\documentclass[12pt,english]{article}
\pdfoutput=1

\usepackage[toc,page,header]{appendix} % for appendices
\usepackage{minitoc} % to get Table of Contents for appendix alone
% Make the "Part I" text invisible --v

\usepackage{ae,aecompl}

\usepackage[T1]{fontenc}
\usepackage[latin9]{inputenc}
\usepackage{hhline} % for double horizontal lines in tables
\usepackage{geometry}
\geometry{verbose,tmargin=1in,bmargin=1in,lmargin=1in,rmargin=1in,headsep=1cm,footskip=1cm}
\usepackage{color}
\usepackage{textcomp}

\usepackage{amsmath}
\usepackage{amsthm}
\usepackage{amssymb}
\usepackage{bm} % for bold math symbols with better spacing than AMS' \boldsymbol
\usepackage{graphicx}
\usepackage{mathrsfs}
\usepackage{mathtools}
\usepackage{setspace}\onehalfspacing

\usepackage[authoryear]{natbib}
% For separate references for main and appendix (bibunits) - does not work ATM
%\usepackage{bibunits} 

\usepackage{verbatim} % for multi-line comments
\usepackage[flushleft]{threeparttable} % for notes below tables

\usepackage[unicode=true,pdfusetitle,
 bookmarks=true,bookmarksnumbered=true,bookmarksopen=true,bookmarksopenlevel=1,
 breaklinks=true,pdfborder={0 0 0},pdfborderstyle={},backref=false,colorlinks=true]
 {hyperref}
 \hypersetup{linkcolor=magenta, urlcolor=blue, citecolor=blue,
 pdfstartview={FitH}, unicode=true}

\usepackage[capposition=top]{floatrow} % to add notes to figures. note: may conflict with hyperref depending on settings.

 % red text (for editing)

\makeatletter

%%%%%%%%%%%%%%%%%%%%%%%%%%%%%% LyX specific LaTeX commands.

%%%%%%%%%%%%%%%%%%%%%%%%%%%%%% Textclass specific LaTeX commands.
\numberwithin{equation}{section}
\numberwithin{figure}{section}
\theoremstyle{definition}
\newtheorem{example}{\protect\examplename}
\theoremstyle{plain}
\newtheorem{assumption}{\protect\assumptionname}
\numberwithin{assumption}{section}
\theoremstyle{plain}
\newtheorem{thm}{\protect\theoremname}
\numberwithin{thm}{section}
\theoremstyle{definition}
\newtheorem{rem}{\protect\remarkname}
\numberwithin{rem}{section}
\theoremstyle{plain}

\theoremstyle{plain}
\newtheorem{lem}{\protect\lemmaname}
\numberwithin{lem}{section}
\theoremstyle{plain}
\newtheorem{cor}{\protect\corollaryname}
\numberwithin{cor}{section}
\theoremstyle{definition}
\newtheorem{alg}{Algorithm}
\numberwithin{alg}{section}

% Special mathematical symbols
\def\E{\mathrm{E}}

\def\En{\mathbb{E}_{n}}
\def\G{\mathbb{G}}
\def\Gn{\G_{n}}

\def\N{\mathbb{N}}
\def\P{\mathrm{P}}

\def\R{\mathbb{R}}

% Boldfacing
\def\bM{\mathbf{M}}
\def\cN{\boldsymbol{\mathcal{N}}}
\def\cS{\boldsymbol{\mathcal{S}}} % intended for sum of Z_is / \sqrt{n} (becoming rescaled score)
\def\bS{\boldsymbol{S}} % intended for score (corresponding to an average of Z_is)
\def\bU{\boldsymbol{U}}
\def\bv{\boldsymbol{v}}
\def\bV{\boldsymbol{V}}
\def\bw{\boldsymbol{w}}
\def\bW{\boldsymbol{W}}
\def\bx{\boldsymbol{x}}
\def\bX{\boldsymbol{X}}
\def\bfX{\mathbf{X}}

\def\bY{\boldsymbol{Y}}
\def\by{\boldsymbol{y}}
\def\bZ{\boldsymbol{Z}}
\def\bz{\boldsymbol{z}}
\def\balpha{\boldsymbol{\alpha}}
\def\bgamma{\boldsymbol{\gamma}}
\def\bdelta{\boldsymbol{\delta}}
\def\bmu{\boldsymbol{\mu}}
\def\btheta{\boldsymbol{\theta}}
\def\bvartheta{\boldsymbol{\vartheta}}
\def\bSigma{\boldsymbol{\Sigma}}

% Math operators

% argmin/-max, liminf/-sup, etc.
\newcommand*{\argmin}{\operatornamewithlimits{argmin}\limits}

% Parantheses, brackets, braces, etc. with and without scaling

\newcommand\norm[1]{\left\lVert#1\right\rVert}

\newcommand\paran[1]{(#1)}

\newcommand\absn[1]{\lvert#1\rvert}
\newcommand\abs[1]{\left\lvert#1\right\rvert}
\newcommand\bracn[1]{[#1]}

\newcommand\bracen[1]{\{#1\}}

\newcommand\supp[1]{\mathrm{supp}\left(#1\right)}
\newcommand\suppn[1]{\mathrm{supp}(#1)}

% For arabic numbering in appendices ->
\usepackage{chngcntr}
\usepackage{apptools}
\AtAppendix{\counterwithin{assumption}{section}}
\AtAppendix{\counterwithin{equation}{section}}
\AtAppendix{\counterwithin{lem}{section}}
\AtAppendix{\counterwithin{thm}{section}}
% <-

% To enumerate inline
\usepackage{paralist}

% For creation of plots
\usepackage{layouts}

\makeatother

\providecommand{\assumptionname}{Assumption}
\providecommand{\corollaryname}{Corollary}
\providecommand{\examplename}{Example}
\providecommand{\lemmaname}{Lemma}
\providecommand{\propositionname}{Proposition}
\providecommand{\remarkname}{Remark}
\providecommand{\theoremname}{Theorem}

\begin{document}

%\begin{bibunit} % keep track of main references

\title{Selecting Penalty Parameters of High-Dimensional M-Estimators using Bootstrapping after
Cross-Validation\thanks{Parts of this paper were previously circulated under the title ``Analytic
and Bootstrap-after-Cross-Validation Methods for Selecting Penalty Parameters of High-Dimensional
M-Estimators.'' We thank Richard Blundell, Victor Chernozhukov, Bo Honor\'{e}, Whitney Newey, Joris
Pinkse, Simon Reese, Azeem Shaikh, Mikkel S{\o}lvsten, Sara van de Geer and numerous seminar
participants for their insightful comments and discussions. Bohdan Salahub and Andrei Voronin
provided excellent research assistance. Chetverikov's work was supported by NSF Grant SES -
1628889.}} \author{Denis Chetverikov\footnote{Department of Economics, UCLA; e-mail:
\tt{chetverikov@econ.ucla.edu}.} \and Jesper R.-V. S{\o}rensen\footnote{Department of Economics,
University of Copenhagen; e-mail:
\tt{jrvs@econ.ku.dk}.}}

\maketitle

\begin{abstract}
We develop a new method for selecting the penalty parameter for $\ell_{1}$-penalized M-estimators in
high dimensions, which we refer to as bootstrapping after cross-validation. We derive rates of
convergence for the corresponding $\ell_1$-penalized M-estimator and also for the
post-$\ell_1$-penalized M-estimator, which refits the non-zero entries of the former estimator
without penalty in the criterion function. We demonstrate via simulations that our methods are not
dominated by cross-validation in terms of estimation errors and can outperform cross-validation in
terms of inference. As an empirical illustration, we revisit \citet{fryer_jr_empirical_2019}, who
investigated racial differences in police use of force, and confirm his findings.
\end{abstract}

\medskip
\noindent
\textbf{Keywords:} Penalty parameter selection, penalized M-estimation,
high-dimensional models, sparsity, cross-validation, bootstrap, inference,
one-step debiasing.

\section{Introduction}\label{sec:Introduction}

High-dimensional models have attracted substantial attention both in the econometrics and in the
statistics/machine learning literature, see e.g.~\citet{belloni2018highdimensional} and
\citet{hastie_statistical_2015}, and $\ell_{1}$-penalized estimators have emerged among the most
useful methods for learning parameters of such models. However, implementing these estimators
requires a choice of the penalty parameter and with few notable exceptions,
e.g.~$\ell_{1}$-penalized linear mean, quantile and logit regression estimators, the choice of this
penalty parameter in practice often remains unclear. In this paper, we develop a new method to
choose the penalty parameter in the context of $\ell_1$-penalized M-estimation and show that our
method leads to precise estimation and inference in a large variety of models.

We consider a model where the true value $\btheta_{0}$ of some parameter $\btheta$ is given by the
solution to an optimization problem
\begin{equation}
\btheta_{0}=\argmin_{\btheta\in\Theta}\E[m(\bX^{\top}\btheta,\bY)],\label{eq:EstimandIntro}
\end{equation}
where $m:\R\times \mathcal Y \to \R$ is a known (potentially non-smooth) loss function that is
convex in its first argument, $\bX = (X_{1},\dots,X_{p})^{\top}\in\mathcal{X}\subseteq\R^{p}$ a
vector of candidate regressors, $\bY\in\mathcal{Y}$ one or more outcome variables, and
$\Theta\subseteq\R^{p}$ a convex parameter space. Prototypical loss functions are square loss and
negative log-likelihood, but the framework (\ref{eq:EstimandIntro}) also covers many other
cross-sectional models and associated modern as well as classical estimation approaches including
logit and probit models, logistic calibration \citep{tan2020regularized}, covariate balancing
\citep{imai_covariate_2014}, and expectile regression \citep{newey_asymmetric_1987}. It also
subsumes approaches to estimation of panel-data models such as the fixed-effects/conditional logit
for binary outcomes \citep{rasch1960probabilistic}, trimmed least-absolute-deviations and trimmed
least-squares for censored outcomes \citep{honore_trimmed_1992}, and partial likelihood approaches
to heterogeneous panel models for duration \citep{chamberlain_heterogeneity_1985}. We detail some of
these examples in Section \ref{sec:Examples}.

For the purpose of estimation, we assume access to a sample $\{(\bX_{i},\bY_{i})\}_{i=1}^n$ of $n$
independent observations from the distribution $P$ of the pair $(\bX,\bY)$, where the number $p$ of
candidate regressors in each $\bX_{i}=(X_{i,1},\dots,X_{i,p})^{\top}$ may be (potentially much)
larger than the sample size $n$, meaning that we cover high-dimensional models. Following the
literature on high-dimensional models, we assume that the vector
$\btheta_{0}=(\theta_{0,1},\dots,\theta_{0,p})^{\top}$ is at least approximately (also known as
``weakly'') sparse. While we postpone a formal definition to Section \ref{sec:Deterministic-Bounds},
approximate sparsity captures the idea that, even though the number of \emph{candidate} regressors
$p$ can be very large, the number of \emph{relevant} regressors may be substantially smaller. In the
simplest case, known as exact (or ``strong'') sparsity, this assumption amounts to the number of
non-zeros in $\btheta_0$ being much smaller than $n$. Approximate sparsity relaxes this idea to
allow possibly many---but typically small---non-zeros. With sparsity in mind, we study the sparsity
encouraging $\ell_{1}$-penalized M-estimator ($\ell_{1}$-ME)
\begin{equation}
\widehat{\btheta}\left(\lambda\right)\in\widehat{\Theta}\left(\lambda\right):=\argmin_{\btheta\in\Theta}\bigg\{\frac{1}{n}\sum_{i=1}^{n}m(\bX_{i}^{\top}\btheta,\bY_{i})+\lambda\Vert\btheta\Vert_{1}\bigg\},\label{eq:ell1PenalizedMEstimationIntro}
\end{equation}
where $\Vert\btheta\Vert_{1}=\sum_{j=1}^{p}\vert\theta_{j}\vert$ denotes the $\ell_{1}$ norm of
$\btheta$, and $\lambda\in[0,\infty)$ is a penalty parameter.\footnote{Throughout the main text, we
implicitly assume that an estimator exists. Simple conditions under which
$\widehat{\Theta}(\lambda)$ is non-empty (and related properties) are given in Appendix
\ref{sec:ExistenceSparsityAndUniqueness}.} We also study the post-$\ell_1$-penalized M-estimator
(post-$\ell_1$-ME), which refits the coefficients of the variables selected by $\ell_1$-ME without
the penalty in the criterion function in \eqref{eq:ell1PenalizedMEstimationIntro}.

Implementing the estimator $\widehat{\btheta}(\lambda)$ requires choosing $\lambda$. To do so, we
first extend a probabilistic bound from \citet{belloni_l1-penalized_2011}, obtained for
$\ell_{1}$-penalized quantile regression, to our general $\ell_{1}$-penalized M-estimation setting
(\ref{eq:ell1PenalizedMEstimationIntro}). (See also \citet{negahban_unified_2012} for independently
developed and closely related results.) The bound, which we state in Section
\ref{sec:Deterministic-Bounds}, yields a general principle to choose $\lambda$. In particular, it
suggests that, for an arbitrary choice of $c_{0}\in(1,\infty)$, one should choose $\lambda$ as small
as possible subject to the constraint that the event
\begin{equation}
\lambda \geqslant c_{0}\max_{1\leqslant j\leqslant p}\bigg|\frac{1}{n}\sum_{i=1}^{n}m_{1}'(\bX_{i}^{\top}\btheta_{0},\bY_{i})X_{i,j}\bigg|\label{eq:ScoreDominationIntro}
\end{equation}
occurs with probability approaching one, where $m_1'$ denotes the partial derivative of the loss
function with respect to its first argument. We therefore wish to set
$\lambda=c_{0}q_n(1-\alpha)$, where
\begin{equation}
q_n(1-\alpha):=\left(1-\alpha\right)\text{-quantile of }\max_{1\leqslant j\leqslant p}\bigg|\frac{1}{n}\sum_{i=1}^{n}m_{1}'(\bX_{i}^{\top}\btheta_{0},\bY_{i})X_{i,j}\bigg|,\label{eq:ScoreQuantileIntro}
\end{equation}
for some small user-specified probability tolerance level $\alpha = \alpha_n \to
0$ as $n\to\infty$. This choice, however, is typically infeasible since the
random variable in (\ref{eq:ScoreQuantileIntro}) depends on the unknown
$\btheta_{0}$. We thus have a vicious circle: to choose $\lambda$, we need an
estimator of $\btheta_{0}$, but to estimate $\btheta_{0}$, we need to choose
$\lambda$. In this paper, we offer a solution to this problem, which constitutes
our key contribution.

To obtain our solution, we show that even though (as we discuss below) the estimator
$\widehat{\btheta}(\lambda)$ based on $\lambda$ chosen by cross-validation or its variants is
generally difficult to analyze, it can be used to construct provably good, in a certain sense,
estimators of the random vectors $m_{1}'(\bX_{i}^{\top}\btheta_{0},\bY_{i})\bX_{i}$. We are then
able to derive an estimator, say $\widehat{q}(1-\alpha)$, of $q_n(1-\alpha)$ via bootstrapping, as
discussed in \citet{belloni2018highdimensional}, and to set $\lambda=c_{0}\widehat{q}(1-\alpha)$,
which we refer to as the \textit{bootstrap-after-cross-validation} (BCV) \textit{method} to choose
$\lambda$. This method is computationally rather straightforward, applicable in a wide variety of
models, and non-conservative in the sense that it gives $\lambda$ such that $\lambda\approx
c_{0}q_n(1-\alpha)$ rather than $\lambda \gg c_0q_n(1-\alpha)$. We derive convergence rates of
$\ell_1$-ME and post-$\ell_1$-ME based on this choice of $\lambda$ in Section
\ref{sec:Bootstrapping-the-Penalty}. In addition, we show in Section \ref{sec: inference} that, upon
debiasing via the double machine learning approach, these estimators yield simple inference
procedures.

The main alternatives to our method are cross-validation and related sample-splitting techniques.
One of the main complications with these methods is that they are difficult to analyze, at least in
some important dimensions. Sample-splitting techniques yield bounds on the $\ell_{2}$ estimation
error $\|\widehat{\btheta}(\lambda)-\btheta_{0}\|_{2}$, see e.g.~\citet{lecue_mitchell}, but not on
the $\ell_{1}$
%The main alternative to our method is cross-validation and related sample-splitting techniques.
%However, sample-splitting techniques typically yield bounds only for the excess risk $\E_{\bX,\bY}[m(\bX^{\top}\widehat{\btheta}(\lambda),\bY)-m(\bX^{\top}\btheta_{0},\bY)]$,
%e.g. see \citet{lecue_mitchell}. These bounds can be translated into
%the $\ell_{2}$ estimation error $\|\widehat{\btheta}(\lambda)-\btheta_{0}\|_{2}$,
%but it is not clear how to convert them into the $\ell_{1}$
estimation error $\|\widehat{\btheta}(\lambda)-\btheta_{0}\|_{1}$.\footnote{Any
two norms on a fixed and finite-dimensional space are equivalent. However, the equivalence
constants generally depend on the dimension (here $p$), which makes translation
of error bounds for one norm into another a non-trivial manner when the dimension
is growing.} In contrast, our method gives bounds on both $\ell_2$ and $\ell_1$
estimation errors. An $\ell_1$ error bound is crucial when we are interested in
estimating dense functionals $\boldsymbol{a}^{\top}\btheta_{0}$ of $\btheta_0$
with $\boldsymbol{a}\in\R^{p}$ being a vector of loadings with many non-zero
components; see \citet{belloni2018highdimensional} for
details.\footnote{Dense functionals $\boldsymbol{a}^{\top}\btheta_0$ may appear
in the analysis, for example, when the vector $\bX$ consists of many dummy
variables and we are interested in making comparisons between two cells,
$(\bx_2 - \bx_1)^{\top}\btheta_0$, where $\bx_1$ and $\bx_2$ represent the first
and the second cell, respectively. In such examples, we can guarantee that
$(\bx_2 - \bx_1)^{\top}\widehat\btheta(\lambda)$ is close to $(\bx_2 -
\bx_1)^{\top}\btheta_0$ only when $\|\widehat\btheta(\lambda) - \btheta_0\|_1$
is small.} Moreover, $\ell_1$ estimation error bounds are needed to perform
inference on components of $\btheta_{0}$ as in Section \ref{sec:
inference}.\footnote{It is possible to replace the requirement on $\ell_1$
estimation error by the requirement on $\ell_2$ estimation error via
cross-fitting, as in \cite{chernozhukov2018double}. However, the combination of
sample-splitting and cross-fitting would require splitting the original sample
into at least three subsamples, which may not lead to accurate inference in
moderate samples.} When $\lambda$ is selected by cross-validation, $\ell_{1}$
and $\ell_{2}$ estimation error bounds are typically both unknown. The only
exception we are aware of is the linear mean regression model estimated by the
LASSO. For this special case, bounds have been derived in
\citet{chetverikov_cross-validated_2016} and \citet{miolane_montanari}, but the
bounds appearing in those references are less sharp than those provided here.
Moreover, and crucially, cross-validation may lead to rather poor inference
results, in the sense of bad size control, even in relatively large samples, and
does not dominate our method even in terms of estimation errors;  see our
simulation results in Section \ref{sec:Simulations} for details.

Another alternative to our method is to base the penalty parameter choice on self-normalized
moderate deviation (SNMD) theory, as proposed in \cite{belloni_sparse_2012} for the linear mean
regression model and extended in \cite{belloni2016post} to the logit model. This method is slightly
conservative, in the sense that it gives $\lambda$ somewhat larger than $c_0 q_n(1-\alpha)$, but
yields estimation and inference results that are comparable in quality with those produced by the
BCV method. The SNMD method can be further extended to cover any Lipschitz-continuous loss function,
but it is not clear how to extend it to a non-Lipschitz setting. For example, the SNMD method can
be applied to the logit model but not to the probit model. In contrast, our BCV method is
nearly universally applicable, and does not require Lipschitz continuity. We provide several other
important examples where the loss function is not Lipschitz-continuous in Section
\ref{sec:Examples}.

To showcase our method using real data, in Section \ref{sec:Application} we revisit the setting of
\citet{fryer_jr_empirical_2019}, who investigated racial differences in police use of force. We
extend Fryer's regression analysis in two ways. First, we change the model from a binary logit to a
binary probit, keeping the regressors as in Fryer's analysis, a relatively small list. Second, we
add a large number of additional (technical) regressors resulting from interactions between the
original regressors. The first change leads to a non-Lipschitz loss (the negative probit
likelihood). The second change brings us into high-dimensional territory, causing classical methods
to break down. Unlike existing methods, the methods developed in this paper can accommodate both
challenges. Our analysis supports the conclusions of \citet{fryer_jr_empirical_2019} in showing that
they are robust to model specification and a much larger set of candidate controls than originally
considered.

The literature on learning parameters of high-dimensional models via $\ell_{1}$-penalized
M-estimation is large. Instead of listing all existing papers, we therefore refer the interested
reader to the excellent textbook treatment in \citet{wainwright_high-dimensional_2019} and focus
here on only a few key references.
\citet{van_de_geer_high-dimensional_2008,van_de_geer_estimation_2016} derives bounds on the
estimation errors of general $\ell_{1}$-penalized M-estimators
(\ref{eq:ell1PenalizedMEstimationIntro}) and provides some choices of the penalty parameter
$\lambda$. However, her penalty formulas give values of $\lambda$ that are so large that the
resulting estimators are typically trivial in moderate samples, with all coefficients being exactly
zero. Recognizing this issue, \citet[p.~621]{van_de_geer_high-dimensional_2008} remarks that her
results should only be seen as an indication that her theory has something to say about finite
sample sizes, and that other methods to choose $\lambda$ should be used in practice.
\citet{negahban_unified_2012} develop error guarantees in a very general setting, and when
specialized to our setting (\ref{eq:ell1PenalizedMEstimationIntro}), their results become quite
similar to those in our Theorem \ref{thm:NonAsymptoticProbabilisticBounds}. The same authors also
note that a challenge to using these results in practice is that the random variable in
(\ref{eq:ScoreDominationIntro}) is usually impossible to compute because it depends on the unknown
vector $\btheta_{0}$ (\emph{ibid.,} p.~547). It is exactly this challenge that we overcome in this
paper. \citet{belloni_l1-penalized_2011} study the high-dimensional quantile regression model and
note that the distribution of the random variable in (\ref{eq:ScoreDominationIntro}) is in this case
pivotal, making the choice of the penalty parameter simple. Similarly, \cite{wang_tuning-free_2020}
study the high-dimensional mean regression model and show that one can obtain pivotality by
replacing the square-loss function by Jaeckel's dispersion function, again making the choice of the
penalty parameter simple. However, these are the only two settings we are aware of in which the
distribution of the random variable in (\ref{eq:ScoreDominationIntro}) is pivotal.\footnote{With a
known censoring propensity, the \cite{buchinsky1998alternative} linear programming estimator for
censored quantile regression boils down to a variant of quantile regression, thus leading to
pivotality.} Finally, \citet{ninomiya_kawano} consider information criteria for the choice of the
penalty parameter $\lambda$ but focus on fixed-$p$ asymptotics, thus precluding high-dimensional
models.

The rest of the paper is organized as follows. In Section \ref{sec:Examples} we
provide a portfolio of examples that constitute possible applications of our
method. In Section \ref{sec:Deterministic-Bounds} we develop bounds on the
estimation error of the $\ell_1$-ME, which motivate our
method for choosing the penalty parameter. In Section
\ref{sec:Bootstrapping-the-Penalty}, we introduce the BCV penalty method and
derive convergence rates for the resulting $\ell_1$-ME and
post-$\ell_1$-ME. In Section \ref{sec: inference}, we show how to perform
inference on individual components of $\btheta_0$ via debiasing. In Section
\ref{sec:Simulations}, we present a simulation study shedding light on the
finite-sample properties of our method and contrast it with cross-validation.
Finally, in Section \ref{sec:Application}, we apply our method to the empirical
setting of \citet{fryer_jr_empirical_2019}. All proofs are relegated to the
Online Appendices.

\subsection*{Notation}

The distribution $P$ of the pair $(\bX,\bY)$ and features thereof, including the dimension $p$ of
the vector $\bX$, may change with the sample size $n$ (that is, we consider triangular array
sampling and asymptotics), but we suppress this potential dependence whenever this does not cause
confusion in order to simplify notation. We use $\E[f(\bX,\bY)]$ (or $\E_{\bX,\bY}[f(\bX,\bY)]$) to
denote the expectation of a function $f$ of the pair $(\bX,\bY)$ computed with respect to $P$, and
we use $\En[f(\bX_i,\bY_i)]:=n^{-1}\sum_{i=1}^{n}f(\bX_i,\bY_i)$ to abbreviate the sample average.
We use $\R$ and $\N$ to denote all real numbers and all positive integers $\{1,2,\dotsc\},$
respectively. For $k\in\N$, we write $\left[k\right]:=\{1,\dotsc,k\}$ for all positive integers up
to and including $k$. When only a non-empty subset $I\subsetneq[n]$ is in use, we write
$\mathbb{E}_{I}[f(\bX_{i},\bY_i)]:=|I|^{-1}\sum_{i\in I}f(\bX_{i},\bY_i)$ for the subsample average.
For a set of indices $I\subseteq\left[n\right]$, we use $I^{c}$ to denote the elements of
$\left[n\right]$ not in $I.$ For $k\in\N$, we use $\mathbf 0_k$ to denote the vector in $\R^k$ whose
components are all zero. Given a vector $\bdelta\in\R^{k},$ we denote its $\ell_{r}$ norms, $r\in
[1,\infty]$, by $\norm{\bdelta}_{r}$. We write $\mathrm{supp}(\bdelta):=\{j\in[k];\bdelta_j\neq0\}$
for the support of $\bdelta$, and use the $\ell_0$ ``norm''
$\|\bdelta\|_0:=|\mathrm{supp}(\bdelta)|$ to denote the number of non-zero elements of $\bdelta$,
where $|J|$ denotes the cardinality of the set $J$. For any function $f\colon \R\times \mathcal
Z\to\R$, whose first argument is a scalar, we use $f'_1$, $f''_{11}$ and $f'''_{111}$ to denote its
partial derivatives with respect to the first argument of the first, second and third order,
respectively. We abbreviate $a\lor b:=\max\{a,b\} $ and $a\wedge b:=\min\{ a,b\} $. Unless
explicitly stated otherwise, limits are understood as $n\to\infty$. For numbers $a_n$ and positive
numbers $b_n,n\in\N,$ we write $a_n=o(1)$ if $a_n\to0$, and $a_n\lesssim b_n,$ if the sequence
$a_n/b_n$ is bounded. For random variables $V_n$ and positive numbers $b_n,$ we write
$V_n\lesssim_{\P} b_n,$ if the sequence $V_n/b_n$ is bounded in probability. We denote
$\eta_n:=\sqrt{\ln(pn)/n}$. We use the word ``constant'' to refer to non-random quantities that do
not depend on $n$. Finally, we take $n\geqslant3$ and $p\geqslant2$ throughout and introduce more
notation as needed in the appendices.

\section{Examples\label{sec:Examples}}

In this section, we discuss a variety of models that fit into the M-estimation
framework (\ref{eq:EstimandIntro}) with the loss function $m(t,\by)$ being convex
in its first argument. The following examples cover both discrete and continuous
outcomes in likelihood and non-likelihood settings with smooth as well as kinked
loss functions. Additional examples can be found in Appendix
\ref{sec:Additional-Examples}.

\begin{example}
[\textbf{Binary Response Model}]\label{exa:Logit} 
A relatively simple model fitting our framework is the \emph{binary} \emph{response model}, i.e. a
model for an outcome $Y\in\{0,1\} $ with
\[
\P(Y=1\mid\bX)=F(\bX^{\top}\btheta_{0}),
\]
for a known cumulative distribution function (CDF) $F:\R\to\left(0,1\right)$. The log-likelihood of
this model yields the following loss function:
\begin{equation}
m\left(t,y\right)=-y\ln F\left(t\right)-\left(1-y\right)\ln\left(1-F\left(t\right)\right).\label{eq:LossBinaryResponse}
\end{equation}
The \emph{logit} model arises from setting
$F\left(t\right)=1/\left(1+\mathrm{e}^{-t}\right)=:\Lambda\left(t\right)$, the standard logistic
CDF, and the loss function reduces in this case to
\begin{equation}
m\left(t,y\right)=\ln\left(1+\mathrm{e}^{t}\right)-yt.\label{eq:LossLogit}
\end{equation}
The \emph{probit} model arises from setting
$F\left(t\right)=\int_{-\infty}^{t}\left(2\pi\right)^{-1/2}\allowbreak\mathrm{e}^{-u^{2}/2}\mathrm{d}u=:\Phi\left(t\right)$,
the standard normal CDF, and the loss function in this case becomes
\begin{equation}
m\left(t,y\right)=-y\ln\Phi\left(t\right)-\left(1-y\right)\ln\left(1-\Phi\left(t\right)\right).\label{eq:LossProbit}
\end{equation}
Both loss functions (\ref{eq:LossLogit}) and (\ref{eq:LossProbit}) are convex in $t$.

More generally, any binary response model with both $F$ and complementary CDF $1-F$ being
log-concave leads to a loss (\ref{eq:LossBinaryResponse}) that is convex in $t$. For these
log-concavities it suffices that $F$ admits a probability density function (PDF) $f=F'$, which is
itself positive and log-concave \citep[Section 5]{pratt_concavity_1981}. Both the standard logistic
and standard normal PDFs are log-concave. Also, $\ln f$ is concave whenever $f$ is of the (Subbotin)
form $f(t)\propto\mathrm{e}^{-\vert t\vert^{a}/a}$ for some $a\in[1,\infty)$, the extreme case being
the Laplace distribution. See \citet[Section 6]{pratt_concavity_1981} for additional examples. We
focus on the logit and probit cases for concreteness.\qed
\end{example}

\begin{example}[\textbf{Ordered Response Model}]\label{exa:LogConcaveOrderedResponse} 
Consider the \emph{ordered response model}, i.e.~a model for an outcome $Y\in\{0,1,\dots,V\}$ with
\[
\P(Y=v\mid\bX)=F(\alpha_{v+1}-\bX^{\top}\btheta_{0})-F(\alpha_{v}-\bX^{\top}\btheta_{0}),\quad v\in\left[V\right] ,
\]
for a known CDF $F:\R\to(0,1)$ and known cut-off points
$-\infty=\alpha_{0}<\alpha_{1}<\cdots<\alpha_{V}<\alpha_{V+1}=+\infty$. (We interpret
$F\left(-\infty\right)$ as zero and $F\left(+\infty\right)$ as one to subsume the end cases.) The
log-likelihood of this model yields the loss function
\begin{equation}
m\left(t,y\right)=-\sum_{v=0}^{V}\mathbf{1}\left(y=v\right)\ln\left(F\left(\alpha_{v+1}-t\right)-F\left(\alpha_{v}-t\right)\right),\label{eq:LossOrderedResponse}
\end{equation}
which is convex in $t$ for any distribution $F$ admitting a positive and log-concave PDF $f=F'$
\citep[Section 3]{pratt_concavity_1981}. See Example \ref{exa:Logit} for specific distributions
satisfying this criterion. As for binary response, we focus on the logit and probit cases.\qed
\end{example}

\begin{example}
[\textbf{Expectile Model}]\label{exa:Expectile} \citet{newey_asymmetric_1987}
study the conditional ($\tau$th) \textit{expectile model}
$\mu_{\tau}(Y\mid\bX)=\bX^{\top}\btheta_{0}$, where
$\tau\in(0,1)$ is a known number, and propose the\emph{ asymmetric least
squares} (ALS) estimator of $\btheta_{0}$ in this model. This estimator
can be understood as an M-estimator with loss of the form
\begin{equation}
m\left(t,y\right)=\rho_{\tau}\left(y-t\right),\label{eq: ALS}
\end{equation}
with $\rho_{\tau}:\R\to\R$ being the ``swoosh'' function given by
\[
\rho_{\tau}\left(u\right)=\left|\tau-\mathbf{1}\left(u<0\right)\right|u^{2}=\begin{cases}
\left(1-\tau\right)u^{2}, &\text{if}\; u<0,\\
\tau u^{2}, &\text{if}\; u\geqslant0,
\end{cases}
\]
a piecewise quadratic and continuously differentiable analogue of the ``check'' function known from
the quantile regression literature. The ALS estimator can be interpreted as a maximum likelihood
estimator when model disturbances arise from a normal distribution with unequal weights placed on
positive and negative disturbances \citep{aigner_estimation_1976, philipps2022mle}. Note that
$m(\cdot,y)$ in (\ref{eq: ALS}) is convex but not twice differentiable (at $y$) unless
$\tau=1/2$.\qed
\end{example}

\begin{example}
[\textbf{Panel Censored Model}]
\label{exa:PanelCensoredRegressionAndTrimming}
Consider the \textit{panel censored model} 
\[
Y_{\tau}=\max\left(0,\gamma+\bX_{\tau}^{\top}\btheta_0+\varepsilon_{\tau}\right),\quad \tau\in\{1,2\},
\]
where $\bY=(Y_{1},Y_{2})^{\top}\in[0,\infty)^2$ is a pair of outcome variables,
$(\bX_{1}^{\top},\bX_{2}^{\top})^{\top}$ is a vector of regressors, $\gamma$ is a unit-specific
(possibly random) unobserved fixed effect, and $\varepsilon_{1}$ and $\varepsilon_{2}$ are
unobserved error terms, which may or may not be centered. \citet{honore_trimmed_1992} shows that
under certain conditions, including exchangeability of $\varepsilon_{1}$ and $\varepsilon_{2}$
conditional on $(\bX_{1},\bX_{2},\gamma)$, $\btheta_{0}$ in this model can be identified by
$\btheta_{0}=\argmin\nolimits_{\btheta\in\R^{p}}\E[m(\bX^{\top}\btheta,\bY)],$ with $\bX := \bX_1 -
\bX_2$ and $m$ being the \emph{trimmed loss }function
\begin{equation}
m\left(t,\by\right)=\begin{cases}
\Xi\left(y_{1}\right)-\left(y_{2}+t\right)\xi\left(y_{1}\right), &\text{if}\; t\in\left(-\infty,-y_2\right],\\
\Xi\left(y_{1}-y_{2}-t\right), &\text{if}\; t\in\left(-y_{2},y_{1}\right),\\
\Xi\left(-y_{2}\right)-\left(t-y_{1}\right)\xi\left(-y_{2}\right), &\text{if}\; t\in\left[y_{1},\infty\right),
\end{cases}\label{eq:TrimmedLoss}
\end{equation}
and either $\Xi=\left\vert\cdot\right\vert$ or $\Xi=(\cdot)^{2}$ and $\xi$ its derivative (when
defined).\footnote{When $\Xi=\left|\cdot\right|$, we set $\xi\left(0\right):=0$ to make
\eqref{eq:TrimmedLoss} consistent with formulas in \citet{honore_trimmed_1992}.} These choices lead
to \emph{trimmed least absolute deviations} (trimmed LAD) and \emph{trimmed least squares} (trimmed
LS) estimators, respectively, both of which are based on loss functions convex in $t$. Note that
trimmed LAD is based on a non-differentiable loss $m(\cdot,\by)$, while trimmed LS is based on a
continuously differentiable but not twice differentiable loss.\qed
\end{example}

\section{Non-Asymptotic Bounds on Estimation Error\label{sec:Deterministic-Bounds}}

In this section, we derive probabilistic bounds on the error of the
$\ell_{1}$-ME (\ref{eq:ell1PenalizedMEstimationIntro}) in the $\ell_{1}$ and
$\ell_{2}$ norms. The bounds reveal which quantities one needs to control in
order to ensure good behavior of the estimator, motivating the choice of the
penalty parameter $\lambda$ in the next section. 

Our bounds will be based on the following assumptions. Since Assumptions
\ref{as: diff and int}, \ref{assu:Margin}, and
\ref{assu:LossLocallyLipschitzAndMore} stated below are high level, we verify
these assumptions under more low-level conditions in the familiar case of the linear
model with square loss in Appendix \ref{sec:VerificationLinearModelSquareLoss}
and for all examples in Section \ref{sec:Examples}  in Appendix \ref{sec:
verification}.
\begin{assumption}
[\textbf{Parameter Space}]\label{assu:ParameterSpace} The parameter space
$\Theta$ is a non-empty convex subset of $\R^{p}$ for which $\btheta_{0}$ is
interior.
\end{assumption}
\begin{assumption}
[\textbf{Convexity}]\label{assu:Convexity} The function
$m\left(\cdot,\by\right)$ is convex for all $\by\in\mathcal{Y}$.
\end{assumption}

\begin{assumption}
[\textbf{Differentiability and Integrability}]\label{as: diff and int} The
derivative $m_{1}'(\bX^{\top}\btheta,\bY)$ exists almost surely for all
$\btheta\in\Theta$, and $\E\bracn{\absn{m\paran{\bX^{\top}\btheta,\bY}}}<\infty$ for
all $\btheta\in\Theta$.
\end{assumption}

Assumption \ref{assu:ParameterSpace} is a minor regularity condition. Both
convexity and interiority follow trivially in the case of a full parameter space
$\Theta=\R^p$. Assumption \ref{assu:Convexity} is satisfied in all examples from
the previous section, as discussed there. In the same examples, Assumption
\ref{as: diff and int} imposes minor integrability conditions on the random
vectors $\bX$ and $\bY$. In addition, in the case of Example
\ref{exa:PanelCensoredRegressionAndTrimming} with trimmed LAD loss function,
this assumption requires that the conditional distribution of $Y_1 - Y_2$ given
$(\bX, Y_1 + Y_2 > 0)$ is continuous; see Appendix \ref{sec: verification} for
details.

Further, define the \emph{excess risk function} $\mathcal{E}:\Theta\to[0,\infty)$ by
\[
\mathcal{E}\left(\btheta\right):=\E\left[m\left(\bX^{\top}\btheta,\bY\right)-m\left(\bX^{\top}\btheta_{0},\bY\right)\right],\quad\btheta\in\Theta.
\]
By definition of $\btheta_0$ in \eqref{eq:EstimandIntro}, this function is non-negative and takes
value zero at $\btheta = \btheta_0$. The next assumption requires that it grows sufficiently fast
as $\btheta$ moves away from $\btheta_0$.
\begin{assumption}
[\textbf{Margin}]\label{assu:Margin} There are constants $c_{M}\in(0,1]$ and $c_{M}'\in(0,\infty]$
such that for all $\btheta\in\Theta$ satisfying $\Vert\btheta-\btheta_{0}\Vert_{2}\leqslant c_{M}'$,
we have $\mathcal{E}\left(\btheta\right)\geqslant c_{M}\Vert\btheta-\btheta_{0}\Vert_{2}^{2}$.
\end{assumption}
In addition to some technical regularity conditions, this assumption requires the matrix
$\E[\bX\bX^\top]$ to be non-singular, which means that there should be no perfect regressor
multicollinearity in the population. In the context of Example
\ref{exa:PanelCensoredRegressionAndTrimming}, it also requires $Y_1$ and $Y_2$ to be different with
positive probability. Also, our formal analysis reveals that Assumption \ref{assu:Margin} could be
relaxed by requiring the bound $\mathcal{E}\left(\btheta\right)\geqslant
c_{M}\Vert\btheta-\btheta_{0}\Vert_{2}^{2}$ to hold only for certain \emph{sparse} vectors
$\btheta$. We have opted for a less general statement to avoid additional technicalities.
%\footnote{The possibility
%$c_{M}'=\infty$ is included to weaken certain side conditions in the
%square-loss case.}

\begin{comment}
\begin{itemize}
   \item J: New ($\ell_2$) margin nbh. Old ($\ell_1$) margin nbh was stronger than necessary.
   \item J: Have allowed $c_{M}'=\infty$ to weaken side conditions in case
      of square loss.
\end{itemize}
\end{comment}

The following assumption requires additional technical regularity of the loss function.
\begin{assumption}[\textbf{Local Loss}]\label{assu:LossLocallyLipschitzAndMore} 
There are constants $c_{L}\in(0,\infty]$, $C_{L}\in[1,\infty)$ and $r\in(4,\infty)$, a non-random
sequence $B_n$ in $[1,\infty)$, and a function $L:\mathcal{X}\times\mathcal Y\to[1,\infty)$ such
that
\begin{enumerate}
\item \label{enu:LossLocallyLipschitz} for all
$(\bx,\by)\in\mathcal{X}\times\mathcal Y$ and all $(t_1,t_2)\in\R^2$ satisfying
$\abs{t_1}\lor\abs{t_2}\leqslant c_{L},$
\begin{align}
\abs{m\left(\bx^{\top}\btheta_{0}+t_1,\by\right)-m\left(\bx^{\top}\btheta_{0}+t_2,\by\right)} & \leqslant L\left(\bx,\by\right)\abs{t_1-t_2}\label{eq:LocallyLipschitz}
\end{align}
with $\max_{1\leqslant j\leqslant p}\E[|L(\bX,\bY)X_j|^2]\leqslant C_L^2$ and
$\E[|L(\bX,\bY)\|\bX\|_{\infty}|^r]\leqslant B_n^r$;
\item \label{enu:LossMeanSquareEll2Conts} for all $\btheta\in\Theta$ satisfying
$\|\btheta - \btheta_0\|_2\leqslant c_L$, we have
\begin{align*}
\mathrm{E}\left[\left|m\left(\bX^{\top}\btheta,\bY\right)-m\left(\bX^{\top}\btheta_{0},\bY\right)\right|^2\right] & \leqslant C_L^2 \|\btheta - \btheta_0\|_2^2;
\end{align*}
\item \label{enu:ResidualMeanSquareEll2Conts} for all $\btheta\in\Theta$ satisfying $\|\btheta -
\btheta_0\|_2\leqslant c_L$, we have
\begin{equation}\label{eq: derivative mean square differentiability}
\E\left[\left| m_{1}'\left(\bX^{\top}\btheta,\bY\right)-m_{1}'\left(\bX^{\top}\btheta_{0},\bY\right)\right|^{2}\right]\leqslant C_{L}^{2}\|\btheta - \btheta_0\|_2.
\end{equation}
\end{enumerate}
\end{assumption}

\begin{comment}
\begin{itemize}
\item J: $c_{L}=\infty$ allows global Lipschitzness (makes some sample size
qualifications vacuous).
\end{itemize}
\end{comment}

Assumption
\ref{assu:LossLocallyLipschitzAndMore}.\ref{enu:LossLocallyLipschitz} states that the loss function
is locally Lipschitz in the first argument with the Lipschitz ``constant'' $L(\bx,\by)$ being
sufficiently well-behaved. The local Lipschitzness required in \eqref{eq:LocallyLipschitz} actually
follows from the loss convexity in Assumption \ref{assu:Convexity} \citep[][Theorem
10.4]{rockafellar_convex_1970}, so Assumption
\ref{assu:LossLocallyLipschitzAndMore}.\ref{enu:LossLocallyLipschitz} should be regarded as a mild
moment condition. Assumptions
\ref{assu:LossLocallyLipschitzAndMore}.\ref{enu:LossMeanSquareEll2Conts} and
\ref{assu:LossLocallyLipschitzAndMore}.\ref{enu:ResidualMeanSquareEll2Conts} essentially state that,
viewed as functions of $\btheta$, both the loss and its derivative are mean-square continuous at
$\btheta_0$.  When the loss $m(\cdot,\by)$ is \emph{globally} Lipschitz uniformly in $\by$ (thus
allowing the choice $c_L=\infty$), Assumption
\ref{assu:LossLocallyLipschitzAndMore}.\ref{enu:LossLocallyLipschitz} boils down to the regressors
having sufficiently many absolute moments, and Assumption
\ref{assu:LossLocallyLipschitzAndMore}.\ref{enu:LossMeanSquareEll2Conts} reduces to the requirement
that the largest eigenvalue of $\E[\bX\bX^\top]$ is bounded from above.\footnote{Boundedness of
eigenvalues is a standard assumption in the semi- and non-parametric estimation literature. See
e.g.~\citet[Condition A.2]{belloni_new_2015} and \citet[Assumption 5]{soerensen_2024}.} Examples of
globally Lipschitz losses are the logit likelihood loss in Example \ref{exa:Logit} and the trimmed
LAD loss in Example \ref{exa:PanelCensoredRegressionAndTrimming}.
%In Appendix \ref{sec:
%verification} we show that, under mild conditions on the data-generating process, in all examples
%from Section \ref{sec:Examples} but for Example \ref{exa:PanelCensoredRegressionAndTrimming}
%with trimmed LAD loss, a stronger version of Assumption
%\ref{assu:LossLocallyLipschitzAndMore}.\ref{enu:ResidualMeanSquareEll2Conts} is actually satisfied,
%with $C_L^2\|\btheta - \btheta_0\|_2^2$ appearing in place of $C_L^2\|\btheta - \btheta_0\|_2$ in
%\eqref{eq: derivative mean square differentiability}.

\begin{assumption}
[\textbf{Approximate Sparsity}]\label{assu:Approximate-Sparsity} 
There is a constant $q\in\left[0,1\right]$ and a non-random sequence $s_{q}:=s_{q,n}$ in
$[1,\infty)$ such that $\sum_{j=1}^{p}|\theta_{0,j}|^{q}\leqslant s_{q}.$
\end{assumption}

This assumption is a sparsity condition, stating that $\btheta_{0}$ lies in an
$\ell_{q}$-``ball'' of ``radius'' $s_{q}^{1/q}$. We interpret the $q=0$ case in the
limiting sense
$\lim_{q\to0_{+}}\sum_{j=1}^{p}|\theta_{0,j}|^{q}=\sum_{j=1}^{p}\mathbf{1}(\theta_{0,j}\neq0)$
so as to nest the case of \textit{exact} sparsity with (at most) $s_{0}$
non-zero entries. When $q>0$, we have only \textit{approximate} sparsity, allowing
possibly many---but typically small---non-zero entries. Related notions of
sparsity appear in many papers on estimation of high-dimensional models. See
Remark \ref{rem:SparsityNotions} for further discussion.

Under Assumption \ref{as: diff and int}, we can (almost surely) define
$\bS_n\in\R^p$ by
\begin{equation}
\bS_n:=\En\left[\left.\frac{\partial}{\partial\btheta}m(\bX_{i}^{\top}\btheta,\bY_{i})\right|_{\btheta = \btheta_0}\right]=\En\left[m_1'(\bX_{i}^{\top}\btheta_0,\bY_{i})\bX_{i}\right].\label{eq:Score}
\end{equation}
In this paper we refer to $\bS_n$ as the \emph{score}.

We are now ready to present a
theorem that provides probabilistic guarantees for $\ell_1$ and $\ell_2$ estimation errors of the
$\ell_{1}$-ME. The proof, given in Appendix \ref{sec:ProofsDeterministicBounds}, builds on arguments
of \cite{belloni_l1-penalized_2011}. Related statements appear also in
\cite{van_de_geer_high-dimensional_2008}, \cite{bickel_simultaneous_2009}, and
\cite{negahban_unified_2012}, among others. Although we could not find the exact same version of the
theorem in the literature, we make no claims of originality for these bounds and include the theorem
for expositional purposes and in order to motivate our method for choosing the penalty parameter
$\lambda$.

To state the theorem, recall that we denote $\eta_n=\sqrt{\ln(pn)/n}$. 

\begin{thm}
[\textbf{Non-Asymptotic Error Bounds for $\boldsymbol{\ell_1}$-ME}]\label{thm:NonAsymptoticProbabilisticBounds} Let Assumptions
\ref{assu:ParameterSpace}--\ref{assu:Approximate-Sparsity} hold, let
$\overline{\lambda}_{n}$ be a non-random sequence in $(0,\infty)$, let $c_0\in(1,\infty)$, and define
\begin{align*}
\ u_{n} & :=\frac{4c_{0}\sqrt{s_{q}\eta_{n}^{-q}}}{(c_{0}-1)c_M}\left(C_{L}\eta_{n}+\overline{\lambda}_{n}\right).
\end{align*}
Then there is a universal constant $C\in[1,\infty)$ such that for all $n\in \N$
and $t\in[1,\infty)$ satisfying
$$
\eta_{n}\leqslant1,\  
   Cu_{n}\leqslant c_{M}', \ 
   \frac{B_n^2\ln(pn)}{\sqrt n}\leqslant C_L^2 \text{ and } \  
   t n^{1/r} B_n\left(Cu_{n}\sqrt{s_{q}\eta_{n}^{-q}}+s_{q}\eta_{n}^{1-q}\right)\leqslant\frac{\left(c_{0}-1\right)c_{L}}{2c_{0}},%\label{eq:Thm1SideConditions}
$$
 we have
\begin{align*}
\sup_{\mathclap{\widehat{\btheta}\in\widehat{\Theta}\left(\lambda\right)}}\|\widehat{\btheta}-\btheta_{0}\|_{2} & \leqslant Cu_{n}
   \quad\text{and}\quad\sup_{\mathclap{\widehat{\btheta}\in\widehat{\Theta}\left(\lambda\right)}}\|\widehat{\btheta}-\btheta_{0}\|_{1}\leqslant\frac{2c_{0}}{c_{0}-1}\left(Cu_{n}\sqrt{s_{q}\eta_{n}^{-q}}+s_{q}\eta_{n}^{1-q}\right)
\end{align*}
with probability at least
$1-\mathrm{P}(\lambda<c_{0}\|\bS_n\|_{\infty})-\mathrm{P}(\lambda>\overline{\lambda}_{n})-
4t^{-r} - C/\ln^2(pn) - n^{-1}.$
\end{thm}

This theorem motivates our choice of the penalty parameter $\lambda$.
Specifically, it demonstrates that we want a level of regularization sufficient
to overrule the score $(\lambda\geqslant c_{0}\|\bS_n\|_{\infty})$ with high
probability, without making the penalty ``too large''
$(\lambda>\overline{\lambda}_n).$ An interested reader can also find an analogue
of Theorem \ref{thm:NonAsymptoticProbabilisticBounds} for the post-$\ell_1$-ME
in Appendix \ref{sec:analysis post estimator}, but the general principle for
choosing $\lambda$ remains the same.

\begin{rem}[\textbf{Non-Uniqueness}]
Like similar statements appearing in the literature, Theorem
\ref{thm:NonAsymptoticProbabilisticBounds} concerns the entire set $\widehat\Theta(\lambda)$ of
optimizers for the convex minimization problem (\ref{eq:ell1PenalizedMEstimationIntro}). While the
objective function is presumed convex, it need not be strictly convex, and the global minimum may be
attained at more than one point. For example, no matter the choice of $\Xi$, the trimmed loss
function \eqref{eq:TrimmedLoss} in Example \ref{exa:PanelCensoredRegressionAndTrimming} will have
linear pieces and need therefore not produce a strictly convex objective function. The bounds stated
here (and in what follows) hold for any of these optimizers. See also Appendix
\ref{sec:ExistenceSparsityAndUniqueness} for sufficient conditions for solution existence and
uniqueness as well as related (sparsity) properties. Despite the possible multiplicity, we sometimes
refer to any element $\widehat{\btheta}\in\widehat{\Theta}\left(\lambda\right)$ as \emph{the}
$\ell_1$-ME. \qed
\end{rem}

\begin{rem}[\textbf{Margin}]
Our convexity, interiority and differentiability assumptions suffice to show
that the excess risk function $\mathcal E(\btheta)$ is differentiable at
$\btheta_0$, and so the estimand $\btheta_{0}$ must satisfy the population
first-order condition $\nabla \mathcal E\left(\btheta_{0}\right)=\mathbf{0}$.
Assumption \ref{assu:Margin} therefore amounts to assuming that $\mathcal
E(\btheta)$ admits a quadratic \emph{margin} near $\btheta_{0}$. The name
\emph{margin condition} appears to originate from \citet[Assumption
A1]{tsybakov_optimal_2004}, who invokes a similar assumption in a classification
context. \citet[Assumption B]{van_de_geer_high-dimensional_2008} contains a more
general formulation of margin behavior for estimation purposes. We consider the
(focal) quadratic case for the sake of simplicity.\qed
\end{rem}

\begin{rem}
[\textbf{Sparsity Notions}]\label{rem:SparsityNotions} In \citet[Section
4.3]{negahban_unified_2012} the sparsity in Assumption
\ref{assu:Approximate-Sparsity} is referred to as \emph{strong} for $q=0$ and
\emph{weak }for $q>0.$ \citet[Chapter 7]{wainwright_high-dimensional_2019}
distinguishes between \emph{strong }$\ell_{q}$-balls (like
$\{\btheta\in\R^{p};\sum_{j=1}^{p}|\theta_{j}|^{q}\leqslant s_{q}\}$
implicitly considered here) and \emph{weak }$\ell_{q}$-balls, which impose a
polynomial decay in the non-increasing rearrangement of the absolute values of
the coefficients. In \citet{belloni2018highdimensional}, restricting $\btheta_{0}$ to a weak $\ell_{q}$-ball is referred to as \emph{approximate
sparsity}, and a $\btheta_{0}$ having bounded $\ell_{1}$ norm (i.e.~belonging to
a strong $\ell_{1}$-ball) is called \emph{dense}. Both strong $(q>0)$ and weak
ball restrictions formalize the idea of ``weak'' or ``approximate''
sparsity.\qed
\end{rem}

\begin{rem}[\textbf{Free Parameter}]
The free parameter $c_0\in(1,\infty)$ in Theorem \ref{thm:NonAsymptoticProbabilisticBounds} serves
as a trade-off between the likelihood of score domination on the one hand and the bound quality on
the other. A smaller $c_0\in(1,\infty)$ makes the event $\lambda\geqslant c_{0}\|\bS_n\|_{\infty}$
more probable but also worsens the bounds. Note that the free parameter $c_0$ appears, either
explicitly or implicitly, in existing bounds as well.\footnote{A free parameter is explicit in both
\cite{belloni_l1-penalized_2011} and \cite{van_de_geer_high-dimensional_2008}. In deriving their
bounds both \cite{bickel_simultaneous_2009} (for the LASSO) and \cite{negahban_unified_2012} set
$c_0=2$.} While asymptotic theory provides no guidance on the choice of $c_0$, our
finite-sample experiments in Section \ref{sec:Simulations} indicate that increasing $c_0$ away from
one worsens performance but setting $c_0$ to any value near one, including one itself, does not
impact the results by much (cf.~Figures \ref{fig:MeanEll2ErrorBCVVaryingc0BCCHrule} and
\ref{fig:MeanEll2ErrorPostBCVVaryingc0BCCHrule}). Similar observations were made by \citet[Footnote
7]{belloni_sparse_2012} in the context of the LASSO.\qed
\end{rem}

\section{Bootstrapping after Cross-Validation\label{sec:Bootstrapping-the-Penalty}}

We next provide a method for choosing the penalty parameter which is broadly
available yet amenable to theoretical analysis. We split the section into two
parts. In Section \ref{sub: boot penalty level}, we discuss a generic bootstrap
method that allows for choosing the penalty parameter $\lambda$ under
availability of some generic estimators $\widehat{U}_i$ of
$U_i:=m'_1(\bX_i^\top\btheta_0,\bY_i),i\in[n]$. In Section
\ref{sec:ResidualEstimationViaCrossValidation}, we show how to obtain suitable
estimators $\widehat U_i$ via cross-validation. By analogy with linear mean
regression, we refer to $U:=m_1'(\bX^\top\btheta_0,\bY)$ as the
\emph{residual}.\footnote{The linear mean model $\E[Y|\bX]=\bX^\top\btheta_0$
and (half) square loss imply $U=\bX^\top\btheta_0-Y$. The name ``residual''
stems from $U$ agreeing with the deviation $Y-\E[Y|\bX]$ from the mean up to a
sign.}

\subsection{Bootstrapping the Penalty Level}\label{sub: boot penalty level}
Suppose for the moment that residuals $U_{i}=m_1'\left(\bX_{i}^{\top}\btheta_{0},\bY_{i}\right)$ are
observable. In this case, we can estimate the $(1-\alpha)$-quantile $q_n(1-\alpha)$ of
$\|\bS_n\|_{\infty} = \| \En[U_i\bX_i] \|_{\infty}$ via the Gaussian multiplier
bootstrap.\footnote{Recall that $\bS_n$ is well-defined a.s.~under Assumption \ref{as: diff and
int}. We omit the qualifier throughout this section.} To this end, let $\{e_i\}_{i=1}^{n}$ be
independent standard normal random variables that are independent of the data
$\{(\bX_i,\bY_i)\}_{i=1}^{n}$. Given that $\E[U\bX]=\mathbf 0_p$ under mild regularity conditions,
the Gaussian multiplier bootstrap estimates $q_n(1-\alpha)$ by
\[
\widetilde{q}_n\left(1-\alpha\right):=\left(1-\alpha\right)\text{-quantile of }\max_{1\leqslant j\leqslant p}\left|\En\left[e_{i}U_{i}X_{i,j}\right]\right|\text{ given }\{(\bX_{i},\bY_i)\}_{i=1}^{n}.
\]
Under certain regularity conditions, $\widetilde q_n(1-\alpha)$ delivers a good
approximation to $q_n(1-\alpha)$, even if the dimension $p$ of the vectors
$\bX_i$ is much larger than the sample size $n$. To see why this is the case,
let $\bZ:=(Z_1,\dotsc,Z_p)^\top$ be a centered random vector in $\R^{p}$ and let
$\{\bZ_i\}_{i=1}^n$ be independent copies of $\bZ$. 
% J: I dropped the i subscript to align with the notation in the rest of the
% paper.
As established in \citet{chernozhukov_gaussian_2013,chernozhukov_central_2017},
the random vectors $\{\bZ_i\}_{i=1}^n$ satisfy the following high-dimensional
versions of the central limit and Gaussian multiplier bootstrap theorems: If for
some constant $b\in (0,\infty)$ and a non-random sequence $\widetilde{B}_n$ in
$[1,\infty)$, possibly growing to infinity, one has
\[
\min_{1\leqslant j\leqslant p}\E[Z_{j}^{2}]\geqslant b,\quad\max_{k\in\{1,2\}}\max_{1\leqslant j\leqslant p}\E\left[|Z_{j}|^{2+k}\right]/\widetilde{B}_n^k\leqslant 1\quad\text{and}\quad\E\Big[\max_{1\leqslant j\leqslant p}Z_{j}^{4}\Big]\leqslant \widetilde{B}_n^4,
\]
% J: These conditions correspond to (M.1), (M.2) and (E.2) w/ q=4 in
% CCK17AnnProb
then there is a constant $C_b\in(0,\infty)$, depending only on $b$, such that
\begin{equation}
\sup_{A\in\mathcal{A}_{p}}\left|\P\left(\frac{1}{\sqrt{n}}\sum_{i=1}^{n}\bZ_i\in A\right)-\P(\cN_n\in A)\right|\leqslant C_{b}\left(\frac{\widetilde{B}_n^4\ln^{7}\left(pn\right)}{n}\right)^{1/6},\label{eq:CCK_HDCLT}
\end{equation}
% J: ^-- from their Prop 2.1
% J: v-- from their Cor 4.2 w/ alpha_n=1/ln(pn).
and, with probability approaching one,
\begin{equation}
\sup_{A\in\mathcal{A}_{p}}\left|\P\left(\left.\frac{1}{\sqrt{n}}\sum_{i=1}^{n}e_{i}\bZ_{i}\in A\right|\left\{\bZ_{i}\right\} _{i=1}^{n}\right)-\P(\cN_n\in A)\right|\leqslant C_{b}\left(\frac{\widetilde{B}_n^4\ln^{7}\left(pn\right)}{n}\right)^{1/6},\label{eq:CCK_HDBootstrap}
\end{equation}
where $\mathcal{A}_{p}$ denotes the collection of all (hyper)rectangles in
$\R^{p}$, and $\cN_n$ is a centered Gaussian random vector in $\R^p$ with
covariance matrix $\E[\bZ\bZ^{\top}]$. Provided
$\widetilde{B}_n^4\ln^{7}(pn)/n\to0$, applying these two results with $\bZ_i =
U_i\bX_i$ for all $i\in[n]$ and noting that sets of the form $A_t =
\{\boldsymbol{u}\in\R^p;\max_{1\leqslant j\leqslant p}|u_j|\leqslant t\}$,
$t\in[0,\infty)$, are indeed rectangles, suggest that the Gaussian multiplier
bootstrap estimator $\widetilde q_n(1-\alpha)$ provides a good approximation to
$q_n(1-\alpha)$.

As we typically do not observe the residuals $U_i =
m_{1}'(\bX_{i}^{\top}\btheta_{0},\bY_{i})$, the method described above is
infeasible. Fortunately, the result (\ref{eq:CCK_HDBootstrap}) continues to hold
upon replacing $\{\bZ_{i}\}_{i=1}^n$ with estimators
$\{\widehat{\bZ}_{i}\}_{i=1}^{n}$, provided these estimators are ``sufficiently
good,'' in the sense to be defined below; see \eqref{eq: residual estimation side condition}. Suppose
therefore that residual estimators $\{\widehat{U}_{i}\}_{i=1}^{n}$ are
available. We then compute
\begin{equation}
\widehat{q}^{\texttt{bm}}\left(1-\alpha\right):=\left(1-\alpha\right)\text{-quantile of }\max_{1\leqslant j\leqslant p}\big|\En\big[e_{i}\widehat{U}_{i}X_{i,j}\big]\big|\text{ given }\{(\bX_{i},\bY_i,\widehat{U}_{i})\}_{i=1}^{n},\label{eq:qhat1minusAlpha}
\end{equation}
and a penalty level follows as
\begin{equation}
\widehat{\lambda}^{\mathtt{bm}}_{\alpha}:=c_{0}\widehat{q}^{\texttt{bm}}\left(1-\alpha\right).\label{eq:BootstrapPenaltyLevel}
\end{equation}
We refer to this method for obtaining a penalty level as the \emph{bootstrap
method} (BM) and to $\widehat{\lambda}^{\mathtt{bm}}_{\alpha}$ itself as the
\emph{bootstrap penalty level}.

To ensure that $\widehat{q}^{\texttt{bm}}\left(1-\alpha\right)$ indeed delivers a good
approximation to $q_n(1-\alpha)$, we invoke the following
assumption.

\begin{assumption}
[\textbf{Residuals}]\label{assu:ResidualBootstrapMethod} 
There are constants $c_{U}\in(0,\infty)$ and $C_U\in[1,\infty)$ and a non-random
sequence $\widetilde{B}_n$ in $[1,\infty)$, such that
\begin{inparaenum}[(1)] 
\item\label{enu:ZijSecondMomentsBndAwayZero}
$c_U^2\leqslant\E[|UX_j|^2]\leqslant C_{U}^2$ for all $j\in[p]$, 
\item\label{enu:Zj2pluskMomentBnd} $\E[|UX_j|^{4}]\leqslant\widetilde{B}_n^2$
for all $j\in[p]$, and
\item\label{enu:maxZijFourthMomentBnd}
$\E\left[\|U\bX\|_{\infty}^4\right]\leqslant \widetilde{B}_n^4.$\end{inparaenum}
\end{assumption}

This assumption imposes a few minor regularity conditions. It requires, in
particular, that all components of the vector $\bX$ are normalized to be on the
same scale. Since this assumption is high level, we verify it under low-level
conditions in Appendix \ref{sec: verification} for the examples in Section
\ref{sec:Examples}.

Our next result provides convergence rates for the $\ell_1$-ME based on the bootstrap method.

\begin{lem}
[\textbf{Convergence Rates: Generic Bootstrap Method}]\label{thm:RatesBM} Let
Assumptions \ref{assu:ParameterSpace}--\ref{assu:Approximate-Sparsity} and
\ref{assu:ResidualBootstrapMethod} hold, let $\delta_n$ be a non-random sequence
in $[0,\infty)$ such that 
\begin{equation}\label{eq: residual estimation side condition}
\P\Big(\En[(\widehat{U}_{i}-U_{i})^{2}] > \delta_{n}^{2}/\ln^{2}\left(pn\right)\Big)\to 0,
\end{equation}
let $\widehat{\Theta}(\widehat{\lambda}^{\mathtt{bm}}_{\alpha})$ be the
solutions to the $\ell_{1}$-penalized M-estimation problem
(\ref{eq:ell1PenalizedMEstimationIntro}) with penalty level
$\lambda=\widehat{\lambda}^{\mathtt{bm}}_{\alpha}$ given in
(\ref{eq:BootstrapPenaltyLevel}) and $\alpha=\alpha_n\in(0,1)$ satisfying $\alpha_n\to0$
and $\ln(1/\alpha_n)\lesssim \ln(pn)$, and suppose that
\begin{equation}\label{eq: simple restriction bm}
   n^{1/r}B_{n}\left(\delta_{n} + s_{q}\eta_{n}^{1-q}\right)\to0,\quad
   \frac{B_{n}^{4}\ln^2(pn)}{n}\to0\quad\text{and}\quad
   \frac{\widetilde{B}_{n}^{4}\ln^{7}\left(pn\right)}{n}\to0.
\end{equation}
Then 
\[
\sup_{\mathclap{\widehat\btheta\in\widehat{\Theta}(\widehat{\lambda}^{\mathtt{bm}}_{\alpha})}}\|\widehat{\btheta}-\btheta_{0}\|_{2} \lesssim_{\P} \sqrt{s_{q}\eta_{n}^{2-q}}\quad\text{and}\quad
\sup_{\mathclap{\widehat\btheta\in\widehat{\Theta}(\widehat{\lambda}^{\mathtt{bm}}_{\alpha})}}\|\widehat{\btheta}-\btheta_{0}\|_{1}\lesssim_{\P} s_{q}\eta_{n}^{1-q}.
\]
\end{lem}
% J: Have added the condition $\frac{B_{n}^{2}\ln(pn)}{\sqrt{n}}\to0$, which is
% invoked in the proof of the lemma.

The idea of using a bootstrap procedure to select the penalty level in
high-dimensional estimation is in itself not new.
\cite{chernozhukov_gaussian_2013} use a Gaussian multiplier bootstrap to tune
the Dantzig selector \citep{candes_dantzig_2007} for the high-dimensional linear
model allowing both non-Gaussian and heteroskedastic errors. Note, however, that
\citet[Theorem 4.2]{chernozhukov_gaussian_2013} presumes access to a preliminary
Dantzig selector, which is used to estimate residuals. The condition (\ref{eq:
residual estimation side condition}) is similarly high-level in the sense that
it does not specify how one performs residual estimation in practice. Our
primary contribution lies in providing a method for coming up with good residual
estimators. We turn to this task in the next subsection, where we also compare the
rates with those appearing in the literature and discuss the side conditions
under which they are derived; see Remarks \ref{rem:ConvergenceRates} and
\ref{rem:DenseCase}.
%Note also that like in the previous section, one can derive an analogue of
%Theorem \ref{thm:RatesBM} for the post-$\ell_1$-ME as well.

\subsection{Cross-Validating Residuals\label{sec:ResidualEstimationViaCrossValidation}}

In this subsection, we explain how residual estimation can be performed via
cross-validation (CV). To describe our CV residual estimator, fix any integer
$K\geqslant2$, and let $\{I_{k}\}_{k=1}^K$ partition the observation indices
$[n]$. Provided $n$ is divisible by $K$, the even
partition
\begin{equation}
I_{k}=\left\{ \left(k-1\right)n/K+1,\dotsc,kn/K\right\} ,\quad k\in\left[K\right] ,\label{eq:CVEvenPartition}
\end{equation}
is natural, but not necessary. For the formal results below, we only require
that each $I_{k}$ specifies a ``substantial'' subsample; see Assumption
\ref{assu:DataPartition} below.

Let $\Lambda_{n}$ denote a finite subset of $(0,\infty)$ composed by candidate
penalty levels. In Assumption \ref{assu:CandidatePenalties} below, we require
$\Lambda_{n}$ to be ``sufficiently rich.'' Our CV procedure then goes as
follows. First, estimate the vector of parameters $\btheta_{0}$ by 
\begin{equation}\label{eq:SubsampleEstimator}
\widehat{\btheta}_{I_{k}^{c}}\left(\lambda\right) \in \widehat{\Theta}_{I_{k}^{c}}\left(\lambda\right) := \argmin_{\btheta\in\Theta}\left\{ \mathbb{E}_{I_k^c}[m(\bX_{i}^{\top}\btheta,\bY_{i})]+\lambda\norm{\btheta}_{1}\right\},
\end{equation}
for each candidate penalty level $\lambda\in\Lambda_{n}$ and holding out each
subsample $k\in[K]$ in turn. Second, determine the penalty level 
\begin{equation}\label{eq:LambdaCVDefn}
\widehat{\lambda}^{\mathtt{cv}}\in\argmin_{\lambda\in\Lambda_{n}}\sum_{k=1}^{K}\sum_{i\in I_{k}}m\big(\bX_{i}^{\top}\widehat{\btheta}_{I_{k}^{c}}\left(\lambda\right),\bY_{i}\big)
\end{equation}
by minimizing the out-of-sample loss over the set of candidate penalties. Third,
estimate residuals $U_{i}=m_{1}'(\bX_{i}^{\top}\btheta_{0},\bY_{i}),i\in[n]$, by
predicting out of each estimation subsample, i.e.,
\begin{equation}\label{eq:ResidualEstimatorCV}
\widehat{U}_{i}^{\mathtt{cv}}:=m_{1}'\big(\bX_{i}^{\top}\widehat{\btheta}_{I_{k}^{c}}(\widehat{\lambda}^{\mathtt{cv}}),\bY_{i}\big),\quad i\in I_{k},\quad k\in\left[K\right].
\end{equation}
Note here that since $I_k$ and $I_k^c$ have no elements in common, the
derivative $m_1'(\bX^{\top}_i\widehat\btheta_{I_k^c}(\lambda),\bY_i)$ exists for
all $i\in I_k$, $k\in[K]$, and $\lambda\in\Lambda_n$ almost surely by Assumption
\ref{as: diff and int}. The residual estimates $\{\widehat
U_i^{\mathtt{cv}}\}_{i=1}^n$ are therefore almost-surely well-defined even
though the function $m(\cdot,\by)$ is not necessarily differentiable.

Combining the bootstrap penalty level
$\widehat{\lambda}^{\mathtt{bm}}_\alpha=c_{0}\widehat{q}^{\texttt{bm}}\left(1-\alpha\right)$
from the previous subsection with the CV residual estimates
$\widehat{U}_i=\widehat{U}_{i}^{\mathtt{cv}}$ from this subsection, we obtain
the {\em bootstrap-after-cross-validation} (BCV) \emph{method} for estimating
the quantile $q_n(1-\alpha)$,
\begin{equation}
\widehat{q}^{\mathtt{bcv}}\left(1-\alpha\right):=\left(1-\alpha\right)\text{-quantile of }\max_{1\leqslant j\leqslant p}\big|\En\big[e_{i}\widehat{U}_{i}^{\mathtt{cv}}X_{i,j}\big]\big|\text{ given }\{(\bX_{i},\bY_i)\}_{i=1}^{n},\label{eq:qhat1minusAlpha CV}
\end{equation}
and the \emph{BCV penalty level} follows as
\begin{equation}
\widehat{\lambda}^{\mathtt{bcv}}_{\alpha}:=c_{0}\widehat{q}^{\mathtt{bcv}}\left(1-\alpha\right).\label{eq:BootstrapPenaltyLevel CV}
\end{equation}
To analyze the $\ell_1$-ME implied by BCV, we invoke the following two assumptions.
\begin{assumption}
[\textbf{Data Partition}]\label{assu:DataPartition} The number of folds $K\in\left\{
2,3,\dotsc\right\}$ is constant. There is a constant
$c_{D}\in\left(0,1\right)$ such that $\min_{1\leqslant k\leqslant
K}\left|I_{k}\right|\geqslant c_{D}n$.
\end{assumption}
\begin{assumption}
[\textbf{Candidate Penalties}]\label{assu:CandidatePenalties} There are
constants $c_{\Lambda}$ and $C_{\Lambda}$ in $(0,\infty)$ and
$a\in\left(0,1\right)$ such that
\[
\Lambda_{n}=\left\{ C_{\Lambda}a^{\ell};a^{\ell}\geqslant c_{\Lambda}/n,\ell\in\left\{ 0,1,2,\dotsc\right\} \right\} .
\]
\end{assumption}

Assumption \ref{assu:DataPartition} means that we rely upon classical $K$-fold cross-validation with
fixed $K$. This assumption does rule out leave-one-out cross-validation, since $K=n$ and
$I_{k}=\left\{ k\right\} $ imply $|I_k|/n\to 0$. Assumption \ref{assu:CandidatePenalties} allows for
a rather large candidate set $\Lambda_{n}$ of penalty values. Note that the largest penalty value,
$C_{\Lambda}$, can be set arbitrarily large and the smallest value, $c_{\Lambda}/n$, converges
rapidly to zero. As a part of the proof of Theorem \ref{cor: convergence rate bootstrap after cv}
below, we show that these properties ensure that the set $\Lambda_{n}$ eventually contains a
``good'' penalty candidate, say $\lambda_{\ast}$, in the sense of leading to a uniform bound on the
excess risk of subsample estimators
$\widehat{\btheta}_{I_{k}^{c}}\left(\lambda_{\ast}\right),k\in[K]$ and, because of that, the CV
residual estimators are reasonable inputs for the bootstrap method, in the sense of satisfying
\eqref{eq: residual estimation side condition}. Combining this finding with Lemma \ref{thm:RatesBM},
we obtain convergence rates for the $\ell_1$-ME implied by BCV.
\begin{thm}
[\textbf{Convergence Rates: BCV Method, Penalized Estimator}]\label{cor: convergence rate bootstrap after cv} Let Assumptions
\ref{assu:ParameterSpace}--\ref{assu:Approximate-Sparsity} and
\ref{assu:ResidualBootstrapMethod}--\ref{assu:CandidatePenalties} hold, let
$\widehat\Theta(\widehat \lambda_{\alpha}^{\mathtt{bcv}})$ be the
solutions to the $\ell_1$-penalized M-estimation problem
\eqref{eq:ell1PenalizedMEstimationIntro} with penalty level $\lambda =
\widehat\lambda_{\alpha}^{\mathtt{bcv}}$ given in
\eqref{eq:BootstrapPenaltyLevel CV} and $\alpha=\alpha_n\in(0,1)$ satisfying $\alpha_n\to0$ and
$\ln(1/\alpha_n)\lesssim \ln(pn)$, and suppose that
\begin{equation}\label{eq: difficult restriction}
n^{1/r}B_ns_q\eta_n^{1-q}\to0,\quad
\frac{B_n^4 s_q(\ln(pn))^{5-q/2}(\ln n)^2}{n^{1-q/2-4/r}}\to0\quad\text{and}\quad
\frac{\widetilde{B}_n^4\ln^7\left(pn\right)}{n}\to0.
\end{equation}
Then
\begin{equation}\label{eq: rates ell1 me}
   \sup_{\mathclap{\widehat\btheta\in\widehat{\Theta}(\widehat{\lambda}^{\mathtt{bcv}}_{\alpha})}}\|\widehat{\btheta}-\btheta_{0}\|_{2}\lesssim_{\P} \sqrt{s_{q}\eta_{n}^{2-q}}\quad\text{and}\quad
   \sup_{\mathclap{\widehat\btheta\in\widehat{\Theta}(\widehat{\lambda}^{\mathtt{bcv}}_{\alpha})}}\|\widehat{\btheta}-\btheta_{0}\|_{1}\lesssim_{\P} s_{q}\eta_{n}^{1-q}.
\end{equation}
\end{thm}
% J: Note that the condition $\frac{B_{n}^{2}\ln(pn)}{\sqrt{n}}\to0$ is implied
% by the middle rate condition and invoked in the proof of the theorem.

\begin{rem}[\textbf{Convergence Rates}]\label{rem:ConvergenceRates} 
The Theorem \ref{cor: convergence rate bootstrap after cv} (and Lemma \ref{thm:RatesBM}) convergence
rates are as one would expect in high-dimensional settings. For example, the $\ell_2$ rate in
\eqref{eq: rates ell1 me} coincides with that obtained for the LASSO in \citet[Corollary
3]{negahban_unified_2012} in the context of linear mean regression with $\btheta_0$ belonging to an
$\ell_q$-ball (Assumption \ref{assu:Approximate-Sparsity}). The rate is known to be minimax optimal
in the context of sparse linear mean regression \citep{ye2010rate,raskutti12}. We expect it to
remain optimal in the general high-dimensional M-estimation framework as well. In the special case
of exact sparsity $(q=0)$, the $\ell_2$ and $\ell_1$ rates in \eqref{eq: rates ell1 me} become
$\sqrt{s_0\ln(pn)/n}$ and $\sqrt{s_0^2\ln(pn)/n}$, respectively.\qed
%Again, these are the rates that appear in the analysis of high-dimensional
%linear mean regression \citep[see e.g.][]{belloni_high_2011}.
\end{rem}

\begin{rem}[{\textbf{Dense Case}}]\label{rem:DenseCase} 
In the dense case $(q=1)$ $\eta_n^{1-q}$ does not vanish, and so the side condition $n^{1/r}B_n s_q
\eta_n^{1-q}\to0$ in \eqref{eq: difficult restriction} fails. However, inspection of the proof
reveals that we actually require $n^{1/r}B_n s_q \eta_n^{1-q} / c_L\to0$ for $c_L\in(0,\infty]$ in
Assumption \ref{assu:LossLocallyLipschitzAndMore}. The latter condition is trivially satisfied when
$c_L=\infty$, which is allowed when the loss $m(t,\by)$ is globally Lipschitz in its first argument
uniformly in $\by\in\mathcal Y$. Hence, provided the loss is globally Lipschitz, even in the dense
case Theorem \ref{cor: convergence rate bootstrap after cv} can produce the $\ell_2$ rate of
convergence $s_1^{1/2}(\ln(pn)/n)^{1/4}$. Examples of globally Lipschitz losses are the logit
likelihood loss in Example \ref{exa:Logit} and the trimmed LAD loss in Example
\ref{exa:PanelCensoredRegressionAndTrimming}. The side condition $n^{1/r}B_n s_q \eta_n^{1-q}\to0$
may also be relaxed in the special case of generalized linear models; see \citet[Section
4.4]{negahban_unified_2012} and \citet[Section 9.5]{wainwright_high-dimensional_2019} for
details.\qed
\end{rem}
% J: Added \citet[Section 9.5]{wainwright_high-dimensional_2019}.
% J: \citet[Section 4.4]{negahban_unified_2012} only sketches a GLM extension, so I added
% \citet[Section 9.5]{wainwright_high-dimensional_2019}.

\begin{rem}[\textbf{Model Sparsity and Regressor Regularity}]
The side condition
\[
   \frac{B_n^4 s_q(\ln(pn))^{5-q/2}(\ln n)^2}{n^{1-q/2-4/r}}\to 0   
\]
in \eqref{eq: difficult restriction} necessitates $r>8/(2-q)$, which reveals an
interplay between the model sparsity as captured by the constant $q$ in
Assumption \ref{assu:Approximate-Sparsity} and the regressor integrability as
captured by the constant $r$ in Assumption
\ref{assu:LossLocallyLipschitzAndMore}.\ref{enu:LossLocallyLipschitz}. In the
special case of exact sparsity $(q=0)$ the regressors are required to have more
than four finite moments.\qed
\end{rem}

We next consider the post-$\ell_1$-penalized M-estimator (post-$\ell_1$-ME). The main motivation for
the post-$\ell_1$-ME is that the $\ell_1$-ME may be severely biased because it shrinks coefficients
toward zero. By refitting the non-zero coefficients of the $\ell_1$-ME without the penalty in the
criterion function in \eqref{eq:ell1PenalizedMEstimationIntro}, the post-$\ell_1$-ME attempts to
reduce this bias. 

To define the post-$\ell_1$-ME, for any
$\overline\btheta\in\Theta$, we define the set
$\widetilde\Theta(\suppn{\overline\btheta})\subseteq\Theta$ by
\begin{equation}\label{eq:post-ell1-ME-thetabar}
   \widetilde\Theta(\suppn{\overline\btheta})
      := \argmin_{\mathclap{\substack{\btheta\in\Theta,\\\supp\btheta\subseteq\suppn{\overline\btheta}}}}\En[m(\bX_i^\top\btheta,\bY_i)].   
\end{equation}
Then for any $\ell_1$-ME, i.e.~a solution $\widehat\btheta\in\widehat\Theta(\lambda)$ to the
optimization problem in \eqref{eq:ell1PenalizedMEstimationIntro}, the corresponding post-$\ell_1$-ME
is defined as any element $\widetilde\btheta$ of the set
$\widetilde\Theta(\suppn{\widehat\btheta})$. Note that there could be multiple
post-$\ell_1$-MEs.\footnote{As in our treatment of $\ell_1$-ME, we implicitly assume that a
post-$\ell_1$-ME exists.} Our treatment below covers the set $\widetilde\Theta(\lambda)$ of
\emph{all} post-$\ell_1$-MEs, which we denote
\begin{equation}\label{eq:AllPostEstimators}
   \widetilde\Theta(\lambda):=
      \bigcup_{\mathclap{\widehat\btheta\in\widehat\Theta(\lambda)}} \widetilde\Theta(\suppn{\widehat\btheta}).
\end{equation}
To analyze the post-$\ell_1$-ME, we will use the following two additional assumptions.

\begin{assumption}[\textbf{Smoothness}]\label{as: post estimator smoothness} The function
$m(\cdot,\by)$ is differentiable for all $\by\in\mathcal Y$ with its derivative being
Lipschitz-continuous, i.e.~$|m_{1}'(t_2,\by) - m_1'(t_1,\by)|\leqslant C_m|t_2 - t_1|$ for all
$(t_1,t_2,\by)\in\mathbb R\times\mathbb R\times\mathcal Y$ and some constant $C_m\in(0,\infty)$.
\end{assumption}

\begin{assumption}[\textbf{Moments}]\label{as: post estimator moments}
There is a constant
      $C_{ev}\in[1,\infty)$ such that $\E[(\bX^\top \bdelta)^4]\leqslant
      C_{ev}\|\bdelta\|_2^4$ for all $\bdelta\in\mathbb R^p$.
\end{assumption}

Assumption \ref{as: post estimator smoothness} strengthens the almost-sure differentiability in
Assumption \ref{as: diff and int}. The stronger smoothness requirement precludes the trimmed LAD
loss function in Example \ref{exa:PanelCensoredRegressionAndTrimming}, but it can be easily verified
under more low-level conditions
for the trimmed LS loss function in the same example as well as for all other examples from Section
\ref{sec:Examples}; see Appendix \ref{sec: verification}. Assumption \ref{as: post
estimator moments} is satisfied if the entries of $\bX$ are independent standard Gaussian, for
example. Related assumptions appear in the existing literature on high-dimensional estimation.
% J: Added standard Gaussian *entries* as Gaussianity alone does not guarantee a
% constant $C_{ev}$

With these added assumptions, we can derive the convergence rates for the post-$\ell_1$-ME.
\begin{thm}[\textbf{Convergence Rates: BCV Method, Post-Penalized Estimator}]
\label{thm: convergence rates post estimator}
Let Assumptions
\ref{assu:ParameterSpace}--\ref{assu:Approximate-Sparsity} and
\ref{assu:ResidualBootstrapMethod}--\ref{as: post estimator moments} hold, let
$\widetilde\Theta(\widehat \lambda_{\alpha}^{\mathtt{bcv}})$ be the set of post-$\ell_1$-penalized M-estimators \eqref{eq:AllPostEstimators} with penalty level $\lambda =
\widehat\lambda_{\alpha}^{\mathtt{bcv}}$ given in
\eqref{eq:BootstrapPenaltyLevel CV} and $\alpha=\alpha_n\in(0,1)$ satisfying $\alpha_n\to0$ and
$\ln(1/\alpha_n)\lesssim \ln(pn)$, and suppose that
\begin{equation}\label{eq: even more difficult restriction}
n^{1/r}B_ns_q\eta_n^{1-q}\ln(p n)\to0,\ \frac{B_n^4 s_q(\ln(pn))^{5-q/2}(\ln n)^2}{n^{1-q/2-4/r}}\to 0\ \text{and }  \;\frac{\widetilde{B}_n^4\ln^7\left(pn\right)}{n}\to 0.
\end{equation}
Then
\begin{equation}\label{eq: rates post ell1 me}
   \sup_{\mathclap{\widetilde\btheta\in\widetilde{\Theta}(\widehat{\lambda}^{\mathtt{bcv}}_{\alpha})}}\|\widetilde{\btheta}-\btheta_{0}\|_{2}\lesssim_{\P} \sqrt{s_{q}\eta_{n}^{2-q}\ln(pn)}\quad\text{and}\quad
   \sup_{\mathclap{\widetilde\btheta\in\widetilde{\Theta}(\widehat{\lambda}^{\mathtt{bcv}}_{\alpha})}}\|\widetilde{\btheta}-\btheta_{0}\|_{1}\lesssim_{\P} s_{q}\eta_{n}^{1-q}\ln(pn).
\end{equation}
\end{thm}

\begin{rem}[{\textbf{Comparison of Rates for $\ell_1$-ME and Post-$\ell_1$-ME}}]
The convergence rates for the post-$\ell_1$-ME we derive here are slightly slower than those we
derived for the $\ell_1$-ME itself in Theorem \ref{cor: convergence rate bootstrap after cv}. The
technical reason for this difference is that the analysis of the post-$\ell_1$-ME requires not only
that the penalty parameter $\lambda$ is not too \textit{large} but also that it is not too
\textit{small}, as we may end up with ``too many'' selected variables; see Appendix
\ref{sec:analysis post estimator} for details. In turn, our BCV method may yield low values of the
penalty level if there is substantial correlation between regressors in the vector $\bX$. A simple
solution to this issue would be to censor the BCV penalty parameter
$\widehat\lambda_{\alpha}^{\mathtt{bcv}}$ from below so that it shrinks to zero no faster than
$\eta_n$, i.e.~to replace $\widehat\lambda_{\alpha}^{\mathtt{bcv}}$ by
$\max\{\widehat\lambda_{\alpha}^{\mathtt{bcv}}, c\eta_n\}$ for some small user-chosen constant
$c\in(0,\infty)$. In this case, the rates in \eqref{eq: rates post ell1 me} would coincide with the
rates in \eqref{eq: rates ell1 me}. However, we prefer to state a slightly slower rate, as in
\eqref{eq: rates post ell1 me}, over the necessity to introduce another tuning parameter $c$ (for
which there is no obvious guiding principle). Moreover, under the additional assumption that the
elements of the regressor vector $\bX$ are not too correlated, it is possible to derive the same
rates as in \eqref{eq: rates ell1 me} for the post-$\ell_1$-ME even without censoring, in which case
the rates for post-$\ell_1$-ME are as good as those for $\ell_1$-ME.\qed
\end{rem}
% J: Changed sqrt{ln(pn)/n} to eta_n which we use throughout.

\section{Debiased Estimation and Inference}\label{sec: inference}

In this section, we describe how to construct $\sqrt n$-consistent and asymptotically normal
estimators of individual components of the vector $\btheta_0$ defined in \eqref{eq:EstimandIntro}.
Since these estimators are asymptotically unbiased and have easily estimable asymptotic variance,
they lead to standard inference procedures for testing hypotheses about and building confidence
intervals for individuals components of $\btheta_0$. Our approach here is based on the concept of
Neyman orthogonal equations and closely follows the literature on double/debiased machine learning
\citep{chernozhukov2018double}. We note that the tools developed in this section rule out the
trimmed LAD loss in Example \ref{exa:PanelCensoredRegressionAndTrimming}, as this function is not
sufficiently smooth to satisfy our Assumption \ref{as: smoothness inference}.

Without loss of generality, suppose that we are interested in the first
component of the vector $\btheta_0$, so that $\btheta_0 =
(\beta_0,\bgamma_0^{\top})^{\top}$, where $\beta_0\in\R$ is the scalar parameter
of interest and $\bgamma_0\in\R^{p-1}$ is a vector of nuisance parameters. To
derive a $\sqrt n$-consistent and asymptotically normal estimator of $\beta_0$,
write $\bX = (D,\bW^{\top})^{\top}$, so that $\bX^{\top}\btheta_0 = D\beta_0 +
\bW^{\top}\bgamma_0$, and let $\bmu_0\in\R^{p-1}$ be a vector that is defined as
a solution to the following system of equations:
\begin{equation}\label{eq: definition of mu0}
\E[m''_{11}(\bX^{\top}\btheta_0,\bY)(D-\bW^{\top}\bmu_0)\bW] = \mathbf{0}_{p-1}.
\end{equation}
Note that this system has a solution $\bmu_0$ and this solution is unique as
long as the matrix $\E[m''_{11}(\bX^{\top}\theta_0,\bY)\bW\bW^{\top}]$ is
non-singular, which is the case under our assumptions.\footnote{See Lemma
\ref{lem:ExistenceAndUniquenessOfMu0} in the appendix for the precise
statement.} With this definition in mind, by inspecting the first-order
conditions associated with \eqref{eq:EstimandIntro}, we have
\begin{equation}\label{eq: estimating equation}
\E[m'_1(D\beta_0 + \bW^{\top}\bgamma_0,\bY)(D-\bW^{\top}\bmu_0)] = 0.
\end{equation}
We obtain an estimator of $\beta_0$ by solving an empirical version of this
equation, where we replace the (high-dimensional) vectors $\bgamma_0$ and
$\bmu_0$ by suitable estimators. Here, $\sqrt n$-consistent and asymptotically
normal estimation of $\beta_0$ is possible due to \eqref{eq:
estimating equation} being Neyman orthogonal with respect to $\bgamma_0$ and
$\bmu_0$, which means that this equation is first-order insensitive with respect
to perturbations in $\bgamma_0$ and $\bmu_0$. Specifically, we have
\begin{align*}
&\left.\frac{\partial}{\partial\bgamma}\E[m'_1(D\beta_0 + \bW^{\top}\bgamma,\bY)(D-\bW^{\top}\bmu_0)]\right|_{\bgamma=\bgamma_0} = \mathbf 0_{p-1}\quad\text{and}\\
&\left.\frac{\partial}{\partial\bmu}\E[m'_1(D\beta_0 + \bW^{\top}\bgamma_0,\bY)(D-\bW^{\top}\bmu)]\right|_{\bmu=\bmu_0} = \mathbf 0_{p-1},
\end{align*}
which follows from \eqref{eq: definition of mu0} and \eqref{eq:EstimandIntro},
respectively. Neyman orthogonality thus facilitates simple inference for the
low-dimensional $\beta_0$ despite possibly complicated estimation of the
high-dimensional $\bgamma_0$ and $\bmu_0$. Formally, we consider the following
procedure:
\begin{alg}[\textbf{Three-Step Debiasing}]\label{alg:ThreeStepDebiasing} 
Given rules for choosing penalty levels
$\lambda_1,\lambda_2\in(0,\infty)$,\footnote{Here, $\lambda_1$ can be chosen via the BCV method, and
$\lambda_2$ can be chosen either via the BCV method or via the SNMD theory for weighted LASSO, as
discussed in \cite{belloni2016post}.} follow the steps below to obtain a debiased estimator
$\widehat\beta$ of $\beta_0$:
\begin{itemize}
   \item[\emph{Step 1 (Initiate):}] 
      \begin{inparaenum}[a.]
         \item Define the (preliminary) estimator $\widetilde\btheta =
            (\widetilde\beta,\widetilde\bgamma^{\top})^{\top}$~of $\btheta_0 =
            (\beta_0,\bgamma_0^{\top})^{\top}$ by
            $$
            \widetilde\btheta \in \argmin_{\btheta\in\Theta}\left\{\En[m(\bX_i^{\top}\btheta,\bY_i)] + \lambda_1\|\btheta\|_1\right\}.
            $$
         \item (Optional): Define $\widetilde{T}_1:=\mathrm{supp}(\widetilde\btheta)$ and recast $\widetilde{\btheta}$ as the refitted estimator of $\btheta_0$:
            $$
            \widetilde\btheta \in \argmin_{\mathclap{\substack{\btheta\in\Theta,\\\mathrm{supp}(\btheta)\subseteq\widetilde{T}_1}}}\En[m(\bX_i^{\top}\btheta,\bY_i)].
            $$
      \end{inparaenum} 
      
   \item[\emph{Step 2 (Orthogonalize):}]
   \begin{inparaenum}[a.] 
      \item Based on $\widetilde{\btheta}$ from Step 1, define an estimator $\widetilde\bmu$ of $\bmu_0$ by
         \begin{equation}\label{eq: estimator mu}
         \widetilde\bmu \in \argmin_{\bmu\in\R^{p-1}}\left\{\En\big[ m''_{11}(\bX_i^{\top}\widetilde\btheta,\bY_i)(D_i - \bW_i^{\top}\bmu)^2\big] + \lambda_2\|\bmu\|_1\right\}.
         \end{equation}
      \item (Optional): Define $\widetilde{T}_2:=\mathrm{supp}(\widetilde\bmu)$ and recast $\widetilde{\bmu}$ as the refitted estimator of $\bmu_0$:
         \begin{equation}\label{eq: estimator mu post}
         \widetilde\bmu \in \argmin_{\mathclap{\substack{\bmu\in\R^{p-1},\\\mathrm{supp}(\bmu)\subseteq\widetilde{T}_2}}}\En\big[ m''_{11}(\bX_i^{\top}\widetilde\btheta,\bY_i)(D_i - \bW_i^{\top}\bmu)^2\big].
         \end{equation}
   \end{inparaenum} 
   \item[\emph{Step 3 (Update):}] Define the (debiased) estimator $\widehat\beta$ of $\beta_0$ as the
   one-step update of $\widetilde\beta$:
      \begin{equation}\label{eq: debiased estimator betahat}
      \widehat\beta := \widetilde\beta - \frac{\En\big[m'_1(\bX_i^{\top}\widetilde\btheta, \bY_i)(D_i - \bW_i^{\top}\widetilde\bmu)\big]}{\En\big[m''_{11}(\bX_i^{\top}\widetilde\btheta, \bY_i)(D_i - \bW_i^{\top}\widetilde\bmu)D_i\big]}.\footnote{In Examples \ref{exa:Expectile} and \ref{exa:PanelCensoredRegressionAndTrimming}, the second derivative $m_{11}''(\bX_i^{\top}\widetilde\btheta,\bY_i)$ may not exist for some observations $i\in[n]$, in which case the estimator $\widehat\beta$ may not be well-defined. To make it well-defined, we replace $m_{11}''(\bX_i^{\top}\widetilde\btheta,\bY_i)$ in \eqref{eq: estimator mu}, \eqref{eq: estimator mu post} and \eqref{eq: debiased estimator betahat} for such observations by zero.}
      \end{equation}
\end{itemize}
\end{alg}

Note that (even without refitting) this procedure gives two estimators
of $\beta_0$: $\widetilde\beta$ on the first step and $\widehat\beta$ on the
third step. As it turns out, the estimator $\widehat\beta$ is better, in the
sense that it can be established as both asymptotically unbiased, $\sqrt n$-consistent
and asymptotically normal. To derive these properties, we impose the following
assumptions.
\begin{assumption}[\textbf{Identifiability}]\label{as: identifiability inference} There exists a constant $c_I\in(0,\infty)$ such that we have
$\E[|m_1'(\bX^{\top}\btheta_0,\bY)(D-\bW^{\top}\bmu_0)|^2]\geqslant c_I$. 
\end{assumption}
\begin{assumption}[\textbf{Integrability}]\label{as: integrability inference}
There are constants $C_M\in(0,\infty)$ and $\widetilde r\in(4,\infty)$ such that
$\max_{1\leqslant j\leqslant p}(\E[|X_{j}|^{\widetilde r}])^{1/\widetilde
r}\leqslant C_M$, $(\E[|D - \bW^{\top}\bmu_0|^{\widetilde r}])^{1/\widetilde
r}\leqslant C_M$, and $(\E[|m_1'(\bX^{\top}\btheta_0,\bY)|^{\widetilde
r}])^{1/\widetilde r}\leqslant C_M$.
\end{assumption}
Assumption \ref{as: identifiability inference} essentially means that there is
non-trivial variation in the variable of interest $D$ after partialling out the
controls $\bW$. In the familiar case of the linear mean model with square loss,
non-trivial variation follows from the usual rank condition for identification
of $\btheta_0$; see Appendix \ref{sec:VerificationLinearModelSquareLoss} for
details.\footnote{More generally, Assumption \ref{as: identifiability inference}
is implied by the eigenvalues of the matrix $\E[U^2\bX \bX^{\top}]$ being bounded
away from zero, which is a non-degeneracy condition.}
Assumption \ref{as: integrability inference} imposes minor regularity
conditions requiring a certain amount of integrability of the random variables
in the model and transformations thereof.

For inference purposes, we also invoke a stronger smoothness condition.
\begin{assumption}[\textbf{Smoothness}]\label{as: smoothness inference} There
are constants $C_m\in(0,\infty)$, $J\in\mathbb N$, and a possibly $\by$-dependent
partition $-\infty=t_{\by,0} < t_{\by,1} \leqslant \dots \leqslant t_{\by,J-1} <
t_{\by,J} = \infty$ of $\mathbb R$ such that for all $\by\in\mathcal Y$, the
function $m(\cdot,\by)$ is continuously differentiable on $\R$ and three-times
differentiable on each $(t_{\by,j-1},t_{\by,j})$, $j\in[J]$, with second and
third derivatives satisfying $|m''_{11}(t,\by)|\leqslant C_m$ and
$|m'''_{111}(t,\by)|\leqslant C_m$. In addition,
$m_{11}''(\bX^{\top}\btheta_0,\bY)$ exists almost surely.
\end{assumption}

This assumption strengthens Assumption \ref{as: post estimator smoothness} from Section
\ref{sec:Bootstrapping-the-Penalty} (which is why we reuse the symbol $C_m$ for the constant). Note
that Assumption \ref{as: smoothness inference} does not hold for the trimmed LAD loss function in
Example \ref{exa:PanelCensoredRegressionAndTrimming}, which means that our inference approach does not apply for this loss function. In addition, Assumption \ref{as: smoothness inference} does not hold for the trimmed LS loss function in the same example whenever $\btheta_0 = \mathbf 0_{p}$. Although we believe it should be possible to perform inference in these cases using methods from \cite{BCK17} developed for the case of a high-dimensional linear quantile regression model, we leave this line of work for the future. In
Appendix \ref{sec: verification}, we verify Assumption \ref{as: smoothness inference} for all other
examples from Section \ref{sec:Examples} including Example
\ref{exa:PanelCensoredRegressionAndTrimming} with the trimmed LS loss function whenever $\btheta_0 \neq \mathbf 0_p$.

The next assumption controls the impact of points of non-smoothness in the loss, if any.
\begin{assumption}[\textbf{Density}]\label{as: conditional density}
Provided $J\geqslant 2$, there is a constant $C_f\in(0,\infty)$ and a non-random
sequence $\overline\Delta_n$ in $(0,\infty)$ such that $\P(\bX^{\top}\btheta_0 -
\overline\Delta_n \leqslant t_{\bY,j}\leqslant \bX^{\top}\btheta_0 +
\overline\Delta_n) \leqslant C_f\overline\Delta_n$ for all
$j\in[J-1]$.
\end{assumption}

Assumption \ref{as: conditional density} holds trivially in Examples \ref{exa:Logit} and
\ref{exa:LogConcaveOrderedResponse}, as for those examples Assumption \ref{as: smoothness inference}
holds with $J=1$. When combined with the requirement that $\overline\Delta_n\to0$ (sufficiently
fast), Assumption \ref{as: conditional density} does impose quite a bit of structure in Examples
\ref{exa:Expectile} and \ref{exa:PanelCensoredRegressionAndTrimming}, however. In Example
\ref{exa:Expectile}, this assumption is satisfied if the conditional distribution of $Y$ given $\bX$
is absolutely continuous with bounded PDF. In Example \ref{exa:PanelCensoredRegressionAndTrimming}
with the trimmed LS loss function, it is satisfied if the conditional distribution of $Y_1$ given
$(\bX,Y_1>0)$, the conditional distribution of $Y_2$ given $(\bX,Y_2>0)$, and the (unconditional)
distribution of $\bX^{\top}\btheta_0$ are all absolutely continuous with bounded (uniformly over
$n$) PDFs; see Appendix \ref{sec: verification} for details.\footnote{Note here that the requirement
that the distribution of $\bX^{\top}\btheta_0$ is absolutely continuous with bounded PDF implies
that $\btheta_0$ is sufficiently well separated from $\mathbf 0_p$.}

\begin{assumption}[\textbf{Convergence Rates}]\label{as: convergence rates inference} 
There is a non-random sequence $a_n$ in $(0,\infty)$ such that
$a_n\to 0$ and $\|\widetilde\btheta - \btheta_0\|_1 + \|\widetilde\bmu -
\bmu_0\|_1 \lesssim_{\P} a_n$.
\end{assumption}
Assumption \ref{as: convergence rates inference} is a high-level assumption placed on the estimators
from Steps 1 and 2 in Algorithm \ref{alg:ThreeStepDebiasing}. When
$\widetilde\btheta$ is $\ell_1$-ME or post-$\ell_1$-ME based on BCV, we can lean on the
bounds from Theorem \ref{thm: convergence rates post estimator}. For the estimation error of
$\widetilde\bmu$, however, we cannot use Theorem \ref{thm: convergence rates post estimator}, as
this estimator does not fit into our framework because of the presence of estimated weights in the
optimization problems \eqref{eq: estimator mu} and \eqref{eq: estimator mu post}. However, these
optimization problems correspond to LASSO and post-LASSO with estimated weights, and such estimators
are well studied in the literature. See e.g.~\cite{belloni2016post}, where one can find the
appropriate rates for the estimation error of $\widetilde\bmu$ in terms of the sparsity of $\bmu_0$.

We next present a theorem on the asymptotic distribution of the debiased estimator $\widehat\beta$.
\begin{thm}[\textbf{Asymptotic Distribution}]\label{thm: asymptotic distribution} Let Assumptions
\ref{assu:ParameterSpace}--\ref{assu:Approximate-Sparsity} and \ref{as: identifiability
inference}--\ref{as: convergence rates inference} hold, and suppose that $\sqrt n a_n^2\to0$,
$a_n(n^{1/r}B_n + \sqrt{\ln(pn)})\to 0$, and $B_n^2\ln(pn) = o(n^{1-4/(r\wedge\widetilde r)})$.
If $J\geqslant 2$, suppose also that $(1+\sqrt n B_na_n)(\overline\Delta_n^{1/2} +
(B_na_n/\overline\Delta_n)^{r/2})\to0$. Then
\begin{equation}\label{eq: asy normality}
\frac{\sqrt n(\widehat\beta - \beta_0)}{\sigma_{0}}\overset{D}\to \mathrm{N}(0,1),\quad\text{where}\quad \sigma_{0}^2:=\frac{\E\big[(m'_1(\bX^{\top}\btheta_0,\bY)(D-\bW^{\top}\bmu_0))^2\big]}{\big(\E[m''_{11}(\bX^{\top}\btheta_0,\bY)(D-\bW^{\top}\bmu_0)D]\big)^2}.
\end{equation}
\end{thm}

This theorem shows that the estimator $\widehat\beta$ is asymptotically unbiased and normal under
plausible regularity conditions.  The ``asymptotic'' variance $\sigma_0^2$ appearing in this theorem, which
depends on $n$ in general via the distribution $P$ of $(\bX,\bY)$, is easily estimable. For example,
one can use a plug-in estimator
\begin{equation}\label{eq: sigma hat 1}
\widehat{\sigma}^2:=\frac{\En\big[(m'_1(D_i\widetilde\beta + \bW_i^{\top}\widetilde\bgamma, \bY_i)(D_i - \bW_i^{\top}\widetilde\bmu))^2\big]}{\big(\En\big[m''_{11}(D_i\widetilde\beta + \bW_i^{\top}\widetilde\bgamma, \bY_i)(D_i - \bW_i^{\top}\widetilde\bmu)D_i\big]\big)^2}
\end{equation}
with the estimators $\widetilde\beta,\widetilde\bgamma$ and $\widetilde\bmu$ stemming from Steps 1
and 2 of Algorithm \ref{alg:ThreeStepDebiasing} (possibly with refitting) using BCV as the
penalty rule in both steps. Alternatively, one can incorporate Step 3 of the same algorithm and use
\begin{equation}\label{eq: sigma hat 2}
\widehat{\sigma}^2:=\frac{\En\big[(m'_1(D_i\widehat\beta + \bW_i^{\top}\widetilde\bgamma, \bY_i)(D_i - \bW_i^{\top}\widetilde\bmu))^2\big]}{\big(\En\big[m''_{11}(D_i\widehat\beta + \bW_i^{\top}\widetilde\bgamma, \bY_i)(D_i - \bW_i^{\top}\widetilde\bmu)D_i\big]\big)^2}.\footnote{For both variance estimators \eqref{eq: sigma hat 1} and \eqref{eq: sigma hat 2}, in case the second derivative $m_{11}''(\bX_i^{\top}\btheta,\bY_i)$ fails to exist at $\btheta=(\widetilde\beta,\widetilde\bgamma^\top)^\top$ or $\btheta=(\widehat\beta,\widetilde\bgamma^\top)^\top$ and some $i\in[n]$, we replace those second derivatives by zero.}
\end{equation}
It is rather standard to derive consistency of these estimators. Also, because of asymptotic
normality of $\widehat\beta$, it is then straightforward to perform inference on $\beta_0$. For
example, an asymptotically valid $(1-\alpha)\times100\%$ confidence interval for $\beta_0$ takes the
standard form $[\widehat\beta - z_{\alpha/2}\widehat\sigma/\sqrt n,\widehat\beta +
z_{\alpha/2}\widehat\sigma/\sqrt n]$, where $\widehat\sigma$ is given either by \eqref{eq: sigma hat
1} or by \eqref{eq: sigma hat 2}, and $z_{\alpha/2}$ is the $(1-\alpha/2)$-quantile of the standard
normal distribution.

\begin{rem}[{\textbf{Relation to Literature}}]
As discussed in the beginning of this section, our approach to inference in this
section closely follows the developments in the literature. In particular, our
estimator $\widehat\beta$ is essentially the same as that proposed in
\cite{van_de_geer_high-dimensional_2014}, the only difference being that we
allow refitting in the optional parts of Steps 1 and 2 of Algorithm
\ref{alg:ThreeStepDebiasing}. As we will see in the next section, this refitting
can substantially improve inference, in terms of size control, even in
approximately sparse models. More importantly, however, is that Theorem
\ref{thm: asymptotic distribution} is different from the corresponding theorem
in \cite{van_de_geer_high-dimensional_2014}, as we tune the assumptions of our
theorem toward the examples from Section \ref{sec:Examples}. Specifically, we do
not require the function $m(\cdot,\by)$ to be strictly convex (see \textit{ibid.},
p.~1179) or for it to be everywhere twice differentiable with a
Lipschitz-continuous second derivative (see \textit{ibid.}, Condition (C1)). No
matter the choice of ``trimmer'' $\Xi$, the trimmed loss \eqref{eq:TrimmedLoss} in
Example \ref{exa:PanelCensoredRegressionAndTrimming} has linear pieces and is
therefore not strictly convex. Moreover, neither the asymmetric LS (Example
\ref{exa:Expectile} with $\tau\neq1/2$) nor trimmed LS loss functions have
Lipschitz-continuous second derivatives. We also provide a detailed verification
of our assumptions for all Section \ref{sec:Examples} examples in Appendix
\ref{sec: verification}. Related approaches to debiasing of high-dimensional
estimators were also proposed in \cite{javanmard_confidence_2013} in a
likelihood framework and in \cite{belloni2016post} for generalized linear
models.\qed
\end{rem}
% J: Here we respond to R1Q3.
% J: Made the connections and references to the literature more explicit.
% J: Pointed out how the vdG et al. assumptions fail in our examples.

\section{Simulations}\label{sec:Simulations}

In this section we investigate the finite-sample behavior of our estimators
based on the bootstrap-after-cross-validation (BCV) method for obtaining penalty
levels proposed in Section \ref{sec:Bootstrapping-the-Penalty}. We also compare
our estimation and inference methods to ($K$-fold) cross-validation, which lacks general
theoretical justification but is a popular method in practice.

\subsection{Simulation Design}

We consider a master data-generating process (DGP) of the form
\[
Y_{i}=\mathbf{1}\left(\beta_{0}D_{i}+\sum_{j=1}^{p-1}\gamma_{0j}W_{i,j}+\varepsilon_{i}>0\right),\quad\varepsilon_{i}\mid D_{i},\bW_{i}\sim\mathrm{N}\left(0,1\right),\quad i\in\left[n\right],
\]
thus implying a binary probit model as in Example \ref{exa:Logit}. The
regressors $\bX=(D,\bW^{\top})^{\top}$ are distributed jointly centered Gaussian
$\bX\sim\mathrm{N}(\boldsymbol{0},\bSigma(\rho))$ with covariances (and
correlations)
\[
\Sigma_{j,k}(\rho):=\mathrm{cov}\left(X_j,X_k\right)=\mathrm{E}\left[X_jX_k\right]=\rho^{\left|j-k\right|},\quad\left(j,k\right)\in\left[p\right]^2.
\]
Hence, the regressor covariance matrix $\bSigma\left(\rho\right)$ takes a
Toeplitz form with the overall correlation level being dictated by $\rho$. We
allow $\rho\in\left\{ 0,.2,\dotsc,.8\right\} $, thus running the gamut of
(positive) correlation levels. Since $\varepsilon_{i}$'s are standard normal,
the ``noise'' $\mathrm{var}\left(\varepsilon\right)$ in our DGP is fixed at one.
Hence, the signal-to-noise ratio (SNR) equals the ``signal,''
\[
\mathrm{SNR}:=\frac{\mathrm{var}(\bX^{\top}\theta_{0})}{\mathrm{var}(\varepsilon)}
   =\btheta_{0}^{\top}\bSigma\left(\rho\right)\btheta_{0}.
\]
which depends on both the correlation level and coefficient pattern. We consider
the patterns:
\begin{align*}
\mathrm{Pattern\,1}:\quad & \btheta_{0}=\left(1,1,0,\dotsc,0\right)^{\top},\tag{Exactly Sparse}\\
\mathrm{Pattern\,2}:\quad & \theta_{0,j}=(1/\sqrt{2})^{j-1}\mathbf{1}\left(j\leqslant5\right),\quad j\in\left[p\right],\tag{Intermediate}\\
\mathrm{Pattern\,3}:\quad & \theta_{0,j}=(1/\sqrt{2})^{j-1},\quad j\in\left[p\right].\tag{Approximately Sparse}
\end{align*}
The \emph{exactly sparse} pattern has only non-zero coefficients for the first
couple of regressors $(s_{0}=2)$, and both non-zeros are clearly separated from
zero, thus allowing perfect variable selection. The implied signals (hence SNRs)
are
\begin{equation}
\mathrm{var}\left(\bX^{\top}\btheta_{0}\right)=2\left(1+\rho\right)\in\left\{ 2,2.4,2.8,3.2,3.8\right\} .\label{eq:SimulationsSignal}
\end{equation}
Compared to existing simulation studies for high-dimensional binary response
models, the SNRs considered here are relatively low.\footnote{For example, the
binary logit designs in \citet[Section 5.2]{friedman_regularization_2010} and
\citet[Section 5]{ng_feature_2004} imply SNRs of three and over 30,
respectively.} 

Note that the SNR is increasing with the regressor correlation, such that
sampling from a high-$\rho$ DGP tends to produce an easier estimation problem
compared to sampling from a low-$\rho$ DGP, keeping all other things equal. When
reporting results below for $\rho=0$ (our baseline), we are thus considering the
\emph{worst} correlation scenario.\footnote{The same comments apply to the other
coefficient patterns albeit with the more complicated signal
\[
\mathrm{var}\left(\bX^{\top}\btheta_{0}\right)=\sum_{j=1}^{p}\theta_{0,j}^{2}
   +2\sum_{j=1}^{p-1}\sum_{k=j+1}^{p}\theta_{0,j}\theta_{0,k}\rho^{k-j}.
\]
} 

In contrast to the exactly sparse pattern, the \emph{approximately sparse} 
pattern involves all non-zeros $(s_{0}=p)$, which are not bounded away from zero,
such that variable selection mistakes are bound to happen. To see that this
pattern is in fact approximately sparse, note that for every $q\in(0,1]$ one has
$
\sum_{j=1}^{p}\left|\theta_{0,j}\right|^{q}\leqslant\sum_{j=1}^{\infty}\left|\theta_{0,j}\right|^{q}=1/(1-2^{-q/2}).
$
Hence, for the purpose of Assumption \ref{assu:Approximate-Sparsity}, we can
choose $q\in(0,1]$ freely and pair it with $s_q=1/(1-2^{-q/2})$.
The base $1/\sqrt{2}$ of the approximately sparse pattern was here chosen to
(approximately) equate the signals arising from the approximately and exactly
sparse coefficient patterns in the baseline case of uncorrelated regressors
$(\rho=0)$, which amounts to $\|\btheta_{0}\|_{2}^{2}$. The relevance of a
regressor, as measured by its coefficient, is rapidly decaying in the regressor
index $j$, such that the vast majority of the signal is captured by a small
fraction of the regressors. For example, in the baseline case of uncorrelated
regressors $(\rho=0)$, the first 10 regressors account for 99.9 percent of the
signal (two). 

In between these two extremes lies the \emph{intermediate} pattern. This pattern
was created by cutting off the approximately sparse coefficient sequence at the
smallest regressor index $j^{\ast}$, such that regressors $[j^{\ast}]$ account
for at least 95 percent of the baseline signal. (Here: $j^{\ast}=5$.) For this
pattern, perfect variable selection is possible but unlikely.

We consider sample sizes $n\in\left\{ 100,200,400\right\} $ and limit attention
to the high-dimensional regime by fixing $p=n$ throughout.

\begin{rem}[\textbf{Sparsity of Debiasing Coefficient Vector}]
One may wonder whether the above patterns for the structural coefficients
$\btheta_{0}=(\beta_{0},\bgamma_{0}^{\top})^{\top}$ agree or conflict with sparsity of the
non-primitive debiasing coefficient vector $\bmu_{0}$ in any sense of the word. While a thorough
investigation of this question is beyond the scope of this paper, we can provide some insights for
our concrete DGPs. Specifically, in Appendix \ref{sec:Sparsity-of-Debiasing-Vector}, we show
that---in our collection of DGPs---the number of non-zeros in $\bmu_{0}$ is bounded by the number of
non-zeros in $\bgamma_{0}$, $\|\bmu_{0}\|_{0}\leqslant\|\bgamma_{0}\|_{0}$, i.e.~the number of
relevant controls. Hence, when $\bgamma_0$ is (exactly) sparse, so is $\bmu_0$. Moreover, we show
via simulation that when $\bgamma_{0}$ is only approximately sparse, the sorted absolute values of
the elements of $\bmu_{0}$ are rapidly decaying and approaching zero, cf.~Figure
\ref{fig:SortedAbsoluteDebiasingVector}. Such a decay is in line with the notion of approximate
sparsity.\qed
\end{rem}

\subsection{Estimation and Implementation}

We consider the following four estimators arising from $\ell_1$-ME
\eqref{eq:ell1PenalizedMEstimationIntro} and post-$\ell_1$-ME \eqref{eq:AllPostEstimators} based on
either the CV or BCV penalty levels in \eqref{eq:LambdaCVDefn} and \eqref{eq:BootstrapPenaltyLevel
CV}, respectively:
\begin{itemize}
\item $\ell_1$-ME based on bootstrapping after cross-validation (``BCV''),
\item post-$\ell_1$-ME based on bootstrapping after cross-validation (``post-BCV''),
\item $\ell_1$-ME based on cross-validation (``CV''), and
\item post-$\ell_1$-ME based on cross-validation (``post-CV'').
\end{itemize}
When discussing normal approximations based on three-step debiasing (Algorithm
\ref{alg:ThreeStepDebiasing}), we use the same method in both Steps 1 and 2. For
example, the ``post-BCV'' inference procedure refers to post-BCV in the first
step, followed by post-BCV in the second step (i.e.~both optional steps are
taken).

Our BCV and post-BCV estimation methods require us to specify a score markup
$c_{0}\in(1,\infty)$ and probability tolerance rule $\alpha=\alpha_{n}$. We here
follow the recommendation in \citet[p.~2380]{belloni_sparse_2012} for the LASSO
and post-LASSO and take $c_{0}=1.1$ and $\alpha_{n}=.1/\ln(p\lor n)$
as our benchmark. The latter function, slowly decaying in $p\lor n$, leads to
$\alpha\approx2.2\%,1.9\%$ and $1.7\%$ for $n=100,200$ and $400$, respectively.
We also look at the alternative score markups $\{1,1.05\} $, the
first one being excluded by the theory in Section
\ref{sec:Bootstrapping-the-Penalty}. The alternative probability tolerance rule
$\alpha_{n}=10/n$ leads to qualitatively identical conclusions, cf.~Appendix
\ref{sec:Additional-Simulation-Results}. We stress that the benchmark choices of
$c_0$ and $\alpha_n$ are only rules of thumb that tend to perform well in the
simulation designs considered here. Other choices of score markups and
probability tolerance rules may have better properties in other DGPs.
% J: Here we respond to R1Q1(a).

We have previously treated all coefficients in the same manner, in that they are
all penalized and with equal weight. However, in an empirical application one is
typically confident that an intercept belongs in the model. For this reason, the
(intercept) coefficient on the constant regressor is usually not penalized
during estimation. Moreover, to justify equal penalty weighting, prior to
estimation one typically brings the (non-constant) regressors onto the same scale
by dividing them by their respective sample standard deviations. To align our
simulation study with these empirical practices, we include unpenalized
intercepts in both Steps 1 and 2 of Algorithm \ref{alg:ThreeStepDebiasing} and
rescale regressors. (The intercepts are still suppressed in our notation.) That
is, we treat neither the zero (true) intercept nor equivariant regressors as
information known to the researcher. In these aspects our simulations are
therefore empirically calibrated.

For each sample size $n(=p)$, each correlation level $\rho$, and each
coefficient pattern, we use 2,000 independent simulation draws and 1,000
independent standard Gaussian bootstrap draws per simulation draw and per
estimation step (when relevant). We assign observations to $K$ approximately
equally large folds $\{ I_{k}\} _{k=1}^{K}$ for both the first and
second steps, shuffling the assignments in between. We keep $K=3$ throughout
and use the same folds for all estimators to facilitate
comparison.\footnote{Three folds is the minimum value allowed by
\texttt{cv.glmnet}. Preliminary and unreported simulation experiments suggest
that using 5-fold (instead of 3-fold) CV only affects the average errors
reported below at the third decimal. Similarly, using 2,000 Gaussian bootstraps
(instead of 1,000) appears to only affect these averages at the fourth decimal.} 

All simulations are carried out in \texttt{R} with cross-validation done using
\texttt{glmnet::cv.glmnet}, and refitting done using
\texttt{stats::glm}.\footnote{We use \texttt{R} version 4.2.2 and \texttt{glmnet
}version 4.1-6.} When constructing the candidate penalty set $\Lambda_{n}$, we
use the \texttt{glmnet} default settings, which creates a log-scale equi-distant
grid of a 100 candidate penalties from the threshold penalty level to
essentially zero. The threshold is the (approximately) smallest level of
penalization needed to set every coefficient to zero, thus resulting in a
trivial (null) model.\footnote{Log-scale equi-distance from a ``large''
candidate value to essentially zero fits well with the form of $\Lambda_{n}$ in
our Assumption \ref{assu:CandidatePenalties} (interpreting
$c_{\Lambda}/n\approx0$). However, the threshold penalty is a function of the
data and, thus, random. The resulting candidate penalty set used in our
simulations is therefore also random, and thus, strictly speaking, not allowed
by Assumption \ref{assu:CandidatePenalties}. Moreover, the number of candidate
values $|\Lambda_{n}|$ is here held fixed. We believe these deviations from our
theory to be only a minor issue.} 

Note that \texttt{cv.glmnet} calculates and stores the out-of-fold linear forms
$\bX_{i}^{\top}\widehat{\btheta}_{I_{k}^{c}}(\lambda)$ (with an intercept, if
relevant) for each $i\in I_{k}$, fold $k$ and candidate penalty $\lambda$, and
allows for extraction of estimates for penalty levels off the regularization
grid via linear interpolation. Hence, compared to CV, there is essentially zero
added computational burden associated with using BCV.

\subsection{Results}

\subsubsection{Non-Existence and Treatment of Missing Values}

While the $\ell_{1}$-penalized probit estimators BCV and CV always exist (cf.~Section
\ref{sec:ExistenceSparsityAndUniqueness}), refitting after variable selection based on either of
these estimator can fail. For example, in our binary response setting, without any penalty one may
encounter complete separation of the outcomes based on the fitted probabilities, in which case the
refitted estimates fail to exist (as real numbers).\footnote{Strictly speaking, the
$\ell_{1}$-penalized probit estimator fails to exist when all outcomes are of the same label and
some coefficient (here: the intercept) goes unpenalized. In none of our simulated datasets did we
encounter all zeros or all ones. See Appendix \ref{sec:ExistenceSparsityAndUniqueness} and, in
particular, Remark \ref{rem:UnpenalizedCoefficients} for more discussion.}

Across all simulation designs and draws, refitting after CV fails in nearly 15\%
of all cases. The fraction of such non-existent post-CV cases varies with the DGP
and can be higher than 47\%. Since post-CV estimation and debiasing procedures
do not appear well-defined in our context, we drop them from further
consideration. 

In contrast, refitting after BCV fails to converge in only about 0.01\% of all
cases.\footnote{Specifically, convergence fails in 74 out of a total of 540,000
cases, where the total equals the product of the numbers of simulation draws
(2,000), correlation levels (5), sample/problem sizes (3), coefficient patterns
(3), score markups (3) and probability tolerance rules (2).} Since we find this
fraction miniscule, when reporting results below we choose to simply omit the
problematic cases from the relevant post-BCV statistics; see also the figure
notes.

\subsubsection{Estimation Error}

Figure \ref{fig:MeanEll2ErrorEstimatorComparisonBCCHTuning} shows the mean
$\ell_{2}$ estimation error (for the slope coefficients, averaging over the
2,000 simulation draws) arising from BCV, post-BCV and CV, respectively,
using benchmark tuning.
\begin{figure}
\caption{Mean $\ell_{2}$ Estimation Error by Method with $c_{0}=1.1$ and
$\alpha_{n}=.1/\ln\left(p\lor
n\right)$\label{fig:MeanEll2ErrorEstimatorComparisonBCCHTuning}}

\centering{}\includegraphics[viewport=5bp 5bp 463bp
416bp,clip,width=0.7\textwidth]{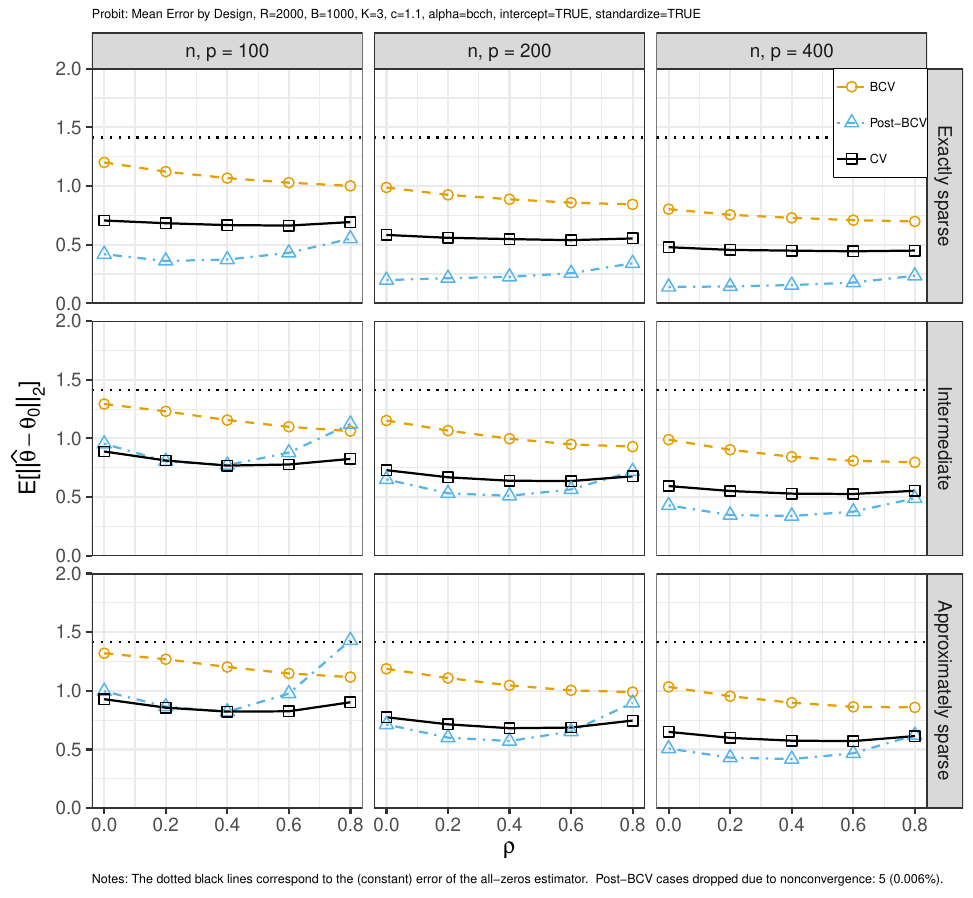}
\end{figure}
Mean $\ell_{2}$ estimation error is here depicted as a function of the
sample/problem size (the tile column), coefficient pattern (the tile row), and
correlation level (the horizontal axis in each tile). The horizontal line at
$\|\btheta_{0}\|_{2}$ facilitates comparison with the trivial ``estimator''
$\widehat{\btheta}\equiv\boldsymbol{0}_p$.

One observation evident from this figure is that the error curves of BCV,
post-BCV and CV can cross. Hence, these estimators cannot be ranked in terms of
mean $\ell_{2}$ estimation error, in general. However, for the largest
sample/problem size considered, post-BCV outperforms CV for small to medium
levels of correlation, and CV outperforms BCV.\footnote{We reach qualitatively
identical conclusions from inspecting the \emph{median }$\ell_{2}$ estimation
errors. Hence, these findings are not limited to one particular feature of the
error distributions. We omit the corresponding median plots due to their
similarity with the mean error plots. Figures are available upon request.}

Increasing the sample size (moving from left to right) leads to a downward shift
in mean estimation error for all three estimators, which is indicative of
convergence. Convergence appears to take place no matter the coefficient pattern
or regressor correlation level even though the number of candidate regressors
matches the sample size. Increasing the number of non-zeros in $\btheta_{0}$
(moving from top to bottom) leads to an upward shift in mean estimation error.
This finding is consistent with convergence slowing down with $q$ and $s_q$
as predicted by Theorems \ref{cor: convergence rate bootstrap after cv} and \ref{thm: convergence rates post estimator}.

We next investigate the impact of the choice of score markup $c_{0}$. Figures
\ref{fig:MeanEll2ErrorBCVVaryingc0BCCHrule} and
\ref{fig:MeanEll2ErrorPostBCVVaryingc0BCCHrule} plot the mean $\ell_{2}$
estimation error for $c_{0}=1,1.05$ and (the previously used) $1.1$, each
sample/problem size and coefficient pattern, and for the BCV and post-BCV
estimators, respectively.
\begin{figure}[p]
\caption{Mean $\ell_{2}$ BCV Estimation Error by Score Markup with
$\alpha_{n}=.1/\ln\left(p\lor
n\right)$\label{fig:MeanEll2ErrorBCVVaryingc0BCCHrule}}

\centering{}\includegraphics[viewport=5bp 5bp 463bp
416bp,clip,width=0.7\textwidth]{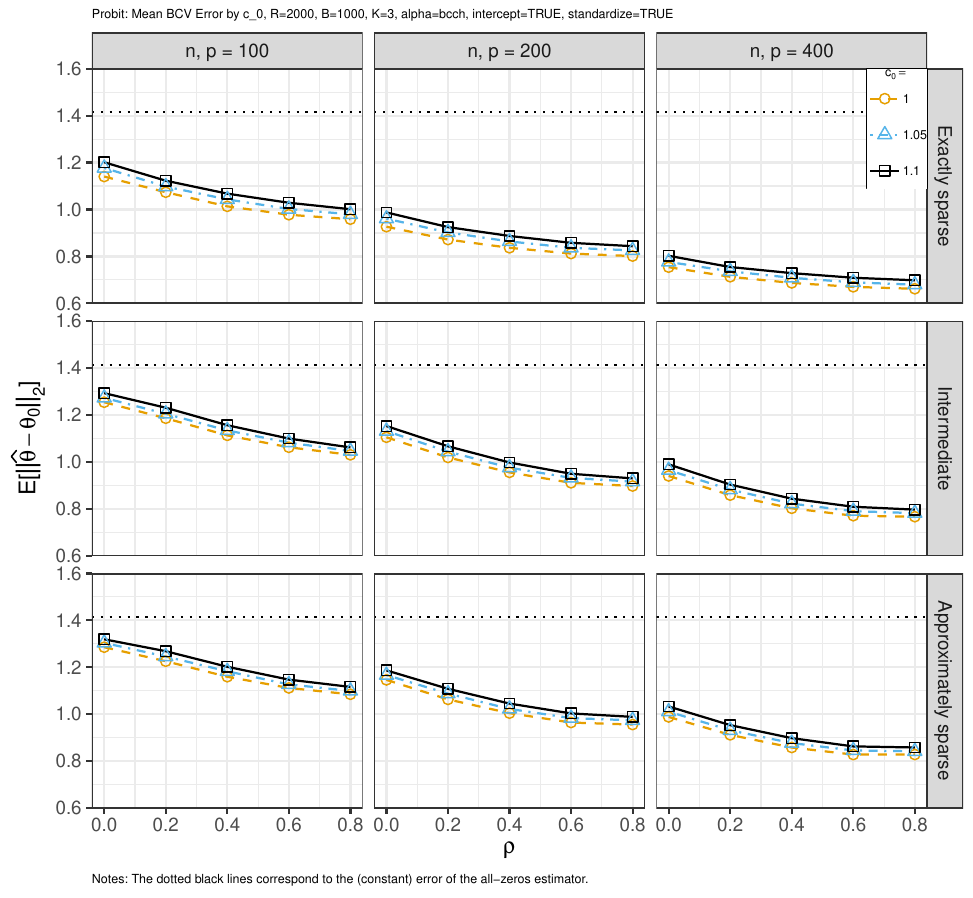}
\end{figure}
\begin{figure}
\caption{Mean $\ell_{2}$ Post-BCV Estimation Error by Score Markup with
$\alpha_{n}=.1/\ln\left(p\lor
n\right)$\label{fig:MeanEll2ErrorPostBCVVaryingc0BCCHrule}}

\centering{}\includegraphics[viewport=5bp 5bp 463bp
416bp,clip,width=0.7\textwidth]{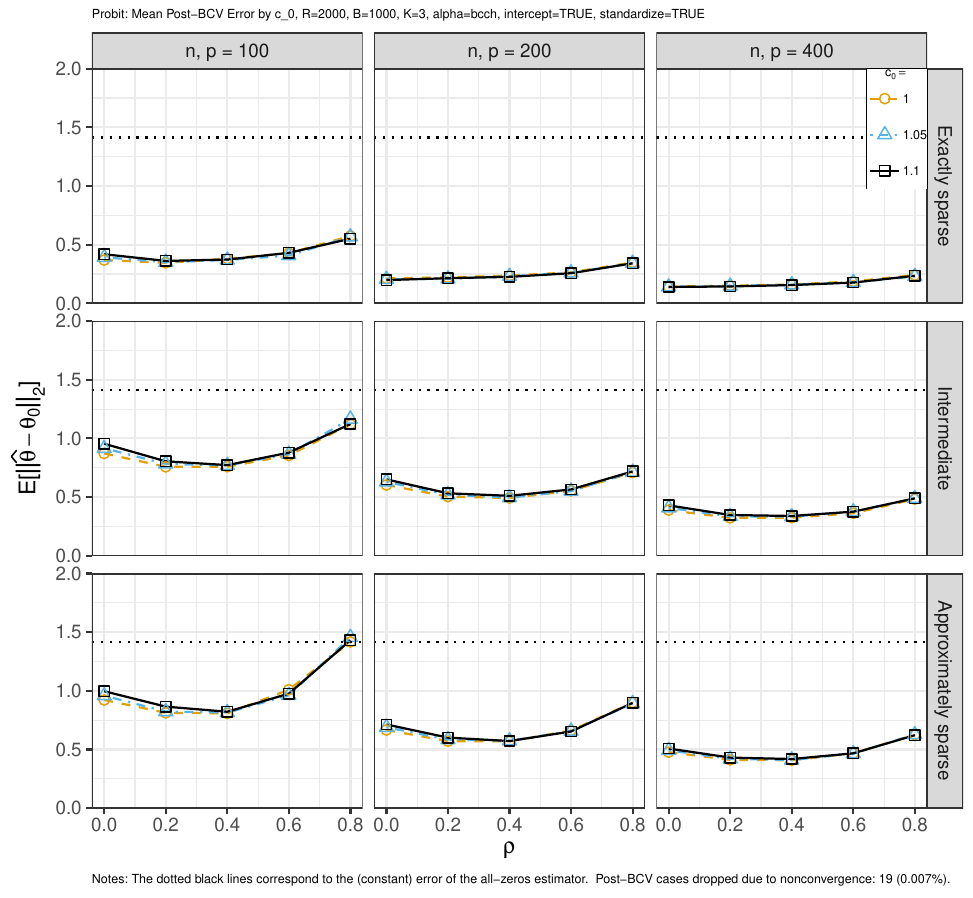}
\end{figure}
Figure \ref{fig:MeanEll2ErrorBCVVaryingc0BCCHrule} suggests that increasing $c_{0}$ away from one
slightly worsens (mean estimation error) performance of BCV. While our theory takes $c_{0}$ strictly
greater than one, any value near one---including the limit case of one itself--- appears to lead to
near identical results.\footnote{That mean BCV error is downward sloping for small to moderate
$\rho$ levels is due to the signal being increasing in $\rho$ and need not translate to other
correlation or coefficient patterns.} For post-BCV (Figure
\ref{fig:MeanEll2ErrorPostBCVVaryingc0BCCHrule}), the findings are similar. In fact, at least for
the largest sample/problem size, the exact value of $c_{0}\in\left\{ 1,1.05,1.1\right\} $ has little
to no impact on mean error. Note that our findings for post-BCV apply even with the approximately
sparse coefficient pattern, where variable selection mistakes are bound to occur. 

To conclude this subsection, we note that it is a well-known puzzle in the LASSO literature that the
theory typically requires that $c_0$ is strictly bigger than one, with the estimation error bounds
deteriorating as $c_0$ approaches one, while simulation experience suggests that the estimation
errors are insensitive with respect to $c_0$ when $c_0$ is close to one. We believe that solving
this puzzle remains one of the key challenges in this literature.

\subsubsection{Normal Approximation}

We next assess the normal approximations resulting from three-step debiasing
(Algorithm \ref{alg:ThreeStepDebiasing}) using either BCV, post-BCV or CV.
Instead of looking at the \emph{standardized} estimate
$\sqrt{n}(\widehat{\beta}-\beta_{0})/\sigma_{0}$ for the true asymptotic
variance $\sigma_0^2$ given in \eqref{eq: asy normality},
%\[
%\sigma_{0}^{2}=\frac{\mathrm{E}[m_{1}'\left(\bX^{\top}\btheta_{0},\bY\right)^{2}\left(D-\bW^{\top}%\bmu_{0}\right)^{2}]}{\left(\mathrm{E}\left[m_{11}''\left(\bX^{\top}\btheta_{0},\bY\right)\left(D-\bW^{\top}\bmu_{0}\right)D\right]\right)^{2}},
%\]
we form an estimate $\widehat{\sigma}^{2}$ and consider the \emph{studentized
}estimate $\sqrt{n}(\widehat{\beta}-\beta_{0})/\widehat{\sigma}$. That is, we
take into account the unknown nature of the $\sigma_{0}^{2}$, as required in an
empirical application. 

To construct the estimate $\widehat{\sigma}^{2}$, we first leverage the binary
response model to establish the (conditional information) equality
$
   \mathrm{E}[m_{1}'\left(\bX^{\top}\btheta_{0},\bY\right)^{2}\mid \bX]=\mathrm{E}[m_{11}''\left(\bX^{\top}\btheta_{0},\bY\right)\mid \bX].
$
We then use the definition of $\bmu_{0}$ to establish the (weighted projection)
equality
\[
   \mathrm{E}\big[m_{11}''\left(\bX^{\top}\btheta_{0},\bY\right)\left(D-\bW^{\top}\bmu_{0}\right)^2\big]=\mathrm{E}\left[m_{11}''\left(\bX^{\top}\btheta_{0},\bY\right)\left(D-\bW^{\top}\bmu_{0}\right)D\right].
\]
Again using the binary response model, we can evaluate
\begin{align*}
\E\left[ m_{11}''\left(\bX^{\top}\btheta_{0},\bY\right) \mid \bX^\top\btheta_0 = t \right]
   = \frac{f\left( t \right)^2 }{F\left( t \right)\left( 1 - F \left( t \right) \right)}
   =: \omega_{F}\left(t\right),
\end{align*}
where $f$ and $F$ denote the PDF and CDF, respectively, associated with the
binary response model.\footnote{In our current binary probit setting, these
functions are the standard normal PDF and CDF, respectively. In the empirical
application in Section \ref{sec:Application}, we also use the logistic
distribution, leading to the binary logit.} This
allows us to simplify the expression for $\sigma_{0}^{2}$ to
\[
\sigma_{0}^{2}=1\big\slash\mathrm{E}\left[\omega_{F}\left(\bX^{\top}\btheta_{0}\right)\left(D-\bW^{\top}\bmu_{0}\right)D\right]
\]
and leads to the following estimator of $\sigma_0^2$:
\[
\widehat{\sigma}^{2}:=1\big\slash\En\big[\omega_{F}\big(\widehat{\beta}D_{i}+\bW_{i}^{\top}\widetilde{\bgamma}\big)\big(D_{i}-\bW_{i}^{\top}\widetilde{\bmu}\big)D_{i}\big],
\]
with $\widetilde{\bgamma},\widetilde{\bmu}$ and $\widehat{\beta}$ given by Steps
1, 2 and 3, respectively, of Algorithm \ref{alg:ThreeStepDebiasing} given
different rules for choosing the penalties $\lambda_1$ and
$\lambda_2$.\footnote{Alternatively, one can use the
``sandwich'' estimators \eqref{eq: sigma hat 1} and \eqref{eq: sigma hat 2}.
%\[
%\check{\sigma}^2=\frac{\En\big[m_{1}'(\widehat{\beta}D_{i}+\bW_{i}^{\top}\widetilde{\bgamma},\bY_i)^{2}\left(D_i-\bW_i^{\top}\widetilde{\bmu}\right)^{2}\big]}{\big(\En\big[m_{11}''(\widehat{\beta}D_{i}+\bW_{i}^{\top}\widetilde{\bgamma},\bY_i)\left(D_i-\bW_i^{\top}\widetilde{\bmu}\right)D_i\big]\big)^{2}}.
%\]
Experimenting with these estimators, we obtained numerically similar results as
reported below for the estimator $\widehat{\sigma}^2$. We prefer
$\widehat{\sigma}^2$ since it leverages both the binary response and projection
structure.}

Figure \ref{fig:NormalApproxBCVPostBCVCVVaryingSampleSizeBCCHtuningZeroCorr}
shows the (kernel) densities of the studentized estimates using
benchmark tuning and $\rho=0$.\footnote{All kernel densities are created using
the \texttt{R} package \texttt{ggplot2} with \texttt{geom\_density}. In
expectation of an approximately normal distribution, we use a Gaussian kernel
and the \citet[Equation (3.31)]{silverman_density_1986} rule-of-thumb bandwidth
(both \texttt{geom\_density }defaults).}
\begin{figure}
\caption{{\small{}Densities of Studentized Estimates by $n(=p)$ with $\rho=0$,
$c_{0}=1.1$ and $\alpha_{n}=.1/\ln\left(p\lor
n\right)$.}\label{fig:NormalApproxBCVPostBCVCVVaryingSampleSizeBCCHtuningZeroCorr}}

\centering{}\includegraphics[viewport=5bp 5bp 463bp
416bp,clip,width=0.68\textwidth]{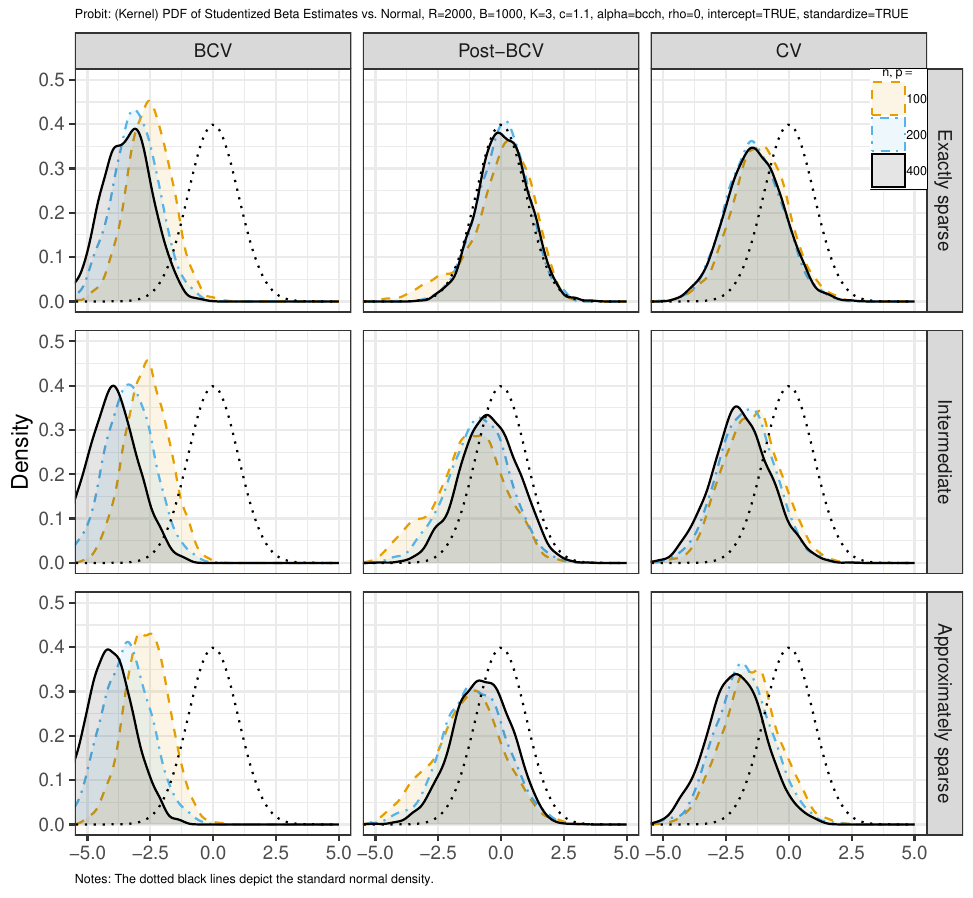}
\end{figure}
The densities arising from BCV, post-BCV and CV, respectively, are here depicted
as columns of tiles, where each tile row corresponds to a coefficient pattern
and each graph within a tile a sample/problem size. Starting with the exactly
sparse coefficient pattern (the top row), we see that both BCV and CV lead to
considerable shrinkage bias even after debiasing the initial estimate of the
focal parameter $\beta_{0}$. This feature is seen from the leftward shifts in
the resulting densities compared to the standard normal density, here
represented by the dotted line. These biases do not seem to disappear as $n$ increases,
holding $n=p$. If anything, these distributions shift further left, 
which indicates that BCV requires a larger sample size.
In constrast, the post-BCV density essentially collapses to the standard normal
one, at least for $n=p=200$ and $400$.

As the coefficient pattern becomes less and less sparse (moving down),
all approximations deteriorate, as is to be expected. While imperfect, the
post-BCV densities are still decent approximations to the normal for both the
intermediate and approximately sparse coefficient patterns. Moreover, only these
densities appear to approach the standard normal as the sample/problem size
increases.

While Figure
\ref{fig:NormalApproxBCVPostBCVCVVaryingSampleSizeBCCHtuningZeroCorr} depicts
the normal approximations for the worst-correlation case $\rho=0$, in Figure
\ref{fig:NormalApproxPostBCVCVBCCHtuningVaryingCorr} we display the normal
approximations as a function of $\rho$.
\begin{figure}[!htb]
\caption{{\footnotesize{}Densities of Studentized Post-BCV and CV Estimates for
Different $\rho$ with $n(=p)=400,$ Approximately Sparse Coefficient Pattern, $c_{0}=1.1$ and
$\alpha_{n}=.1/\ln\left(p\lor
n\right)$}\label{fig:NormalApproxPostBCVCVBCCHtuningVaryingCorr}}

\centering{}\includegraphics[viewport=5bp 5bp 463bp
416bp,clip,width=0.68\textwidth]{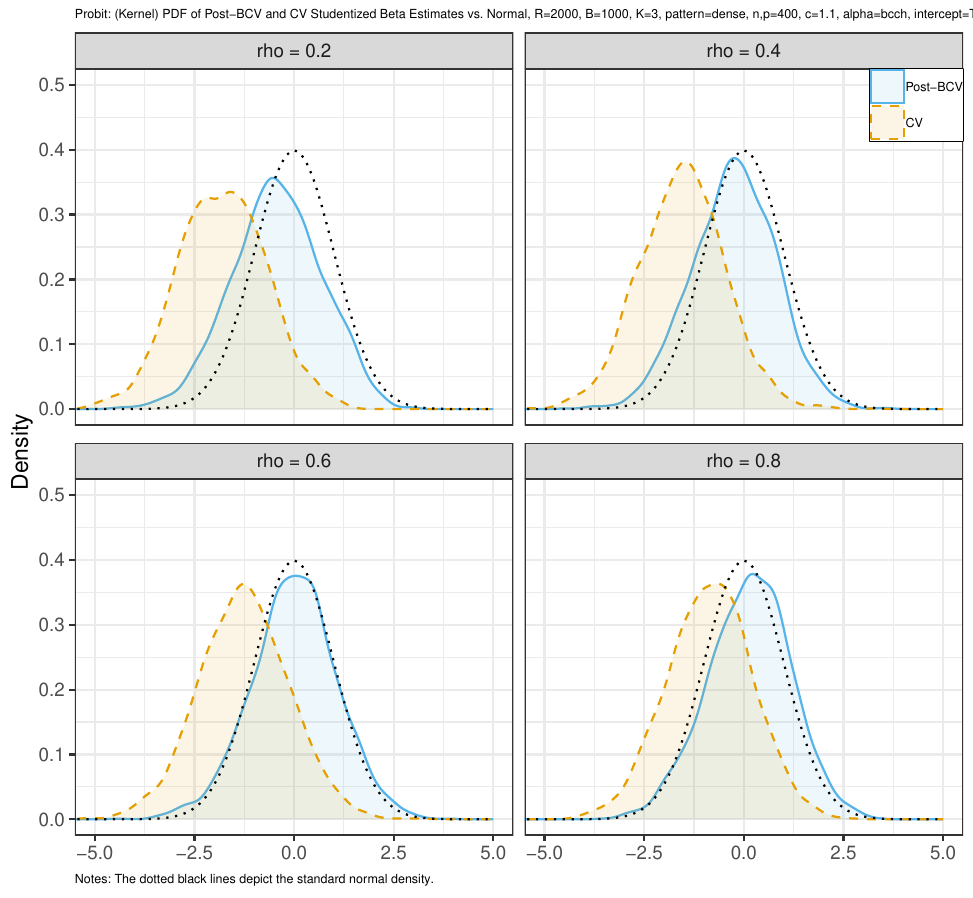}
\end{figure}
We here focus on the largest sample size $n(=p)=400$ and the (more challenging)
approximately sparse coefficient pattern, again using benchmark tuning. Since
post-BCV and CV appear to lead to better normal approximations than BCV, we
display only results from the former two methods. Post-BCV leads to a relatively
accurate normal approximation for every correlation level considered. Moreover,
while the normal approximation stemming from CV appears to improve as $\rho$
increases, at no correlation level considered does CV lead to a visually better
approximation than post-BCV.\footnote{We also investigated the robustness of the
post-BCV-resulting normal approximations with respect to the markup
$c_{0}\in\{1,1.05,1.1\}$. Parallelling our findings for mean estimation error in
Figure \ref{fig:MeanEll2ErrorPostBCVVaryingc0BCCHrule}, the exact markup value
appears to make little difference. Figures are available upon request.}

\section{Revisiting Racial Differences in Police Use of Force}\label{sec:Application}

In this section we revisit the empirical setting in
\citet{fryer_jr_empirical_2019} (henceforth: Fryer), who explored racial
differences in police use of force. We here focus on the part of Fryer's
regression analysis invoking the full Police-Public Contact Survey (PPCS)
dataset with the outcome being an indicator for any use of force by the police
(conditional on an encounter), thus leading to a binary response model as in our
Example \ref{exa:Logit}.\footnote{See \citet{fryer_jr_empirical_2019} and the
associated online appendix for alternative outcome variables and data sources as
well as a detailed discussion of their relative merits and drawbacks.} Specifically,
Fryer estimates models of the form
\begin{equation}
\mathrm{P}\left(\mathrm{Force}=1\mid\mathbf{Race},\bW\right)=F\left(\mathbf{Race}^{\top}\balpha_{0}+\bW^{\top}\bgamma_{0}\right),\label{eq:FryerModel}
\end{equation}
where $\mathrm{Force}$ indicates whether any force was used by the police when
encountering a civilian,
$\mathbf{Race}=(\mathrm{Black},\mathrm{Hisp},\mathrm{Other})^{\top}$ indicate
the race of the civilian (black, hispanic and other than white, with white being
the reference race), and $\bW$ is a list of control variables (including a
constant) capturing both civilian (e.g.~gender and age), officer (e.g.~majority
race) and encounter characteristics (e.g.~whether the civilian disobeyed,
resisted or otherwise misbehaved).\footnote{See the \citet[Table
2.B]{fryer_jr_empirical_2019} notes for the full variable list and his online
appendix A for descriptions.} Here $F$ is a placeholder for a strictly
increasing known cumulative distribution function (CDF), which Fryer takes to be
the logistic CDF $\Lambda$, thus leading to the binary logit model.

The PPCS logistic regression results reported in \citet[Table
2.B]{fryer_jr_empirical_2019} show that black and hispanic subjects are
statistically significantly more likely to experience some form of force in
interactions with the police, controlling for context and civilian behavior. We
here look into the robustness of this finding by employing also
\begin{inparaenum}[(i)]
   \item an alternative binary response model and
   \item large(r) sets of candidate regressors, in combination with
      $\ell_{1}$-penalization.
\end{inparaenum}
For brevity, we here single out the black dummy $(\mathrm{Black})$ and its
coefficient $\beta_{\mathrm{Black}}$ and group the other non-white dummies
($\mathrm{Hisp}$ and $\mathrm{Other}$) with the controls (thus recasting $\bW$
and $\bgamma_0$). We interpret the statement ``there are no racial differences
in police use of force'' as there being no difference in the probability of
force being used for black civilians relative to white subjects, holding
everything else equal. That is,
\[
   \mathrm{P}\left(\mathrm{Force}=1\mid\mathrm{Black}=1,\bW=\bw\right)
      =\mathrm{P}\left(\mathrm{Force}=1\mid\mathrm{Black}=0,\bW=\bw\right)
\]
for all realizations $\bw$ of the controls (with $\mathrm{Hisp}$ and
$\mathrm{Other}$ both zero). Since the race dummies enter the strictly
increasing $F$ in \eqref{eq:FryerModel} in an additive manner, using the model,
such a zero probability difference is equivalent to a zero coefficient on the
dummy for being black, i.e.~$\beta_{\mathrm{Black}}=0$. We therefore take the
latter as the hypothesis to be tested.

To this end, we first use the \citet{fryer_jr_empirical_2019} supplementary files and descriptions
in his online appendix to recollect and recreate the PPCS dataset. Using the same supplementary
files, we then replicate the PPCS logistic regression results in \citet[Table
2.B]{fryer_jr_empirical_2019} to all reported digits, which leaves us confident that we are indeed
considering the original dataset.

We next apply three-step debiasing (Algorithm \ref{alg:ThreeStepDebiasing}) with the loss function
being either the negative logit or probit log-likelihood. Our simulation findings indicate that
post-BCV debiasing outperforms both the BCV and CV equivalents. We therefore only consider the
former.\footnote{For both Steps 1 and 2, we here use 10-fold cross-validation, i.e.~$K=10$.} We use
two sets of regressors. The first set (Basic Controls) corresponds to that in \citet[Table 2.B, Row
l]{fryer_jr_empirical_2019}, and is Fryer's largest set of controls. The only difference is that we
include categorical regressors via dummies for their different levels, leaving one reference
category for each. The second set of regressors (Basic Controls + Interactions) builds on the first
by adding all first-order pairwise interactions between the controls (excluding the race dummies
$\mathrm{Hisp}$ and $\mathrm{Other}$). After eliminating variables with zero variance or perfect
correlation, the two sets include 30 and 327 non-constant regressors, respectively, which should be
compared to a sample size of $n=59,668$ civilian--police encounters.\footnote{We note in passing
that the 59,668 equals the \emph{total} number of police-civilian encounters in the PPCS dataset
covering the six surveys 1996, 1999, 2002, 2005, 2008 and 2011. The number of \emph{complete} cases
with respect to the regressors used is 9,930 and only leaves the years 2002 and 2011. For a clean
comparison, we follow Fryer's approach to missing values.}

Table \ref{tab:RevisitingFryerTstats} displays the $t$-values associated
with testing the null hypothesis using either unpenalized or
$\ell_{1}$-penalized methods.
\begin{table}
\caption{$t$-Values for Testing a Zero Coefficient
$\beta_{\mathrm{Black}}$ on $\mathrm{Black}$\label{tab:RevisitingFryerTstats}}

\centering{}%
\begin{tabular}{lrrrrr}
\hhline{======}
 & \multicolumn{2}{c}{Unpenalized (ML)} &  &
\multicolumn{2}{c}{Post-BCV}\tabularnewline \cline{2-3} \cline{3-3} \cline{5-6}
\cline{6-6} Controls \textbackslash{} Loss & Logit & Probit &  & Logit &
Probit\tabularnewline
\hline 
Basic Controls & 8.8 & 8.7 &  & 10.5 & 9.6\tabularnewline + Interactions & n.a.
& n.a. &  & 20.7 & 18.9\tabularnewline
\hline 
\end{tabular}
\end{table}
Using only basic controls (the logit case being covered in Fryer), the
$t$-statistics take on similar values for both unpenalized maximum likelihood
(ML) and post-BCV methods. Thus, with only 30 non-constant candidate regressors,
regularization has little impact. In contrast, the specification including both
basic controls and interactions thereof leads to complete separation in the data,
such that the (unpenalized) maximum likelihood estimates do not exist (as real
numbers). For this case, some regularization is necessary. Hence, although the
numbers of regressors considered here may not appear overwhelmingly large
when compared to the sample size, the set of regressors is of great importance. Even
when we include all first-order interactions between the controls, the
$t$-statistics resulting from our three-step debiasing procedure remain of the
same order as before.\footnote{The increase in the $t$-values for
post-BCV upon inclusion of interactions is for both the logit and probit
loss due to both a somewhat larger point estimate and a somewhat smaller
standard error. The values underlying Table \ref{tab:RevisitingFryerTstats} 
thus illustrate that more candidate regressors need not lead to
larger standard error.} The $t$-tests based on our post-BCV debiasing
lead us to reject the null hypothesis of no racial differences in police use of
force at any reasonable significance level. This conclusion in
\citet{fryer_jr_empirical_2019} therefore appears robust to the choice of
controls.

To gauge the economic impact of our change in estimation procedures, we estimate
the average partial effect (APE) of changing the civilian race from white to
black. Iterating expectations and using (\ref{eq:FryerModel}), the APE can be
expressed as the average probability difference
\begin{align}
\mathrm{APE}_{\mathrm{Black}} & :=\mathrm{E}\big[\mathrm{P}\left(\mathrm{Force}=1\mid\mathrm{Black}=1,\mathrm{Hisp}=0,\mathrm{Other}=0,\bW\right)\nonumber \\
 & \quad\quad-\mathrm{P}\left(\mathrm{Force}=1\mid\mathrm{Black}=0,\mathrm{Hisp}=0,\mathrm{Other}=0,\bW\right)\big]\nonumber \\
 & =\mathrm{E}\left[F\left(\beta_{\mathrm{Black}}+\bW^{\top}\bgamma_{0}\right)-F\left(\bW^{\top}\bgamma_{0}\right)\right],\label{eq:APE}
\end{align}
where we bring back the other (non-white) civilian race dummies to clarify the
comparison made. We estimate this APE by
$
\widehat{\mathrm{APE}}_{\mathrm{Black}}:=\mathbb{E}_{n}[F\big(\widehat{\beta}_{\mathrm{Black}}+\bW_{i}^{\top}\widehat{\bgamma}\big)-F\left(\bW_{i}^{\top}\widehat{\bgamma}\right)],
$
for point estimates $\widehat{\beta}_{\mathrm{Black}}$ and $\widehat{\bgamma}$
of $\beta_{\mathrm{Black}}$ and $\bgamma_{0}$, respectively. When results stem
from (unpenalized) ML, we use the ML estimates. When results stem from
three-step post-BCV debiasing, we use the debiased third-step estimate
$\widehat{\beta}_{\mathrm{Black}}$ and the (biased) first-step estimate
$\widehat{\bgamma}$. Table \ref{tab:RevisitingFryerAPEs} reports the APE
estimates (in percentage points) corresponding to these procedures including
either basic controls or basic controls with interactions.

\begin{table}[htb]
\caption{Estimates of the Average Partial Effect of $\text{Black}$ (in Percentage
Points)\label{tab:RevisitingFryerAPEs}}
\centering{}%
\begin{tabular}{lrrrrr}
\hhline{======}
 & \multicolumn{2}{c}{Unpenalized (ML)}
&  & \multicolumn{2}{c}{Post-BCV}\tabularnewline \cline{2-3} \cline{3-3}
\cline{5-6} \cline{6-6} Controls \textbackslash{} Loss & Logit & Probit &  &
Logit & Probit\tabularnewline
\hline 
Basic Controls & 1.1 & 1.1 &  & 1.4 & 1.3\tabularnewline + Interactions & n.a. &
n.a. &  & 3.2 & 2.8\tabularnewline
\hline 
\end{tabular}
\end{table}
Using only basic controls, (unpenalized) ML and post-BCV lead to APE estimates
in the range of 1.1--1.4 percentage points regardless of the CDF used. (For
context, the unconditional average of contacts in which PPCS respondents
reported any force being used for white civilians is .7 percent.) Including
interactions of basic controls, the post-BCV APE estimates roughly double in
size to about 3 percentage points. Of course, these relatively large APE
estimates may come with relatively large estimation error. However, as the APE
in (\ref{eq:APE}) depends on many coefficients, it remains a non-trivial task to
assign standard errors to these point estimates---a task falling outside the
scope of this paper.

Finally, to get a feel for the computational burden associated with the methods
proposed in this paper when applied to real data, in Table \ref{tab:Timings} we
report the computing time used by the above-mentioned estimation routines.
\begin{table}[htb]
   \begin{threeparttable}
      \caption{Estimation Routine Timings (in Seconds) \label{tab:Timings}}
      \centering{}%
      \begin{tabular}{lrrrrr}
      \hhline{======}
      & \multicolumn{2}{c}{Unpenalized (ML)} &  &
      \multicolumn{2}{c}{Post-BCV}\tabularnewline \cline{2-3} \cline{3-3}
      \cline{5-6} \cline{6-6} Controls \textbackslash{} Loss & Logit & Probit &
      & Logit & Probit\tabularnewline
      \hline 
      Basic Controls & 1.5 & 1.5 &  & 21 & 40\tabularnewline + Interactions
      & $\infty$ & $\infty$ &  & 210 & 199\tabularnewline
      \hline 
      \end{tabular}
      \begin{tablenotes}
         \footnotesize
         \item\emph{Notes:} All timings were carried out on an Intel Core
         i7-8700 3.20GHz CPU. When using \texttt{cv.glmnet}, we use the parallel
         computing option with all 12 virtual cores available.
      \end{tablenotes}
   \end{threeparttable}
\end{table}
With only basic controls, the three-step post-BCV debiasing procedure takes at least ten times as
long as (unpenalized) ML. This is not surprising, as the former method involves two rounds of
(10-fold) CV, bootstrapping and refitting---no task of which is undertaken by ML. However, including
also interactions, the ranking of the two approaches is reversed. The about tenfold increase in
number of controls increases the computing time associated with post-BCV logit debiasing
approximately linearly. For post-BCV probit debiasing, the corresponding increase is almost five
fold. In contrast, as the ML estimates are not real numbers, without proper checks for solution
existence, any (gradient-based) optimizer would iterate indefinitely in search of the ML estimates.
We represent the non-existence of an ML estimate by infinite computing time.\footnote{While infinity
may appear overly dramatic, we warn that \texttt{glmnet} does \emph{not} check for optimizer
divergence \citep[cf.][p.~9]{friedman_regularization_2010}. We therefore opted for
\texttt{stats::glm} for ML estimation and refitting.}

Of course, the Table \ref{tab:Timings} runtimes are only single observations arising from our
particular \texttt{R} implementation of our procedures, using a specific dataset, and our specific
computing environment. As such, they need not translate to other settings.

%\putbib[My_Library] % display main references
%\end{bibunit} % stop tracking main references
\bibliographystyle{ecta}
\bibliography{My_Library}

\newpage{}

\appendix

%\begin{bibunit} % start tracking appendix references

\doparttoc % Tell minitoc to generate a toc for each part of the document (here only the appendix)
\faketableofcontents % Run a fake tableofcontents command for the partocs
\part{Online Appendices} % Start the appendix part
\singlespacing % compress TOC
\parttoc % Insert the appendix TOC
\onehalfspacing % return to normal spacing

\section{Verification of High-Level Assumptions}\label{sec:Verification}

In this section, we discuss the high-level assumptions from the main text. We
first consider the case of the linear model and square loss, and then turn to
the examples from Section \ref{sec:Examples}. In particular, Corollaries
\ref{cor: examples} and \ref{cor: examples inference} state convergence rate and
inference results under low-level conditions for each of the examples from
Section \ref{sec:Examples}.

\subsection{Verification for Linear Model with Square Loss}
\label{sec:VerificationLinearModelSquareLoss}
It is instructive to illustrate the contents of Assumptions
\ref{assu:ParameterSpace}--\ref{assu:LossLocallyLipschitzAndMore},
\ref{assu:ResidualBootstrapMethod}, \ref{as: post estimator smoothness},
\ref{as: post estimator moments}, \ref{as: identifiability inference}, \ref{as:
smoothness inference}, and \ref{as: conditional density} in the familiar case of
the linear mean regression model. We here consider the following (strong)
version of the linear model with independent Gaussian errors,
\[
Y=\bX{^\top}\btheta_0+\varepsilon,\quad \varepsilon\mid \bX\sim \mathrm{N}\left(0,\sigma_0^2\right),
\] 
where $\btheta_0\in\R^p$ and $\sigma_0^2\in(0,\infty)$ are model parameters, and
we use the (one-half) square loss $m(t,y)=(1/2)(y-t)^2$. Less restrictive
dependence and distributional assumptions can be accommodated. We focus on the
independent Gaussian case for simplicity.

We take $\Theta$ to be the full space $\R^p$, such that Assumption
\ref{assu:ParameterSpace} is trivial. Convexity of the loss (Assumption
\ref{assu:Convexity}) follows from differentiating twice with respect to $t$ and
observing that $m''_{11}(t,y)=1>0$ no matter $(t,y)\in\R^2$. For Assumption
\ref{as: diff and int}, observe that for $\btheta\in\Theta$ arbitrary and
denoting $\bdelta:=\btheta-\btheta_0$, the derivative
\[
   m_1'(\bX^{\top}\btheta,Y)=\bX^\top\btheta-Y=\bX^\top\bdelta-\varepsilon
\]   
surely exists. The finiteness of
\[
\E\left[\left|m\left(\bX^{\top}\btheta,Y\right)\right|\right]
   =\frac{1}{2}\E\big[\left(\varepsilon-\bX^{\top}\bdelta\right)^2\big]
   =\frac{1}{2}\left(\sigma_0^2+\bdelta^{\top}\E\left[\bX\bX^{\top}\right]\bdelta\right)
\]
boils down to finiteness of the matrix $\E[\bX\bX^{\top}]$. Assumption \ref{as:
diff and int} can then be guaranteed by finiteness of second moments
$\E[X_{j}^{2}]$ for all $j\in[p]$, for example. Turning to Assumption
\ref{assu:Margin}, note that the excess risk at $\btheta$ is
\[
\mathcal{E}\left(\btheta\right)
   =\frac{1}{2}\E\big[\left(\varepsilon-\bX^{\top}\bdelta\right)^2-\varepsilon^2\big]
   =\frac{1}{2}\bdelta^{\top}\E\left[\bX\bX^{\top}\right]\bdelta
   \geqslant \frac{1}{2}\lambda_{\min}\left(\E\left[\bX\bX^{\top}\right]\right)\left\|\bdelta\right\|_2^2,
\]
with $\lambda_{\min}(\E[\bX\bX^{\top}])$ denoting the smallest eigenvalue of
$\E[\bX\bX^{\top}]$. As long as the eigenvalues are bounded away from zero,
Assumption \ref{assu:Margin} holds with $c_M=1\wedge
(1/2)\inf_{n\in\N}\lambda_{\min}(\E[\bX\bX^{\top}])$ and $c_M'=\infty$.
Positivity of $\lambda_{\min}(\E[\bX\bX^{\top}])$ is precisely the rank
condition for identification of $\btheta_0$. Without this condition $\btheta_0$
does not uniquely minimize the expected loss. For Assumption
\ref{assu:LossLocallyLipschitzAndMore}.\ref{enu:LossLocallyLipschitz}, a
calculation shows that for all $(\bx,y)\in\R^{p+1}$ and all $(t_1,t_2)\in\R^2$,
\begin{align*}
   \abs{m\left(\bx^{\top}\btheta_{0}+t_1,y\right)-m\left(\bx^{\top}\btheta_{0}+t_2,y\right)}
      &=\frac{1}{2}\left|t_1+t_2+2\left(\bx^{\top}\btheta_0-y\right)\right|\left|t_1-t_2\right|\\
      &\leqslant \frac{1}{2}\left(\left|t_1\right|+\left|t_2\right|+2\left|y-\bx^{\top}\btheta_0\right|\right)\left|t_1-t_2\right|.
\end{align*}
Choosing $c_L=1$ and $L(\bx,y)=1+|y-\bx^{\top}\btheta_0|$, we see that
(\ref{eq:LocallyLipschitz}) holds for all $(\bx,y)\in\R^{p+1}$ and all
$(t_1,t_2)\in\R^2$ for which $|t_1|\lor |t_2|\leqslant c_L$. For this $L$, using
independence and $(a+b)^2\leqslant 2a^2+2b^2$, we get
\[
\max_{1\leqslant j\leqslant p}\E\big[\left|L\left(\bX,Y\right)X_{j}\right|^2\big]
   =\E\left[\left(1+\left|\varepsilon\right|\right)^2\right]\max_{1\leqslant j\leqslant p}\E\left[X_{j}^2\right]
   \leqslant 2\left(1+\sigma^2_0\right)\max_{1\leqslant j\leqslant p}\E\left[X_{j}^2\right].
\]
As long as the error has bounded variance and the regressors bounded second
moments, the previous displays suggests
\[
   C_{L,1}^2:=1\lor 2\sup_{n\in\N}\left\{\left(1+\sigma_0^2\right)\max_{1\leqslant j\leqslant p}\E[X_{j}^2]\right\}.
\]
Also, picking $r=8$, we get
\[
   \E\big[\left|L\left(\bX,Y\right)\left\|\bX\right\|_{\infty}\right|^8\big]
      =\E\left[\left(1+\left|\varepsilon\right|\right)^8\right]\E\left[\left\|\bX\right\|_{\infty}^8\right]
      \leqslant 2^7 \left(1+105\sigma_0^8\right) \E\left[\left\|\bX\right\|_{\infty}^8\right],
\]
where the last inequality uses normality to get
$\E[\varepsilon^8]=105\sigma_0^8$. Provided the right-hand side is finite, this
calculation suggests
\[
B_n^8 = 1\lor 2^7\left(1+105\sigma_0^8\right)\E\left[\left\|\bX\right\|_{\infty}^8\right].
\]
% If $\E[\|\bX\|_{\infty}^8]=\infty$, but
% $\E[\|\bX\|_{\infty}^r]<\infty$ for a smaller $r\in(4,8)$, then
% $B_n^r$ can be defined analogously using, e.g., the (generalized) binomial theorem.

For Assumption \ref{assu:LossLocallyLipschitzAndMore}.\ref{enu:LossMeanSquareEll2Conts},
note that
\begin{align*}
\mathrm{E}\left[\left|m\left(\bX^{\top}\btheta,Y\right)-m\left(\bX^{\top}\btheta_{0},Y\right)\right|^2\right]
   &=\frac{1}{4}\mathrm{E}\big[\big|\left(\varepsilon-\bX^{\top}\bdelta\right)^2-\varepsilon^2\big|^2\big]\\
   &=\frac{1}{4}\mathrm{E}\big[\left(\bX^{\top}\bdelta\right)^4\big]
      +\mathrm{E}\big[\varepsilon^2\left(\bX^{\top}\bdelta\right)^2\big]
      -\mathrm{E}\big[\varepsilon\left(\bX^{\top}\bdelta\right)^3\big]\\
   &=\frac{1}{4}\mathrm{E}\big[\left(\bX^{\top}\bdelta\right)^4\big]
      +\sigma_0^2\mathrm{E}\big[\left(\bX^{\top}\bdelta\right)^2\big]\\
   &\leqslant \frac{1}{4}\mathrm{E}\big[\left(\bX^{\top}\bdelta\right)^4\big]
      + \sigma_0^2\lambda_{\max}\left(\mathrm{E}\big[\bX\bX^{\top}\big]\right)\|\bdelta\|_2^2,
\end{align*}
with $\lambda_{\max}(\mathrm{E}[\bX\bX^{\top}])$ denoting the largest eigenvalue
of $\mathrm{E}[\bX\bX^{\top}]$. The term
$(1/4)\mathrm{E}[(\bX^{\top}\bdelta)^4]$ is not generally of the form required
by Assumption
\ref{assu:LossLocallyLipschitzAndMore}.\ref{enu:LossMeanSquareEll2Conts}.
However, if the regressors themselves are jointly Gaussian
$\bX\sim\mathrm{N}(\mathbf{0}_p,\mathrm{E}[\bX\bX^{\top}])$ (or at least centered
sub-Gaussian to obtain an inequality), then
$\bX^\top\bdelta\sim\mathrm{N}(0,\E[(\bX^{\top}\bdelta)^2])$
and, thus,
\begin{equation}\label{eq:GaussianFourthMomentLinearVerification}
   \mathrm{E}\big[\left(\bX^{\top}\bdelta\right)^4\big]=3\big(\mathrm{E}\big[\left(\bX^{\top}\bdelta\right)^2\big]\big)^2
   \leqslant 3\lambda_{\max}\left(\mathrm{E}\big[\bX\bX^{\top}\big]\right)^2\|\bdelta\|_2^4.  
\end{equation}
Hence, when $\|\bdelta\|_2\leqslant c_L=1$,
$\mathrm{E}[(\bX^{\top}\bdelta)^4]\leqslant 3\lambda_{\max}(\mathrm{E}[\bX\bX^{\top}])^2\|\bdelta\|_2^2$,
and, thus,
\[
\mathrm{E}\left[\left|m\left(\bX^{\top}\btheta,Y\right)-m\left(\bX^{\top}\btheta_{0},Y\right)\right|^2\right]
   \leqslant \left(\frac{3}{4}\lambda_{\max}\left(\mathrm{E}\big[\bX\bX^{\top}\big]\right)^2
      +\sigma_0^2\lambda_{\max}\left(\mathrm{E}\big[\bX\bX^{\top}\big]\right)\right)\|\bdelta\|_2^2.
\]
Provided also the eigenvalues of $\mathrm{E}[\bX\bX^{\top}]$ are bounded from above,
the previous display suggests
\[
C_{L,2}^2=1\lor \sup_{n\in\N}\left\{\frac{3}{4}\lambda_{\max}\left(\mathrm{E}\big[\bX\bX^{\top}\big]\right)^2
   +\sigma_0^2\lambda_{\max}\left(\mathrm{E}\big[\bX\bX^{\top}\big]\right)\right\}.   
\]
Finally, for Assumption \ref{assu:LossLocallyLipschitzAndMore}.\ref{enu:ResidualMeanSquareEll2Conts},
observe that no matter $\|\bdelta\|_2$,
\[
   \E\big[\left| m_{1}'\left(\bX^{\top}\btheta,Y\right)-m_{1}'\left(\bX^{\top}\btheta_{0},Y\right)\right|^{2}\big]
   = \E\big[\left(\bX^{\top}\bdelta\right)^{2}\big] \leqslant \lambda_{\max}\left(\mathrm{E}\big[\bX\bX^{\top}\big]\right)\|\bdelta\|_2^2,
\]
which suggests
\[
   C_{L,3}^2=1\lor \sup_{n\in\N}\lambda_{\max}\left(\mathrm{E}\big[\bX\bX^{\top}\big]\right).
\]
Assumption \ref{assu:LossLocallyLipschitzAndMore} now follows from setting
$C_L=\max\{C_{L,1},C_{L,2},C_{L,3}\}$, which lies in $[1,\infty)$ under the previously
stated assumptions.

With an eye on Assumption \ref{assu:ResidualBootstrapMethod}, from
$m'_1(\bX^\top\btheta_0,Y)=-\varepsilon$ and independence of $\varepsilon$ and
$\bX$, we see that $\E[|UX_{j}|^2]=\sigma_0^2\E[X_{j}^2]$, and so
\[
   \sigma_0^2 \min_{1\leqslant j\leqslant p}\E[X_{j}^2]\leqslant \E[|UX_{j}|^2] \leqslant \sigma_0^2 \max_{1\leqslant j\leqslant p}\E[X_{j}^2],
\]
Hence, as long as the lower and upper bounds are bounded away from zero and
infinity, respectively, for the purpose of Assumption
\ref{assu:ResidualBootstrapMethod}.\ref{enu:ZijSecondMomentsBndAwayZero}, we can
take
\[
c_U^2=\inf_{n\in\N}\left\{\sigma_0^2 \min_{1\leqslant j\leqslant p}\E[X_{j}^2]\right\}
   \;\text{and}\;C_U^2=1\lor\sup_{n\in\N}\left\{\sigma_0^2 \max_{1\leqslant j\leqslant p}\E[X_{j}^2]\right\}.
\]
Using also normality, $\E[|UX_{j}|^4]=3\sigma_0^4\E[X_{j}^4]$ and
$\E[\|U\bX\|^4_{\infty}]=3\sigma_0^4\E[\|\bX\|_{\infty}^4]$, which suggest
\[
\widetilde{B}_{n,1}^2=1\lor 3\sigma_0^4\max_{1\leqslant j\leqslant p}\E[X_{j}^4]
   \;\text{and}\;\widetilde{B}_{n,2}^4=1\lor 3\sigma_0^4\E[\|\bX\|_{\infty}^4],
\]
respectively. The remainder of Assumption \ref{assu:ResidualBootstrapMethod} now follows from
setting $\widetilde B_{n}=\widetilde{B}_{n,1}\lor \widetilde{B}_{n,2}$, which lies in $[1,\infty)$
as long as $\E[\|\bX\|_{\infty}^4]<\infty$, which holds when all regressors have finite fourth
moments. Assumptions \ref{assu:DataPartition} and \ref{assu:CandidatePenalties} have nothing to do
with the model and loss function. Moving to Assumption \ref{as: post estimator smoothness},
observing that $m'_1(t,y)=t-y$, setting $C_m:=1$ we have |$m'_1(t_1,y)-m'_1(t_2,y)|=C_m|t_1-t_2|$
for all $t_1,t_2,y\in\R$. As shown in \eqref{eq:GaussianFourthMomentLinearVerification}, if $\bX$ is
Gaussian with $\lambda_{\max}(\E[\bX\bX^\top])$ bounded, then Assumption \ref{as: post estimator
moments} holds with 
\[
   C_{ev}:=3\sup_{n\in\N}\lambda_{\max}(\E[\bX\bX^\top])^2.
\]
More generally, joint sub-Gaussianity with bounded (joint) sub-Gaussian norm
here suffices.

Next, $\bmu_0$ is uniquely given by $\bmu_0=(\E[\bW\bW^\top])^{-1}\E[\bW D]$
provided $\lambda_{\min}(\E[\bW\bW^\top])>0$, which is implied by the previously
discussed rank condition for identification of $\btheta_0$. It follows that
\[
 \E\left[|m_1'(\bX^\top\btheta_0,Y)(D-\bW^\top\bmu_0)|^2\right]=\sigma^2_0 \E\left[(D-\bW^\top\bmu_0)^2\right],
\]
and Assumption \ref{as: identifiability inference} is satisfied if both right-hand side
terms are bounded away from zero. To this end, note that
\[
\E\left[(D-\bW^\top\bmu_0)^2\right]=
\left[\begin{array}{cc}
1 & -\boldsymbol{\mu}_{0}^{\top}\end{array}\right]\mathrm{E}\left[\boldsymbol{X}\boldsymbol{X}^{\top}\right]\left[\begin{array}{c}
1\\
-\boldsymbol{\mu}_{0}
\end{array}\right]\geqslant \lambda_{\min}(\E[\bX\bX^\top]).
\]
Our identifiability condition (Assumption \ref{as: identifiability
inference}) then essentially follows from the previously invoked
$\inf_{n\in\N}\lambda_{\min}(\E[\bX\bX^\top])>0$, which, again, is only slightly
stronger than the rank condition for identification of $\btheta_0$.
% J: Assumption 5.2 is difficult to reduce further. We thus skip it.
The square loss is everywhere thrice differentiable. Hence,
for the purpose of Assumption \ref{as: smoothness inference}, we can
take $J=1$. The second derivative of the loss is $m''_{11}(\cdot,\cdot)\equiv 1$
and the third derivative is $m'''_{111}(\cdot,\cdot)\equiv0$, and, thus, the remainder of 
Assumption \ref{as: smoothness inference} follows. Since we can pick
$J=1$, Assumption \ref{as: conditional density} is trivially satisfied.

\subsection{Verification for Examples in Section \ref{sec:Examples}}\label{sec: verification} In
this section, we state convergence rate and inference results for each of the examples from the main
text under low-level assumptions. The main task of this section is to prove the following results.
\begin{cor}
[\textbf{Convergence Rates in Examples from Main Text}]\label{cor: examples}
Let $c_{de}$, $c_{ev}$, $c_f$, $c_{eps}$, $C_0$, $C_{ev}$ and $C_{pdf}$ be some constants in
$(0,\infty)$, let $\bar r$ be a constant in $(4,\infty)$, and let $\bar B_n$ be a non-random
sequence in $[1,\infty)$. Assume the setting of one of the examples from the main text: Example
\ref{exa:Logit} (logit or probit), Example \ref{exa:LogConcaveOrderedResponse} (logit or probit),
Example \ref{exa:Expectile}, or Example \ref{exa:PanelCensoredRegressionAndTrimming} (trimmed LS or
trimmed LAD). In all of these examples, take $\Theta = \R^p$ and assume that
\begin{equation}\label{eq: verification of high level assumptions key condition}
\E[|\bX^{\top}\bdelta|^2]\geqslant c_{ev}\|\bdelta\|_2^2,\quad\text{and}\quad\E[|\bX^{\top}\bdelta|^4]\leqslant C_{ev}^2\|\bdelta\|_2^4,\quad\text{for all }\bdelta\in\R^p,
\end{equation}
\begin{equation}\label{eq: verification of high level assumptions key condition 2}
\max\left\{\E[|\bX^{\top}\btheta_0|^8], \E[|D - \bW^{\top}\bmu_0|^8], \max_{j\in[p]}\E[|X_j|^8]\right\}\leqslant C_0^8,
\end{equation}
\begin{equation}\label{eq: verification of high level assumptions key condition 3}
\E\left[\left(1 + |\bX^{\top}\btheta_0|\right)^{\bar r}\|\bX\|_{\infty}^{\bar r}\right]\leqslant \bar B_n^{\bar r}.
\end{equation}
% J: Have put exponent \bar r outside parentheses to ease verification.
For Example \ref{exa:Expectile}, assume in addition that $Y$ is continuously distributed given
$\bX$, i.e., that
\begin{equation}\label{eq: expectile Y conditionally continuous}
\P(Y=t\mid\bX=\bx)=0\quad\text{for all}\quad(t,\bx)\in\mathcal Y\times\mathcal X,
\end{equation}
and that
\begin{equation}\label{eq: y square integrable example}
\E[|Y|^8]\leqslant C_{0}^8,\quad \min_{j\in[p]}\E[|Y - \bX^{\top}\btheta_0|^2X_j^2]\geqslant c_{eps},\quad\text{and}\quad \E[\|Y\bX\|_{\infty}^{\bar r}] \leqslant \bar B_n^{\bar r}.
\end{equation}
In Example \ref{exa:PanelCensoredRegressionAndTrimming} (trimmed LS), assume in addition
that the conditional distributions of $Y_1$ and $Y_2$ given $\bX$ are continuous on $(0,\infty)$, i.e., that
\begin{equation}\label{eq: y1 and y2 conditionally continuous on positive reals}
\P(Y_j=t\mid\bX=\bx)=0\quad\text{for all}\quad(j, t,\bx)\in\{1,2\}\times(0,\infty)\times\mathcal X,
\end{equation}
and that
\begin{equation}\label{eq: y square integrable example 4}
\E[|\overline Y|^8]\leqslant C_0^8,\quad \E[\|\overline Y\bX\|_{\infty}^{\bar r}] \leqslant \bar B_n^{\bar r},
\end{equation}
\begin{equation}\label{eq: even more stuff}
\min_{j\in[p]}\E\left[X_j^2\left(Y_1^2 \mathbf 1\{Y_2 \leqslant - \bX^{\top}\btheta_0\} + Y_2^2 \mathbf 1\{Y_1 \leqslant  \bX^{\top}\btheta_0\}\right)\right]\geqslant c_{eps},
\end{equation}
where $\overline Y := Y_1\lor Y_2$, and
\begin{align}
\P\Big( \min\big\{ &\P(Y_1 - Y_2 > \bX^{\top}\btheta_0 + c_{de} \mid \bX),\nonumber \\
 &\P(Y_1 - Y_2 < \bX^{\top}\btheta_0 - c_{de} \mid \bX)\big\} \geqslant 2c_f \Big) \geqslant 1 - \left(\frac{c_{ev}}{2\sqrt 2C_{ev}}\right)^2. \label{eq: nontrivial tail bound assumption}
\end{align}
In Example \ref{exa:PanelCensoredRegressionAndTrimming} (trimmed LAD), assume
in addition that
\begin{equation}\label{eq: y square integrable example 5}
\E[|\overline Y|^8]\leqslant C_0^8,\quad  \min_{j\in[p]}\E\left[|X_j|^2\mathbf 1\{Y_1>0,Y_2>0\}\right] \geqslant c_{eps},
\end{equation}
where $\overline Y:=Y_1\lor Y_2$, and that the conditional distribution of $Y_1 - Y_2$ given $(\bX, Y_1
> 0, Y_2 > 0)$ as well as the conditional distributions of $Y_1$ given $(\bX, Y_1>0, Y_2 =0)$ and of
$Y_2$ given $(\bX, Y_1 = 0, Y_2 > 0)$ are absolutely continuous with bounded PDFs, i.e., that
\begin{equation}\label{eq: bounded conditional distributions}
f_{Y_1 - Y_2\mid \bX, Y_1 > 0, Y_2 > 0}(t\mid \bx)\leqslant C_{pdf},\quad\text{for all }(\bx,t)\in\mathcal X\times\R
\end{equation}
and
\begin{equation}\label{eq: bounded conditional distributions 2}
f_{Y_j\mid \bX, Y_j > 0, Y_{3-j} = 0}(t\mid \bx)\leqslant C_{pdf},\quad\text{for all }(j,\bx,t)\in\{1,2\}\times\mathcal X\times(0,\infty),
\end{equation} 
and that
\begin{align}
& \P\Big(\inf_{|t|\leqslant c_{de}}f_{Y_1 - Y_2\mid \bX, Y_1>0,Y_2>0}(\bX^{\top}\btheta_0 + t\mid \bX)\P(Y_1 > 0, Y_2 > 0\mid \bX) \geqslant 2c_f\Big) \nonumber\\
& \qquad\qquad\qquad\qquad\qquad\qquad\qquad\qquad\qquad\qquad\qquad\qquad\qquad \geqslant 1 - \left(\frac{c_{ev}}{2\sqrt 2 C_{ev}}\right)^2. \label{eq: nontrivial tail bound assumption 5}
\end{align}
Moreover, let Assumptions \ref{assu:Approximate-Sparsity}, \ref{assu:DataPartition} and
\ref{assu:CandidatePenalties} hold. Finally, let $\widehat\Theta(\widehat
\lambda_{\alpha}^{\mathtt{bcv}})$ be the $\ell_1$-MEs in \eqref{eq:ell1PenalizedMEstimationIntro}
arising from the BCV penalty level $\widehat\lambda_{\alpha}^{\mathtt{bcv}}$ in \eqref{eq:BootstrapPenaltyLevel CV} and
$\alpha=\alpha_n$ satisfying $\alpha_n\to0$ and $\ln(1/\alpha_n)\lesssim \ln(pn)$, and suppose that
$$
n^{1/\bar r}\bar B_ns_q\eta_n^{1-q}\to0,\quad\frac{\bar B_n^4 s_q(\ln(pn))^{5-q/2}(\ln n)^2}{n^{1-q/2-4/\bar r}}\to 0\quad\text{and}\quad \;\frac{\bar{B}_n^4\ln^7\left(pn\right)}{n}\to0.
$$
Then
\[
   \sup_{\mathclap{\widehat\btheta\in\widehat{\Theta}(\widehat{\lambda}^{\mathtt{bcv}}_{\alpha})}}\|\widehat{\btheta}-\btheta_{0}\|_{2}\lesssim_{\P} \sqrt{s_{q}\eta_{n}^{2-q}}\quad\text{and}\quad
   \sup_{\mathclap{\widehat\btheta\in\widehat{\Theta}(\widehat{\lambda}^{\mathtt{bcv}}_{\alpha})}}\|\widehat{\btheta}-\btheta_{0}\|_{1}\lesssim_{\P} s_{q}\eta_{n}^{1-q}.
\]
Let $\widetilde\Theta(\widehat\lambda_{\alpha}^{\mathtt{bcv}})$ be the post-$\ell_1$-MEs in
\eqref{eq:post-ell1-ME-thetabar}--\eqref{eq:AllPostEstimators} resulting from
$\widehat\Theta(\widehat\lambda_{\alpha}^{\mathtt{bcv}})$. If, in addition, in all examples except
for Example \ref{exa:PanelCensoredRegressionAndTrimming} (trimmed LAD), also $n^{1/\bar
r}\bar B_ns_q\eta_n^{1-q}\ln(pn)\to0$, then
\[
   \sup_{\mathclap{\widetilde\btheta\in\widetilde{\Theta}(\widehat{\lambda}^{\mathtt{bcv}}_{\alpha})}}\|\widetilde{\btheta}-\btheta_{0}\|_{2}\lesssim_{\P} \sqrt{s_{q}\eta_{n}^{2-q}\ln(pn)}\quad\text{and}\quad
   \sup_{\mathclap{\widetilde\btheta\in\widetilde{\Theta}(\widehat{\lambda}^{\mathtt{bcv}}_{\alpha})}}\|\widetilde{\btheta}-\btheta_{0}\|_{1}\lesssim_{\P} s_{q}\eta_{n}^{1-q}\ln(pn).
\]
\end{cor}

\begin{cor}
[\textbf{Inference in Examples from Main Text}]
\label{cor: examples inference}
Assume the setting of one of the examples from the main text: Example \ref{exa:Logit} (logit or
probit), Example \ref{exa:LogConcaveOrderedResponse} (logit or probit), Example \ref{exa:Expectile},
or Example \ref{exa:PanelCensoredRegressionAndTrimming} (trimmed LS), and let all
assumptions of Corollary \ref{cor: examples} related to these examples hold. In addition, in Example
\ref{exa:Expectile}, assume that the conditional distribution of $Y$ given $\bX$ is absolutely
continuous with bounded PDF, i.e., that
\begin{equation}\label{eq: y given x continuous example}
f_{Y|\bX}(y\mid\bx) \leqslant C_{pdf}\quad\text{for all}\quad(y,\bx)\in\R\times\mathcal X,
\end{equation}
and in Example \ref{exa:PanelCensoredRegressionAndTrimming} (trimmed LS), assume that for
all $j \in \{1,2\}$, the conditional distribution of $Y_j$ given $(\bX,Y_j>0)$ as well as the
unconditional distribution of $\bX^{\top}\btheta_0$ are absolutely continuous with bounded PDFs, i.e., that
\begin{equation}\label{eq: y given x continuous example 4}
f_{Y_j\mid\bX, Y_j > 0}(t\mid\bx) \leqslant C_{pdf}\ \text{for all }(t,\bx)\in (0,\infty)\times \mathcal X\ \text{and}
\ f_{\bX^{\top}\btheta_0}(t) \leqslant C_{pdf} \ \text{for all }t\in\R.
\end{equation}
Also, let Assumptions \ref{as: identifiability inference} and \ref{as: convergence rates
inference} hold and suppose that $\sqrt n a_n^2\to0$, $a_n(n^{1/\bar r}\bar B_n + \sqrt{\ln(pn)})\to 0$ and
$\bar B_n^2\ln(pn) = o(n^{1-4/(\bar r\wedge 8)})$. Finally, in Examples \ref{exa:Expectile} and
\ref{exa:PanelCensoredRegressionAndTrimming} (trimmed LS), suppose also that $\sqrt n(\bar B_n
a_n)^{(3\bar r+2)/(2\bar r+2)}\to 0$. Then the debiased estimator $\widehat \beta$ given in 
\eqref{eq: debiased estimator betahat} satisfies
$$
\frac{\sqrt n(\widehat\beta - \beta_0)}{\sigma_{0}}\overset{D}\to \mathrm{N}(0,1),\quad\text{where}\quad \sigma_{0}^2:=\frac{\E\big[(m'_1(\bX^{\top}\btheta_0,\bY)(D-\bW^{\top}\bmu_0))^2\big]}{\big(\E[m''_{11}(\bX^{\top}\btheta_0,\bY)(D-\bW^{\top}\bmu_0)D]\big)^2}.
$$
\end{cor}
Corollaries \ref{cor: examples} and \ref{cor: examples inference} follow immediately from Theorems
\ref{cor: convergence rate bootstrap after cv}, \ref{thm: convergence rates post estimator} and
\ref{thm: asymptotic distribution} as long as we can verify Assumptions \ref{assu:ParameterSpace},
\ref{assu:Convexity}, \ref{as: diff and int}, \ref{assu:Margin},
\ref{assu:LossLocallyLipschitzAndMore}, \ref{assu:ResidualBootstrapMethod}, \ref{as: post estimator
smoothness}, \ref{as: post estimator moments}, \ref{as: integrability inference}, \ref{as:
smoothness inference} and \ref{as: conditional density} from the main text under low-level
example-specific assumptions of these corollaries with $B_n \lesssim \bar B_n$, $\widetilde B_n
\lesssim \bar B_n$, $r = \bar r$, and $\widetilde r = 8$.\footnote{Our verification in the Examples
\ref{exa:Logit}, \ref{exa:LogConcaveOrderedResponse}, \ref{exa:Expectile} and
\ref{exa:PanelCensoredRegressionAndTrimming} (trimmed LS) leaves $\Delta_n\in(0,\infty)$
unrestricted. The Corollary \ref{cor: examples inference} growth condition $\sqrt n(\bar B_n
a_n)^{(3\bar r+2)/(2\bar r+2)}\to 0$ results from optimizing this choice subject to the growth
conditions stated in Theorem \ref{thm: asymptotic distribution}.} Also, Assumption
\ref{assu:ParameterSpace} holds trivially as we set $\Theta = \R^p$ and Assumption
\ref{assu:Convexity} holds trivially in all examples as we discussed in the main text. In addition,
Assumptions \ref{as: post estimator smoothness} and \ref{as: post estimator moments} follow
immediately from Assumption \ref{as: smoothness inference} and \eqref{eq: verification of high level
assumptions key condition}, respectively, so we do not have to verify them separately. In the rest
of this section, we verify all the remaining assumptions.

Before starting the verification process, however, we note that there are various sets of sufficient
low-level example-specific assumptions. In particular, throughout this section, we do not impose any
restrictions on the set $\Theta$ and assume that $\Theta=\R^p$, which is convenient from the
implementation point of view, but one could assume, for example, that the set $\Theta$ is
$\ell_1$-bounded in the sense that $\sup_{\btheta\in\Theta}\|\btheta\|_1$ is finite but possibly
growing with $n$, and relax some of the assumptions above. For brevity, we provide results only for
assumptions that are listed in Corollaries \ref{cor: examples} and \ref{cor: examples inference}.

Let $f\colon\R\times\mathcal X\to\R$ be the function defined by $f(t,\bx):=\E[m(t,\bY)\mid\bX=\bx]$
for $(t,\bx)\in\R\times\mathcal X$. We will show that it exists under our conditions. Also, in this
section, to emphasize dependence between constants, we will use function arguments. For example, we
will write $C = C(C_{ev},C_0)$ when the constant $C\in(0,\infty)$ may depend on $C_{ev}$ and $C_0$. 

To streamline the verification process, we first state five lemmas, whose proofs can be found at the
end of this section.

\begin{lem}\label{lem: verification of margin assumption}
Let Assumptions \ref{assu:ParameterSpace}, \ref{assu:Convexity} and \ref{as: diff and int}
hold, and suppose that inequalities \eqref{eq: verification of high level assumptions key condition} and
\eqref{eq: verification of high level assumptions key condition 2} are satisfied. Let $c_f$ and
$c_M'$ be some constants in $(0,\infty)$. In addition, suppose that for all $\bx\in\mathcal X$,
$t\mapsto f(t,\bx)$ exists as a differentiable function from $\R$ to $\R$, with its first derivative
$t\mapsto f_1'(t,\bx)$ being Lipschitz continuous on compacta, so that (per Rademacher's theorem)
there is a (possibly empty) Lebesgue null set $N(\bx)\subset\R$ for which also the second
derivatives $f_{11}''(t,\bx),t\in\R\backslash N(\bx)$, exist. Moreover, suppose that
$\E[|f_1'(\bX^{\top}\btheta,\bX)X_j|]<\infty$ for all $\btheta\in\Theta$ and $j\in[p]$, and that at
least one of the following two conditions is satisfied:
\begin{equation}\label{eq: verifying assumption 3.4 key condition}
\P\left(\inf_{\substack{t\in[-C,C]\\t\notin N(\bX)}}f_{11}''(t,\bX)\geqslant 4c_f\right) \geqslant 1-\left(\frac{c_{ev}}{2\sqrt 2C_{ev}}\right)^2\;\text{for}\quad C: = \frac{4C_{ev}}{c_{ev}}\sqrt{C_0^2+C_{ev}(c_M')^2},
\end{equation}
\begin{equation}\label{eq: verifying assumption 3.4 key condition 2}
\P\left(\inf_{\substack{t=\bX^{\top}\btheta_0+u\\u\in[-C,C]\\t\notin N(\bX)}}f_{11}''(t,\bX)\geqslant 4c_f\right) \geqslant 1-\left(\frac{c_{ev}}{2\sqrt 2C_{ev}}\right)^2\;\text{for}\quad C: = \frac{2\sqrt 2c_M'C_{ev}^{3/2}}{c_{ev}}.
\end{equation}
Then Assumption \ref{assu:Margin} holds with the given $c_M'$ and
$c_M:=(c_{ev}c_f)\wedge1$.
\end{lem}
% J: Have modified the lemma and its proof to accommodate the exceptional set. This change allows me
% to handle the trimmed LS example w/o presuming existence of densities. (well, at least for
% estimation error control; inference is a different story). Note to self: Lipschitz cont => abs
% cont => (second) derivative exist as integrable function on compacta, which is used to express
% the Taylor remainder in integral form.

\begin{lem}\label{lem: verification of some high level assumptions}
Suppose that inequalities \eqref{eq: verification of high level assumptions key condition},
\eqref{eq: verification of high level assumptions key condition 2} and \eqref{eq: verification of
high level assumptions key condition 3} are satisfied, and let $c_{m,1}$, $c_{m,2}$, and $c_{m,3}$
be some constants in $[0,\infty)$. Also, suppose that the function $m(\cdot,\by)$ is continuously
differentiable for all $\by\in\mathcal Y$ with the first derivative $m(\cdot,\by)$ satisfying
\begin{equation}\label{eq: bounded derivative lemma}
|m_1'(t,\by)| \leqslant c_{m,1} + c_{m,2}|t|,\quad\text{for all }(t,\by)\in\R\times\mathcal Y
\end{equation}
and
\begin{equation}\label{eq: lipschitz derivative lemma}
|m_1'(t_2,\by) - m_1'(t_1,\by)| \leqslant c_{m,3}|t_2 - t_1|,\quad\text{for all }(t_1,t_2,\by)\in\R\times\R\times\mathcal Y.
\end{equation}
Then Assumption \ref{assu:LossLocallyLipschitzAndMore} is satisfied with
$L(\bx,\by):=(1 + c_{m,1}+c_{m,2})(1 + |\bx^{\top}\btheta_0|)$ for all
$(\bx,\by)\in\mathcal X\times\mathcal Y$, $c_L:=1$, $r:=\bar r$, $B_n := (1 +
c_{m,1}+c_{m,2})\bar B_n$, and 
$C_L^2:=\max\{1,
   2(1+c_{m,1}+c_{m,2})^2C_{ev}(1+C_0^2),
   3(c_{m,1}^2C_{ev}+c_{m,2}^2 C_0^2 C_{ev}+c_{m,2}^2C_{ev}^2),
   c_{m,3}^2C_{ev}\}$.
\end{lem}

\begin{lem}\label{lem: verification of high level assumptions section 4}
Suppose that all conditions of Lemma \ref{lem: verification of some high level assumptions} are
satisfied. In addition, suppose that there is a constant $c_U\in(0,\infty)$ such that $\E[|
m_1'(\bX^{\top}\btheta_0,\bY) X_j |^2]\geqslant c_U^2$ for all $j\in[p]$. Then Assumption
\ref{assu:ResidualBootstrapMethod} is satisfied with the given $c_U$, $C_U^2:=\max\{1,
2(c_{m,1}^2C_0^2 + c_{m,2}^2C_0^4)\}$ and $\widetilde{B}_n^2:= \max\{1,8(c_{m,1} + c_{m,2})^4 C_0^4
(1+C_0^4), (c_{m,1}+c_{m,2})^2 \bar{B}_n^2\}$.
\end{lem}

\begin{lem}\label{lem: verification of high level assumptions section 5} 
Suppose that all conditions of Lemma \ref{lem: verification of some high level assumptions} are
satisfied. Then Assumption \ref{as: integrability inference} is satisfied with $\widetilde r := 8$
and $C_M:=\max\{C_0,(c_{m,1} + c_{m,2})(1+C_0)\}$.
\end{lem}

\begin{lem}\label{lem: lipschitz derivative}
Let $Z$ be a random variable satisfying $\E[|Z|]<\infty$. Then the function $f$ defined by
$
f(t):= \E[(Z-t)\mathbf 1(Z\geqslant t)]
$
or, equivalently, $f(t):=\E[(Z-t)\mathbf 1(Z > t)]$, is a Lipschitz continuous mapping from $\R$ to
$\R$. If, in addition, $Z$ is continuously distributed, i.e.~$\P(Z=t)=0$ for all
$t\in\R$, then $f$ is differentiable with derivative $f'(t) = - \P(Z>t)$ for all $t\in\R$.
\end{lem}
% J: Have changed the lemma to match with expectile example.

We are now ready to verify Assumptions \ref{as: diff and int}, \ref{assu:Margin},
\ref{assu:LossLocallyLipschitzAndMore}, \ref{assu:ResidualBootstrapMethod}, \ref{as: integrability
inference}, \ref{as: smoothness inference} and \ref{as: conditional density} in each of the examples
from the main text.

\medskip
\noindent
\textbf{Example \ref{exa:Logit} (Binary Response Model, Continued).} 
We first consider the case of the \emph{logit} loss function \eqref{eq:LossLogit}. In this case, the
differentiability part of Assumption \ref{as: diff and int} is trivial. In addition, for all
$\btheta\in\Theta$, since $1+\mathrm{e}^t\leqslant 2\mathrm{e}^t$ for $t\geqslant0$, by \eqref{eq:
verification of high level assumptions key condition} we have
$$
\E[|m(\bX^{\top}\btheta,Y)|] \leqslant \E\left[\ln(1+\mathrm{e}^{|\bX^{\top}\btheta|})\right] + \E[|\bX^{\top}\btheta|] \leqslant \ln 2 + 2\E[|\bX^{\top}\btheta|] <\infty,
$$
which gives the integrability part of Assumption \ref{as: diff and int}.

Next, recalling that $\P(Y=1|\bX=\bx)=\Lambda(\bx^\top\btheta_0)$ for the standard logistic CDF
$\Lambda(t)=1/(1+\mathrm{e}^{-t})$, we see that the function $f$ defined by
$$
f(t,\bx)=\E[m(t,Y)\mid \bX = \bx] = \ln(1 + \mathrm{e}^t) - \Lambda(\bx^{\top}\btheta_0)t,\quad(t,\bx)\in\R\times\mathcal X,
$$ 
is twice continuously differentiable in its first argument. Hence, for all $\bx\in\mathcal X$,
$t\mapsto f_1'(t,\bx)$ is Lipschitz continuous on compacta. Moreover, $f$ satisfies
\begin{align*}
&f_1'(t,\bx) = \Lambda(t) - \Lambda(\bx^{\top}\btheta_0)\quad\text{and}\quad f_{11}''(t,\bx) = \Lambda'(t) = \Lambda(t)(1-\Lambda(t)),
\end{align*}
which, in particular, shows that $|f'_1(t,\bx)|\leqslant1$, that $f_{11}''(t,\bx)$ is strictly
positive for all $(t,\bx)\in\R\times\mathcal X$ and that $f_{11}''(t,\bx)$ does not actually depend
on $\bx$. Thus, for all $\btheta\in\Theta$ and $j\in[p]$, we have
$\E[|f_1'(\bX^{\top}\btheta,\bX)X_j|]\leqslant \E[|X_j|] < \infty$ by \eqref{eq: verification of
high level assumptions key condition 2}. Now, for arbitrary $C\in(0,\infty)$, defining
$c_f(C):=(1/4)\inf_{|t|\leqslant C}\Lambda'(t)=(1/4)\Lambda'(C)\in(0,\infty)$, we see that
$f_{11}''(t,\bx)\geqslant 4c_f(C)$ for all $(t,\bx)\in[-C,C]\times\mathcal X$. Choosing $c_M':=1$
and the specific $C:=C(C_0,c_{ev},C_{ev},c_M')$ in \eqref{eq: verifying assumption 3.4 key
condition}, by Lemma \ref{lem: verification of margin assumption} with the implied $c_f:=c_f(C)$, we
see that Assumption \ref{assu:Margin} holds with $c_M' = 1$ and $c_M = c_M(C_0,c_{ev},C_{ev})$.

Next, for all $(t,y)\in\R\times\mathcal Y$, the loss has derivatives in its first argument of all
orders with
\begin{alignat*}{2}
m_1'(t,y) &= \Lambda(t) - y &{}={}& \frac{\mathrm{e}^t}{1+\mathrm{e}^t} - y,\\
m_{11}''(t,y) &= \Lambda'(t) &{}={}& \frac{\mathrm{e}^t}{(1+\mathrm{e}^t)^2}\quad\text{and}\\
m_{111}'''(t,y)  &= \Lambda''(t) &{}={}& \frac{\mathrm{e}^t (1-\mathrm{e}^t)}{(1+\mathrm{e}^t)^3},
\end{alignat*}
and so $|m_1'(t,y)|\leqslant 1$, $|m_{11}''(t,y)|\leqslant 1$, and $|m_{111}'''(t,y)|\leqslant 1$.
Hence, \eqref{eq: bounded derivative lemma} and (by way of the mean-value theorem) \eqref{eq:
lipschitz derivative lemma} are satisfied with $c_{m,1} = 1$, $c_{m,2} = 0$, and $c_{m,3} = 1$.
Assumption \ref{assu:LossLocallyLipschitzAndMore} now follows from Lemma \ref{lem: verification of
some high level assumptions} with $L(\bx,y) = 2(1+| \bx^{\top}\btheta_0 |)$ for
$(\bx,y)\in\mathcal X\times\mathcal Y$, $c_L=1$, $r=\bar r$, $B_n = 2\bar B_n$, and $C_L =
C_L(C_0,C_{ev})$. Also, Assumption \ref{as: smoothness inference} is satisfied with $J=1$ and $C_m
=1$. In addition, since $J=1$, Assumption \ref{as: conditional density} is satisfied trivially.
Moreover, Assumption \ref{as: integrability inference} follows from Lemma \ref{lem: verification of
high level assumptions section 5} with $\widetilde r = 8$ and $C_M = C_M(C_0)$.

Finally, letting $C:=C_0^2/\sqrt{c_{ev}/2}$, noting that $U=m_1'(\bX^{\top}\btheta_0,Y) =
\Lambda(\bX^{\top}\btheta_0) - Y$ and letting $c=c(C_0,c_{ev})$ be the constant
$c:=\inf_{|t|\leqslant C}\Lambda'(t)=\Lambda'(C)\in(0,\infty)$, since $Y|\bX=\bx$ is here Bernoulli
distributed with $\P(Y=1|\bX=\bx)=\Lambda(\bx^\top\btheta_0)$, iterating expectations, we have
for all $j\in[p]$ that
\begin{align*}
\E[|UX_j|^2]
& = \E\left[|X_j|^2\E\left[|Y-\Lambda(\bX^\top\btheta_0)|^2\middle|\bX\right]\right]
=\E\left[|X_j|^2\Lambda(\bX^{\top}\btheta_0)\left(1-\Lambda(\bX^{\top}\btheta_0)\right)\right] \\
& \geqslant c\E\left[|X_j|^2\mathbf{1}\left\{|\bX^{\top}\btheta_0|\leqslant C\right\}\right] = c\left( \E[|X_j|^2] - \E\left[|X_j|^2\mathbf 1\left\{|\bX^{\top}\btheta_0|>C\right\}\right] \right)\\
& \geqslant c\left(c_{ev} - \E\left[|X_j|^2|\bX^{\top}\btheta_0|^2\right]/C^2\right) \geqslant c\left(c_{ev} - \Big( \E[|X_j|^4]\E[|\bX^{\top}\btheta_0|^4] \Big)^{1/2}/C^2\right) \\
& \geqslant c\left( c_{ev} - C_0^4/C^2 \right) = cc_{ev}/2,
\end{align*}
by \eqref{eq: verification of high level assumptions key condition}, \eqref{eq: verification of high
level assumptions key condition 2}, and the Cauchy-Schwarz and H{\"o}lder inequalities. The final
inequality in the previous display shows that the lower bound in Assumption
\ref{assu:ResidualBootstrapMethod}.\ref{enu:ZijSecondMomentsBndAwayZero} is satisfied with $c_U =
c_U(C_0,c_{ev})$. The remaining parts of Assumption \ref{assu:ResidualBootstrapMethod} follow from
Lemma \ref{lem: verification of high level assumptions section 4} with $C_U = C_U(C_0)$ and
$\widetilde B_n = \bar B_n \lor C_1$ for some $C_1=C_1(C_0)$. This observation
completes the logit case.

Next, we consider the case of the \emph{probit} loss function \eqref{eq:LossProbit}. Let $\Phi$ and
$\phi$ denote the standard normal CDF and PDF, respectively.  We start with deriving some basic
inequalities. By Proposition 2.5(b) in \cite{dudley2014uniform}, for all $t\in[1,\infty)$, we have
$
\phi(t)/(1 - \Phi(t)) \leqslant 2t,
$
so
$$
\frac{1}{1 - \Phi(t)} \leqslant 2\sqrt{2\pi} t \mathrm{e}^{t^2/2} \leqslant 2\sqrt{2\pi}\mathrm{e}^{t+t^2/2}.
$$
Also, for all $t\in(-\infty,1)$, we have $1/(1-\Phi(t)) \leqslant 1/(1-\Phi(1))$. Hence,
\begin{equation}\label{eq: gauss 1}
|\ln(1-\Phi(t))| \leqslant |\ln(1 - \Phi(1))| + \ln(2\sqrt{2\pi}) + |t| + \frac{t^2}{2},\quad\text{for all } t\in\R,
\end{equation}
and, using the symmetry of the standard normal distribution to get $\Phi(t) = 1 - \Phi(-t)$,
\begin{equation}\label{eq: gauss 2}
|\ln(\Phi(t))| \leqslant |\ln(1 - \Phi(1))| + \ln(2\sqrt{2\pi}) + |t| + \frac{t^2}{2},\quad\text{for all } t\in\R.
\end{equation}
In addition, for all $t\in(-\infty,1)$, we have $\phi(t)/(1 - \Phi(t)) \leqslant \phi(0)/(1 -
\Phi(1))$. Hence, both
\begin{equation}\label{eq: gauss 3}
\frac{\phi(t)}{1 - \Phi(t)} \leqslant \frac{\phi(0)}{1 - \Phi(1)} + 2|t|\quad\text{and}\quad\frac{\phi(t)}{\Phi(t)} \leqslant \frac{\phi(0)}{1 - \Phi(1)} + 2|t|,\quad\text{for all } t\in\R,
\end{equation}
where the second inequality follows from the first, $\Phi(t) = 1 - \Phi(-t)$ and $\phi(t) =
\phi(-t)$. Moreover, by (1.2.2) in \cite{adler_random_2007}, for all $t\in(0,\infty)$, we have
$\phi(t)/(1-\Phi(t))>t$, and so
\begin{equation}\label{eq: gauss 4}
\frac{\phi(t)}{1 - \Phi(t)} - t>0\quad\text{and}\quad \frac{\phi(t)}{ \Phi(t)} + t>0, \quad\text{for all } t\in\R,
\end{equation}
where the second inequality again follows from the first and symmetry. Again, by (1.2.2) in
\cite{adler_random_2007}, for all $t\in(1,\infty)$, we have
$$
0<\frac{\phi(t)}{1 - \Phi(t)} - t \leqslant \frac{1}{1/t-1/t^3} - t = \frac{t}{t^2 - 1}.
$$
The previous display shows that for $t\in(2,\infty)$,
$$
\frac{\phi(t)}{1 - \Phi(t)}\cdot\left| \frac{\phi(t)}{1 - \Phi(t)} - t \right|\leqslant 2t\cdot\frac{t}{t^2 - 1}=\frac{2t^2}{t^2-1}\leqslant \frac{8}{3}.
$$
The left-hand side (continuous) function is bound on compacta, including $[-2,2]$. Finally, for
$t\in(-\infty,2)$, as $\phi$ is bounded, $1-\Phi(\cdot)$ is bounded away from zero, and $\phi(t)$
decays more rapidly than $-t$ grows as $t\to-\infty$, the same left-hand side function remains
bounded on $(-\infty,2)$ as well. Conclude that there is a universal constant $C\in[1,\infty)$ such that 
\begin{equation}\label{eq: gauss 5}
\frac{\phi(t)}{1 - \Phi(t)}\left| \frac{\phi(t)}{1 - \Phi(t)} - t \right| \leqslant C\quad\text{and}\quad \frac{\phi(t)}{\Phi(t)}\left| \frac{\phi(t)}{\Phi(t)} + t \right| \leqslant C,\quad\text{for all } t\in\R,
\end{equation}
where the second inequality follows from symmetry and parallel reasoning. Finally, both
\begin{equation}\label{eq: gauss 6}
\lim_{t\to\infty}t^2\left(\frac{\phi(t)}{1 - \Phi(t)} - t - \frac{1}{t}\right)=0\quad\text{and}\quad \lim_{t\to-\infty}t^2\left(\frac{\phi(t)}{\Phi(t)} + t + \frac{1}{t}\right) = 0
\end{equation}
which both follow from repeated application of L'H{\^o}pital's rule.

With these inequalities in mind, we now verify the required assumptions. The differentiability part
of Assumption \ref{as: diff and int} is trivial. In addition, for all $\btheta\in\Theta$, we have
$$
\E[|m(\bX^{\top}\btheta,Y)|] \leqslant \E[|\ln(\Phi(\bX^{\top}\btheta))| + |\ln(1 - \Phi(\bX^{\top}\btheta))|] <\infty
$$
by \eqref{eq: gauss 1}, \eqref{eq: gauss 2}, the Cauchy-Schwarz inequality and \eqref{eq:
verification of high level assumptions key condition}, which yields also the integrability part of
Assumption \ref{as: diff and int}.

Next, viewed as a function of its first argument, the function $f$ defined by
$$
f(t,\bx)=\E[m(t,Y)\mid \bX = \bx] = -\Phi(\bx^{\top}\btheta_0)\ln(\Phi(t)) - (1 - \Phi(\bx^{\top}\btheta_0))\ln(1 - \Phi(t)),
$$ 
for $(t,\bx)\in\R\times\mathcal X$, is seen to be twice continuously differentiable in its first argument, with
\begin{align*}
f_1'(t,\bx) &= -\Phi(\bx^{\top}\btheta_0)\frac{\phi(t)}{\Phi(t)} + (1 - \Phi(\bx^{\top}\btheta_0))\frac{\phi(t)}{1 - \Phi(t)}\quad\text{and}\\
f_{11}''(t,\bx) &= \Phi(\bx^{\top}\btheta_0)\frac{\phi(t)}{\Phi(t)}\left(t + \frac{\phi(t)}{\Phi(t)}\right) + (1 - \Phi(\bx^{\top}\btheta_0))\frac{\phi(t)}{1 - \Phi(t)}\left(\frac{\phi(t)}{1 - \Phi(t)} - t\right)>0.
\end{align*}
Thus, for all $\btheta\in\Theta$ and $j\in[p]$, we have 
$$
\E[|f_1'(\bX^{\top}\btheta,\bX)X_j|]\leqslant \E\left[\left( \frac{\phi(\bX^{\top}\btheta)}{\Phi(\bX^{\top}\btheta)} + \frac{\phi(\bX^{\top}\btheta)}{1 - \Phi(\bX^{\top}\btheta)} \right)|X_j|\right] < \infty
$$ 
by \eqref{eq: gauss 3}, \eqref{eq: verification of high level assumptions key condition}, \eqref{eq:
verification of high level assumptions key condition 2}, and the Cauchy-Schwarz inequality. Also,
by the positivities in \eqref{eq: gauss 4}, for arbitrary $C\in(0,\infty)$, defining
$$
c_f(C):=\frac{1}{4}\min\left\{\inf_{|t|\leqslant C}\frac{\phi(t)}{\Phi(t)}\left(t + \frac{\phi(t)}{\Phi(t)}\right),\inf_{|t|\leqslant C}\frac{\phi(t)}{1 - \Phi(t)}\left(\frac{\phi(t)}{1 - \Phi(t)} - t\right)\right\}\in(0,\infty)
$$
we see that $f_{11}''(t,\bx)\geqslant 4c_f(C)$ for all $t\in[-C,C]\times\mathcal X$. Choosing
$c_M':=1$ and the specific $C:=C(C_0,c_{ev},C_{ev},c_M')$ in \eqref{eq: verifying assumption 3.4 key
condition}, by Lemma \ref{lem: verification of margin assumption} with the implied $c_f:=c_f(C)$, we
see that Assumption \ref{assu:Margin} holds with $c_M' = 1$ and $c_M = c_M(C_0,c_{ev},C_{ev})$.

Next, for all $(t,y)\in\R\times\mathcal Y$, differentiation shows that
\begin{align*}
m_1'(t,y) &= -y\frac{\phi(t)}{\Phi(t)} + (1 - y)\frac{\phi(t)}{1 - \Phi(t)},\\
m_{11}''(t,y) &= y\frac{\phi(t)}{\Phi(t)}\left(t + \frac{\phi(t)}{\Phi(t)}\right) + (1 - y)\frac{\phi(t)}{1 - \Phi(t)}\left(\frac{\phi(t)}{1 - \Phi(t)} - t\right)\quad\text{and}\\
m_{111}'''(t,y) &= y\frac{\phi(t)}{\Phi(t)}\left\{1 -  \left( \frac{\phi(t)}{\Phi(t)} + t \right)\left( \frac{2\phi(t)}{\Phi(t)} + t \right) \right\}  \\
&\quad +  (1-y)\frac{\phi(t)}{1 - \Phi(t)}\left\{ \left( \frac{\phi(t)}{1 - \Phi(t)} - t \right)\left( \frac{2\phi(t)}{1 - \Phi(t)} - t \right) - 1 \right\}.
\end{align*}
Therefore, for all $(t,y)\in\R\times\mathcal Y$,
$
|m_1'(t,y)|\leqslant \phi(0)/(1 - \Phi(1)) + 2|t|
$
by \eqref{eq: gauss 3} and
$
|m_{11}''(t,y)| \leqslant C
$
for some universal constant $C\in[1,\infty)$ by \eqref{eq: gauss 5}. Hence, both \eqref{eq: bounded
derivative lemma} and (via the mean-value theorem) \eqref{eq: lipschitz derivative lemma} are
satisfied with $c_{m,1} = \phi(0)/(1 - \Phi(1))$, $c_{m,2} = 2$, and $c_{m,3} = C$, and so
Assumption \ref{assu:LossLocallyLipschitzAndMore} follows from Lemma \ref{lem: verification of some
high level assumptions} with $L(\bx,y) = (3 + \phi(0)/[1-\Phi(1)])(1+| \bx^{\top}\btheta_0 |)$ for
 $(\bx,y)\in\mathcal X\times\mathcal Y$, $c_L=1$, $r=\bar r$, $B_n = (3 +
\phi(0)/[1-\Phi(1)])\bar B_n$, and $C_L = C_L(C_0,C_{ev})$. Also, as $t\to\infty$, uniformly over
$y\in\mathcal Y$,
$$
|m_{111}'''(t,y)| = o(1) + (t + o(t))\left| (1/ t + o(1/t^2))(t + 2/t + o(1/t^2)) - 1 \right| =o(1)
$$
and 
as $t\to-\infty$, uniformly over $y\in\mathcal Y$,
$$
|m_{111}'''(t,y)| = (-t + o(|t|))\left|1 - (-1/ t + o(1/t^2))(-t - 2/t + o(1/t^2)) \right| + o(1) =o(1)
$$
using the limits in \eqref{eq: gauss 6}. Since each $t\mapsto m_{111}'''(t,y),y\in\mathcal Y$, is
continuous, from the previous two displays we deduce boundedness of $t\mapsto
m_{111}'''(t,y)$ uniformly in $y\in\mathcal Y$. Hence, Assumption \ref{as: smoothness inference} is
satisfied with $J=1$ and some universal constant $C_m \in [1,\infty)$. Since $J=1$, Assumption
\ref{as: conditional density} is trivially satisfied. Moreover, Assumption \ref{as: integrability
inference} follows from Lemma \ref{lem: verification of high level assumptions section 5} with
$\widetilde r = 8$ and $C_M = C_M(C_0)$. 

Finally, let $C:=C_0^2/\sqrt{c_{ev}/2}$ and let $c=c(C_0,c_{ev})$ be the constant
$$
c:=\inf_{|t|\leqslant C}\frac{\phi(t)^2}{\Phi(t)\left[1-\Phi(t)\right]}\in(0,\infty).
$$
Since $Y|\bX=\bx$ is here Bernoulli distributed with $\P(Y=1|\bX=\bx)=\Phi(\bx^\top\btheta_0)$,
we have
$$
U=m_1'(\bX^{\top}\btheta_0,Y) = \frac{\phi(\bX^{\top}\btheta_0)}{\Phi(\bX^{\top}\btheta_0)\left[1-\Phi(\bX^{\top}\btheta_0)\right]}\left[\Phi(\bX^{\top}\btheta_0)-Y\right].
$$
Iterating expectations, we see that for all $j\in[p]$,
\begin{align*}
\E[|UX_j|^2]
& = \E\left[\frac{\phi(\bX^{\top}\btheta_0)^2}{\Phi(\bX^{\top}\btheta_0)^2\left[1-\Phi(\bX^{\top}\btheta_0)\right]^2}|X_j|^2\E\left[\left|Y-\Phi(\bX^{\top}\btheta_0)\right|^2\middle|\bX\right]\right] \\
& = \E\left[\frac{\phi(\bX^{\top}\btheta_0)^2}{\Phi(\bX^{\top}\btheta_0)\left[1-\Phi(\bX^{\top}\btheta_0)\right]}|X_j|^2\right] \\
& \geqslant c\E\left[|X_j|^2\mathbf{1}\left\{|\bX^{\top}\btheta_0|\leqslant C\right\}\right]
= c\left( \E[|X_j|^2] - \E\left[|X_j|^2\mathbf 1\left\{|\bX^{\top}\btheta_0|>C\right\}\right] \right)\\
& \geqslant c\left(c_{ev} - \E\left[|X_j|^2|\bX^{\top}\btheta_0|^2\right]/C^2\right)
\geqslant c\left(c_{ev} - \left( \E[|X_j|^4]\E\left[|\bX^{\top}\btheta_0|^4\right] \right)^{1/2}/C^2\right) \\
& \geqslant c\left( c_{ev} - C_0^4/C^2 \right) = cc_{ev}/2,
\end{align*}
by \eqref{eq: verification of high level assumptions key condition}, \eqref{eq: verification of high
level assumptions key condition 2}, the Cauchy-Schwarz and H{\"o}lder inequalities, and the choice
of $C$. The final inequality in the previous display shows that the lower bound in Assumption
\ref{assu:ResidualBootstrapMethod}.\ref{enu:ZijSecondMomentsBndAwayZero} is satisfied with $c_U =
c_U(C_0,c_{ev})$. The remaining parts of Assumption \ref{assu:ResidualBootstrapMethod} follow from
Lemma \ref{lem: verification of high level assumptions section 4} with $C_U = C_U(C_0)$, and
$\widetilde B_n = C_1\bar B_n \lor C_2$, where $C_1\in[1,\infty)$ is a universal constant and
$C_2=C_2(C_0)$. This observation completes the probit case and, thus, the example.
\qed

\medskip
\noindent
\textbf{Example \ref{exa:LogConcaveOrderedResponse} (Ordered Response Model, Continued).} 
We first consider the \emph{logit} case. Here the CDF $F$ is of the logistic form, $F(t) =
\Lambda(t) = 1/(1+\mathrm{e}^{-t})$, which has PDF
$\Lambda'(t)=\Lambda(t)[1-\Lambda(t)]=\mathrm{e}^{-t}/(1+\mathrm{e}^{-t})^2$. In this case, the
differentiability part of Assumption \ref{as: diff and int} is trivial. Also, for all $t_1,t_2\in\R$
such that $t_1<t_2$, by the Mean Value Theorem, there is a $\tau\in(t_1,t_2)$ such that
\begin{align}
|\ln(\Lambda(t_2) - \Lambda(t_1))|
&=\ln\left(\frac{1}{\Lambda(t_2) - \Lambda(t_1)}\right)
=\ln\left(\frac{1}{\Lambda'(\tau)(t_2 - t_1)}\right)\nonumber\\
&=\ln\left(\frac{(1+\mathrm{e}^{\tau})^2}{\mathrm{e}^{\tau}}\right) - \ln(t_2 - t_1)\nonumber\\
&\leqslant 2\ln(1 + \mathrm{e}^{|\tau|}) + |\tau| -\ln(t_2 - t_1)\nonumber\\
&\leqslant 2\ln 2 + 3(|t_1|\vee|t_2|) - \ln(t_2 - t_1).\label{eq: log of logistic CDF difference}
\end{align}
Moreover, since $1+\mathrm{e}^t\leqslant 2\mathrm{e}^t$ for $t\geqslant0$, for any $t\in\R$, we have
\begin{align}
\left|\ln(\Lambda(t))\right|\lor\left|\ln(1-\Lambda(t))\right|\leqslant \ln 2 + |t|.\label{eq: log of logistic CDF}
\end{align}
Hence, for all $\btheta\in\Theta$, using the bound \eqref{eq: log of logistic CDF difference} to
control the terms with $v\in[V-1]$ and \eqref{eq: log of logistic CDF} for the 
end cases $v\in\{0,V\}$, from \eqref{eq: verification of high level assumptions key condition}, we
see that
\begin{align*}
\E\left[\left|m(\bX^{\top}\btheta,Y)\right|\right]
& \leqslant \sum_{v=0}^V \E\left[\left|\ln(\Lambda(\alpha_{v+1} - \bX^{\top}\btheta) - \Lambda(\alpha_v - \bX^{\top}\btheta))\right|\right] < \infty.
\end{align*}
(Recall that we interpret $\Lambda(+\infty)$ as one and $\Lambda(-\infty)$ as zero.) The previous display
shows the integrability part of Assumption \ref{as: diff and int}.

For each $t\in\R$, $m(t,Y)$ is integrable. Hence, for any $\bx\in\mathcal X$, $t\mapsto f(t,\bx)
:=\E[m(t,Y)|\bX = \bx]$ is a well-defined function from $\R$ to $\R$, given by
\begin{align*}
f(t,\bx) & = -\sum_{v=0}^V \left[\Lambda(\alpha_{v+1} - \bx^{\top}\btheta_0) - \Lambda(\alpha_{v} - \bx^{\top}\btheta_0)\right]\ln\left(\Lambda(\alpha_{v+1} - t) - \Lambda(\alpha_{v} - t)\right).
\end{align*}
We see that $t\mapsto f(t,\bx)$ is twice continuously differentiable, with derivatives
\begin{align*}
f_1'(t,\bx) &= \sum_{v=0}^V \left[\Lambda(\alpha_{v+1} - \bx^{\top}\btheta_0) - \Lambda(\alpha_{v} - \bx^{\top}\btheta_0)\right]\left[1 - \Lambda(\alpha_{v+1} - t) - \Lambda(\alpha_{v} - t)\right]\quad\text{and}\\
f_{11}''(t,\bx)  &= \sum_{v=0}^V \left[\Lambda(\alpha_{v+1} - \bx^{\top}\btheta_0) - \Lambda(\alpha_{v} - \bx^{\top}\btheta_0)\right]\left[\Lambda'(\alpha_{v+1} - t)+\Lambda'(\alpha_{v} - t)\right].
\end{align*}
Thus, for each $\btheta\in\Theta$ and $j\in[p]$, we have
$\E[|f_1'(\bX^{\top}\btheta,\bX)X_j|]\leqslant \E[|X_j|] < \infty$ by \eqref{eq: verification of
high level assumptions key condition 2}. Also, for arbitrary $C\in(0,\infty)$, gathering the cut-off
points in $\balpha:=(\alpha_1,\dots,\alpha_V)^\top$ and defining
\[
   c_f(\balpha,C):=\frac{1}{4}\inf_{|t|\leqslant C}\min_{v\in\{0,1,\dotsc,V\}}\left\{\Lambda'(\alpha_{v+1} - t)+\Lambda'(\alpha_{v} - t)\right\}\in(0,\infty),
\]
we see that, for all $(t,\bx)\in[-C,C]\times\mathcal X$,
\begin{align*}
   f_{11}''(t,\bx)
   &\geqslant \sum_{v=0}^V \left[\Lambda(\alpha_{v+1} - \bx^{\top}\btheta_0) - \Lambda(\alpha_{v} - \bx^{\top}\btheta_0)\right]\left[\Lambda'(\alpha_{v+1} - t)+\Lambda'(\alpha_{v} - t)\right]\\
   &\geqslant 4c_f(\balpha,C)\times \sum_{v=0}^V \left[\Lambda(\alpha_{v+1} - \bx^{\top}\btheta_0) - \Lambda(\alpha_{v} - \bx^{\top}\btheta_0)\right]=4c_f(\balpha,C),
\end{align*}
where the equality follows from the probability differences summing to one. Choosing the specific
$C:=C(C_0,c_{ev},C_{ev},c_M')$ in \eqref{eq: verifying assumption 3.4 key
condition} and $c_M':=1$, by Lemma \ref{lem: verification of margin assumption} with the implied
$c_f:=c_f(\balpha,C)$, we see that Assumption \ref{assu:Margin} holds with $c_M' = 1$ and $c_M =
c_M(\balpha,C_0,c_{ev},C_{ev})$.

Next, differentiating thrice shows that, for all $(t,y)\in\R\times\mathcal Y$,
\begin{align*}
m_1'(t,y) &= \sum_{v=0}^V \mathbf 1(y = v)\left[1 - \Lambda(\alpha_{v+1} - t) - \Lambda(\alpha_v - t)\right],\\
m_{11}''(t,y) &= \sum_{v=0}^V \mathbf 1(y = v)\left[\Lambda'(\alpha_{v+1} - t) +  \Lambda'(\alpha_{v} - t)\right]\quad\text{and}\\
m_{111}'''(t,y) &=- \sum_{v=0}^V \mathbf 1(y = v)\big(\Lambda'(\alpha_{v+1} - t)\left[1-2\Lambda(\alpha_{v+1} - t)\right]+\Lambda'(\alpha_{v} - t)\left[1-2\Lambda(\alpha_{v} - t)\right]\big),
\end{align*}
from which we deduce that $|m_1'(t,y)|\leqslant 1$, $|m_{11}''(t,y)|\leqslant 2$, and
$|m_{111}'''(t,y)|\leqslant 2$ for all $(t,y)\in\R\times\mathcal Y$. Hence, \eqref{eq: bounded
derivative lemma} and \eqref{eq: lipschitz derivative lemma} are satisfied with $c_{m,1} = 1$,
$c_{m,2} = 0$, and $c_{m,3} = 2$, and so Assumption \ref{assu:LossLocallyLipschitzAndMore} follows
from Lemma \ref{lem: verification of some high level assumptions} with $L(\bx,y) = 2(1+|
\bx^{\top}\btheta_0 |)$ for $(\bx,y)\in\mathcal X\times\mathcal Y$, $c_L=1$, $r=\bar r$, $B_n =
2\bar B_n$, and $C_L = C_L(C_0,C_{ev})$. Also, Assumption \ref{as: smoothness inference} is
satisfied with $J=1$ and $C_m =2$, and Assumption \ref{as: conditional density} holds
trivially (because $J=1$). Moreover, Assumption \ref{as: integrability inference} follows from Lemma
\ref{lem: verification of high level assumptions section 5} with $\widetilde r = 8$ and $C_M =
C_M(C_0)$.

For arbitrary $C\in(0,\infty)$, defining
\[
   c(\balpha,C):=\inf_{|t|\leqslant C}\left\{\Lambda(\alpha_{1} - t)\left[1 - \Lambda(\alpha_{1} - t)\right]^2\right\}\in(0,\infty),
\]
since all summands are non-negative, we must have
\begin{align*}
&\inf_{|t|\leqslant C} \sum_{v=0}^V \left[\Lambda(\alpha_{v+1} - t) - \Lambda(\alpha_{v} - t)\right]\left[1 - \Lambda(\alpha_{v+1} - t) - \Lambda(\alpha_{v} - t)\right]^2\\
&\geqslant \inf_{|t|\leqslant C} \left[\Lambda(\alpha_{1} - t) - \Lambda(\alpha_{0} - t)\right]\left[1 - \Lambda(\alpha_{1} - t) - \Lambda(\alpha_{0} - t)\right]^2 = c(\balpha,C),
\end{align*}
where we have used that $\alpha_0=-\infty$ implies $\Lambda(\alpha_{0} - t) = 0$ for all $t\in\R$. For
this example, 
\begin{align*}
   U=m_1'(\bX^\top\btheta_0,Y)=\sum_{v=0}^V \mathbf{1}\left(Y=v\right)\left[1 - \Lambda(\alpha_{v+1} - \bX^{\top}\btheta_0) - \Lambda(\alpha_{v} - \bX^{\top}\btheta_0)\right],
\end{align*}
such that its square
\begin{align*}
   U^2=\sum_{v=0}^V \mathbf{1}\left(Y=v\right)\left[1 - \Lambda(\alpha_{v+1} - \bX^{\top}\btheta_0) - \Lambda(\alpha_{v} - \bX^{\top}\btheta_0)\right]^2.
\end{align*}
Hence, for the specific $C:=C_0^2/\sqrt{c_{ev}/2}$ and the implied $c:=c(\balpha,C)=c(\balpha,C_0,c_{ev})$, for all
$j\in[p]$, by iterated expectations, \eqref{eq: verification of high level assumptions key
condition}, \eqref{eq: verification of high level assumptions key condition 2}, and the
Cauchy-Schwarz inequality, we get
\begin{align*}
\E[|UX_j|^2]
& = \E\Bigg[|X_j|^2 \sum_{v=0}^V \left[\Lambda(\alpha_{v+1} - \bX^{\top}\btheta_0) - \Lambda(\alpha_{v} - \bX^{\top}\btheta_0)\right]\\
&\qquad\qquad\qquad\times \left[1 - \Lambda(\alpha_{v+1} - \bX^{\top}\btheta_0) - \Lambda(\alpha_{v} - \bX^{\top}\btheta_0)\right]^2 \Bigg] \\
& \geqslant c\E\big[|X_j|^2\mathbf{1}\left\{|\bX^{\top}\btheta_0|\leqslant C\right\}\big]
= c\left( \E[|X_j|^2] - \E\left[|X_j|^2\mathbf 1\left\{|\bX^{\top}\btheta_0|>C\right\}\right] \right)\\
& \geqslant c\left(c_{ev} - \E\left[|X_j|^2|\bX^{\top}\btheta_0|^2\right]/C^2\right)
\geqslant c\left(c_{ev} - \left( \E[|X_j|^4]\E[|\bX^{\top}\btheta_0|^4] \right)^{1/2}/C^2\right) \\
& \geqslant c\left( c_{ev} - C_0^4/C^2 \right) = cc_{ev}/2.
\end{align*}
The final inequality provides the lower bound in Assumption
\ref{assu:ResidualBootstrapMethod}.\ref{enu:ZijSecondMomentsBndAwayZero} for $c_U =
c_U(\balpha,C_0,c_{ev})$. The remaining parts of Assumption \ref{assu:ResidualBootstrapMethod}
follow from Lemma \ref{lem: verification of high level assumptions section 4} with $C_U = C_U(C_0)$
and $\widetilde B_n = \bar B_n\lor C_1$ for $C_1=C_1(C_0)$. This observation completes the logit
case.

Next, we consider the \emph{probit} case. Here the CDF $F$ is of the standard normal form $F(t) =
\Phi(t)$. We first derive several basic inequalities for later reference.
First, we show that
\begin{equation}\label{eq: difference gauss tails 0}
|\ln\left(\Phi(t_2) - \Phi(t_1)\right)| \leqslant \frac{(|t_1|\vee|t_2|)^2}{2} - \ln\left(\frac{t_2 - t_1}{\sqrt{2\pi}}\right)\;\text{for}\;t_1,t_2\in\R\text{ such that }t_1<t_2.
\end{equation}
To this end, note that by the Mean Value Theorem, for some $\tau\in(t_1,t_2)$ we have
\begin{align*}
|\ln\left(\Phi(t_2) - \Phi(t_1)\right)| & = \ln\left(\frac{1}{\Phi(t_2) - \Phi(t_1)}\right) = \ln\left(\frac{1}{\phi(\tau)(t_2 - t_1)}\right) \\
& = \frac{\tau^2}{2} - \ln\left(\frac{t_2 - t_1}{\sqrt{2\pi}}\right) \leqslant \frac{(|t_1|\vee|t_2|)^2}{2} - \ln\left(\frac{t_2 - t_1}{\sqrt{2\pi}}\right),
\end{align*}
where we have inserted $\phi(t)=(2\pi)^{-1/2}\mathrm{e}^{-t^2/2}$. The inequality \eqref{eq:
difference gauss tails 0} follows.

Second, we show that
\begin{equation}\label{eq: difference gauss tails 1}
\frac{\phi(t_1) - \phi(t_2)}{\Phi(t_2) - \Phi(t_1)} \geqslant t_1\;\text{for}\; t_1,t_2\in\R\text{ such that }t_1<t_2.
\end{equation}
To do so, fix any $t_1\in \R$ and let $g\colon(t_1,\infty)\to\R$ be the function defined by
\begin{equation}\label{eq: function g verification example 2}
g(t):=\frac{\phi(t_1) - \phi(t)}{\Phi(t) - \Phi(t_1)},\quad t\in(t_1,\infty).
\end{equation}
By L'H{\^o}pital's rule, we see that 
\begin{equation}\label{eq: lower tail function g}
\lim_{t\downarrow t_1}g(t) = t_1,
\end{equation}
which allows us to continuously extend the domain of $g$ to include $t_1$. Henceforth, we therefore
interpret $g$ as the resulting function from $[t_1,\infty)$ to $\R$.

Suppose for the moment that $g'(t) \geqslant 0$ for all $t\in(t_1,\infty)$. By the Fundamental
Theorem of Calculus, for $\underline{t},t\in(t_1,\infty)$ satisfying $\underline{t}\leqslant t$, we must have
\[
g(t) - g(\underline{t}) = \int_{\underline{t}}^{t}g'(u)\,\mathrm{d}u \geqslant \int_{\underline{t}}^{t}0\,\mathrm{d}u = t - \underline{t} \geqslant 0.
\]
Rearranging this inequality to get $g(t)\geqslant g(\underline{t})$ and taking limits as
$\underline{t}\downarrow t_1$ gives $g(t) \geqslant g(t_1)=t_1$. To establish \eqref{eq: difference
gauss tails 1}, it thus suffices to show that $g'(t) \geqslant 0$ for all $t\in(t_1,\infty)$.
Differentiating $g$ shows
\[
g'(t) = \frac{\phi(t)}{\Phi(t) - \Phi(t_1)}\left[t-g(t)\right]\;\text{for}\;t\in(t_1,\infty),
\]
so, on $(t_1,\infty)$, $g'(t) \geqslant 0$ if and only if $t\geqslant g(t)$. To show the latter
claim, Let $f:[t_1,\infty)\to\R$ be the function defined by $f(t):=g(t)-t$ for $t\in[t_1,\infty)$.
Then $f$ is continuous, and $f(t_1) = 0$. Seeking a contradiction, suppose that there is a
$t_2\in(t_1,\infty)$ such that $f(t_2) > 0$. Let $\mathcal T:=\{t\in[t_1,t_2];f(t)=0\}$ be the roots
of $f$ in $[t_1,t_2]$, which includes at least $t_1$. Per continuity of $f$, $\mathcal T$ is closed,
so $\overline{t}:=\sup\mathcal T$ lies in $\mathcal T$. Since $f(t_2)>0$ by supposition, per
continuity of $f$ and definition of $\overline{t}$, we must have $f(t) > 0$ for all
$t\in(\overline{t},t_2)$. The Mean Value Theorem then implies that there is a $\tilde
t\in(\overline{t},t_2)$ such that
\[
f'(\tilde t) = \frac{f(t_2) - f(\overline{t})}{t_2 - \overline{t}} = \frac{f(t_2)}{t_2 - \overline{t}} > 0.
\]
On the other hand, for any such mean value $\tilde t$, differentiation and $f(\tilde t)>0$ imply that
\[
f'(\tilde t) = g'(\tilde t) - 1 = \frac{\phi(\tilde t)}{\Phi(\tilde t) - \Phi(t_1)}\left[-f(\tilde t)\right] - 1 < 0,
\]
a contradiction. Hence, it must be that $f(t) \leqslant 0$ for all $t\in(t_1,\infty)$, which means that $g(t) \leqslant
t$ for all $t\in(t_1,\infty)$, as desired. This observation completes the proof of \eqref{eq:
difference gauss tails 1}.

Third, we show that
\begin{equation}\label{eq: difference gauss tails 2}
\frac{t_2\phi(t_2) - t_1\phi(t_1)}{\Phi(t_2) - \Phi(t_1)} + \left( \frac{\phi(t_2) - \phi(t_1)}{\Phi(t_2) - \Phi(t_1)} \right)^2 > 0,\quad\text{for all }t_1,t_2\in \R\text{ such that }t_1 < t_2.
\end{equation}
To do so, we consider the three possible cases: (i) $t_2 > t_1 \geqslant 0$, (ii) $t_1 < t_2
\leqslant 0$, and (iii) $t_1 < 0 < t_2$. In the first case, we have
\begin{align*}
\frac{t_2\phi(t_2) - t_1\phi(t_1)}{\Phi(t_2) - \Phi(t_1)} + \left( \frac{\phi(t_2) - \phi(t_1)}{\Phi(t_2) - \Phi(t_1)} \right)^2
& > \frac{t_1\phi(t_2) - t_1\phi(t_1)}{\Phi(t_2) - \Phi(t_1)} + \left( \frac{\phi(t_2) - \phi(t_1)}{\Phi(t_2) - \Phi(t_1)} \right)^2 \\
& = \frac{\phi(t_1) - \phi(t_2)}{\Phi(t_2) - \Phi(t_1)}\left( \frac{\phi(t_1) - \phi(t_2)}{\Phi(t_2) - \Phi(t_1)} - t_1 \right) \geqslant 0
\end{align*}
by \eqref{eq: difference gauss tails 1}. In the second case, we have
$$
\frac{t_2\phi(t_2) - t_1\phi(t_1)}{\Phi(t_2) - \Phi(t_1)} + \left( \frac{\phi(t_2) - \phi(t_1)}{\Phi(t_2) - \Phi(t_1)} \right)^2 
= \frac{|t_1|\phi(|t_1|) - |t_2|\phi(|t_2|)}{\Phi(|t_1|) - \Phi(|t_2|)} + \left( \frac{\phi(|t_1|) - \phi(|t_2|)}{\Phi(|t_1|) - \Phi(|t_2|)} \right)^2 > 0,
$$
arguing as in the first case and using $|t_1| > |t_2|\geqslant 0$. In the third case, \eqref{eq:
difference gauss tails 2} is immediate from the first fraction being positive. Since we have
considered all cases, \eqref{eq: difference gauss tails 2} follows.

Fourth, we show that
\begin{equation}\label{eq: difference gauss tails 3}
\frac{\left| \phi(t_2) - \phi(t_1) \right|}{\Phi(t_2) - \Phi(t_1)} \leqslant \frac{\phi(0)}{1 - \Phi(1)} + 2\left(|t_1|\wedge |t_2|\right),\quad \text{for all }t_1,t_2\in\R\text{ such that }t_1<t_2.
\end{equation}
To do so, we again consider the three possible cases: (i) $t_2 > t_1 \geqslant 0$, (ii) $t_1 < t_2
\leqslant 0$, and (iii) $t_1 < 0 < t_2$. In the first case, we have
$$
\frac{\left| \phi(t_2) - \phi(t_1) \right|}{\Phi(t_2) - \Phi(t_1)} = \frac{\phi(t_1) - \phi(t_2)}{\Phi(t_2) - \Phi(t_1)} \leqslant \frac{\phi(t_1)}{1 - \Phi(t_1)} \leqslant  \frac{\phi(0)}{1 - \Phi(1)} + 2\left(|t_1|\wedge |t_2|\right),
$$
where the first inequality follows from $t_2\mapsto g(t_2)$ in \eqref{eq: function g verification
example 2} being non-decreasing on $[t_1,\infty)$ and taking the limit as $t_2\to\infty$, and the
second from \eqref{eq: gauss 3}. In the second case, we have
$$
\frac{\left| \phi(t_2) - \phi(t_1) \right|}{\Phi(t_2) - \Phi(t_1)} = \frac{\phi(|t_2|) - \phi(|t_1|)}{\Phi(|t_1|) - \Phi(|t_2|)} \leqslant \frac{\phi(|t_2|)}{1 - \Phi(|t_2|)} \leqslant  \frac{\phi(0)}{1 - \Phi(1)} + 2\left(|t_1|\wedge |t_2|\right)
$$
by the same arguments. Consider now the third case, where $t_1 < 0 < t_2$. If $-t_1=t_2$, then
the left-hand side of \eqref{eq: difference gauss tails 3} is zero per symmetry of the standard
normal distribution about zero, and the bound is trivial. The remaining two subcases $-t_1<t_2$ or
$-t_1>t_2$, can be handled exactly as in the above. Since we have considered all cases, \eqref{eq:
difference gauss tails 3} follows.

Fifth, we show that for any $\Delta\in(0,\infty)$,
\begin{equation}\label{eq: difference gauss tails 4}
\lim_{t\to\infty} t^2\left( \frac{\phi(t) - \phi(t+\Delta)}{\Phi(t+\Delta) - \Phi(t)} - t - \frac{1}{t} \right) = 0.
\end{equation}
To do so, fix any $\Delta\in(0,\infty)$ and observe that for all $t\in[1,\infty)$, we have
$t\leqslant \phi(t)/[1-\Phi(t)]\leqslant 2t$ by Proposition 2.5(b) in \cite{dudley2014uniform}. Hence,
again for all $t\in[1,\infty)$,
$$
\frac{1 - \Phi(t + \Delta)}{1 - \Phi(t)} = \frac{1 - \Phi(t+\Delta)}{\phi(t+\Delta)}\cdot\frac{\phi(t)}{1 - \Phi(t)}\cdot\frac{\phi(t+\Delta)}{\phi(t)}\leqslant \frac{1}{t+\Delta}\cdot2t\cdot\frac{\phi(t+\Delta)}{\phi(t)},
$$
such that, given that $\phi(t+\Delta) / \phi(t) = o(1/t^3)$ as $t\to\infty$, we get
$$
\frac{1 - \Phi(t + \Delta)}{1 - \Phi(t)} = o(1/t^3)\quad\text{as }t\to\infty.
$$
It follows that
$$
\left| \frac{\phi(t) - \phi(t+\Delta)}{\Phi(t+\Delta) - \Phi(t)} - \frac{\phi(t)}{1 - \Phi(t)} \right|
= \frac{\phi(t)}{1 - \Phi(t)}\left|\frac{1 - \frac{\phi(t+\Delta)}{\phi(t)}}{1 - \frac{1 - \Phi(t+\Delta)}{1 - \Phi(t)}} - 1\right| = o(1/t^2)\quad\text{as }t\to\infty,
$$
which in combination with \eqref{eq: gauss 6} gives \eqref{eq: difference gauss tails 4}.

Sixth, we show that for any $\Delta\in(0,\infty)$,
\begin{equation}\label{eq: gauss tails million}
\frac{t\Delta\phi(t+\Delta)}{\Phi(t+\Delta) - \Phi(t)} \to0\quad\text{as }t\to\infty.
\end{equation}
To do so, observe that by a change of variables, for $t\in(0,\infty)$ we have
$$
\frac{\Phi(t+\Delta) - \Phi(t)}{\phi(t+\Delta)} = \int_0^{\Delta} \frac{\phi(t+s)}{\phi(t+\Delta)}\mathrm{d}s \geqslant \int_0^{\Delta} \mathrm{e}^{t\Delta - ts}\mathrm{d}s = \int_0^{\Delta} \mathrm{e}^{ts}\mathrm{d}s = \frac{\mathrm{e}^{t\Delta} - 1}{t}.
$$
The claim \eqref{eq: gauss tails million} follows from rearranging, multiplying by $t\Delta$, and
taking the limit as $t\to\infty$.

With these bounds in mind, we now verify the required assumptions. (Throughout, we continue to
interpret $\Phi(+\infty)$ as one and $\Phi(-\infty)$, $\phi(-\infty)$ and $\phi(+\infty)$ as zero.)
The differentiability part of Assumption \ref{as: diff and int} is immediate. In addition, for all
$\btheta\in\Theta$, we have
$$
\E\left[\left|m(\bX^{\top}\btheta,Y)\right|\right] \leqslant \sum_{v=0}^V \E\left[\left|\ln\left(\Phi(\alpha_{v+1} - \bX^{\top}\btheta) - \Phi(\alpha_{v} - \bX^{\top}\btheta)\right)\right|\right] <\infty
$$
where we use \eqref{eq: gauss 1} and \eqref{eq: gauss 2} for the cases $v\in\{0,V\}$
and \eqref{eq: difference gauss tails 0} for the cases $v\in[V-1]$, and \eqref{eq:
verification of high level assumptions key condition}. The integrability part of
Assumption \ref{as: diff and int} follows.

Next, for all $t\in\R$, $\bx\in\mathcal X$ and $v\in\{0,1,\dots,V\}$, denote
$$
\psi_v(t): = \frac{\phi(\alpha_v - t) - \phi(\alpha_{v+1} - t)}{\Phi(\alpha_{v+1} - t) - \Phi(\alpha_{v} - t)}\quad\text{and}\quad \kappa_v(\bx) := \Phi(\alpha_{v+1} - \bx^{\top}\btheta_0) -  \Phi(\alpha_{v} - \bx^{\top}\btheta_0),
$$
so that
\begin{equation}\label{eq: psi limit simplify}
\lim_{t\to-\infty}(\alpha_v - t)^2\left(\psi_v(t) - (\alpha_v-t) - \frac{1}{\alpha_v - t}\right)=0
\end{equation}
for all $v\in[V-1]$ by \eqref{eq: difference gauss tails 4}. For each $t\in\R$, $m(t,Y)$ is
integrable. Hence, for any $\bx\in\mathcal X$, $t\mapsto f(t,\bx) :=\E[m(t,Y)|\bX = \bx]$ is a
well-defined function from $\R$ to $\R$, namely
\begin{align*}
f(t,\bx) = -\sum_{v=0}^V \kappa_v(\bx)\ln\left(\Phi(\alpha_{v+1} - t) - \Phi(\alpha_v - t)\right).
\end{align*}
We see that $t\mapsto f(t,\bx)$ is twice continuously differentiable, with derivatives
\begin{align*}
f_1'(t,\bx) &=  \sum_{v=0}^V \kappa_v(\bx)\left[-\psi_v(t)\right]\quad\text{and}\\
f_{11}''(t,\bx) &= \sum_{v=0}^V\kappa_v(\bx)\left\{ \frac{(\alpha_{v+1} - t)\phi(\alpha_{v+1} - t) - (\alpha_v - t)\phi(\alpha_v - t)}{\Phi(\alpha_{v+1} - t) - \Phi(\alpha_v - t)} + \psi_v(t)^2 \right\},
\end{align*}
where the terms $(\alpha_{V+1} - t)\phi(\alpha_{V+1} - t)$ and $(\alpha_0 - t)\phi(\alpha_0 - t)$
are understood to be zero for all $t\in\R$. Thus, for all $\btheta\in\Theta$ and $j\in[p]$, we have 
$$
\E\left[\left|f_1'(\bX^{\top}\btheta,\bX)X_j\right|\right]\leqslant \E\left[\left|X_j\right|\sum_{v=0}^V \left|\psi_v(\bX^\top\btheta_0)\right|\right] < \infty
$$ 
by \eqref{eq: difference gauss tails 3}, \eqref{eq: verification of high level assumptions key
condition}, \eqref{eq: verification of high level assumptions key condition 2}, and the
Cauchy-Schwarz inequality. Also, by \eqref{eq: gauss 4} and \eqref{eq: difference gauss tails 2},
for arbitrary $C\in(0,\infty)$, defining
\[
c_f(\balpha,C) := \frac{1}{4}\min_{v\in\{0,1,\dotsc,V\}}\inf_{|t|\leqslant C}\left\{ \frac{(\alpha_{v+1} - t)\phi(\alpha_{v+1} - t) - (\alpha_v - t)\phi(\alpha_v - t)}{\Phi(\alpha_{v+1} - t) - \Phi(\alpha_v - t)} + \psi_v(t)^2 \right\},
\]
we have $c_f(\balpha,C)\in(0,\infty)$ and $f_{11}''(t,\bx)\geqslant 4c_f(\balpha,C)$ for all
$(t,\bx)\in[-C,C]\times\mathcal X$. Choosing the specific $C:=C(C_0,c_{ev},C_{ev},c_M')$
in \eqref{eq: verifying assumption 3.4 key condition}, with $c_M':=1$ and the implied
$c_f:=c_f(\balpha,C)=c_f(\balpha,C_0,c_{ev},C_{ev})$, Lemma \ref{lem: verification of margin
assumption} now shows that Assumption \ref{assu:Margin} holds with $c_M' = 1$ and $c_M =
c_M(\balpha,C_0,c_{ev},C_{ev})$.

Next, differentiating thrice shows that for all $(t,y)\in\R\times\mathcal Y$,
\begin{align*}
m_1'(t,y) &= \sum_{v=0}^V \mathbf 1(y = v)\left[-\psi_v(t)\right],\\
m_{11}''(t,y) &= \sum_{v=0}^V \mathbf 1(y = v) \left\{ \frac{(\alpha_{v+1} - t)\phi(\alpha_{v+1}-t) - (\alpha_{v} - t)\phi(\alpha_{v} - t)}{\Phi(\alpha_{v+1} - t) - \Phi(\alpha_v - t)} + \psi_v(t)^2 \right\}\quad\text{and}\\
m_{111}'''(t,y)  &= \sum_{v=0}^V\mathbf 1(y = v)\bigg\{\psi_v(t)  + \frac{(\alpha_{v+1} - t)^2\phi(\alpha_{v+1} - t) - (\alpha_v - t)^2\phi(\alpha_v - t)}{\Phi(\alpha_{v+1} - t) - \Phi(\alpha_v - t)} \\
& \qquad\qquad\qquad -\psi_v(t) \frac{(\alpha_{v+1} - t)\phi(\alpha_{v+1} - t) - (\alpha_{v} - t)\phi(\alpha_v - t)}{\Phi(\alpha_{v+1} - t) - \Phi(\alpha_v - t)} \\
& \qquad\qquad\qquad - 2\psi_v(t) \bigg[ \psi_v(t)^2  + \frac{(\alpha_{v+1} - t)\phi(\alpha_{v+1} - t) - (\alpha_{v} - t)\phi(\alpha_v - t)}{\Phi(\alpha_{v+1} - t) - \Phi(\alpha_v - t)}  \bigg] \bigg\},
\end{align*}
where the terms $(\alpha_{V+1} - t)^2\phi(\alpha_{V+1} - t)$, $(\alpha_{V+1} - t)\phi(\alpha_{V+1} -
t)$, $(\alpha_0 - t)^2\phi(\alpha_0 - t)$, and $(\alpha_0 - t)\phi(\alpha_0 - t)$ are all understood
to be zero for all $t\in\R$. Therefore, for all $(t,y)\in\R\times\mathcal Y$,
$
|m_1'(t,y)|\leqslant \phi(0)/[1 - \Phi(1)] + 2\max_{v\in[V]}|\alpha_v| + 2|t|
$
by \eqref{eq: gauss 3} and \eqref{eq: difference gauss tails 3}. Also, for all $t\in
(-\infty,\alpha_1]$, using the Mean Value Theorem for the denominator, we see that
$$
\frac{(\alpha_{v+1} - \alpha_v)\phi(\alpha_{v+1}-t)}{\Phi(\alpha_{v+1} - t) - \Phi(\alpha_v - t)} \leqslant 1,\quad \text{for all}\quad v\in[V-1],
$$
and applying this upper bound to the terms $v\in[V-1]$, for all $t\in
(-\infty,\alpha_1]$, we get
\begin{align*}
|m_{11}''(t,y)| & \leqslant 
\mathbf 1(y = 0)\frac{\phi(\alpha_1 - t)}{\Phi(\alpha_1 - t)}\left| \frac{\phi(\alpha_1 - t)}{\Phi(\alpha_1 - t)} + (\alpha_1 - t) \right| \\
& \qquad + \sum_{v=1}^{V-1} \mathbf 1(y = v)\big(1 + |\psi_v(t)|\cdot\left|\psi_v(t) - (\alpha_v - t)\right|\big) \\
& \qquad + \mathbf 1(y = V)\frac{\phi(\alpha_V - t)}{1 - \Phi(\alpha_V - t)}\left| \frac{\phi(\alpha_V - t)}{1 - \Phi(\alpha_V - t)} - (\alpha_V -t ) \right|.
\end{align*}
Hence, we have $\max_{y\in\mathcal Y}|m_{11}''(t,y)| = O(1)$ as $t\to-\infty$ by \eqref{eq: gauss 5}
(for the cases $v\in\{0,V\}$) and \eqref{eq: psi limit simplify} (for the cases
$v\in[V-1]$). Also, $\max_{y\in\mathcal Y}|m''_{11}(t,y)| = O(1)$ as $t\to\infty$ by a
similar argument. Thus, there is a constant $C(\balpha)\in(0,\infty)$ such
that for all $(t,y)\in\R\times\mathcal Y$, we have
$
|m_{11}''(t,y)| \leqslant C(\balpha).
$
Hence, \eqref{eq: bounded derivative lemma} and \eqref{eq: lipschitz derivative lemma} are satisfied
with $c_{m,1} = \phi(0)/[1 - \Phi(1)] + 2\max_{v\in[V]}|\alpha_v|$, $c_{m,2} = 2$,
and $c_{m,3} = C(\balpha)$, and so Assumption \ref{assu:LossLocallyLipschitzAndMore} follows from
Lemma \ref{lem: verification of some high level assumptions} with $L(\bx,y) = (3 +
\phi(0)/[1-\Phi(1)] + 2\max_{v\in[V}|\alpha_v|)(1+| \bx^{\top}\btheta_0 |)$ for
$(\bx,y)\in\mathcal X\times\mathcal Y$, $c_L=1$, $r=\bar r$, $B_n = (3 +
\phi(0)/[1-\Phi(1)]+2\max_{v\in[V]}|\alpha_v|)\bar B_n$, and $C_L =
C_L(\balpha,C_0,C_{ev})$. Further, for all $v\in[V-1]$, as $t\to-\infty$, 
\begin{equation}\label{eq: last resort 1}
\frac{\phi(\alpha_{v+1}-t)}{\Phi(\alpha_{v+1} - t) - \Phi(\alpha_v - t)} \to 0 \quad \text{and} \quad \frac{(\alpha_v - t)\phi(\alpha_{v+1}-t)}{\Phi(\alpha_{v+1} - t) - \Phi(\alpha_v - t)} \to 0
\end{equation}
by \eqref{eq: gauss tails million},
\begin{equation}\label{eq: last resort 2}
\frac{\psi_v(t)\phi(\alpha_{v+1}-t)}{\Phi(\alpha_{v+1} - t) - \Phi(\alpha_v - t)} \to 0
\end{equation}
by \eqref{eq: gauss tails million} and \eqref{eq: psi limit simplify}, Hence, uniformly over
$y\in\mathcal Y$,
$$
|m_{111}'''(t,y)| \leqslant o(1) + \sum_{v=1}^{V-1}|\psi_v(t)|\times \left| (\psi_v(t) - (\alpha_v - t))(2\psi_v(t) - (\alpha_v - t)) - 1 \right|
$$
as $t\to-\infty$, where the terms $v\in\{0,V\}$ are treated by applying the same arguments as
in the binary probit derivations from Example \ref{exa:LogConcaveOrderedResponse}, and the terms
$v\in[V-1]$ are handled by decomposing $\alpha_{v+1} - t = \alpha_{v+1} - \alpha_v + \alpha_v -
t$ and applying \eqref{eq: last resort 1} and \eqref{eq: last resort 2} to all the terms containing
$\alpha_{v+1} - \alpha_v$. Thus, $\max_{y\in\mathcal Y}|m_{111}'''(t,y)|\to 0$ as
$t\to-\infty$ by \eqref{eq: psi limit simplify} and
$
\max_{y\in\mathcal Y}|m_{111}'''(t,y)| \to 0
$
as $t\to\infty$ by a similar argument. Therefore, Assumption \ref{as: smoothness inference} is
satisfied with $J=1$ and some $C_m = C_m(\balpha)$, and Assumption \ref{as: conditional density}
follows trivially (from $J=1$). Moreover, Assumption \ref{as: integrability inference} holds
from Lemma \ref{lem: verification of high level assumptions section 5} with $\widetilde r = 8$ and
$C_M = C_M(\balpha,C_0)$. 

Finally, for arbitrary $C\in(0,\infty)$, define
$$
c(\balpha,C):=\inf_{|t|\leqslant C} \sum_{v=0}^V \frac{|\phi(\alpha_v - t) - \phi(\alpha_{v+1} - t)|^2}{\Phi(\alpha_{v+1} - t) - \Phi(\alpha_{v} - t)}\in(0,\infty).
$$
In this example and case, the residual takes the form
$$
U=m'_1(\bX^\top\btheta_0,Y)= \sum_{v=0}^V \mathbf 1(Y = v)\left[-\psi_v(\bX^\top\btheta_0)\right]
$$
so that its square
$$
U^2 = \sum_{v=0}^V \mathbf 1(Y = v)\psi_v(\bX^\top\btheta_0)^2,
$$
Consider now the specific $C:=C_0^2/\sqrt{c_{ev}/2}$ and the implied $c:=c(\balpha,C)=c(\balpha,C_0,c_{ev})$.
Then by iterated expectations, we get
\begin{align*}
\E\left[|UX_j|^2\right]
& = \E\left[|X_j|^2 \sum_{v=0}^V \kappa_v(\bX) \psi_v(\bX^\top\btheta_0)^2\right]\\
& = \E\left[|X_j|^2\sum_{v=0}^V \frac{\left|\phi(\alpha_v - \bX^{\top}\btheta_0) - \phi(\alpha_{v+1} - \bX^{\top}\btheta_0)\right|^2}{\Phi(\alpha_{v+1} - \bX^{\top}\btheta_0) - \Phi(\alpha_{v} - \bX^{\top}\btheta_0)}\right] \\
& \geqslant c\E\left[|X_j|^2\mathbf1\left\{|\bX^{\top}\btheta_0|\leqslant C\right\}\right]
 = c\left( \E[|X_j|^2] - \E\left[|X_j|^2\mathbf 1\left\{|\bX^{\top}\btheta_0|>C\right\}\right] \right)\\
& \geqslant c\left(c_{ev} - \E\Big[|X_j|^2|\bX^{\top}\btheta_0|^2\Big]/C^2\right)
 \geqslant c\left(c_{ev} - \Big( \E[|X_j|^4]\E[|\bX^{\top}\btheta_0|^4] \Big)^{1/2}/C^2\right) \\
& \geqslant c\left( c_{ev} - C_0^4/C^2 \right) = cc_{ev}/2,
\end{align*}
by \eqref{eq: verification of high level assumptions key condition}, \eqref{eq: verification of high
level assumptions key condition 2}, and the Cauchy-Schwarz inequality. The previous display shows
that the lower bound in Assumption
\ref{assu:ResidualBootstrapMethod}.\ref{enu:ZijSecondMomentsBndAwayZero} holds for
$c_U=c_U(\balpha,C_0,c_{ev})$. The remaining parts of Assumption \ref{assu:ResidualBootstrapMethod}
follow from Lemma \ref{lem: verification of high level assumptions section 4} with $C_U =
C_U(\balpha,C_0)$, and $\widetilde B_n = (C_1\bar B_n) \lor C_2$, where $C_1 = C_1(\balpha)\in[1,\infty)$ and $C_2 =
C_2(\balpha,C_0)$. This observation completes the probit case and, thus, the
example.
\qed

\medskip
\noindent
\textbf{Example \ref{exa:Expectile} (Expectile Model, Continued).}
The loss \eqref{eq: ALS} is continuously differentiable in $t$, thus implying the differentiability
part of Assumption \ref{as: diff and int}. In addition, for all $\btheta\in\Theta$, the basic
inequality $(a+b)^2\leqslant 2(a^2 + b^2)$, \eqref{eq: y square integrable example} and \eqref{eq:
verification of high level assumptions key condition} combine to show that
$$
\E[|m(\bX^{\top}\btheta,Y)|] \leqslant \E[|Y - \bX^{\top}\btheta|^2] \leqslant 2\E[Y^2] + 2\E[|\bX^{\top}\btheta|^2] <\infty,
$$
which shows also the integrability part of Assumption \ref{as: diff and int}.

Next, by integrability of $Y$ (implied by \eqref{eq: y square integrable example}),
for all $\bx\in\mathcal X$, $t\mapsto f(t,\bx) = \E[m(t,Y)\mid \bX = \bx]$
is a well-defined function from $\R$ to $\R$. Its partial derivative is
$$
f_1'(t,\bx) = 2\E[(1 - \tau)(t-Y)\mathbf 1(Y<t) + \tau(t-Y)\mathbf 1(Y\geqslant t) \mid \bX = \bx],\quad (t,\bx)\in\R\times\mathcal X.
$$
By the conditional continuity \eqref{eq: expectile Y conditionally continuous}, two applications of
Lemma \ref{lem: lipschitz derivative} (conditional on $\bX=\bx$, with $Z$ there set to $Y$ and, in
turn, $-Y$) show that this partial derivative is itself differentiable in $t$ with derivative
$$
f_{11}''(t,\bx) = 2(1-\tau)\P(Y < t\mid \bX = \bx) + 2\tau\P(Y > t\mid \bX = \bx),\quad (t,\bx)\in\R\times\mathcal X.
$$
Thus, for all $\btheta\in\Theta$ and $j\in[p]$, we have 
$$
\E[|f_1'(\bX^{\top}\btheta,\bX)X_j|]\leqslant 2\E[|X_j\bX^{\top}\btheta|] + 2\E[|YX_j|] < \infty
$$ 
by the law of iterated expectations, the Jensen and Cauchy-Schwarz inequalities, \eqref{eq:
verification of high level assumptions key condition}, \eqref{eq: verification of high level
assumptions key condition 2} and \eqref{eq: y square integrable example}. Using the conditional
continuity \eqref{eq: expectile Y conditionally continuous} again, we see that,
for all $(t,\bx)\in\R\times\mathcal X$,
$$
f_{11}''(t,\bx)\geqslant 2(\tau\wedge(1-\tau))\left[\P(Y < t\mid \bX = \bx)+\P(Y > t\mid \bX = \bx)\right]
= 2(\tau\wedge(1-\tau)).
$$
The previous display and Lemma \ref{lem: verification of margin assumption} with
$c_f:=(1/2)(\tau\wedge(1-\tau))$, combine to show that Assumption \ref{assu:Margin} holds
for $c_M = c_M(c_{ev},\tau)$ and any $c_M'\in(0,\infty)$.

Next, for all $(t,y)\in\R\times\mathcal Y$, we have
\begin{align*}
& m_1'(t,y) = \begin{cases}
2(1-\tau)(t-y), & \text{if }y<t,\\
2\tau(t-y), & \text{if }y\geqslant t,
\end{cases}
\end{align*}
which satisfies
$$
|m_1'(t,y)|\leqslant 2|y|+2|t|,\quad (t,y)\in\R\times\mathcal Y.
$$
Moreover, going case by case, we see that no matter the ordering, for all
$$
   |m_1'(t,y)-m_2'(t,y)| \leqslant 2|t_1-t_2|,\quad(t_1,t_2,y)\in\R^2\times\mathcal Y,
$$
i.e.~$t\mapsto m_1'(t,y)$ is Lipschitz continuous in $t$ with Lipschitz constant $2$ for all
$y\in\mathcal Y$. The previous two displays show that \eqref{eq: bounded derivative lemma} and
\eqref{eq: lipschitz derivative lemma} hold with $c_{m,1}(y) = 2|y|$, $c_{m,2} = 2$, and $c_{m,3} =
2$. While $c_{m,1}(y)=2|y|$ is no constant, from minor modifications to the proof of Lemma \ref{lem:
verification of some high level assumptions}, leveraging now the added \eqref{eq: y square
integrable example}, we deduce that Assumption \ref{assu:LossLocallyLipschitzAndMore} is satisfied
with $L(\bx,y) = 2(1 + |y|+| \bx^{\top}\btheta_0 |)$ (now depending on $y$) for $(\bx,y)\in\mathcal
X\times\mathcal Y$, $c_L=1$, $r=\bar r$, $B_n = 4\bar B_n$, and $C_L = C_L(C_0,C_{ev})$.

Next, for all $(t,y)\in\R\times\mathcal Y$, such that $t\neq y$, we have
$$
m_{11}''(t,y) =
\begin{cases}
2(1-\tau), & \text{if }y<t,\\
2\tau, & \text{if }y > t,
\end{cases}
\quad\text{and, thus,}\quad
m_{111}'''(t,y) =
\begin{cases}
0, & \text{if }y<t,\\
0, & \text{if }y > t.
\end{cases}
$$
The conditional continuity \eqref{eq: expectile Y conditionally continuous} now shows that
Assumption \ref{as: smoothness inference} is satisfied with $J=2$, $t_{y,1}=y$ and $C_m =2$. Under
the (stronger) bounded conditional density assumption \eqref{eq: y given x continuous example},
Assumption \ref{as: conditional density} is satisfied with any $\overline\Delta_n\in(0,\infty)$ and
$C_f = 2C_{pdf}$. Moreover, Assumption \ref{as: integrability inference} follows from \eqref{eq:
verification of high level assumptions key condition 2} and \eqref{eq: y square integrable example}
with $\widetilde r = 8$ and $C_M = C_M(C_0)$.

Finally, we have for all $j\in[p]$ that, by \eqref{eq: y square integrable example},
\begin{align*}
\E[|UX_j|^2]
& = \E[|m_1'(\bX^{\top}\btheta_0,Y)X_j|^2] \\
& \geqslant 4(\tau\wedge(1-\tau))^2 \E[(Y - \bX^{\top}\btheta_0)^2X_j^2] \geqslant 4(\tau\wedge(1-\tau))^2 c_{eps},
\end{align*}
showing that the lower bound in Assumption
\ref{assu:ResidualBootstrapMethod}.\ref{enu:ZijSecondMomentsBndAwayZero} is satisfied with $c_U =
c_U(c_{eps},\tau)$. While Lemma \ref{lem: verification of high level assumptions section 4} does not
apply directly (due to the non-constant $c_{m,1}(y)$), from minor modifications to the arguments
used in the proof of Lemma \ref{lem: verification of some high level assumptions}, leveraging the
added \eqref{eq: y square integrable example}, we deduce that the rest of Assumption
\ref{assu:ResidualBootstrapMethod} is satisfied with $C_U = C_U(C_0)$, and $\widetilde B_n = (C_1\bar
B_n)\lor C_2$, where $C_1 = C_1(C_0)\in[1,\infty)$ and $C_2 = C_2(C_0)$.
\qed

\medskip
\noindent
\textbf{Example \ref{exa:PanelCensoredRegressionAndTrimming} (Panel Censored Model, Continued).} 
We consider \emph{trimmed LS} and \emph{trimmed LAD}, in turn. For the former loss,
\eqref{eq:TrimmedLoss} takes the form
$$
m\left(t,\by\right)=\begin{cases}
y_1^2-2y_1(y_2 + t), &\text{if}\;t\in\left(-\infty,-y_2\right],\\
(y_{1}-y_{2}-t)^2, &\text{if}\; t\in\left(-y_{2},y_{1}\right),\\
y_2^2 + 2y_2(t-y_1), &\text{if}\; t\in\left[y_{1},\infty\right),
\end{cases}
$$
which is continuously differentiable in $t$ for each $\by\in\mathcal Y=[0,\infty)^2$. The
differentiability part of Assumption \ref{as: diff and int} follows. In addition, for all
$\btheta\in\Theta$, we have
$$
\E[|m(\bX^{\top}\btheta,\bY)|] \leqslant 9\E[Y_1^2] + 9\E[Y_2^2] + 9\E[|\bX^{\top}\btheta|^2] <\infty
$$
by \eqref{eq: y square integrable example 4} and \eqref{eq: verification of high level assumptions
key condition}, which gives the integrability part of Assumption \ref{as: diff and int}.

A similar argument shows integrability of $m(t,\bY)$ for each $t\in\R$. Hence, for all
$\bx\in\mathcal X$, $t\mapsto f(t,\bx)=\E[m(t,\bY)| \bX = \bx]$ is well-defined as a function
from $\R$ to $\R$. Direct calculation and case inspection show that it is differentiable with
derivative given by
$$
f_1'(t,\bx) = 
\begin{cases}
2\E[(Y_2 + t)\mathbf 1(Y_2 + t > 0) - Y_1\mid \bX = \bx], &\text{if }t\in(-\infty,0],\\
2\E[(t - Y_1)\mathbf 1(Y_1 - t > 0) + Y_2\mid \bX = \bx], &\text{if }t\in(0,\infty).
\end{cases}
$$
From the continuities in \eqref{eq: y1 and y2 conditionally continuous on positive reals} and two
applications of Lemma \ref{lem: lipschitz derivative} conditional on $\bX=\bx$, $t\mapsto
f_1'(t,\bx)$ is seen to be Lipschitz-continuous on $\R$, and differentiable on $\R\backslash\{0\}$
with
$$
f_{11}''(t,\bx) = 
\begin{cases}
2\P(Y_2 > -t\mid \bX = \bx), &\text{if }t\in(-\infty,0),\\
2\P(Y_1 > t\mid \bX = \bx), &\text{if }t\in(0,\infty).
\end{cases}
$$
Thus, for all $\btheta\in\Theta$ and $j\in[p]$, we have 
$
\E[|f_1'(\bX^{\top}\btheta,\bX)X_j|] < \infty
$ 
by the law of iterated expectations, the Cauchy-Schwarz inequality, \eqref{eq: verification of high
level assumptions key condition}, \eqref{eq: verification of high level assumptions key condition
2}, and \eqref{eq: y square integrable example 4}. Also, for $t\neq0$, we have
$$
f_{11}''(t,\bx) \geqslant 
\begin{cases}
2\P(Y_1 - Y_2 < t \mid \bX = \bx), &\text{if }t\in(-\infty,0),\\
2\P(Y_1 -Y_2 > t\mid \bX = \bx), &\text{if }t\in(0,\infty),
\end{cases}
$$
which implies that
\begin{align*}
\inf_{\substack{t\in[-c_{de},c_{de}]\\\bx^\top\btheta_0+t\neq0}}f_{11}''(\bx^{\top}\btheta_0 + t, \bx) 
\geqslant 2\min\Big\{&\P\left( Y_1 - Y_2 < \bx^{\top}\btheta_0 - c_{de} \mid \bX = \bx\right) ,\\ 
& \P\left( Y_1 - Y_2 > \bx^{\top}\btheta_0 + c_{de}  \mid \bX = \bx\right)\Big\}.
\end{align*}
Therefore, by \eqref{eq: nontrivial tail bound assumption}, we have that the probability statement
in \eqref{eq: verifying assumption 3.4 key condition 2} is satisfied with $C = c_{de}$, the provided
$c_f$, and $N(\bx)=\{0\}$ for all $\bx\in\mathcal X$. Adjusting $c_M'$ (which is here free to vary)
to match this $C$, we see that Assumption \ref{assu:Margin} follows from Lemma \ref{lem:
verification of margin assumption} with $c_M' = c_M'(c_{de},c_{ev},C_{ev})$ and $c_M =
c_M(c_{ev},c_f)$.

Next, for all $(t,\by)\in\R\times\mathcal Y$, we have
\begin{align*}
m_1'(t,\by) &= \begin{cases}
-2y_1, &\text{if}\; t\in(-\infty, -y_2],\\
2(t + y_2 - y_1), &\text{if}\; t\in(-y_2,y_1),\\
2y_2, &\text{if}\; t \in [y_1,\infty),
\end{cases}
\end{align*}
which implies both
$$
|m_1'(t,\by)|\leqslant 2(y_1\lor y_2)+2|t|,\quad\text{for all}\quad (t,\by)\in\R\times\mathcal Y.
$$
and 
$$
|m_1'(t_1,\by)-m_1'(t_2,\by)|\leqslant 2|t_1-t_2|,\quad\text {for all}\quad (t_1,t_2,\by)\in\R^2\times\mathcal Y.
$$
The previous two displays show that \eqref{eq: bounded derivative lemma} and \eqref{eq: lipschitz
derivative lemma} hold with $c_{m,1}(\by) = 2(y_1\lor y_2)$, $c_{m,2} = 2$, and $c_{m,3} = 2$. While
$c_{m,1}(\by)$ is no constant, with minor modifications to the proof of Lemma \ref{lem: verification
of some high level assumptions} and leveraging the added \eqref{eq: y square integrable example 4},
it follows that Assumption \ref{assu:LossLocallyLipschitzAndMore} is satisfied with $L(\bx,\by) =
2(1 + (y_1\lor y_2) +| \bx^{\top}\btheta_0 |)$ for $(\bx,\by)\in\mathcal X\times\mathcal Y$, $c_L=1$,
$r=\bar r$, $B_n = C\bar B_n$ for a universal constant $C\in[1,\infty)$, and $C_L =
C_L(C_0,C_{ev})$. 

If $(y_1,y_2)=(0,0)$, then the trimmed LS loss $m(\cdot,0,0)$ is identically zero. If
$(y_1,y_2)\neq (0,0)$, i.e.~at least one element in $(y_1,y_2)$ is positive, then $(-y_2,y_1)$ is non-empty, and
\begin{align*}
m_{11}''(t,\by) &= \begin{cases}
0, &\text{if}\; t\in(-\infty,-y_2),\\
2, &\text{if}\; t\in(-y_2,y_1),\\
0, &\text{if}\; t\in(y_1,\infty),
\end{cases}
\quad\text{and}\quad m_{111}'''(t,\by) = 0,\quad\text{if}\; t\notin\{-y_2,y_1\}.
\end{align*}
with the second (and thus the third) derivative being undefined at $t\in\{-y_2,y_1\}$. This gives all conditions of Assumption \ref{as: smoothness inference} except for the last one with $J=3$, $t_{\by,1}=-y_2$, $t_{\by,2}=y_1$, and $C_m =2$. The last condition of Assumption \ref{as: smoothness inference} follows by noting that
\begin{align*}
\P(\bX^{\top}\btheta_0 = Y_1) &= \P(\bX^{\top}\btheta_0 = Y_1,Y_1 = 0) + \P(\bX^{\top}\btheta_0 = Y_1, Y_1 > 0) \\
&\leqslant \P(\bX^{\top}\btheta_0 = 0) + \P(\bX^{\top}\btheta_0 = Y_1 \mid Y_1 > 0)\P(Y_1 > 0) = 0
\end{align*}
and, similarly, $\P(\bX^{\top}\btheta_0 = -Y_2) = 0$ under the continuities provided by \eqref{eq: y
given x continuous example 4}. Also, by similar reasoning,
\begin{align*}
&\P(\bX^{\top}\btheta_0 - \overline\Delta_n \leqslant Y_1 \leqslant \bX^{\top}\btheta_0 + \overline\Delta_n) \\
& \leqslant \P(\bX^{\top}\btheta_0 - \overline\Delta_n \leqslant 0 \leqslant \bX^{\top}\btheta_0 + \overline\Delta_n) \\
& \quad + \P(\bX^{\top}\btheta_0 - \overline\Delta_n \leqslant Y_1 \leqslant \bX^{\top}\btheta_0 + \overline\Delta_n \mid Y_1 > 0)\cdot\P(Y_1 > 0) \\
& \leqslant 2C_{pdf}\overline\Delta_n + 2C_{pdf}\overline\Delta_n \P(Y_1>0) \leqslant 4C_{pdf}\overline\Delta_n
\end{align*}
and, analogously, 
$$
\P(\bX^{\top}\btheta_0 - \overline\Delta_n \leqslant -Y_2 \leqslant \bX^{\top}\btheta_0 + \overline\Delta_n)\leqslant 4C_{pdf}\overline\Delta_n,
$$
showing that Assumption \ref{as: conditional density} is satisfied with any
$\overline\Delta_n\in(0,\infty)$ and $C_f = 4C_{pdf}$. Moreover, Assumption \ref{as: integrability
inference} follows from \eqref{eq: verification of high level assumptions key condition 2} and
\eqref{eq: y square integrable example 4} with $\widetilde r = 8$ and $C_M = C_M(C_0)$.

Finally, by \eqref{eq: even more stuff}, we have for all $j\in[p]$ that
\begin{align*}
\E[|UX_j|^2]
& = \E\left[\left|m_1'(\bX^{\top}\btheta_0,\bY)X_j\right|^2\right] \\
& \geqslant 4\E\left[X_j^2\left(Y_1^2 \mathbf 1\left\{Y_2 \leqslant - \bX^{\top}\btheta_0\right\} + Y_2^2 \mathbf 1\left\{Y_1 \leqslant  \bX^{\top}\btheta_0\right\}\right)\right]  \geqslant 4 c_{eps},
\end{align*}
which shows that the lower bound in Assumption
\ref{assu:ResidualBootstrapMethod}.\ref{enu:ZijSecondMomentsBndAwayZero} is satisfied with $c_U =
c_U(c_{eps})$. While Lemma \ref{lem: verification of high level assumptions section 4} is not
directly applicable (due to the non-constant $c_{m,1}(\by)$), from minor modifications to the proof
of that lemma, we see that the rest of Assumption \ref{assu:ResidualBootstrapMethod} is satisfied
with $C_U = C_U(C_0)$ and $\widetilde B_n = (C_1\bar B_n)\lor C_2$, where $C_1 =
C_1(C_0)\in[1,\infty)$ and $C_2 = C_2(C_0)$.

Next, we consider \emph{trimmed LAD}, in which case the loss \eqref{eq:TrimmedLoss} can be written
as
$$
m\left(t,\by\right)=\begin{cases}
|y_1 - y_2 - t|, & \text{if } y_1>0,y_2>0,\\
(y_2 + t)\mathbf 1(y_2 + t > 0), & \text{if }y_1 = 0, y_2 >0,\\
(y_1 - t)\mathbf 1(y_1 - t > 0), & \text{if }y_1>0, y_2 = 0,\\
0, &\text{if }y_1 =0, y_2 = 0.
\end{cases}
$$
The differentiability part of Assumption \ref{as: diff and int} follows from the absolute continuity
provided by \eqref{eq: bounded conditional distributions} and \eqref{eq: bounded conditional
distributions 2}. In addition, for all $\btheta\in\Theta$, we have
$$
\E[|m(\bX^{\top}\btheta,\bY)|] \leqslant \E[Y_1] + \E[Y_2] + \E[|\bX^{\top}\btheta|] <\infty
$$
by \eqref{eq: y square integrable example 5} and \eqref{eq: verification of high level assumptions
key condition}, which gives the integrability part of Assumption \ref{as: diff and int}.

A similar argument shows that each $m(t,\bY)$ is integrable, which implies that, for all
$\bx\in\mathcal X$, $t\mapsto f(t,\bx)=\E[m(t,\bY)| \bX = \bx]$ is well-defined as a function from
$\R$ to $\R$. By \eqref{eq: bounded conditional distributions} and \eqref{eq: bounded conditional
distributions 2}, $t\mapsto f(t,\bx)$ is seen to be differentiable with derivative
\begin{align*}
&f_1'(t,\bx)
 = \E[(\mathbf 1(Y_1 - Y_2 < t) - \mathbf 1(Y_1 - Y_2 > t))\mathbf 1\{Y_1>0,Y_2>0\}\mid \bX = \bx]\\
& \quad + \E[\mathbf 1\{Y_2 > - t\}\mathbf 1\{Y_1 = 0, Y_2 >0\}\mid \bX = \bx]
 - \E[\mathbf 1\{Y_1 > t\}\mathbf 1\{Y_1>0, Y_2 = 0\}\mid \bX = \bx].
\end{align*}
Again by \eqref{eq: bounded conditional distributions} and \eqref{eq: bounded conditional
distributions 2}, we deduce that $t\mapsto f_1'(t,\bx)$ is Lipschitz-continuous and differentiable.
Inspection reveals that its derivative satisfies the lower bound
$$
f_{11}''(t,\bx) \geqslant 2f_{Y_1 - Y_2\mid \bX, Y_1>0,Y_2>0}(t\mid \bx)\P(Y_1 > 0, Y_2 > 0\mid \bX = \bx).
$$
Thus, for all $\btheta\in\Theta$ and $j\in[p]$, we have  $\E[|f_1'(\bX^{\top}\btheta,\bX)X_j|] <
\infty$ by \eqref{eq: verification of high level assumptions key condition}. Also, by \eqref{eq:
nontrivial tail bound assumption 5}, we have that \eqref{eq: verifying assumption 3.4 key condition
2} is satisfied with $C = c_{de}$ and the provided $c_f$. Adjusting $c_M'$ (which is here free to
vary), Assumption \ref{assu:Margin} follows from Lemma \ref{lem: verification of margin assumption}
with $c_M' = c_M'(c_{de},c_{ev},C_{ev})$ and $c_M = c_M(c_{ev},c_f)$.

Next, given that for all $\by\in\mathcal Y$, the function $m(\cdot,\by)$ is Lipschitz-continuous
with Lipschitz constant one, using \eqref{eq: verification of high level assumptions key condition},
\eqref{eq: verification of high level assumptions key condition 2} and \eqref{eq: verification of
high level assumptions key condition 3}, Assumptions
\ref{assu:LossLocallyLipschitzAndMore}.\ref{enu:LossLocallyLipschitz} and
\ref{assu:LossLocallyLipschitzAndMore}.\ref{enu:LossMeanSquareEll2Conts} follow upon taking
$L(\bx,\by)=1$ for $(\bx,\by)\in\mathcal X\times\mathcal Y$, $c_L=1$, $r = \bar r$, $B_n = \bar B_n$
and $C_L = C_L(C_0,C_{ev})$. For the remaining part of this assumption, note that for all
$\btheta\in\Theta$, we have
\begin{align*}
 \E\left[\left| m_1'(\bX^{\top}\btheta,\bY) - m_1'(\bX^{\top}\btheta_0,\bY)\right|^2\right] 
&  \leqslant 4\P(\bX^{\top}\btheta_0 \leqslant Y_1 - Y_2\leqslant \bX^{\top}\btheta,Y_1 + Y_2 >0) \\
& \quad + 4\P(\bX^{\top}\btheta \leqslant Y_1 - Y_2\leqslant \bX^{\top}\btheta_0,Y_1 + Y_2 >0) \\
& \leqslant 8C_{pdf} \E[|\bX^{\top}\btheta - \bX^{\top}\btheta_0|]  \leqslant 8C_{pdf}\sqrt{C_{ev}}\|\btheta - \btheta_0\|_2,
\end{align*}
where the first inequality follows from the fact that a jump in (the piecewise constant and Lebesgue
almost everywhere defined) function $t\mapsto m_1'(t,\by)$ occurs only at $t = y_1 - y_2$ and only
when $y_1 + y_2 >0$, and from observing that such a jump can be at most two (corresponding to a sign
flip), the second from \eqref{eq: bounded conditional distributions} and \eqref{eq: bounded
conditional distributions 2}, and the third from Jensen's inequality and \eqref{eq: verification of
high level assumptions key condition}. The previous display shows that Assumption
\ref{assu:LossLocallyLipschitzAndMore}.\ref{enu:ResidualMeanSquareEll2Conts} holds with $C_L =
C_L(C_{ev},C_{pdf})$, such that Assumption \ref{assu:LossLocallyLipschitzAndMore} as a whole is
satisfied with $C_L=C_L(C_0,C_{ev},C_{pdf})$.

Finally, we have for all $j\in[p]$ that
\begin{align*}
\E[|UX_j|^2] \geqslant \E[|m_1'(\bX^{\top}\btheta_0,\bY)X_j|^2\mathbf 1(Y_1>0,Y_2>0)] = \E[|X_j|^2\mathbf 1(Y_1>0,Y_2>0)] \geqslant c_{eps}
\end{align*}
by \eqref{eq: bounded conditional distributions} and \eqref{eq: y square integrable example 5},
which shows that the lower bound in Assumption
\ref{assu:ResidualBootstrapMethod}.\ref{enu:ZijSecondMomentsBndAwayZero} is satisfied with $c_U =
\sqrt{c_{eps}}$. Given that the loss $m(\cdot,\by)$ is Lipschitz-continuous with Lipschitz constant
one for all $\by\in\mathcal Y$, using  \eqref{eq: verification of high level assumptions key
condition 2} and \eqref{eq: verification of high level assumptions key condition 3}, straightforward
arguments show that the rest of Assumption \ref{assu:ResidualBootstrapMethod} is satisfied with $C_U
= C_0$, and $\widetilde B_n = \bar B_n\lor C_0^2$.
\qed

We end this section by providing the proofs for Lemmas \ref{lem: verification of margin
assumption}--\ref{lem: lipschitz derivative}, in turn.

\begin{proof}
[\sc{Proof of Lemma \ref{lem: verification of margin assumption}}] We first
show that $\E[f_1'(\bX^{\top}\btheta_0,\bX)\bX]=\mathbf 0_{p}$. To do so, note that since
$\btheta_{0}$ is interior to $\Theta$ (Assumption \ref{assu:ParameterSpace}),
there is a radius $\overline{r}_{n}\in(0,\infty)$ such that the ball $\mathcal{B}_{\btheta_0}(\overline{r}_{n}):= \left\{\btheta\in\R^{p};\|\btheta-\btheta_{0}\|_{2}\leqslant\overline{r}_{n}\right\}$
is a subset of $\Theta$. Fix any $\btheta\in \mathcal B_{\btheta_0}(\overline r_n)$ and let
$$
g(\tau):=\E[f(\bX^{\top}\btheta_0 + \tau(\bX^{\top}\btheta - \bX^{\top}\btheta_0),\bX)],\quad \tau\in(-1,1).
$$
Note that since $\E[|m(\bX^{\top}\btheta_0,\bY)|]<\infty$ by Assumption \ref{as: diff and int}, it
follows from \citet[Theorem 10.1.1]{dudley2004real} that the conditional expectation
$\E[m(\bX^{\top}\btheta_0,\bY)|\bX]$ exists and hence is equal to $f(\bX^{\top}\btheta_0,\bX)$ almost
surely. Thus,
$$
\E[|f(\bX^{\top}\btheta_0,\bX)|] = \E[|\E[m(\bX^{\top}\btheta_0\bY)|X]|] \leqslant \E[|m(\bX^{\top}\btheta_0,\bY)|] < \infty.
$$
% J: This shows integrability at $tau_0=0$.
Also, since it follows from Assumption \ref{assu:Convexity} that the function
$t\mapsto f(t,\bx)$ is convex,  we have for any $\tau\in(-1,1)\backslash \{0\}$ that
\begin{align*}
& \tau^{-1}\Big| f(\bX^{\top}\btheta_0 + \tau(\bX^{\top}\btheta - \bX^{\top}\btheta_0),\bX) - f(\bX^{\top}\btheta_0,\bX) \Big| \\
&\qquad \leqslant\Big( |f_1'(\bX^{\top}\btheta_0,\bX)| + |f_1'(\bX^{\top}\btheta,\bX)| \Big) \times |\bX^{\top}\btheta - \bX^{\top}\btheta_0|,
\end{align*}
where the right-hand side is integrable by assumption. Along with the differentiability and
integrability presumed in the statement of the lemma, the previous two displays suffice to satisfy
both Condition (A.2) and the difference quotient domination condition of \citet[Corollary
A.3]{dudley2014uniform} when applied to the function $\tau\mapsto f(\bx^{\top}\btheta_0 +
\tau(\bx^{\top}\btheta - \bx^{\top}\btheta_0),\bx)$ in a neighborhood of $\tau_0=0$.
% J: This translates to differentiability in a neighborhood of $\tau_0=0$. 
% It follows that Condition (A.2)(i) (L_1-boundedness) of Dudley (2014) is satisfied. 
% It also follows that the difference quotients are dominated by an integrable function,
% which is part of Dudley's Corollary A.3 premise.
% Since the partial derivatives exist everywhere, Condition (A.2)(ii) of Dudley (2014) is satisfied.
% Integrability of the partial derivative at $\tau_0=0$ follows from the presumed integrability.
% Now we can invoke the Corollary A.3 of Dudley (2014) to conclude that the function is differentiable at $\tau_0=0$
% with the particular form of the derivative.
Hence, it follows from the same corollary that $g$ is differentiable at $\tau_0=0$ with derivative
$$
g'(0)=\E[f_1'(\bX^{\top}\btheta_0,\bX)(\bX^{\top}\btheta - \bX^{\top}\btheta_0)].
$$
Therefore, by Taylor's theorem (with Peano's form of remainder),
$$
g(\tau) = g(0)+g'(0)\tau+h(\tau)\tau,\quad \tau\in(-1,1),
$$
where $h\colon(-1,1)\to\R$ is a function such that $h(\tau)\to0$ as $\tau\to0$. On the
other hand, by the definition of $\btheta_0$, we have 
$$
g(\tau) =
\E[m(\bX^\top\btheta_0 + \tau(\bX^{\top}\btheta -
\bX^{\top}\btheta_0),\bY)]\geqslant \E[m(\bX^{\top}\btheta_0,\bY)]=g(0)$$
for all $\tau\in(-1,1)$. Thus,
$$
g'(0) + h(\tau) \leqslant 0\quad\text{for}\quad\tau\in(-1,0)
$$
and
$$
g'(0) + h(\tau) \geqslant 0\quad\text{for}\quad\tau\in(0,1).
$$
Taking the limits as $\tau\to0_-$ and $\tau\to0_+$, respectively, implies that $g'(0)=0$. Since
$\btheta\in \mathcal B_{\btheta_0}(\overline r_n)$  used in defining $f$ and, thus, $g$ was
arbitrarily chosen, varying $\btheta\in \mathcal B_{\btheta_0}(\overline r_n)$ produces the desired
$\E[f'_1(\bX^{\top}\btheta_0,\bX)\bX]=\mathbf 0_{p}$. (See the proof of Lemma \ref{lem:EofUXis0} for
details.)

Let $\btheta\in\Theta$ satisfy $\|\btheta - \btheta_0\|_2\leqslant c_M'$, and introduce the
abbreviations $\bdelta:=\btheta - \btheta_0$ and
\[
   g(r,\bx):=
   \begin{cases}
      f_{11}''(\bx^{\top}(\btheta_0 + r\bdelta),\bx), &\text{if } \bx^{\top}(\btheta_0 +
      r\bdelta)\notin N(\bx),\\
      4c_f, &\text{if } \bx^{\top}(\btheta_0 +
      r\bdelta)\in N(\bx),
   \end{cases}
\]
for $r\in[0,1]\times\mathcal X$. Using $\E[f'_1(\bX^{\top}\btheta_0,\bX)\bX]=\mathbf 0_{p}$, by a
first-order Taylor expansion with remainder in the integral form (which is allowed per Lipschitz
continuity of $t\mapsto f_1'(t,\bx)$ on compacta), using that $N(\bx)$ is a Lebesgue null set
for all $\bx\in\mathcal X$, we get
\begin{align}
\mathcal E(\btheta)
&=\E[f(\bX^{\top}\btheta,\bX) - f(\bX^{\top}\btheta_0,\bX)] \nonumber\\
&= \E\left[\int_{\bX^{\top}\btheta_0}^{\bX^{\top}\btheta}f''_{11}(t,\bX)(\bX^{\top}\btheta - t)dt\right]\nonumber\\
&= \E\left[\int_0^1 f''_{11}(\bX^\top(\btheta_0+r\bdelta),\bX)|\bX^{\top}\bdelta|^2(1-r)dr\right] \nonumber\\
&= \E\left[\int_0^1 g(r,\bX)|\bX^{\top}\bdelta|^2(1-r)dr\right]\nonumber\\
&\geqslant 4c_f \E\left[ \int_0^1 |\bX^{\top}\bdelta|^2  \mathbf{1}\{g(r,\bX) \geqslant 4c_f\} (1-r)  dr \right]. \label{eq: verifying assumption 3.4 last line, working hard}
\end{align}
Splitting the expectation in \eqref{eq: verifying assumption 3.4 last line, working hard} in two, we get
\begin{align*}
& \E\left[ \int_0^1 |\bX^{\top}\bdelta|^2(1-r)dr \right] - \E\left[ \int_0^1 |\bX^{\top}\bdelta|^2 \mathbf 1\{g(r,\bX)<4c_f\} (1-r)  dr \right] \\
& \geqslant \frac{c_{ev}\|\btheta-\btheta_0\|_2^2}{2} - \int_0^1\sqrt{\E[|\bX^{\top}\bdelta|^4]}\sqrt{\P(g(r,\bX)<4c_f)}(1-r)dr \\
& \geqslant \frac{\|\btheta - \btheta_0\|_2^2}{2}\left(c_{ev} - C_{ev}\sup_{r\in[0,1]}\sqrt{\P(g(r,\bX)<4c_f)}\right)
\end{align*}
where we have used Tonelli's theorem, the Cauchy-Schwarz inequality and \eqref{eq: verification of
high level assumptions key condition}. Continuing with the right-hand side probability, for all
$r\in[0,1]$, for $C$ stated in \eqref{eq: verifying assumption 3.4 key condition}, we see that
\begin{align*}
\P\left(g(r,\bX)<4c_f\right)
& \leqslant \P\left(\inf_{t\in[-C,C]\backslash N(\bX)}f_{11}''(t,\bX)<4c_f\right) + \P\left(|\bX^{\top}(\btheta_0 + r\bdelta)| > C\right) \\
& \leqslant \left(\frac{c_{ev}}{2\sqrt 2 C_{ev}}\right)^2 + \frac{\E[(\bX^{\top}\btheta_0 + r\bX^{\top}(\btheta - \btheta_0))^2]}{C^2}\\
& \leqslant \left(\frac{c_{ev}}{2\sqrt 2 C_{ev}}\right)^2 + \frac{2\E[|\bX^{\top}\btheta_0|^2] + 2\E[|\bX^{\top}(\btheta - \btheta_0)|^2]}{C^2}\\
& \leqslant \left(\frac{c_{ev}}{2\sqrt 2 C_{ev}}\right)^2 + \frac{2(C_0^2 + C_{ev}(c_M')^2)}{C^2} = \left(\frac{c_{ev}}{2C_{ev}}\right)^2
\end{align*}
where we have used the union bound, \eqref{eq: verifying assumption 3.4 key condition}, Markov's
inequality, the basic inequality $(a+b)^2\leqslant 2(a^2+b^2)$, H{\"o}lder's inequality,
\eqref{eq: verification of high level assumptions key condition}
and \eqref{eq: verification of high level assumptions key condition 2}. 

Also, for all $r\in[0,1]$,
with $C$ stated in \eqref{eq: verifying assumption 3.4 key condition 2}, we have
\begin{align*}
\P\left(g(r,\bX)<4c_f\right)
& \leqslant \P\Bigg(\inf_{\substack{t\in[-C,C]\\\bX^{\top}\btheta_0 + t\notin N(\bX)}}f_{11}''(\bX^{\top}\btheta_0 + t,\bX)<4c_f\Bigg) + \P\Big( |\bX^{\top}(\btheta - \btheta_0)| > C \Big) \\
& \leqslant \left(\frac{c_{ev}}{2\sqrt 2 C_{ev}}\right)^2 + \frac{\E[(\bX^{\top}(\btheta - \btheta_0))^2]}{C^2}\\
& \leqslant \left(\frac{c_{ev}}{2\sqrt 2 C_{ev}}\right)^2 + \frac{C_{ev}(c_M')^2}{C^2} = \left(\frac{c_{ev}}{2C_{ev}}\right)^2
\end{align*}
by similar arguments. Combining these chains of inequalities gives the asserted claim.
\end{proof}

\begin{proof}
[\sc{Proof of Lemma \ref{lem: verification of some high level assumptions}}]
First, for all $(\bx,\by)\in\mathcal X\times\mathcal Y$ and all $(t_1,t_2)\in\R^2$ satisfying
$|t_1|\vee|t_2|\leqslant 1$, we have for some $\tau\in[0,1]$ that
\begin{align*}
& |m(\bx^{\top}\btheta_0 + t_1,\by) - m(\bx^{\top}\btheta_0 + t_2,\by)| 
= |m_1'(\bx^{\top}\btheta_0 + t_1 + \tau(t_2 - t_1),\by)(t_1 - t_2)| \\
&\qquad\leqslant \left(c_{m,1} + c_{m,2}(| \bx^{\top}\btheta_0 + t_1 |\vee| \bx^{\top}\btheta_0 + t_2 |)\right)|t_1 - t_2| \\
& \qquad\leqslant \left( c_{m,1} + c_{m,2}(| \bx^{\top}\btheta_0 | + 1) \right)|t_1 - t_2|
\leqslant (1 + c_{m,1} + c_{m,2})(1 + |\bx^{\top}\btheta_0|)|t_1 - t_2|,
\end{align*}
where the first line follows from the Mean Value Theorem, the second from \eqref{eq: bounded
derivative lemma} and the triangle inequality, and the third from $|t_1|\vee|t_2|\leqslant 1$ and
the triangle inequality. This gives \eqref{eq:LocallyLipschitz} with the provided $c_L$ and $L$.
% J: Have modified (A.3) to make following calculation easier.
Also, for this $L$, by \eqref{eq: verification of high level assumptions key condition 3},
\begin{align*}
\E\left[| L(\bX,\bY)\|\bX\|_{\infty} |^{\bar r}\right]
= (1 + c_{m,1} + c_{m,2})^{\bar r} \E\left[\left(1+|\bX^{\top}\btheta_0|\right)^{\bar r}\|\bX\|_{\infty}^{\bar r}\right]
\leqslant (1 + c_{m,1} + c_{m,2})^{\bar r}\bar B_n^{\bar r},
\end{align*}
which justifies the choices $r = \bar r$ and $B_n = (1 + c_{m,1} + c_{m,2})\bar B_n$. In addition,
for all $j\in[p]$, continuing with the above $L$, we have
\begin{align*}
\E[|L(\bX,\bY)X_j|^2] 
& = (1 + c_{m,1} + c_{m,2})^2\E[(1+|\bX^{\top}\btheta_0|)^2X_j^2] \\
& \leqslant 2(1 + c_{m,1} + c_{m,2})^2\left(\E[X_j^2] + \sqrt{\E[|\bX^{\top}\btheta_0|^4]\E[X_j^4]}\right) \\
& \leqslant 2(1 + c_{m,1} + c_{m,2})^2 C_{ev}(1+C_0^2) \leqslant C_L^2,
\end{align*}
where the second line follows from the basic inequality $(a+b)^2\leqslant2(a^2+b^2)$ and the
Cauchy-Schwarz inequality, and the third from \eqref{eq: verification of high level assumptions key
condition}, \eqref{eq: verification of high level assumptions key condition 2} and the definition of
$C_L$. The previous display yields Assumption
\ref{assu:LossLocallyLipschitzAndMore}.\ref{enu:LossLocallyLipschitz}.

Second, for all $\btheta\in\Theta$ satisfying $\|\btheta - \btheta_0\|_2
\leqslant 1$, we have for some $\tau\in[0,1]$ that
\begin{align*}
&\E\left[\left|m(\bX^{\top}\btheta,\bY) - m(\bX^{\top}\btheta_0,\bY)\right|^2\right] \\
&\qquad =\E\left[\left|m_1'(\bX^{\top}\btheta_0 + \tau\bX^{\top}(\btheta - \btheta_0),\bY)\right|^2\left|\bX^{\top}\btheta - \bX^{\top}\btheta_0\right|^2\right] \\
&\qquad \leqslant \E\left[ \left(c_{m,1} + c_{m,2}(| \bX^{\top}\btheta_0 | + | \bX^{\top}(\btheta - \btheta_0) |)\right)^2\left|\bX^{\top}\btheta - \bX^{\top}\btheta_0\right|^2 \right] \\
&\qquad \leqslant 3\E\left[ \left(c_{m,1}^2 + c_{m,2}^2|\bX^{\top}\btheta_0|^2 + c_{m,2}^2|\bX^{\top}(\btheta - \btheta_0)|^2\right)\left|\bX^{\top}\btheta - \bX^{\top}\btheta_0\right|^2\right] \\
&\qquad\leqslant 3\left(c_{m,1}^2 C_{ev} + c_{m,2}^2C_0^2C_{ev} + c_{m,2}^2C_{ev}^2\right)\|\btheta - \btheta_0\|_2^2 \leqslant C_L^2\|\btheta - \btheta_0\|_2^2,
\end{align*}
where the second line follows from the Mean Value Theorem, the third from \eqref{eq: bounded
derivative lemma} and the triangle inequality, the fourth from the basic inequality
$(a+b+c)^2\leqslant 3(a^2+b^2+c^2)$, and the fifth from the Cauchy-Schwarz inequality, \eqref{eq:
verification of high level assumptions key condition}, \eqref{eq: verification of high level
assumptions key condition 2} and the definition of $C_L$. The previous display yields Assumption
\ref{assu:LossLocallyLipschitzAndMore}.\ref{enu:LossMeanSquareEll2Conts}.

Third, for all $\btheta\in\Theta$ satisfying $\|\btheta - \btheta_0\|_2\leqslant 1$, we have
\begin{align*}
&\E\left[\left| m_1'(\bX^{\top}\btheta,\bY) - m_1'(\bX^{\top}\btheta_0,\bY) \right|^2\right]
 \leqslant c_{m,3}^2\E\left[\left| \bX^{\top}\btheta - \bX^{\top}\btheta_0 \right|^2\right] \\
&\qquad \leqslant c_{m,3}^2 C_{ev} \| \btheta - \btheta_0 \|_2^2 \leqslant C_L^2\|\btheta - \btheta_0\|_2^2 \leqslant C_L^2\|\btheta - \btheta_0\|_2
\end{align*}
by \eqref{eq: lipschitz derivative lemma}, \eqref{eq: verification of high level assumptions key
condition}, and the definition of $C_L$. The previous display yields Assumption
\ref{assu:LossLocallyLipschitzAndMore}.\ref{enu:ResidualMeanSquareEll2Conts} and completes the proof
of the lemma.
\end{proof}

\begin{proof}
[\sc{Proof of Lemma \ref{lem: verification of high level assumptions section 4}}] 
First, for all $j\in[p]$, we have
\begin{align*}
\E[| m_1'(\bX^{\top}\btheta_0,\bY)X_j |^2]
& \leqslant \E\left[ (c_{m,1} + c_{m,2}|\bX^{\top}\btheta_0|)^2X_j^2 \right] \\
&  \leqslant 2\E\left[ (c_{m,1}^2 + c_{m,2}^2|\bX^{\top}\btheta_0|^2) X_j^2 \right]
 \leqslant 2(c_{m,1}^2C_0^2 + c_{m,2}^2C_0^4) \leqslant C_U^2
\end{align*}
by \eqref{eq: bounded derivative lemma}, the basic inequality $(a+b)^2\leqslant 2(a^2+b^2)$,
\eqref{eq: verification of high level assumptions key condition 2}, the Cauchy-Schwarz inequality,
and the definition of $C_U$. Together with the assumptions of the lemma, the previous display yields
Assumption \ref{assu:ResidualBootstrapMethod}.\ref{enu:ZijSecondMomentsBndAwayZero}.

Second, for all $j\in[p]$, we have
\begin{align*}  
   \E\left[ | m_1'(\bX^{\top}\btheta_0,\bY)X_j |^4 \right] 
   & \leqslant \E\left[ (c_{m,1} + c_{m,2}|\bX^{\top}\btheta_0|)^4X_j^4 \right] \\
   & \leqslant (c_{m,1} + c_{m,2})^4 \E\left[ (1+ |\bX^{\top}\btheta_0|)^4X_j^4 \right] \\  
   & \leqslant 8(c_{m,1} + c_{m,2})^4 \E\left[ (1+ |\bX^{\top}\btheta_0|^4)X_j^4 \right] \\  
   & \leqslant 8(c_{m,1} + c_{m,2})^4 C_0^4 (1+C_0^4) \leqslant \widetilde B_n^2.
\end{align*}
by \eqref{eq: bounded derivative lemma}, the basic inequality $(a+b)^2\leqslant 2(a^2+b^2)$,
\eqref{eq: verification of high level assumptions key condition 2}, the Cauchy-Schwarz inequality,
and the definition of $\widetilde B_n$. The previous display yields Assumption
\ref{assu:ResidualBootstrapMethod}.\ref{enu:Zj2pluskMomentBnd}.

Third, by \eqref{eq: bounded derivative lemma}, H{\"o}lder's inequality and \eqref{eq: verification
of high level assumptions key condition 3}, and the definition of $\widetilde B_n$,
\begin{align*}
   \E\left[ \| m_1'(\bX^{\top}\btheta_0,\bY)\bX\|_{\infty}^4 \right] 
   & \leqslant \E\left[ \left(c_{m,1} + c_{m,2}|\bX^{\top}\btheta_0|\right)^4\|\bX\|_{\infty}^4 \right] \\   
   & \leqslant \left(c_{m,1}+c_{m,2}\right)^4 \E\left[ \left(1+ |\bX^{\top}\btheta_0|\right)^4\|\bX\|_{\infty}^4 \right] \\
   & \leqslant \left(c_{m,1}+c_{m,2}\right)^4 \bar{B}_n^4 \leqslant \widetilde{B}_n^4.
\end{align*}
The previous display shows Assumption
\ref{assu:ResidualBootstrapMethod}.\ref{enu:maxZijFourthMomentBnd} and completes the proof of the
lemma.
\end{proof}

\begin{proof}
[\sc{Proof of Lemma \ref{lem: verification of high level assumptions section 5}}] 
By \eqref{eq: bounded derivative lemma}, the triangle inequality, \eqref{eq: verification of high
level assumptions key condition 2}, and the definition of $C_M$,
\begin{align*}
   \left( \E\left[| m_1'(\bX^{\top}\btheta_0,\bY) |^8\right] \right)^{1/8}
   & \leqslant \left( \E\left[ \left(c_{m,1} + c_{m,2}|\bX^{\top}\btheta_0|\right)^8 \right] \right)^{1/8} \\
   & \leqslant \left(c_{m,1} + c_{m,2}\right)\left( \E\left[ \left(1+|\bX^{\top}\btheta_0|\right)^8 \right] \right)^{1/8} \\
   & \leqslant \left(c_{m,1} + c_{m,2}\right)\left( 1 + \left(\E\left[|\bX^{\top}\btheta_0|^8\right]\right)^{1/8} \right) \\
   & \leqslant \left(c_{m,1} + c_{m,2}\right)(1+C_0)\leqslant C_M.
\end{align*}   
The previous display gives one part of Assumption \ref{as: integrability inference} for $\widetilde
r = 8$. Since the remaining parts follow trivially for the same $\widetilde r$ by \eqref{eq:
verification of high level assumptions key condition 2}, the asserted claim follows.
\end{proof}

\begin{proof}
[\sc{Proof of Lemma \ref{lem: lipschitz derivative}}] 
First, $f$ is a well-defined and real-valued function on $\R$ by integrability of $Z$. For the
alternative expression for $f$, considering the three cases $t<z$, $t=z$ and $t>z$, we see that the
integrands $(z-t)\mathbf{1}(z\geqslant t)$ and $(z-t)\mathbf{1}(z>t)$ used in the two definitions
are actually one and the same. Taking the expectation over $Z$ gives the desired equivalence.

To argue Lipschitzness, let $t_1,t_2\in\R$. Consider the case $t_1 < t_2$. Then
\begin{align}
f(t_2) - f(t_1)
&=\E\left[(Z - t_2)\mathbf 1(Z\geqslant t_2)\right] - \E\left[(Z - t_1)\mathbf 1(Z\geqslant t_1)\right] \nonumber\\
&=\E\left[(Z - t_1)\left(\mathbf 1(Z\geqslant t_2) - \mathbf 1(Z\geqslant t_1)\right)\right] - (t_2-t_1)\E\left[\mathbf 1(Z\geqslant t_2)\right] \nonumber\\
&=-\underbrace{\big(\E\left[(Z - t_1)\mathbf 1(t_1\leqslant Z < t_2)\right] + (t_2-t_1)\E\left[\mathbf 1(Z\geqslant t_2)\right]\big)}_{\geqslant 0}, \label{eq: lipschitz property derivativation}
\end{align}
which implies
\begin{align*}
|f(t_2) - f(t_1)|
& = \E\left[(Z - t_1)\mathbf 1(t_1\leqslant Z < t_2)\right] + (t_2-t_1)\E\left[\mathbf 1(Z\geqslant t_2)\right] \\
&\leqslant (t_2-t_1)\E\left[\mathbf 1(t_1\leqslant Z < t_2)\right] + (t_2-t_1)\E\left[\mathbf 1(Z\geqslant t_2)\right] \leqslant (t_2-t_1) = |t_2-t_1|.
\end{align*}
The case $t_1 > t_2$ can be handled analogously, thus showing Lipschitzness of $f$.

For the differentiability assertion, let $t\in\R$ and $\Delta\in(0,\infty)$ be arbitrary.
Then \eqref{eq: lipschitz property derivativation} and the continuity of the distribution of $Z$
combine to show that
\begin{align*}
& |f(t+\Delta) - f(t)-\Delta\left[-\P(Z>t)\right]|\\
& = \E\left[(Z - t)\mathbf 1(t\leqslant Z < t+\Delta)\right] + \Delta\left[\P(Z\geqslant t+\Delta)-\P(Z > t)\right] \\
& \leqslant \Delta\left[\P(t\leqslant Z < t+\Delta) + \P(Z\geqslant t+\Delta)-\P(Z > t)\right] \\
& = \Delta\P(Z = t) = 0.
\end{align*}
It follows that
$$
\lim_{\Delta\to0_+}\frac{f(t+\Delta) - f(t)}{\Delta} = -\P(Z>t).
$$
The case $\Delta\to0_-$ can be handled analogously, thus completing the proof.
\begin{comment}
The previous display implies that, as long as $\P(Z=t)=0$, we have
$$
\lim_{\Delta\to0_+}\frac{f(t+\Delta) - f(t)}{\Delta} = - \P(Z>t).
$$
By a similar argument, and with the same proviso, we have
$$
\lim_{\Delta\to0_-}\frac{f(t+\Delta) - f(t)}{\Delta} = - \P(Z>t).
$$
Hence, as long as we can show that $\P(Z=t)=0$ for Lebesgue almost every $t\in\R$, we will have
$f'(t)=-\P(Z>t)$ for Lebesgue almost everywhere $t$, as claimed. To this end, let
$A:=\{t\in\R;\P(Z=t)>0\}$ be all mass points of $Z$, consider the sets defined by
$A_n:=\{t\in\R;\P(Z=t)\geqslant1/n\},n\in\N$, and let $m$ denote the Lebesgue measure on $\R$. As
probabilities sum to one, $A_n$ has cardinality $|A_n|\leqslant n$ and, thus, $m(A_n)=0$.
Since $A=\cup_{n=1}^{\infty} A_n$, it follows that $m(A)\leqslant\sum_{n=1}^{\infty}m(A_n)=0$, which
completes the proof for the function $f$. 

For the alternative expression for $f$, considering the three cases $t<z$, $t=z$ and $t>z$, we see
that the integrands $(z-t)\mathbf{1}(z\geqslant t)$ and $(z-t)\mathbf{1}(z>t)$ used in the two
definitions are actually one and the same. Taking the expectation gives the desired result,
completing the proof of the lemma.
\end{comment}
\end{proof}

\section{Proofs for Statements in Main Text}\label{sec:ProofsMainText}

\subsection{Proofs for Section
\ref{sec:Deterministic-Bounds}}\label{sec:ProofsDeterministicBounds}

For the arguments in this section, we introduce some additional notation. Let
\[
T_{0}:=\mathrm{supp}\left(\btheta_{0}\right)=\left\{ j\in\left[p\right];\left|\theta_{0,j}\right|>0\right\} 
\]
be the support of $\btheta_{0}$, and let $T(\eta)\subseteq T_{0}$ be the
$\eta$-thresholded version thereof, i.e.
\[
T(\eta):=\left\{ j\in\left[p\right] ;\left|\theta_{0,j}\right|>\eta\right\} ,\quad\eta\in[0,\infty),
\]
such that $T(0)=T_{0}.$ Given a vector $\bdelta\in\R^{p}$ and a set
of indices $J\subseteq\left[p\right]$, we let $\bdelta_{J}$ denote the vector in
$\R^{p}$ with coordinates given by $\delta_{J,j}=\delta_{j}$ if $j\in J$ and
$\delta_{J,j}=0$ otherwise. Also, for
$\widetilde{c},\eta\in[0,\infty)$, let $\mathcal{R}(\widetilde{c},\eta)$ denote
the \emph{restricted set}
\[
\mathcal{R}\left(\widetilde{c},\eta\right):=\left\{ \bdelta\in\R^{p};\left\Vert \bdelta_{T\left(\eta\right)^{c}}\right\Vert _{1}\leqslant\widetilde{c}\left\Vert \bdelta_{T\left(\eta\right)}\right\Vert _{1}+\left(1+\widetilde{c}\right)\left\Vert \btheta_{0T\left(\eta\right)^{c}}\right\Vert _{1}\text{ and }\btheta_{0}+\bdelta\in\Theta\right\} .
\]
In addition, for a constant $c_0\in(1,\infty)$, define the (random) \emph{empirical error} function
$\epsilon_{n}\colon [0,\infty)\to[0,\infty)$ by
\begin{align*}
\epsilon_{n}\left(u\right) & :=\sup_{\substack{\bdelta\in\mathcal{R}\left(\overline{c}_{0},\eta_{n}\right),\\
\left\Vert \bdelta\right\Vert _{2}\leqslant u
}
}\left|\left(\mathbb{E}_{n}-\mathrm{E}\right)\left[m\left(\bX_{i}^{\top}\left(\btheta_{0}+\bdelta\right),\bY_{i}\right)-m\left(\bX_{i}^{\top}\btheta_{0},\bY_{i}\right)\right]\right|,
\end{align*}
where $\overline{c}_{0}=(c_{0}+1)/(c_{0}-1)$, and
$\eta_{n}=\sqrt{\ln(pn)/n}$. Finally, for any $\lambda\in(0,\infty)$, any
non-random sequence $\overline\lambda_n$ in $(0,\infty)$, and non-random
sequences $a_{\epsilon,n}$ and $b_{\epsilon,n}$ in $(0,\infty)$, to be specified
later, define the events
\begin{equation}\label{eq: three events}
\mathscr{S}_n :=\left\{ \lambda\geqslant c_{0}\|\bS_n\|_{\infty}\right\} ,\quad \mathscr{L}_n :=\left\{ \lambda\leqslant\overline{\lambda}_{n}\right\}, \quad\text{and}\quad \mathscr{E}_n  :=\left\{ \epsilon_{n}\left(\widetilde{u}_{n}\right)\leqslant a_{\epsilon,n}\widetilde{u}_{n}+b_{\epsilon,n}\right\},
\end{equation}
where
\begin{equation}\label{eq: definition un tilde key}
\widetilde{u}_{n}:=\frac{2}{c_{M}}\left(a_{\epsilon,n}+\left(1+\overline{c}_{0}\right)\overline{\lambda}_{n}\sqrt{s_{q}\eta_{n}^{-q}}\right).
\end{equation}
(That $\bS_n$ exists is left implicit in the definition of $\mathscr{S}_n$.) In
proving Theorem \ref{thm:NonAsymptoticProbabilisticBounds}, we rely on the
following four lemmas, whose proofs can be found at the end of this section.

\begin{lem}
[\textbf{Strong Ball Consequences}]\label{lem:StrongBallConsequences} Let
Assumption \ref{assu:Approximate-Sparsity} hold. Then for any
$\eta\in(0,\infty)$, we have $|T(\eta)|\leqslant s_{q}\eta^{-q}$  and
$\|\btheta_{0T\left(\eta\right)^{c}}\|_{1}\leqslant s_{q}\eta^{1-q}$.
\end{lem}

\begin{lem}
[\textbf{Restricted Set Consequence}]\label{lem:RestrictedSetConsequence} Let
Assumption \ref{assu:Approximate-Sparsity} hold. Then for any
$\widetilde{c},\eta\in(0,\infty)$, $\bdelta\in\mathcal{R}(\widetilde{c},\eta)$
implies
$\|\bdelta\|_{1}\leqslant\left(1+\widetilde{c}\right)(\sqrt{s_{q}\eta^{-q}}\left\Vert
\bdelta\right\Vert _{2}+s_{q}\eta^{1-q}).$
\end{lem}

\begin{lem}[\textbf{Non-Asymptotic Deterministic
Bounds}]\label{lem:NonasymptoticDeterministicBounds} Let Assumptions
\ref{assu:ParameterSpace}--\ref{assu:Margin} and \ref{assu:Approximate-Sparsity}
hold and suppose that $\widetilde{u}_{n}\leqslant c_{M}'$ and
\begin{equation}\label{eq: lemma c3 additional constraint}
\Big( a_{\epsilon,n}+(1+\overline c_0)\overline\lambda_n\sqrt{s_q\eta^{-q}_n} \Big)^2 \geqslant c_M\Big( b_{\epsilon,n} + (1+\overline c_0)\overline\lambda_ns_q \eta_n^{1-q} \Big).
\end{equation}
Then on the event $\mathscr{S}_n\cap\mathscr{L}_n\cap\mathscr{E}_n,$ for all
$\widehat{\btheta}\in\widehat{\Theta}(\lambda)$, we have
$\widehat{\btheta}-\btheta_0\in\mathcal{R}(\overline{c}_{0},\eta_{n})$,
$$
\|\widehat{\btheta} -\btheta_{0}\|_{2}  \leqslant \widetilde{u}_{n}\ \text{and }\ \|\widehat{\btheta} - \btheta_{0}\|_{1}  \leqslant\left(1+\overline{c}_{0}\right)\left(\widetilde{u}_{n}\sqrt{s_{q}\eta_{n}^{-q}}+s_{q}\eta_{n}^{1-q}\right).
$$
\end{lem}
% J: Assu 3.3 (integrability) is used to make sure emp error is well defined.

\begin{lem}[\textbf{Empirical Error Bound}]\label{lem:EmpiricalErrorBound} Let
Assumptions \ref{as: diff and int}, \ref{assu:LossLocallyLipschitzAndMore} and
\ref{assu:Approximate-Sparsity} hold. Then
there is a universal constant $C\in[1,\infty)$, such that for any $n\in\N$,
$t\in[1,\infty)$ and $u\in(0,\infty)$ satisfying
\begin{equation}\label{eq: extra condition to apply contraction principle}
   \eta_{n}\leqslant1,\quad\frac{B_n^2 \ln(pn)}{\sqrt n}\leqslant C_L^2\quad\text{and}\quad
   t n^{1/r} B_n\Big(u\sqrt{s_{q}\eta_{n}^{-q}}+s_{q}\eta_{n}^{1-q}\Big)\leqslant\frac{c_{L}}{1+\overline{c}_{0}},
\end{equation}
we have
\begin{align*}
\epsilon_{n}\left(u\right) & \leqslant C(1+\overline{c}_{0})C_{L}\left(u\sqrt{s_{q}\eta_{n}^{2-q}}+s_{q}\eta_{n}^{2-q}\right)
\end{align*}
with probability at least $1-4t^{-r} - C/\ln^2(p n) - n^{-1}.$
\end{lem}
% J: Assu 3.3 (integrability) is used to make sure emp error is well defined.
% J: Have added sq in B_n^2 ln(pn)/sqrt n <= C_L^2 to justify the max ineq for nonneg
% r.v.s and, thus, the application of the contraction principle. Have moved
% eta_n <= 1 to the display to make the conditions more readable.

\begin{rem}[\textbf{Alternative Non-Asymptotic Bounds}]
If the loss function $m$ is (globally) Lipschitz in its first argument with
Lipschitz constant not depending on $\by$, and the regressors are bounded, then
symmetrization, contraction, and concentration arguments may be used to bound
the modified empirical error
\[
\widetilde{\epsilon}_n\left(u\right):=\sup_{\substack{\bdelta\in\R^p;\\ \Vert\bdelta\Vert_{1}\leqslant u}}\left|\left(\En-\E\right)\left[m\left(\bX_{i}^{\top}\left(\btheta_{0}+\bdelta\right),\bY_{i}\right)-m\left(\bX_{i}^{\top}\btheta_{0},\bY_{i}\right)\right]\right|,\quad u\in\left(0,\infty\right),
\]
now defined with respect to the $\ell_{1}$ norm and without the restricted set.
This is the approach taken by \citet{van_de_geer_high-dimensional_2008}, who
shows that, under the above assumptions, there exists a constant
$\widetilde{C}\in\left(0,\infty\right)$ such that with probability approaching one,
\[
   \sup_{u\in\left(0,\infty\right)}\frac{\widetilde\epsilon_n\left(u\right)}{u}\leqslant\widetilde{C}\left(\sqrt{\frac{\ln p}{n}}+\frac{\ln p}{n}\right).
\]
\citet{van_de_geer_high-dimensional_2008} demonstrates that bounds on the
estimation error of the $\ell_1$-ME can be derived if $\lambda$ is
chosen to exceed the right-hand side of this inequality, which motivates
alternative methods to choose $\lambda$. Unfortunately, $\widetilde{C}$
typically relies on design constants unknown to the researcher. Moreover, even
if these constants were known, the resulting values of $\widetilde C$ would
typically be prohibitively large, yielding choices of $\lambda$ leading to
trivial estimates of the vector $\btheta_0$ in moderate samples. Our bounds
therefore seem more suitable for devising methods to choose $\lambda$.\qed
\end{rem}

\begin{proof}
[\sc{Proof of Theorem \ref{thm:NonAsymptoticProbabilisticBounds}}] We will prove
the theorem with the universal constant $C\in[1,\infty)$ appearing in the
statement of Lemma \ref{lem:EmpiricalErrorBound}. Let
$\widehat{\btheta}\in\widehat\Theta(\lambda)$ be arbitrary, fix any
$t\in[1,\infty)$ satisfying the requirements of the theorem, and specify the
sequences $a_{\epsilon,n}:=C(1+\overline{c}_{0})C_{L}\sqrt{s_{q}\eta_{n}^{2-q}}$
and $b_{\epsilon,n}:=C(1+\overline{c}_{0})C_{L}s_{q}\eta_{n}^{2-q}$, so that
\eqref{eq: lemma c3 additional constraint} is satisfied (because
$C(1+\overline{c}_{0})C_{L} > 1\geqslant c_M$) and $\widetilde{u}_{n}\leqslant
Cu_{n}$ with $\widetilde{u}_{n}$ given in \eqref{eq: definition un tilde key}. 
% J: To see this, expand square and compare coefficients
% J: utilde_n is defined as u_n except that C_L is replaced with C*C_L. Ineq
% thus follows from C>=1
Since $Cu_n\leqslant c_M'$ by assumption, we therefore obtain from Lemma
\ref{lem:NonasymptoticDeterministicBounds} that on the event
$\mathscr{S}_n\cap\mathscr{L}_n\cap\mathscr{E}_n$, we have
\begin{align*}
\|\widehat{\btheta}-\btheta_{0}\|_{2} & \leqslant Cu_{n}\quad\text{and}\quad\|\widehat{\btheta}-\btheta_{0}\|_{1}\leqslant\frac{2c_{0}}{c_{0}-1}\left(Cu_{n}\sqrt{s_{q}\eta_{n}^{-q}}+s_{q}\eta_{n}^{1-q}\right),
\end{align*}
where we have used $1+\overline{c}_{0}=2c_{0}/(c_{0}-1).$ The asserted claim now
follows from
\begin{align*}
\mathrm{P}\left(\left(\mathscr{S}_n\cap\mathscr{L}_n\cap\mathscr{E}_n\right)^{c}\right) & \leqslant\mathrm{P}\left(\mathscr{S}^{c}_n\right)+\mathrm{P}\left(\mathscr{L}^{c}_n\right)+\mathrm{P}\left(\mathscr{E}^{c}_n\right)\\%\tag{union bound}\\
 & \leqslant\mathrm{P}\left(\mathscr{S}^{c}_n\right)+\mathrm{P}\left(\mathscr{L}^{c}_n\right)+4t^{-r}+C/\ln^2(pn)+n^{-1},%\tag{Lemma \ref{lem:EmpiricalErrorBound}}
\end{align*}
where the first inequality follows from the union bound and the second from
Lemma \ref{lem:EmpiricalErrorBound}, whose application is justified by the
assumptions of the theorem.
\end{proof}

We now turn to the proofs of the lemmas.

\begin{proof}
[\sc{Proof of Lemma \ref{lem:StrongBallConsequences}}] We proceed as in 
\citet[][p.~551]{negahban_unified_2012}. The first claim follows from
\[
s_{q}\geqslant\sum_{j=1}^{p}|\theta_{0,j}|^{q}\geqslant\sum_{j\in T\left(\eta\right)}|\theta_{0,j}|^{q}\geqslant|T(\eta)|\eta^{q}
\]
upon rearrangement. The second claim follows from
\[
\|\btheta_{0T\left(\eta\right)^{c}}\|_{1}=\sum_{j\in T\left(\eta\right)^{c}}|\theta_{0,j}|^{1-q}|\theta_{0,j}|^{q}\leqslant\eta^{1-q}\sum_{j\in T\left(\eta\right)^{c}}|\theta_{0,j}|^{q}\leqslant s_{q}\eta^{1-q}.
\]
\end{proof}

\begin{proof}
[\sc{Proof of Lemma \ref{lem:RestrictedSetConsequence}}] The claim
follows from
\begin{align*}
\left\Vert \bdelta\right\Vert _{1}=\left\Vert \bdelta_{T(\eta)}\right\Vert _{1}+\left\Vert \bdelta_{T(\eta)^{c}}\right\Vert _{1} & \leqslant\left(1+\widetilde{c}\right)\left(\left\Vert \bdelta_{T(\eta)}\right\Vert _{1}+\left\Vert \btheta_{0T(\eta)^{c}}\right\Vert _{1}\right)\\
 & \leqslant\left(1+\widetilde{c}\right)\left(|T(\eta)|^{1/2}\left\Vert \bdelta_{T(\eta)}\right\Vert _{2}+\left\Vert \btheta_{0T(\eta)^{c}}\right\Vert _{1}\right)\\
 & \leqslant\left(1+\widetilde{c}\right)\Big(\sqrt{s_{q}\eta^{-q}}\left\Vert \bdelta\right\Vert _{2}+s_{q}\eta^{1-q}\Big),
\end{align*}
where the first inequality follows from $\bdelta\in\mathcal R(\widetilde c,\eta)$, the second from the Cauchy-Schwarz inequality, and the third from Lemma \ref{lem:StrongBallConsequences}.
\end{proof}

\begin{proof}
[\sc{Proof of Lemma \ref{lem:NonasymptoticDeterministicBounds}}] 
Let $\widehat{\btheta}\in\widehat\Theta(\lambda)$ be arbitrary.
We proceed in two steps. In the first step, we show that
$\widehat{\btheta}-\btheta_0\in\mathcal{R}(\overline{c}_{0},\eta_{n})$ on
the event $\mathscr{S}_n$. In the second step, we derive bounds on
$\|\widehat\btheta - \btheta_0\|_2$ and
$\|\widehat\btheta - \btheta_0\|_1$ on the event
$\mathscr{S}_n\cap\mathscr{L}_n\cap\mathscr{E}_n$.

\medskip
\textbf{Step 1: }Abbreviate $\widehat{\bdelta}:=\widehat{\btheta}-\btheta_{0}.$ By minimization in
\eqref{eq:ell1PenalizedMEstimationIntro},
\begin{align*}
\mathbb{E}_{n}[m(\bX_{i}^{\top}\widehat{\btheta},\bY_{i})-m\left(\bX_{i}^{\top}\btheta_{0},\bY_{i}\right)] & \leqslant\lambda\left(\|\btheta_{0}\|_{1}-\|\widehat{\btheta}\|_{1}\right).
\end{align*}
Let $J\subseteq[p] $ be a for now arbitrary index set. By
convexity in Assumption \ref{assu:Convexity} followed by H\"{o}lder's
inequality, score domination $(\mathscr{S}_n),$ and the triangle inequality,
\[
\mathbb{E}_{n}[m(\bX_{i}^{\top}\widehat{\btheta},\bY_{i})-m\left(\bX_{i}^{\top}\btheta_{0},\bY_{i}\right)]\geqslant \bS_n^{\top}(\widehat{\btheta}-\btheta_{0})\geqslant-\|\bS_n\|_{\infty}\|\widehat{\bdelta}\|_{1}\geqslant-\frac{\lambda}{c_{0}}\left(\|\widehat{\bdelta}_{J}\|_{1}+\|\widehat{\bdelta}_{J^{c}}\|_{1}\right).
\]
Moreover, since
$\widehat{\btheta}=\btheta_{0}+\widehat{\bdelta}=\btheta_{0J}+\btheta_{0J^{c}}+\widehat{\bdelta}_{J}+\widehat{\bdelta}_{J^{c}},$
the triangle inequality shows that
\begin{align*}
\|\widehat{\btheta}\|_{1}-\|\btheta_{0}\|_{1} & \geqslant\|\btheta_{0J}+\widehat{\bdelta}_{J^{c}}\|_{1}-\|\btheta_{0J^{c}}+\widehat{\bdelta}_{J}\|_{1}-\|\btheta_{0}\|_{1}\\
 & =\|\btheta_{0J}\|_{1}+\|\widehat{\bdelta}_{J^{c}}\|_{1}-\left(\|\btheta_{0J^{c}}\|_{1}+\|\widehat{\bdelta}_{J}\|_{1}\right)-\left(\|\btheta_{0J}\|_{1}+\|\btheta_{0J^{c}}\|_{1}\right)\\
 & =\|\widehat{\bdelta}_{J^{c}}\|_{1}-\|\widehat{\bdelta}_{J}\|_{1}-2\|\btheta_{0J^{c}}\|_{1}.
\end{align*}
Combining the three previous displays, we get
\[
\|\widehat{\bdelta}_{J^{c}}\|_{1}-\|\widehat{\bdelta}_{J}\|_{1}-2\|\btheta_{0J^{c}}\|_{1}\leqslant\|\widehat{\btheta}\|_{1}-\|\btheta_{0}\|_{1}\leqslant\frac{1}{c_{0}}\left(\|\widehat{\bdelta}_{J}\|_{1}+\|\widehat{\bdelta}_{J^{c}}\|_{1}\right),
\]
which implies that
\[
\|\widehat{\bdelta}_{J^{c}}\|_{1}\leqslant\frac{c_{0}+1}{c_{0}-1}\|\widehat{\bdelta}_{J}\|_{1}+\frac{2c_{0}}{c_{0}-1}\|\btheta_{0J^{c}}\|_{1}=\overline{c}_{0}\|\widehat{\bdelta}_{J}\|_{1}+\left(1+\overline{c}_{0}\right)\|\btheta_{0J^{c}}\|_{1}.
\]
Choosing $J=T(\eta_n)$, we see that the event 
$\mathscr{S}_n$ implies
$\widehat{\bdelta}\in\mathcal{R}(\overline{c}_{0},\eta_{n})$, as claimed.

\medskip
\textbf{Step 2: } Define the (random) function $\widehat{\mathcal F}\colon\R^p\to\R$ by
\[
   \widehat{\mathcal F} (\bdelta) := \mathbb{E}_{n}\left[m\left(\bX_{i}^{\top}\left(\btheta_{0}+\bdelta\right),\bY_{i}\right)-m\left(\bX_{i}^{\top}\btheta_{0},\bY_{i}\right)\right]+\lambda\left(\|\btheta_{0}+\bdelta\|_{1}-\|\btheta_{0}\|_{1}\right).
\]
Then $\widehat{\mathcal F}$ is convex and $\widehat{\mathcal F}(\mathbf{0}_p)=0$.
Moreover, since $\Theta$ is convex (Assumption \ref{assu:ParameterSpace}),
$\bdelta\in\mathcal{R}(\overline{c}_{0},\eta_{n})$ and $t\in[0,1]$
imply $\btheta_{0}+t\bdelta\in\Theta$ and
\begin{align*}
\|\left(t\bdelta\right)_{T\left(\eta_{n}\right)^{c}}\|_{1}=t\|\bdelta_{T\left(\eta_{n}\right)^{c}}\|_{1} & \leqslant t\left(\overline{c}_{0}\|\bdelta_{T\left(\eta_{n}\right)}\|_{1}+\left(1+\overline{c}_{0}\right)\|\btheta_{0T\left(\eta_{n}\right)^{c}}\|_{1}\right)\\
 & =\overline{c}_{0}\|\left(t\bdelta\right)_{T\left(\eta_{n}\right)}\|_{1}+\left(1+\overline{c}_{0}\right)t\|\btheta_{0T\left(\eta_{n}\right)^{c}}\|_{1}\\
 & \leqslant\overline{c}_{0}\|\left(t\bdelta\right)_{T\left(\eta_{n}\right)}\|_{1}+\left(1+\overline{c}_{0}\right)\|\btheta_{0T\left(\eta_{n}\right)^{c}}\|_{1},
\end{align*}
which shows that $t\bdelta\in\mathcal{R}\left(\overline{c}_{0},\eta_{n}\right)$. Hence,
$\mathcal{R}(\overline{c}_{0},\eta_{n})$ is star-shaped with vantage point $\mathbf{0}_p$. Seeking a
contradiction, suppose that we are on the event $\mathscr{S}_n\cap\mathscr{L}_n\cap\mathscr{E}_n$,
but $\|\widehat{\bdelta}\|_{2}>\widetilde{u}_{n}.$ Since
$\widehat{\bdelta}\in\mathcal{R}(\overline{c}_{0},\eta_{n})$ by Step 1,
$\mathcal{R}(\overline{c}_{0},\eta_{n})$ being star-shaped implies $(\widetilde u_n /
\|\widehat{\bdelta}\|_2)\widehat{\bdelta}\in\mathcal{R}(\overline{c}_{0},\eta_{n})$. By the
definition \eqref{eq:ell1PenalizedMEstimationIntro} of $\widehat\btheta$ as a minimizer, we have
$\widehat{\mathcal F}(\widehat\bdelta)\leqslant0$. Convexity of $\widehat{\mathcal F}$ and
$\widehat{\mathcal F}(\mathbf{0}_p)=0$ then show that $\widehat{\mathcal F}((\widetilde u_n /
\|\widehat{\bdelta}\|_2)\widehat{\bdelta}) \leqslant (\widetilde u_n /
\|\widehat{\bdelta}\|_2)\widehat{\mathcal F}(\widehat{\bdelta})\leqslant0$. Unpacking
$\widehat{\mathcal F}$, these findings imply
\[
0\geqslant\inf_{\substack{\bdelta\in\mathcal{R}\left(\overline{c}_{0},\eta_{n}\right),\\
\|\bdelta\|_{2}=\widetilde{u}_{n}
}
}\left\{ \mathbb{E}_{n}\left[m\left(\bX_{i}^{\top}\left(\btheta_{0}+\bdelta\right),\bY_{i}\right)-m\left(\bX_{i}^{\top}\btheta_{0},\bY_{i}\right)\right]+\lambda\left(\|\btheta_{0}+\bdelta\|_{1}-\|\btheta_{0}\|_{1}\right)\right\}.
\]
% J: Use star-shapedness of restricted set, convexity and optimality to sign the
% infimum with an equality norm constraint.
By superadditivity of infima and the definition of the empirical error function,
on the event $\mathscr{L}_n,$ the right-hand side of the previous display is
bounded from below by
\begin{align*}
 & \inf_{\substack{\bdelta\in\mathcal{R}\left(\overline{c}_{0},\eta_{n}\right),\\
\|\bdelta\|_{2}=\widetilde{u}_{n}
}
}\mathrm{E}\left[m\left(\bX^{\top}\left(\btheta_{0}+\bdelta\right),\bY\right)-m\left(\bX^{\top}\btheta_{0},\bY\right)\right]\\
 & \qquad +\inf_{\substack{\bdelta\in\mathcal{R}\left(\overline{c}_{0},\eta_{n}\right),\\
\left\Vert \bdelta\right\Vert _{2}=\widetilde{u}_{n}
}
}\left(\mathbb{E}_{n}-\mathrm{E}\right)\left[m\left(\bX_{i}^{\top}\left(\btheta_{0}+\bdelta\right),\bY_{i}\right)-m\left(\bX_{i}^{\top}\btheta_{0},\bY_{i}\right)\right]\\
&\qquad + \lambda\inf_{\substack{\bdelta\in\mathcal{R}\left(\overline{c}_{0},\eta_{n}\right),\\
 \left\Vert \bdelta\right\Vert _{2}=\widetilde{u}_{n}
}
}\left\{ \|\btheta_{0}+\bdelta\|_{1}-\|\btheta_{0}\|_{1}\right\} \\
 &\quad  \geqslant\inf_{\substack{\bdelta\in\mathcal{R}\left(\overline{c}_{0},\eta_{n}\right),\\
\|\bdelta\|_{2}=\widetilde{u}_{n}
}
}\mathcal{E}\left(\btheta_{0}+\bdelta\right)-\epsilon_{n}\left(\widetilde{u}_{n}\right)-\overline{\lambda}_{n}\sup_{\substack{\bdelta\in\mathcal{R}\left(\overline{c}_{0},\eta_{n}\right),\\
\left\Vert \bdelta\right\Vert _{2}=\widetilde{u}_{n}
}
}\left|\|\btheta_{0}+\bdelta\|_{1}-\|\btheta_{0}\|_{1}\right|.
\end{align*}
Now, since $\widetilde u_{n}\leqslant c_{M}'$ by hypothesis,
Assumption \ref{assu:Margin} yields
\[
\inf_{\substack{\bdelta\in\mathcal{R}\left(\overline{c}_{0},\eta_{n}\right),\\
\|\bdelta\|_{2}=\widetilde{u}_{n}
}
}\mathcal{E}\left(\btheta_{0}+\bdelta\right)\geqslant\inf_{\substack{\btheta_{0}+\bdelta\in\Theta,\\
\|\bdelta\|_{2}=\widetilde{u}_{n}
}
}\mathcal{E}\left(\btheta_{0}+\bdelta\right)\geqslant c_{M}\inf_{\substack{\bdelta\in\R^{p},\\
\|\bdelta\|_{2}=\widetilde{u}_{n}
}
}\left\Vert \bdelta\right\Vert _{2}^{2}=c_{M}\widetilde{u}_{n}^{2}.
\]
Also, the triangle inequality followed by Lemma
\ref{lem:RestrictedSetConsequence} show that
\begin{align*}
\sup_{\substack{\bdelta\in\mathcal{R}\left(\overline{c}_{0},\eta_{n}\right),\\
\left\Vert \bdelta\right\Vert _{2}=\widetilde{u}_{n}
}
}\left|\|\btheta_{0}+\bdelta\|_{1}-\|\btheta_{0}\|_{1}\right| & \leqslant\sup_{\substack{\bdelta\in\mathcal{R}\left(\overline{c}_{0},\eta_{n}\right),\\
\left\Vert \bdelta\right\Vert _{2}=\widetilde{u}_{n}
}
}\|\bdelta\|_{1}\leqslant\left(1+\overline{c}_{0}\right)\left(\widetilde{u}_{n}\sqrt{s_{q}\eta_{n}^{-q}}+s_{q}\eta_{n}^{1-q}\right).
\end{align*}
In addition, $\epsilon_{n}(\widetilde{u}_{n})\leqslant
a_{\epsilon,n}\widetilde{u}_{n}+b_{\epsilon,n}$ on the event $\mathscr{E}_n.$
Harvesting the results, it follows that
\begin{align*}
0 & \geqslant\inf_{\substack{\bdelta\in\mathcal{R}\left(\overline{c}_{0},\eta_{n}\right),\\
\|\bdelta\|_{2}=\widetilde{u}_{n}
}
}\mathcal{E}\left(\btheta_{0}+\bdelta\right)-\epsilon_{n}\left(\widetilde{u}_{n}\right)-\overline{\lambda}_{n}\sup_{\substack{\bdelta\in\mathcal{R}\left(\overline{c}_{0},\eta_{n}\right),\\
\left\Vert \bdelta\right\Vert _{2}=\widetilde{u}_{n}
}
}\left|\|\btheta_{0}+\bdelta\|_{1}-\|\btheta_{0}\|_{1}\right|\\
 & \geqslant c_{M}\widetilde{u}_{n}^{2}-\left(a_{\epsilon,n}\widetilde{u}_{n}+b_{\epsilon,n}\right)-\left(1+\overline{c}_{0}\right)\overline{\lambda}_{n}\left(\widetilde{u}_{n}\sqrt{s_{q}\eta_{n}^{-q}}+s_{q}\eta_{n}^{1-q}\right)\\
 & =c_{M}\widetilde{u}_{n}^{2}-\left(a_{\epsilon,n}+\left(1+\overline{c}_{0}\right)\overline{\lambda}_{n}\sqrt{s_{q}\eta_{n}^{-q}}\right)\widetilde{u}_{n}-\left(b_{\epsilon,n}+\left(1+\overline{c}_{0}\right)\overline{\lambda}_{n}s_{q}\eta_{n}^{1-q}\right)\\
 & =:A_n\widetilde{u}_{n}^{2}-B_n\widetilde{u}_{n}-C_n.
\end{align*}
The right-hand side quadratic in $\widetilde{u}_{n}$ has
$A_n,B_n,C_n\in(0,\infty)$. Observing that $\widetilde{u}_{n}$ is actually equal
to $2B_n/A_n$ by the definition in \eqref{eq: definition un tilde key}, the
right-hand side equals $2B_n^2/A_n-C_n$. Since $B_n^2/A_n\geqslant C_n$ by
\eqref{eq: lemma c3 additional constraint} and $C_n>0$, we arrive at the desired
contradiction. We therefore conclude that provided $\widetilde{u}_{n}\leqslant
c_{M}',$ on the event $\mathscr{S}_n\cap\mathscr{L}_n\cap\mathscr{E}_n,$ we have
$\|\widehat{\bdelta}\|_{2}\leqslant\widetilde{u}_{n},$ which establishes the
$\ell_{2}$ bound. The $\ell_{1}$ bound then follows from $\widehat{\bdelta}$
belonging to $\mathcal{R}(\overline{c}_{0},\eta_{n})$ and Lemma
\ref{lem:RestrictedSetConsequence}.
\end{proof}

\begin{proof}
[\sc{Proof of Lemma \ref{lem:EmpiricalErrorBound}}] The claim will follow from
an application of the maximal inequality in Theorem
\ref{lem:MaxIneqBasedOnContraction}. First, fix $n\in\N$, $t\in[1,\infty)$ and
$u\in(0,\infty)$ satisfying \eqref{eq: extra condition to apply contraction
principle} and denote
$\Delta(u,\eta_n):=\mathcal{R}(\overline{c}_{0},\eta_n)\cap\left\{ \left\Vert
\cdot\right\Vert _{2}\leqslant u\right\} .$ Lemma
\ref{lem:RestrictedSetConsequence} shows that
\begin{equation}
\left\Vert \Delta\left(u,\eta_n\right)\right\Vert _{1}:=\sup_{\bdelta\in\Delta\left(u,\eta_n\right)}\left\Vert \bdelta\right\Vert _{1}\leqslant\left(1+\overline{c}_{0}\right)\left(u\sqrt{s_{q}\eta_n^{-q}}+s_{q}\eta_n^{1-q}\right) =: \overline\Delta_n(u).\label{eq:DeltaEtaOfuEll1Bnd new}
\end{equation}
Setting up for an application of Theorem \ref{lem:MaxIneqBasedOnContraction},
define $h:\R\times\mathcal{X}\times\mathcal Y\to\R$ by
$h(t,\bx,\by):=m(\bx^{\top}\btheta_{0}+t,\by)-m(\bx^{\top}\btheta_{0},\by)$ for
all $t\in\R$ and $(\bx,\by)\in\mathcal{X}\times\mathcal Y$. By construction,
$h(0,\cdot,\cdot)\equiv0$. By Assumption
\ref{assu:LossLocallyLipschitzAndMore}.\ref{enu:LossLocallyLipschitz}, the
restriction $h:[-c_{L},c_{L}]\times\mathcal{X}\times\mathcal Y\to\R$ is
$L(\bx,\by)$-Lipschitz in its first argument, thus verifying Condition
\ref{enu:ContractionConsequenceLocallyLipschitz} of Theorem
\ref{lem:MaxIneqBasedOnContraction} with $C_h=c_L$. H\"{o}lder's inequality, Assumption
\ref{assu:LossLocallyLipschitzAndMore}.\ref{enu:LossLocallyLipschitz},
(\ref{eq:DeltaEtaOfuEll1Bnd new}), and \eqref{eq: extra condition to apply
contraction principle} imply that
\[
\max_{1\leqslant i\leqslant n}\sup_{\bdelta\in\Delta\left(u,\eta_{n}\right)}\left|\bX_{i}^{\top}\bdelta\right|
   \leqslant\max_{1\leqslant i\leqslant n}\left\Vert \bX_{i}\right\Vert _{\infty}\left\Vert \Delta\left(u,\eta_{n}\right)\right\Vert _{1}
   \leqslant t n^{1/r} B_{n}\overline{\Delta}_{n}\left(u\right)\leqslant c_{L}
\]
with probability at least $1-t^{-r}$, where the bound $\P(\max_{1\leqslant
i\leqslant n}\|\bX_i\|_{\infty}>t n^{1/r} B_n)\leqslant t^{-r}$ follows from Markov's
inequality, since Assumption
\ref{assu:LossLocallyLipschitzAndMore}.\ref{enu:LossLocallyLipschitz} implies
that $\E[\|\bX\|_{\infty}^r]\leqslant B_n^r$. 
Condition
\ref{enu:ContractionConsequenceXprimeDeltaBounded} of Theorem \ref{lem:MaxIneqBasedOnContraction}
therefore holds with $C_h=c_L$ and $\zeta_n=t^{-r}$.
Given that
$s_{q},t\in[1,\infty)$, and $B_{n}\in[1,\infty)$, 
\eqref{eq: extra condition to apply contraction principle}
implies that $u\leqslant c_L$. Therefore, it follows from Assumption
\ref{assu:LossLocallyLipschitzAndMore}.\ref{enu:LossMeanSquareEll2Conts} that
\begin{align*}
& \sup_{\bdelta\in\Delta\left(u,\eta_{n}\right)}\mathrm{E}\big[h(\bX^{\top}\bdelta,\bX,\bY)^{2}\big]\\
& \leqslant\sup_{\substack{\btheta_{0}+\bdelta\in\Theta,\\
\|\bdelta\|_{2}\leqslant u
}
}\mathrm{E}\big[\left|m(\bX^{\top}\left(\btheta_{0}+\bdelta\right),\bY)-m(\bX^{\top}\btheta_{0},\bY)\right|^{2}\big]\leqslant C_{L}^{2}u^{2},
\end{align*}
and so Condition \ref{enu:ContractionConsequencehBoundedInL2} of Theorem
\ref{lem:MaxIneqBasedOnContraction} holds for $B_{1n}=C_{L}u.$ For (the final)
Condition \ref{enu:ContractionConsequenceLtimesXBoundedInEmpL2} of Theorem
\ref{lem:MaxIneqBasedOnContraction}, we invoke
Theorem \ref{thm: max inequality nonnegative}. Observe first that
$L(\bX,\bY)^2X_{j}^2\geqslant0$ for all $j\in[p]$. Assumption
\ref{assu:LossLocallyLipschitzAndMore}.\ref{enu:LossLocallyLipschitz} entails
$\max_{1\leqslant j\leqslant p}\E[L(\bX,\bY)^2X_{j}^2]\leqslant C_L^2$, so
Condition \ref{enu:MaxIneqIIIFirstMomentBnd} of Theorem \ref{thm: max inequality
nonnegative} holds with $\mu_n=C_L^2$, a constant. Assumption
\ref{assu:LossLocallyLipschitzAndMore}.\ref{enu:LossLocallyLipschitz} also
implies that $\E[\max_{1\leqslant j\leqslant p}L(\bX,\bY)^4X_{j}^4]\leqslant
B_n^4$, so Condition \ref{enu:MaxIneqIIIMomentOfMaxBnd} of Theorem \ref{thm: max inequality nonnegative} holds
with $q=2$ and $M_n=B_n^2$. Equation \eqref{eq: extra condition to apply contraction principle}
shows that $B_n^2\ln(pn)/n^{1/2}\leqslant C_L^2$,
which verifies Condition \ref{enu:MaxIneqIIIGrowthCond} of Theorem \ref{thm: max inequality nonnegative}.
Theorem \ref{thm: max inequality nonnegative} therefore shows that
there is a universal constant $C\in[1,\infty)$ such that 
\begin{equation}\label{eq: lemma c4 useful inequality to be used later}
\max_{1\leqslant j\leqslant p}\mathbb{E}_{n}\left[L\left(\bX_{i},\bY_i\right)^{2}X_{i,j}^{2}\right]\leqslant (CC_L)^2
\end{equation}
with probability at least $1-C/\ln^2(pn)$. Condition
\ref{enu:ContractionConsequenceLtimesXBoundedInEmpL2} of Theorem
\ref{lem:MaxIneqBasedOnContraction} therefore holds with $B_{2n}=CC_{L}$ and
$\gamma_{n}=C/\ln^2(pn)$. Theorem \ref{lem:MaxIneqBasedOnContraction} combined
with the bound $\overline{\Delta}_{n}(u)$ on $\|\Delta(u,\eta_{n})\|_{1}$ from
(\ref{eq:DeltaEtaOfuEll1Bnd new}) and $\ln(8pn)\leqslant4\ln(pn)$ (which follows
from $p\geqslant2$) now show that
\begin{equation}\label{eq: application of contraction principle lemma c4}
 \mathrm{P}\left(\sqrt{n}\epsilon_{n}\left(u\right)>\left\{ 4C_{L}u\right\} \lor\left\{ 16\sqrt{2}CC_{L}\overline{\Delta}_{n}\left(u\right)\sqrt{\ln\left(pn\right)}\right\} \right)
   \leqslant 4t^{-r} + 4C/\ln^2(pn) + n^{-1}.
\end{equation}
Now, given that $s_{q},C\in[1,\infty),p\in[2,\infty),n\in[3,\infty)$ and
$\eta_n\in(0,1]$, it follows that
$4\sqrt{2}C(1+\overline{c}_{0})[s_{q}\eta_n^{-q}\ln(pn)]^{1/2}\geqslant1$, and
so
$16\sqrt{2}CC_{L}\overline{\Delta}_{n}\left(u\right)\sqrt{\ln\left(pn\right)}\geqslant4C_{L}u.$
Therefore, the asserted claim follows from \eqref{eq: application of contraction
principle lemma c4} upon recognizing that
\begin{align*}
\overline{\Delta}_{n}\left(u\right)\sqrt{\frac{\ln\left(pn\right)}{n}} & =(1+\overline c_0)\left(u\sqrt{s_{q}\eta_{n}^{2-q}}+s_{q}\eta_{n}^{2-q}\right)
\end{align*}
and redefining the universal constant $C$ appropriately.
\end{proof}

\subsection{Proofs for Section \ref{sub: boot penalty level}}\label{sec: proofs for bcv} 
For the arguments in this section, we first introduce some additional notation.
Since $\btheta_{0}$ is interior to $\Theta$ (Assumption
\ref{assu:ParameterSpace}), there is a radius $\overline{r}_{n}\in(0,\infty)$
such that $\overline r_n \leqslant \min(c_L,c_M')$ and the ball  $\mathcal
B_{\btheta_0}(\overline r_n):=
\left\{\btheta\in\R^{p};\|\btheta-\btheta_{0}\|_{2}\leqslant\overline{r}_{n}\right\}$
is a subset of $\Theta$, with $c_{L},c_{M}'\in(0,\infty]$ provided by
Assumptions \ref{assu:Margin} and \ref{assu:LossLocallyLipschitzAndMore},
respectively. Fix $\btheta \in \mathcal B_{\btheta_0}(\overline r_n)$, and define
\begin{align}
f\left(\tau,\bz\right) & :=m \left(\bx^{\top}\btheta_\tau,\by\right)-m\left(\bx^{\top}\btheta_{0},\by\right)\;\text{for each}\;(\tau,\bz)\in\R\times\mathcal{Z},\label{eq:fOfwandrDudley}\\
g\left(\tau\right) & :=\mathrm{E}\left[f\left(\tau,\bZ\right)\right]\;\text{for each}\;\tau\in\left[-1,1\right]\label{eq:gOfrDudley},
\end{align}
where we employ the shorthand notations,
$\btheta_\tau:=\btheta_0+\tau(\btheta-\btheta_0)$, $\bz:=(\bx,\by)$,
$\bZ:=(\bX,\bY)$ and $\mathcal Z:=\mathcal X\times\mathcal Y$. Below we show
that $g$ is well defined. Also, let $f_1'(\tau,\bz):=(\partial/\partial
\tau)f(\tau,\bz)$ denote the partial derivative of $f$ with respect to its first
argument, when it exists.

We next state and prove the following non-asymptotic version of Lemma
\ref{thm:RatesBM}.

\begin{thm}
[\textbf{Non-Asymptotic Error Bounds: Generic Bootstrap
Method}]\label{thm:NonasyHighProbBndsBM} Let Assumptions
\ref{assu:ParameterSpace}--\ref{assu:Approximate-Sparsity} and
\ref{assu:ResidualBootstrapMethod} hold, let $\beta_n$ and $\delta_n$ be non-random
sequences in $[0,1]$ and $[0,\infty)$, respectively,
such that
\begin{equation}\label{eq: residual estimation generic bound}
\P\Big(\En[(\widehat{U}_{i}-U_{i})^{2}] > \delta_{n}^{2}/\ln^{2}\left(pn\right)\Big) \leqslant \beta_n,
\end{equation}
let
$\widehat{\Theta}(\widehat{\lambda}^{\mathtt{bm}}_{\alpha})$
be the solutions to the $\ell_{1}$-penalized M-estimation problem
(\ref{eq:ell1PenalizedMEstimationIntro})\textcolor{red}{{} }with penalty level
$\lambda=\widehat{\lambda}^{\mathtt{bm}}_{\alpha}$ given in
(\ref{eq:BootstrapPenaltyLevel}), and define
\begin{align}
u_{n,\alpha}^{\mathtt{bm}}  :=\frac{4c_{0}\sqrt{s_{q}\eta_{n}^{-q}}}{(c_{0}-1)c_M}\left(C_{L}\eta_{n}+c_{0}C_{U}\sqrt{\frac{\ln\left(p/\alpha\right)}{n}}\right). \label{eq:BootstrapPenaltyBound}
\end{align}
Then there is a constant $C_1\in[1,\infty)$, depending only on $c_U$ and
$C_U$, and a universal constant $C_2\in[1,\infty)$ such that with
\[
\rho_{n}:=C_1\max\left\{ \beta_{n}+t^{-r},tn^{1/r}B_{n}\delta_{n},\left(\frac{\widetilde{B}_{n}^{4}\ln^{7}\left(pn\right)}{n}\right)^{1/6},\frac{1}{\ln^{2}\left(pn\right)}\right\},
\]
for $n\in\N$ and $t\in[1,\infty)$ satisfying
$$
\eta_{n}\leqslant1,\quad
C_2u_{n,\alpha}^{\mathtt{bm}}\leqslant c_{M}',\quad
\frac{B_n^2\ln(pn)}{\sqrt n}\leqslant C_L^2 ,\quad
\frac{\widetilde{B}^2_n\ln(pn)}{\sqrt n}\leqslant C_U^2
$$
and
$$
t n^{1/r} B_n\left(C_2u_{n,\alpha}^{\mathtt{bm}}\sqrt{s_{q}\eta_{n}^{-q}}+s_{q}\eta_{n}^{1-q}\right)\leqslant\frac{\left(c_{0}-1\right)c_{L}}{2c_{0}},
$$
we have both
\begin{align*}
\sup_{\widehat\btheta\in\widehat{\Theta}(\widehat{\lambda}^{\mathtt{bm}}_{\alpha})}\|\widehat{\btheta}-\btheta_{0}\|_{2} & \leqslant C_2u_{n,\alpha}^{\mathtt{bm}}\quad\text{and}\quad
\sup_{\widehat\btheta\in\widehat{\Theta}(\widehat{\lambda}^{\mathtt{bm}}_{\alpha})}\|\widehat{\btheta}-\btheta_{0}\|_{1}\leqslant\frac{2c_{0}}{c_{0}-1}\left(C_2 u_{n,\alpha}^{\mathtt{bm}}\sqrt{s_{q}\eta_{n}^{-q}}+s_{q}\eta_{n}^{1-q}\right)
\end{align*}
with probability at least $1-\alpha-\rho_n - \beta_{n}-2C_2/\ln^2(pn)-5t^{-r}-n^{-1}$.
\end{thm}
% J: Have added the condition on B_n^2\ln(pn)/\sqrt n\leqslant C_L^2$ to the list.

In proving Theorem \ref{thm:NonasyHighProbBndsBM}, we rely on the following
six lemmas, whose proofs can be found at the end of this section. Recall that
$P$ denotes the distribution of $(\bX,\bY)=\bZ$.

\begin{lem}[\textbf{$\boldsymbol{L_1}$-Boundedness of
Levels}]\label{lem:L1BoundednessOfLevels} Let Assumptions
\ref{assu:ParameterSpace}, \ref{assu:Margin} and
\ref{assu:LossLocallyLipschitzAndMore} hold. Then
\[
\sup_{\tau\in\left[-1,1\right]}\mathrm{E}\left[\left|f\left(\tau,\bZ\right)\right|\right]\leqslant c_L C_L<\infty.
\]
\end{lem}
% J: For Dudley's Condition (A.2) Part (i) [and defining g]
% J: We do not use convexity at this stage.

\begin{lem}[\textbf{Almost Sure Existence of
Partials}]\label{lem:AlmostSureExistenceOfPartials} Let Assumptions
\ref{assu:ParameterSpace}, \ref{as: diff and int}, \ref{assu:Margin} and
\ref{assu:LossLocallyLipschitzAndMore} hold. Then for any $\tau\in[-1,1]$, the
partial $f_{1}'(\tau,\bz)$ exists for $P$-almost every ($P$-a.e.)
$\bz\in\mathcal Z$, and then
\begin{align}
f_{1}'\left(\tau,\bz\right)=m_{1}'\left(\bx^{\top}\btheta_\tau,\by\right)\bx^{\top}\left(\btheta-\btheta_{0}\right).\label{eq:f2wrExpressionAlmostSure}
\end{align}
\end{lem}
% J: For Dudley's Condition (A.2) Part (ii)
% J: We do not use convexity here. But we use Assus 3.4 and 3.5 to define the
% ball (depending on c'_M and c_L) in which theta' lies.

\begin{lem}[\textbf{$\boldsymbol{L_1}$-Boundedness of
Partials}]\label{lem:L1BoundednessOfPartials} Let Assumptions
\ref{assu:ParameterSpace}, \ref{as: diff and int}, \ref{assu:Margin},
\ref{assu:LossLocallyLipschitzAndMore} and \ref{assu:ResidualBootstrapMethod}
hold. Then
\[
   \sup_{\tau\in\left[-1,1\right]}\mathrm{E}\left[\left|f_{1}'\left(\tau,\bZ\right)\right|\right]
   \leqslant \overline{r}_{n}\sqrt{p}\left(\widetilde{B}_{n} + C_LB_n \sqrt{\overline{r}_{n}}\right)<\infty.   
\]
\end{lem}
% J: For Dudley's Condition (A.2) Part (iii)

\begin{lem}[{\textbf{Difference Quotient Domination}}]\label{lem:DominationOfDifferenceQuotients}
Let Assumptions
\ref{assu:ParameterSpace}--\ref{assu:LossLocallyLipschitzAndMore} and
\ref{assu:ResidualBootstrapMethod} hold. Then there is a $P$-integrable
function $\overline{f}:\mathcal{Z}\to[0,\infty]$ such that
\[
\left|f\left(\tau_0+h,\bz\right)-f\left(\tau_0,\bz\right)\right|\leqslant\left|h\right|\overline{f}\left(\bz\right)\;\text{for each}\;\tau_0\in\left[-1,1\right],\;h\in\left[-d\left(\tau_0\right),d\left(\tau_0\right)\right],\;\bz\in\mathcal{Z},
\]
where $d\left(\tau_0\right):=|\tau_0+1|\wedge|\tau_0-1|$ denotes the distance to the nearest interval endpoint.
\begin{comment} % J: Old statement (for tau0=0 alone) below
\[
\left|f\left(\bw,\tau\right)\right|=\left|f\left(\bw,\tau\right)-f\left(\bw,0\right)\right|\leqslant\left|\tau\right|\overline{f}\left(\bw\right),\quad \tau\in\left[-1,1\right],\quad \bw\in\mathcal{W}.
\]
\end{comment}
\end{lem}
% J: For Dudley's Corollary A.3

\begin{lem}[\textbf{Existence of Derivatives}]\label{lem:ExistenceOfDerivatives}
Let Assumptions
\ref{assu:ParameterSpace}--\ref{assu:LossLocallyLipschitzAndMore} and
\ref{assu:ResidualBootstrapMethod} hold. Then $g$ given in \eqref{eq:gOfrDudley}
is well defined as a mapping from $[-1,1]$ to $\R$. This mapping is differentiable
on $(-1,1)$ with derivative given by
$$g'(\tau)=\E\left[f_1'\left(\tau,\bZ\right)\right],\quad\tau\in\left(-1,1\right).$$
% m'_1\left(\bX^\top\left[\btheta_0+\tau\left(\btheta'-\btheta_0\right)\right],\bY\right)\bX^\top\left(\btheta'-\btheta_0\right)
In particular,
$$g'(0)=\E\left[U\bX^\top(\btheta-\btheta_0)\right]=0.$$
\end{lem}

\begin{lem}[\textbf{Zero Derivative}]\label{lem:EofUXis0}
Let Assumptions \ref{assu:ParameterSpace}--\ref{assu:LossLocallyLipschitzAndMore} and
\ref{assu:ResidualBootstrapMethod} hold. Then $\E[U\bX]=\mathbf{0}_p$.
\end{lem}

\begin{proof}[\sc{Proof of Theorem \ref{thm:NonasyHighProbBndsBM}}] 
We first set up for an application of the multiplier bootstrap consistency
result in Theorem \ref{thm:MultiplierBootstrapConsistency}. To this end, note
that   
Assumption \ref{assu:ResidualBootstrapMethod} implies the moment conditions
(\ref{eq:MomentConds}) for $Z_{i,j}$, $b$ and $B_{n}$ there equal to
$U_{i}X_{i,j}$, $c_{U}^2$ and $C_U\widetilde{B}_{n}$, respectively. Fix
$t\in[1,\infty)$. Assumption
\ref{assu:LossLocallyLipschitzAndMore}.\ref{enu:LossLocallyLipschitz} implies
that $\E[\|\bX\|_{\infty}^r]\leqslant B_n^r$, so $\P(\max_{1\leqslant i\leqslant
n}\|\bX_i\|_{\infty}>tn^{1/r}B_n)\leqslant t^{-r}$ by Markov's inequality. It
then follows from \eqref{eq: residual estimation generic bound} that the
estimation error condition (\ref{eq:L2PnEstErrCond}) for
$\widehat{Z}_{i,j}=\widehat{U}_{i}X_{i,j}$ holds with $\delta_{n}$ and
$\beta_{n}$ there replaced by $tn^{1/r}B_{n}\delta_{n}$ and $\beta_{n}+t^{-r},$
respectively. Since $U\bX$ is centered (cf.~Lemma \ref{lem:EofUXis0}), Theorem
\ref{thm:MultiplierBootstrapConsistency} therefore shows that there is a
constant $C_1\in[1,\infty)$, depending only on $c_{U}$ and $C_U$, such
that\footnote{We here invoke the scaling property that
$q_{tV}(\alpha)=tq_{V}(\alpha)$ for $t\in(0,\infty)$ and $\alpha\in(0,1)$ both
non-random and $q_{V}\left(\alpha\right)$ denoting the $\alpha$ quantile of the
random variable $V.$} 
\begin{align*}
 & \sup_{\alpha\in\left(0,1\right)}\big|\mathrm{P}\big(\Vert \bS_n\Vert_{\infty}>\widehat{q}^{\texttt{bm}}\left(1-\alpha\right)\big)-\alpha\big|\\
 & \qquad\leqslant C_1\max\left\{ \beta_{n}+t^{-r},tn^{1/r}B_{n}\delta_{n},\left(\frac{\widetilde{B}_{n}^{4}\ln^{7}\left(pn\right)}{n}\right)^{1/6},\frac{1}{\ln^{2}\left(pn\right)}\right\} .
\end{align*}
Taking $\rho_{n}$ to be this upper bound, it thus follows by construction of the
bootstrap penalty level
$\widehat{\lambda}^{\mathtt{bm}}_{\alpha}=c_{0 }\widehat{q}^{\texttt{bm}}\left(1-\alpha\right)$
that
$\mathrm{P}(\widehat{\lambda}^{\mathtt{bm}}_{\alpha}<c_{0}\|\bS_n\|_{\infty})\leqslant\alpha+\rho_{n}.$
We proceed to establish the claimed bounds on the estimation error for this
constant $C_1$.

To this end, assume without loss of generality that $tn^{1/r}B_n\delta_n\leqslant
1$ (otherwise $\rho_n\geqslant 1$ and the probabilistic claim becomes vacuous) and
use the observation that, conditional on $\{(\bX_i,\bY_i,\widehat{U}_{i})\}_{i=1}^{n}$, the
random vector $\mathbb{E}_{n}[e_{i}\widehat{U}_{i}\bX_{i}]$ is centered Gaussian
in $\R^{p}$ with $j$th coordinate variance
$n^{-1}\mathbb{E}_{n}[\widehat{U}_{i}^{2}X_{i,j}^{2}]$ in combination with
Theorem \ref{lem:GaussianQuantileBnd} to see that
\[
\widehat{q}^{\texttt{bm}}\left(1-\alpha\right)\leqslant(2+\sqrt{2})\sqrt{\frac{\ln\left(p/\alpha\right)}{n}\max_{1\leqslant j\leqslant p}\mathbb{E}_{n}[\widehat{U}_{i}^{2}X_{i,j}^{2}]}.
\]
In addition, there is a universal constant $\widetilde C\in[1,\infty)$ such
that with probability at least $1 - \widetilde C/\ln^2(pn) - t^{-r} - \beta_n,$
\begin{align*}
\max_{1\leqslant j\leqslant p}\mathbb{E}_{n}[\widehat{U}_{i}^{2}X_{i,j}^{2}] & 
\leqslant2\max_{1\leqslant j\leqslant p}\Big(\mathbb{E}_{n}[U_{i}^{2}X_{i,j}^{2}]+\mathbb{E}_{n}[(\widehat{U}_{i}-U_{i})^{2}X_{i,j}^{2}]\Big)\\
 & \leqslant 2(\widetilde CC_U)^2+2(tn^{1/r}B_{n})^{2}\mathbb{E}_{n}[(\widehat{U}_{i}-U_{i})^{2}]\\
 & \leqslant2(\widetilde CC_{U})^{2}+2(tn^{1/r}B_{n})^{2}\delta_{n}^{2}/\ln^{2}(pn)\\
 & \leqslant4(\widetilde CC_{U})^{2},
\end{align*}
where the first inequality follows from the elementary inequality
$(a+b)^{2}\leqslant2a^{2}+2b^{2},$ the second follows from the already
established bound $\P(\max_{1\leqslant i\leqslant
n}\|\bX_i\|_{\infty}>tn^{1/r}B_n)\leqslant t^{-r}$ and Thereom \ref{thm: max
inequality nonnegative} applied with $Z_j=U^2X_j^2$, $\mu_n=C_U^2$, $q=2$ and
$M_n=\widetilde B_n^2$ [which is justified by Assumptions
\ref{assu:ResidualBootstrapMethod}.\ref{enu:ZijSecondMomentsBndAwayZero} and
\ref{assu:ResidualBootstrapMethod}.\ref{enu:maxZijFourthMomentBnd} and the condition
$\widetilde B_n^2\ln(pn)/\sqrt n\leqslant C_U^2$], the third from \eqref{eq:
residual estimation generic bound}, and the fourth and final inequality follows
from the inequalities $\widetilde CC_U\geqslant 1\geqslant tn^{1/r}B_n\delta_n$.
Hence, with the same probability, 
\[
\widehat{\lambda}^{\mathtt{bm}}_{\alpha}\leqslant 2(2+\sqrt 2)c_{0}\widetilde CC_{U}\sqrt{\frac{\ln\left(p/\alpha\right)}{n}}=:\overline{\lambda}_n.
\]
With $\overline\lambda_n$ defined as such the implied $u_n$ and the
universal constant $C$ appearing in the statement of Theorem
\ref{thm:NonAsymptoticProbabilisticBounds} satisfy $Cu_n \leqslant 2(2+\sqrt
2)C\widetilde C u_{n,\alpha}^{\mathtt{bm}}$.
% J: Insert lambdabarn def into u_n in Thm 3.1 and observe that
% 2(2+sqrt{2})\widetilde C >= 1.
We therefore take $C_2:=2(2+\sqrt 2)C\widetilde C$ as the universal constant. Up
to this point, the choice of $t\in[1,\infty)$ has been arbitrary. With the
restrictions placed on $n$ and $t$ in the statement of Theorem
\ref{thm:NonasyHighProbBndsBM} for this choice of $C_2$, the asserted
probabilistic bounds on the estimation error follow from Theorem
\ref{thm:NonAsymptoticProbabilisticBounds}.
\end{proof}

\begin{proof}
[\sc{Proof of Lemma \ref{thm:RatesBM}}] 
We set up for an application of Theorem \ref{thm:NonasyHighProbBndsBM}. First,
to satisfy \eqref{eq: residual estimation generic bound}, for $\delta_n$
provided by \eqref{eq: residual estimation side condition}, we set
\[
\beta_n:=\P\Big(\En[(\widehat{U}_{i}-U_{i})^{2}] > \delta_{n}^{2}/\ln^{2}\left(pn\right)\Big),
\]
such that $\beta_n\to0$. Next, from $n^{1/r}B_ns_q\eta_n^{1-q}\to0$ in
\eqref{eq: simple restriction bm}, we deduce that $s_q\eta_n^{1-q}\to0$ (recall
that $B_n\geqslant1$) and, thus, $\eta_n\to0$ (recall that $s_q\geqslant1$). It
follows that $\eta_n\leqslant1$ for sufficiently large $n$. Since $\alpha_n$
satisfies $\ln(1/\alpha_n)\lesssim\ln(pn)$, we also have
$u_{n,\alpha_n}^{\mathtt{bm}}\lesssim\sqrt{s_q\eta_n^{2-q}}\to0$. Letting $C_2$
be the universal constant from Theorem \ref{thm:NonasyHighProbBndsBM}, we
eventually have $C_2u_{n,\alpha_n}^{\mathtt{bm}}\leqslant c_M'$. From
$B_n^2\ln(pn)/\sqrt n\to0$ and $\widetilde B_n^4\ln^7(pn)/n\to0$  in \eqref{eq:
simple restriction bm} we deduce that $B_n^2\ln(pn)/\sqrt n\leqslant C_L^2$
eventually and $\widetilde B_n^2\ln(pn)/\sqrt n\leqslant C_U^2$ eventually,
respectively. From $u_{n,\alpha_n}^{\mathtt{bm}}\lesssim\sqrt{s_q\eta_n^{2-q}}$,
we further deduce
\[
   u_{n,\alpha_n}^{\mathtt{bm}}s_q\eta_n^{-q}+ s_q\eta_n^{1-q}\lesssim s_q\eta_n^{1-q}.
\]
Choose now $t=t_n:=1\lor [n^{1/r}B_n(\delta_n\lor s_q\eta_n^{1-q})]^{-1/2}$.
Then \eqref{eq: simple restriction bm} guarantees $t_n=[n^{1/r}B_n(\delta_n\lor
s_q\eta_n^{1-q})]^{-1/2}\in[1,\infty)$ for sufficiently large $n$, and
$t_n\to\infty$. By choice of $t_n$ and \eqref{eq: simple restriction bm}, we
have
\[
   t_n n^{1/r} B_n\left(C_2u_{n,\alpha}^{\mathtt{bm}}\sqrt{s_{q}\eta_{n}^{-q}}+s_{q}\eta_{n}^{1-q}\right)\to0
\]
implying that for sufficiently large $n$,
\[
   t_n n^{1/r} B_n\left(C_2u_{n,\alpha}^{\mathtt{bm}}\sqrt{s_{q}\eta_{n}^{-q}}+s_{q}\eta_{n}^{1-q}\right)\leqslant\frac{\left(c_{0}-1\right)c_{L}}{2c_{0}}.
\]
The previous observations and \eqref{eq: residual estimation side condition}
combine to show that the $\rho_n$ in Theorem \ref{thm:NonasyHighProbBndsBM}
implied by this choice of $t_n$ converges to zero. Since also $\alpha_n\to0$,
the estimation error bounds provided by Theorem \ref{thm:NonasyHighProbBndsBM}
hold with probability approaching one.
\begin{comment} % J: Old proof below
The claimed rates follow from Theorem
\ref{thm:NonasyHighProbBndsBM} upon setting
\[
\beta_n:=\P\Big(\En[(\widehat{U}_{i}-U_{i})^{2}] > \delta_{n}^{2}/\ln^{2}\left(pn\right)\Big)   
\]
and $t=t_n:=1\lor [n^{1/r}B_n(\delta_n\lor s_q\eta_n^{1-q})]^{-1/2}$.
\end{comment}
\end{proof}

\begin{proof}[\sc{Proof of Lemma \ref{lem:L1BoundednessOfLevels}}]
Fix $\tau\in[-1,1]$. Since $\btheta\in\mathcal B_{\btheta_0}(\overline
r_n)\subset\Theta$ and $\overline r_n\leqslant c_L$, we have
$\|\btheta_\tau-\btheta_0\|_2=|\tau|\|\btheta-\btheta_0\|_2\leqslant c_L$, so
using the Cauchy-Schwarz inequality followed by Assumption
\ref{assu:LossLocallyLipschitzAndMore}.\ref{enu:LossMeanSquareEll2Conts}, we get
\begin{align*}
\mathrm{E}\left[\left|f\left(\tau,\bZ\right)\right|\right] 
   & =\mathrm{E}\left[\left|m\left(\bX^{\top}\btheta_{\tau},\bY\right)-m\left(\bX^{\top}\btheta_{0},\bY\right)\right|\right]\\
   & \leqslant\sqrt{\mathrm{E}\left[\left|m\left(\bX^{\top}\btheta_{\tau},\bY\right)-m\left(\bX^{\top}\btheta_{0},\bY\right)\right|^{2}\right]}\\
   & \leqslant\sqrt{C_L^2\|\btheta_\tau-\btheta_0\|_2^2} \leqslant c_LC_L<\infty.
\end{align*}
This bound holds for all $\tau\in[-1,1]$.
\end{proof}
% J: Added proof of the lemma.

\begin{proof}[\sc{Proof of Lemma \ref{lem:AlmostSureExistenceOfPartials}}]
By Assumption \ref{as: diff and int}, $m_{1}'(\bx^{\top}\btheta,\by)$ exists for
$P$-a.e. $\bz\in\mathcal Z$. Also, for any $\tau\in[-1,1]$ we have
$\Vert\btheta_\tau -\btheta_{0}\Vert_{2}=|\tau|\Vert
\btheta-\btheta_{0}\Vert_{2} \leqslant \overline{r}_{n}$, so that every
$\btheta_\tau,\tau\in[-1,1]$, lies in
$\mathcal{B}_{\btheta_0}(\overline{r}_{n})$ and, thus, $\Theta$. Hence, for any
$\tau\in[-1,1]$ there is a set $A\subseteq\mathcal{Z}$ [possibly depending on
$(\btheta,\tau)$] such that $P(A)=0$, and for all $\bz\in\mathcal{Z}\backslash
A$, $m_{1}'(\bx^{\top}\btheta_\tau,\by)$ exists. In this
case, the chain rule of differentiation shows that the partial
$f_{1}'\left(\tau,\bz\right)$ exists and equals
\begin{align*}
f_{1}'\left(\tau,\bz\right) & =\left.\frac{\partial}{\partial\widetilde{\tau}}m\left(\bx^{\top}[\btheta_{0}+\widetilde{\tau}(\btheta-\btheta_{0})],\by\right)\right|_{\widetilde{\tau}=\tau}=m_{1}'\left(\bx^{\top}\btheta_\tau,\by\right)\bx^{\top}\left(\btheta-\btheta_{0}\right)
\end{align*}
which gives the asserted claim.
\end{proof}
% J: Note that Assumptions 3.4 and 3.5 are used to define the ell_2-ball.
% (They enter its radius.) 
% J: Clarified exceptional set dependencies.

\begin{proof}[{\sc{Proof of Lemma \ref{lem:L1BoundednessOfPartials}}}]
From Lemma \ref{lem:AlmostSureExistenceOfPartials} we know that for any
$\tau\in[-1,1]$, $f'_1(\tau,\bZ)$ exists a.s.~and then takes the form in
\eqref{eq:f2wrExpressionAlmostSure}. Setting $\tau=0$, we get
\begin{align*}
\mathrm{E}\left[\left|f_{1}'\left(0,\bZ\right)\right|\right] & =\mathrm{E}\left[\left|m_{1}'\left(\bX^{\top}\btheta_{0},\bY\right)\bX^{\top}\left(\btheta-\btheta_{0}\right)\right|\right]\\
   & =\mathrm{E}\left[\left|U\bX^{\top}\left(\btheta-\btheta_{0}\right)\right|\right]\\
   & \leqslant\left\Vert \btheta-\btheta_{0}\right\Vert _{1}\mathrm{E}\left[\left\Vert U\bX\right\Vert _{\infty}\right]\tag{H{\"o}lder}\\
   & \leqslant\widetilde{B}_{n}\overline{r}_{n}\sqrt{p}<\infty,
\end{align*}
where the last inequality stems from the Cauchy-Schwarz and Jensen inequalities
and Assumption
\ref{assu:ResidualBootstrapMethod}.\ref{enu:maxZijFourthMomentBnd}. Also, for
any $\tau\in[-1,1]$, $\|\btheta_\tau
-\btheta_{0}\|_{2}=|\tau|\|\btheta-\btheta_{0}\|_{2}\leqslant \overline{r}_{n}$,
so that every $\btheta_\tau,\tau\in[-1,1]$, lies in
$\mathcal{B}_{\btheta_0}(\overline{r}_{n})$ and, thus, $\Theta$.
Since $\overline r_n \leqslant c_L$, from the Cauchy-Schwarz and H{\"o}lder
inequalities and Assumptions
\ref{assu:LossLocallyLipschitzAndMore}.\ref{enu:LossLocallyLipschitz} and
\ref{assu:LossLocallyLipschitzAndMore}.\ref{enu:ResidualMeanSquareEll2Conts}
[recall that $L$ maps into $[1,\infty)$], we get
\begin{align*}
   \E\left[\left|f_1'\left(\tau,\bZ\right)-f_1'\left(0,\bZ\right)\right|\right]
   &=\E\left[\left|m'_1\left(\bX^{\top}\btheta_\tau,\bY\right)-m'_1\left(\bX^{\top}\btheta_0,\bY\right)\right|\left|\bX^{\top}\left(\btheta-\btheta_0\right)\right|\right] \\
   &\leqslant \sqrt{\E\left[\left|m'_1\left(\bX^{\top}\btheta_\tau,\bY\right)-m'_1\left(\bX^{\top}\btheta_0,\bY\right)\right|^2\right]}\sqrt{\E\left[\left|\bX^{\top}\left(\btheta-\btheta_0\right)\right|^2\right]}\\
   &\leqslant \sqrt{C_L^2 \overline{r}_n} B_n \overline{r}_n \sqrt{p}.
\end{align*}   
The claim now follows from the triangle inequality and $\tau\in[-1,1]$ being
arbitrary.
\end{proof}

\begin{proof}[{\sc{Proof of Lemma \ref{lem:DominationOfDifferenceQuotients}}}]
Since $m$ is (finite) convex in its first argument (Assumption
\ref{assu:Convexity}) and
$f\left(\tau,\bz\right)=m\left(\bx^{\top}\btheta_\tau,\by\right)-m\left(\bx^{\top}\btheta_{0},\by\right)$,
$f:\R\times\mathcal Z\to\R$ is (finite) convex and, thus, everywhere
subdifferentiable in its first argument with compact-valued subdifferential
\citep[Theorem 23.4]{rockafellar_convex_1970}. Letting $\partial_1f(\tau,\bz)$
denote the subdifferential of $f$ with respect to its first argument evaluated
at $(\tau,\bz)$, it follows that for any $(\tau_1,\tau_2)\in\R^2$ and any
$(\tau_1^{\ast},\tau_2^{\ast})\in\partial_{1}f\left(\tau_1,\bz\right)\times\partial_{1}f\left(\tau_2,\bz\right)$,
\begin{align*}
   f\left(\tau_1,\bz\right)-f\left(\tau_2,\bz\right) & \geqslant \tau_{2}^{\ast}\left(\tau_1-\tau_2\right)\geqslant-\left(\left|\tau_{1}^{\ast}\right|+\left|\tau_{2}^{\ast}\right|\right)\left|\tau_1-\tau_2\right|\quad\text{and}\\
   f\left(\tau_2,\bz\right)-f\left(\tau_1,\bz\right) & \geqslant \tau_{1}^{\ast}\left(\tau_2-\tau_1\right)\geqslant-\left(\left|\tau_{1}^{\ast}\right|+\left|\tau_{2}^{\ast}\right|\right)\left|\tau_1-\tau_2\right|,
\end{align*}
which combine to yield
\begin{align*}
\left|f\left(\tau_1,\bz\right)-f\left(\tau_2,\bz\right)\right| & \leqslant\left|\tau_1-\tau_2\right|\left(\sup_{\tau_{1}^{\ast}\in\partial_{1}f\left(\tau_1,\bz\right)}\left|\tau_{1}^{\ast}\right|+\sup_{\tau_{2}^{\ast}\in\partial_{1}f\left(\tau_2,\bz\right)}\left|\tau_{2}^{\ast}\right|\right).
\end{align*}   
Setting $\tau_1=\tau_0+h$ and $\tau_2=\tau_0$, we see that
\[
\left|f\left(\tau_0+h,\bz\right)-f\left(\tau_0,\bz\right)\right|\leqslant\left|h\right|\overline{f}\left(\bz\right)\;\text{for each}\;\tau_0\in\left[-1,1\right],\;h\in\left[-d(\tau_0),d(\tau_0)\right],\;\bz\in\mathcal{Z}, 
\]
for $\overline{f}:\mathcal{Z}\to[0,\infty]$ defined by
\[
\overline{f}\left(\bz\right):=2\sup_{\tau\in\left[-1,1\right]}\sup_{\tau^{\ast}\in\partial_{1}f\left(\tau,\bz\right)}\left|\tau^{\ast}\right|,\quad \bz\in\mathcal{Z}.
\]
Note that $\overline{f}$ thus defined depends on neither $\tau_0$ nor $h$. It
remains to show that $\overline{f}$ is $P$-integrable. To this end, note that
for each $\tau\in\R$ and $\bz\in\mathcal Z$, $\partial_{1}f(\tau,\bz)$ is the
non-empty compact interval
\[
\partial_{1}f\left(\tau,\bz\right)=\left[f_{1-}'\left(\tau,\bz\right),f_{1+}'\left(\tau,\bz\right)\right]
\]
\citep[p.~216]{rockafellar_convex_1970}, with $f_{1-}'\left(\tau,\bz\right)$ and
$f_{1+}'\left(\tau,\bz\right)$ being the left and right (partial) derivatives
\begin{align*}
f_{1-}'\left(\tau,\bz\right) & :=\lim_{\tau'\uparrow \tau}\frac{f\left(\tau',\bz\right)-f\left(\tau,\bz\right)}{\tau'-\tau},\\
f_{1+}'\left(\tau,\bz\right) & :=\lim_{\tau'\downarrow \tau}\frac{f\left(\tau',\bz\right)-f\left(\tau,\bz\right)}{\tau'-\tau},
\end{align*}
respectively, and both limits exist as real numbers per (finite) convexity of
$f(\cdot,\bz)$. It follows that the ``inner'' supremum in $\overline f(\bz)$ is attained at an interval
endpoint, i.e.~for all $(\tau,\bz)\in\R\times\mathcal Z$,
\[
\sup_{\tau^{\ast}\in\partial_{1}f\left(\tau,\bz\right)}\left|\tau^{\ast}\right|=\left|f_{1-}'\left(\tau,\bz\right)\right|\lor\left|f_{1+}'\left(\tau,\bz\right)\right|.
\]
% J: Alternatively: The supremum of a convex function over a compact interval is
% attained at an endpoint.
Both the left and right derivatives are non-decreasing functions of $\tau$, cf.~
\citet[Theorem 24.1]{rockafellar_convex_1970} and $f(\cdot,\bz)$ being finite
convex (hence closed and proper). It follows that the ``outer'' supremum over
$\tau\in[-1,1]$ is also attained at an interval endpoint, so
\begin{align*}
\sup_{\tau\in\left[-1,1\right]}\sup_{\tau^{\ast}\in\partial_{1}f\left(\tau,\bz\right)}\left|\tau^{\ast}\right| 
   &= \sup_{\tau\in\left[-1,1\right]}\left|f_{1-}'\left(\tau,\bz\right)\right|\lor\sup_{\tau\in\left[-1,1\right]}\left|f_{1+}'\left(\tau,\bz\right)\right|\\
   & =\max\left\{ \left|f_{1-}'\left(-1,\bz\right)\right|,\left|f_{1-}'\left(1,\bz\right)\right|,\left|f_{1+}'\left(-1,\bz\right)\right|,\left|f_{1+}'\left(1,\bz\right)\right|\right\}. 
\end{align*}
% J: If g is finite and non-decreasing on [a,b], then the supremum of |g| is
% attained at a or b. (If g is >=0 or <=0, then the conclusion is immediate. If
% neither g >=0 nor g <=0, then we split the interval into parts depending on
% the sign and proceed.)
Since $\partial_{1}f(-1,\bZ)=\left\{ f'_{1}(-1,\bZ)\right\} $ and
$\partial_{1}f(1,\bZ)=\left\{f'_{1}(1,\bZ)\right\} $ a.s.~(cf. Lemma
\ref{lem:AlmostSureExistenceOfPartials}), we have
$f_{1-}'\left(-1,\bZ\right)=f_{1+}'\left(-1,\bZ\right)$ and
$f_{1-}'\left(1,\bZ\right)=f_{1+}'\left(1,\bZ\right)$ a.s., and, thus,
\begin{align*}
   \mathrm{E}\left[\sup_{\tau\in\left[-1,1\right]}\sup_{\tau^{\ast}\in\partial_{1}f\left(\tau,\bZ\right)}\left|\tau^{\ast}\right|\right]
      & =\mathrm{E}\left[\left|f'_{1}\left(-1,\bZ\right)\right|\lor\left|f'_{1}\left(1,\bZ\right)\right|\right]\\
      & \leqslant\mathrm{E}\left[\left|f'_{1}\left(-1,\bZ\right)\right|\right]+\E\left[\left|f'_{1}\left(1,\bZ\right)\right|\right]\\
      & \leqslant 2\overline{r}_n\sqrt{p}\left(\widetilde B_n + C_LB_n\sqrt{\overline{r}_n}\right)<\infty,\tag{Lemma \ref{lem:L1BoundednessOfPartials}}
\end{align*}
implying that $\overline f$ is $P$-integrable.
\end{proof}

\begin{proof}[{\sc{Proof of Lemma \ref{lem:ExistenceOfDerivatives}}}]
Lemma \ref{lem:L1BoundednessOfLevels} shows that $g$ in \eqref{eq:gOfrDudley} is
well defined as a map from $[-1,1]$ to $\R$. Lemmas
\ref{lem:L1BoundednessOfLevels}, \ref{lem:AlmostSureExistenceOfPartials} and
\ref{lem:L1BoundednessOfPartials} combine to verify \citet[Condition
(A.2)]{dudley2014uniform} for our $f$ in \eqref{eq:fOfwandrDudley} for any
$\tau_0\in(-1,1)$ with $\delta$ there being our $d(\tau_0)$. Combining the
difference quotient domination by a $P$-integrable function in Lemma
\ref{lem:DominationOfDifferenceQuotients} with \citet[Corollary
A.3]{dudley2014uniform} now show that $g$ is differentiable at every
$\tau_0\in(-1,1)$ with
\[
g'(\tau_0)=\E[f_1'(\tau_0,\bZ)]=\E[m_1'(\bX^\top\btheta_{\tau_0},\bY)\bX^\top(\btheta-\btheta_0)].
\]
It remains to show that $g'(0)=0$. Since
$\btheta\in\mathcal{B}_{\btheta_0}\left(\overline{r}_{n}\right)$ and $\overline r_n \leqslant c_M'$,
Assumption \ref{assu:Margin} tells us that for all $h\in\left[-1,1\right]$,
\begin{align*}
g\left(h\right) - g\left(0\right) & =\mathcal{E}\left(\btheta_{0}+h\left(\btheta-\btheta_{0}\right)\right)\geqslant0.
\end{align*}
Seeking a contradiction, suppose first that
$g'(0)\in\left(0,\infty\right)$. Since the derivative exists, letting
$\left\{ h_{m}\right\} _{m=1}^{\infty}\subset[-1,0)$ be the strictly negative
vanishing sequence $h_{m}=-1/m$, we have 
\[
g'(0)=\lim_{m\to\infty}\frac{g\left(h_{m}\right)-g\left(0\right)}{h_{m}}\in(0,\infty).
\]
Hence, for all $m$ sufficiently large,
\[
\frac{g\left(h_{m}\right)-g\left(0\right)}{h_{m}}\in(0,\infty),
\]
which by $h_{m}<0$ implies $g\left(h_{m}\right)-g\left(0\right)\in(-\infty,0)$, a
contradiction. If we instead suppose that $g'(0)\in(-\infty,0)$, then
letting $\left\{ h_{m}\right\} _{m=1}^{\infty}\subset(0,1]$ be the
strictly positive vanishing sequence $h_{m}=1/m$, we again reach a
contradiction. It follows that $g'(0)=0$.
\end{proof}

\begin{proof}[{\sc{Proof of Lemma \ref{lem:EofUXis0}}}]
From Lemma \ref{lem:ExistenceOfDerivatives} we know that
\[
\mathrm{E}\left[U\bX^{\top}\left(\btheta-\btheta_{0}\right)\right]=0
\]
for any $\btheta\in \mathcal
B_{\btheta_{0}}\left(\overline{r}_{n}\right)\subseteq\Theta$. Seeking a
contradiction, suppose that $\mathrm{E}\left[U\bX\right]$ is non-zero. Then
$\mathrm{E}\left[U\bX\right]\in(0,\infty)$, so we may define
\[
\btheta:=\btheta_{0}+\frac{\overline{r}_{n}}{\left\Vert \mathrm{E}\left[U\bX\right]\right\Vert _{2}}\mathrm{E}\left[U\bX\right].
\]
This $\btheta$ belongs to $\mathcal
B_{\btheta_{0}}(\overline{r}_{n})$, but
\[
\mathrm{E}\left[U\bX^{\top}\right]\left(\btheta-\btheta_{0}\right)=\overline{r}_{n} \left\Vert \mathrm{E}\left[U\bX\right]\right\Vert _{2}\in(0,\infty),
\]
a contradiction. Conclude that $\mathrm{E}\left[U\bX\right]=\mathbf{0}_p$.
\end{proof}
   
\subsection{Proofs for Section \ref{sec:ResidualEstimationViaCrossValidation}}

For the arguments in this section, we introduce some additional notation.
For any non-empty $I\subsetneq[n]$, define
the \emph{subsample score} by
\[
\bS_{I}:=\mathbb{E}_{I}\left[m'_{1}\left(\bX_{i}^{\top}\btheta_{0},\bY_{i}\right)\bX_{i}\right]
\]
and the (random) \emph{subsample} \emph{empirical error} function $\epsilon_I:[0,\infty)\to[0,\infty)$ by
\begin{align*}
\epsilon_{I}\left(u\right) & :=\sup_{\substack{\bdelta\in\mathcal{R}\left(\overline{c}_{0},\eta_n\right),\\
\Vert\bdelta\Vert_{2}\leqslant u}} \left|\left(\mathbb{E}_{I}-\E\right)\left[m\left(\bX_{i}^{\top}\left(\btheta_{0}+\bdelta\right),\bY_{i}\right)-m\left(\bX_{i}^{\top}\btheta_{0},\bY_{i}\right)\right]\right|.
\end{align*}
Also, recall the notations $\eta_n = \sqrt{\ln(pn)/n}$ and $\overline
c_0=(c_0+1)/(c_0-1)$ for the user-chosen constant $c_0\in(1,\infty)$.

In proving Theorem \ref{cor: convergence rate bootstrap after cv}, we will
rely on the following eight lemmas, whose proofs can be found at the end of this section.

\begin{lem}\label{lem:ExistenceGoodCandidatePenalty} 
Let Assumption \ref{assu:CandidatePenalties} hold. Then for any constant
$C\in(0,\infty)$ satisfying $\eta_n\leqslant C_{\Lambda}a/C$ and
$n\eta_n\geqslant c_{\Lambda}C_{\Lambda}/C$, the candidate penalty set
$\Lambda_{n}$ and the interval $\left[C\eta_n,C\eta_n/a\right]$ have an element
in common.
\end{lem}
\begin{lem}\label{lem:SubsampleEmpiricalErrorBnd} 
Let Assumptions \ref{as: diff and int}, \ref{assu:LossLocallyLipschitzAndMore},
\ref{assu:Approximate-Sparsity} and \ref{assu:DataPartition} hold. Then
there is a universal constant $C\in[1,\infty)$, such that for any $n\in\N$,
$t\in[1,\infty)$ and $u\in(0,\infty)$ satisfying
\begin{equation}\label{eq: lemma e2 side condition}
\eta_n\leqslant 1,\quad\frac{B_n^2\ln(pn)}{n^{1/2}}\leqslant C_L^2\quad\text{and}\quad tn^{1/r}B_n\Big(u\sqrt{s_{q}\eta_{n}^{-q}}+s_{q}\eta_{n}^{1-q}\Big)\leqslant\frac{c_{L}}{1+\overline{c}_{0}},
\end{equation}
we have
\begin{align*}
\max_{1\leqslant k\leqslant K}\epsilon_{I_{k}^{c}}\left(u\right) & \leqslant \frac{(1+\overline c_0)CC_L}{(K-1)c_D}\left(u\sqrt{s_{q}\eta_{n}^{2-q}}+s_{q}\eta_{n}^{2-q}\right)
\end{align*}
with probability at least $1-K\left(4t^{-r} +  C/\ln^2(pn)
+\left[\left(K-1\right)c_{D}n\right]^{-1}\right)$.
\end{lem}
% J: Assu 3.3 (integrability) is implicitly used to make sure sub-sample emp
% error is well defined.
% J: Added side condition here, which was missing in first JPE revision. Have
% also included the eta_n\leqslant 1 condition for easier reference.
% J: Have rearranged statement to make the statement in Lem B.14 easier.

\begin{lem}\label{lem:SubsampleScoreBnd} 
Let Assumptions
\ref{assu:ParameterSpace}--\ref{assu:LossLocallyLipschitzAndMore},
\ref{assu:ResidualBootstrapMethod} and \ref{assu:DataPartition} hold.
Then there is a universal constant $C\in[1,\infty)$, such that
for any $n\in\N$ satisfying
\begin{equation}\label{eq: lemma e3 side condition}
\frac{\widetilde B_n^2 \ln(pn)}{\sqrt n} \leqslant C_U^2\sqrt{(K-1)c_D}\quad\text{and}\quad  pn\geqslant\left(\frac{1}{(K-1)c_D}\right)^2,
\end{equation}
we have
\[
\max_{1\leqslant k\leqslant K}\|\bS_{I_{k}^{c}}\|_{\infty}\leqslant \frac{CC_U\eta_n}{\sqrt{(K-1)c_D}}
\]
with probability at least $1-CK/\ln^4(pn)$.
\end{lem}
% J: Added square on C_U in the side condition for clarity. (It was missing in
% the first JPE revision.)
% J: Have rearranged statement to make the statement in Lem B.14 easier.

\begin{lem}\label{lem: theorem 1 extension}
Let Assumptions \ref{assu:ParameterSpace}--\ref{assu:Margin} and
\ref{assu:Approximate-Sparsity} hold, and let $a_{\epsilon,n},b_{\epsilon,n}$
and $\overline{\lambda}_n$ be non-random sequences in $(0,\infty)$. Define
$\widetilde u_n$ as in \eqref{eq: definition un tilde key} and suppose that
$\widetilde{u}_n\leqslant c_{M}'$ and \eqref{eq: lemma c3 additional constraint}
are satisfied. Then for any $k\in[K]$ and any (possibly random) $\lambda \in
\Lambda_n$, on the event 
\begin{equation}\label{eq: main event modified for lemma e4}
\{\lambda\geqslant c_{0}\Vert \bS_{I_k^c}\Vert_{\infty}\}\cap\{\lambda\leqslant\overline{\lambda}_n\}\cap\{\epsilon_{I_k^c}\left(\widetilde u_{n}\right)\leqslant a_{\epsilon,n}\widetilde u_n + b_{\epsilon,n}\},
\end{equation}
we have
\[
\sup_{\widehat\btheta\in\widehat\Theta_{I_k^c}(\lambda)}\mathcal{E}(\widehat\btheta) \leqslant\left(1+\overline{c}_{0}\right) \overline{\lambda}_n\left(\widetilde u_{n}\sqrt{s_q\eta_n^{-q}}+s_q\eta_n^{1-q}\right) + a_{\epsilon,n}\widetilde u_n + b_{\epsilon,n}.
\]
\end{lem}
% J: Have dropped Assu 3.5 (local smoothness) from the assumptions as it is not
% in use. However, Assu 3.3 (integrability) is implicitly used to make sure
% sub-sample emp error is well defined.

\begin{lem}\label{lem:High-Probability-Subsample-Risk-Bound-Lambda-Star} 
Let Assumptions \ref{assu:ParameterSpace}--\ref{assu:Approximate-Sparsity} and
\ref{assu:ResidualBootstrapMethod}--\ref{assu:CandidatePenalties} hold, such
that Lemmas \ref{lem:SubsampleEmpiricalErrorBnd} and \ref{lem:SubsampleScoreBnd}
apply, and let $C\in[1,\infty)$ be the largest universal constant appearing in
these two lemmas. Define the constants
\begin{align*}
   C_1&:=\frac{C C_U}{\sqrt{(K-1)c_D}},
   \\C_2&:=\frac{C C_L}{(K-1)c_D},\\
   C_{S}&:=\frac{2\left(1+\overline c_0\right)}{c_M}\left(\frac{c_0 C_1}{a} + C_2\right)\quad\text{and}\quad\\
   C_{\mathcal E}&:=\frac{c_M C_S (1+C_S)}{2}.
\end{align*}
Then for any $n\in\N$ and $t\in[1,\infty)$ satisfying both \eqref{eq: lemma e3
side condition} and
\begin{equation}\label{eq:SideCondsHighProbSubsampleRiskBndLambdaStar}
   \left\{
   \begin{aligned}
      &\frac{c_{\Lambda}C_{\Lambda}}{c_0 C_1 n}\leqslant \eta_n\leqslant 1\wedge \frac{C_{\Lambda}a}{c_0 C_1},\quad \frac{B_n^2\ln(pn)}{n^{1/2}}\leqslant C_L^2,\\
      & C_S\sqrt{s_q\eta_n^{2-q}}\leqslant c_M' \quad\text{and}\quad (1+C_S)tn^{1/r}B_ns_q\eta_n^{1-q} \leqslant\frac{c_{L}}{1+\overline{c}_{0}},      
   \end{aligned}
   \right\}
\end{equation}
there is a non-random candidate penalty level $\lambda_{\ast} \in\Lambda_{n}$,
such that
\[
\max_{1\leqslant k\leqslant K}\sup_{\widehat\btheta\in\widehat{\Theta}_{I_{k}^{c}}\left(\lambda_{\ast}\right)}\mathcal{E}(\widehat\btheta)\leqslant C_{\mathcal E}s_q\eta_n^{2-q}
\]
with probability at least $1-K(4t^{-r} + 2C/\ln^2(pn)+[(K-1)c_{D}n]^{-1})$.
\end{lem}
% J: Introduced two constants C_1 and C_2 to simplify statements.
% J: Fixed side cond: \sqrt{(K-1) c_D} in num instead of denom.
% J: Fixed C_{\mathcal E} definition, which was missing part of denom.

\begin{lem}\label{lem: convexity minoration implication} Let Assumptions
\ref{assu:ParameterSpace}--%\ref{assu:Convexity} and 
\ref{assu:Margin} hold. Then for all $\btheta\in\Theta$ such that $\mathcal
E(\btheta) \leqslant c_M(c_M')^2$, we have $\|\btheta - \btheta_0\|_2^2 \leqslant
\mathcal E(\btheta) / c_M$.
\end{lem}

\begin{lem}\label{lem:High-Probability-Subsample-Risk-Bound-Lambda-CV}
Let Assumptions \ref{assu:ParameterSpace}--\ref{assu:Approximate-Sparsity} and
\ref{assu:ResidualBootstrapMethod}--\ref{assu:CandidatePenalties} hold, let
$C$, $C_S$, and $C_{\mathcal E}$ be the constants defined in Lemma
\ref{lem:High-Probability-Subsample-Risk-Bound-Lambda-Star}, and define the
non-random sequence $\overline{\mathcal{E}}^{\ast}_{n}:=C_{\mathcal E} s_q
\eta_n^{2-q}$. For each $(k,\lambda)\in[K]\times\Lambda_n$, fix a solution
$\widehat\btheta_{I_k^c}(\lambda)\in\widehat\Theta_{I_k^c}(\lambda)$ to
\eqref{eq:SubsampleEstimator} and a solution $\widehat{\lambda}^{\mathtt{cv}}$
to \eqref{eq:LambdaCVDefn} based on
$\{\widehat\btheta_{I_k^c}(\lambda)\}_{(k,\lambda)\in[K]\times\Lambda_n}$. Then for
any $(n,t)\in\N\times[1,\infty)$ satisfying \eqref{eq: lemma e3 side condition},
\eqref{eq:SideCondsHighProbSubsampleRiskBndLambdaStar} and
\begin{equation}
\left\{ n\geqslant\frac{1}{c_{\Lambda}\wedge a}\quad\text{and}\quad \frac{11C_L}{4c_Dc_M}\sqrt{\frac{3t\ln n}{\ln(1/a)n}} + \frac{3}{2}\sqrt{\frac{\overline{\mathcal{E}}^{\ast}_{n}}{c_Dc_M}}\leqslant c_M'\wedge c_L\right\},\label{eq:AddedSideCondsHighProbSubsampleRiskBndLambdaCV}
\end{equation}
we have
\begin{equation}\label{eq: lemma a6 main bound}
\max_{1\leqslant k\leqslant K}\|\widehat\btheta_{I_{k}^{c}}(\widehat{\lambda}^{\mathtt{cv}}) - \btheta_0 \|_2 \leqslant \frac{11C_L}{4c_Dc_M}\sqrt{\frac{3t\ln n}{\ln(1/a)n}} + \frac{3}{2}\sqrt{\frac{\overline{\mathcal{E}}^{\ast}_{n}}{c_Dc_M}}
\end{equation}
with probability at least $1 - K(4t^{-r}+3t^{-1}+2C/\ln^2(pn)+[(K-1)c_{D}n]^{-1})$.
\end{lem}

\begin{lem}\label{thm:High-Probability-L2n-Error-Bnd-CV-Residuals}
Let Assumptions \ref{assu:ParameterSpace}--\ref{assu:Approximate-Sparsity} and
\ref{assu:ResidualBootstrapMethod}--\ref{assu:CandidatePenalties} hold, let $C$,
$C_S$, and $C_{\mathcal E}$ be the constants defined in Lemma
\ref{lem:High-Probability-Subsample-Risk-Bound-Lambda-Star}, and define the
non-random sequence $\overline{\mathcal{E}}^{\ast}_{n}=C_{\mathcal E} s_q
\eta_n^{2-q}$. For each $(k,\lambda)\in[K]\times\Lambda_n$, fix a solution
$\widehat\btheta_{I_k^c}(\lambda)\in\widehat\Theta_{I_k^c}(\lambda)$ to
\eqref{eq:SubsampleEstimator} and a solution $\widehat{\lambda}^{\mathtt{cv}}$
to \eqref{eq:LambdaCVDefn} based on
$\{\widehat\btheta_{I_k^c}(\lambda)\}_{(k,\lambda)\in[K]\times\Lambda_n}$, and define
$\widehat{U}_i^{\mathtt{cv}}$ as in \eqref{eq:ResidualEstimatorCV} based on
$\{\widehat\btheta_{I_k^c}(\widehat{\lambda}^{\mathtt{cv}})\}_{k\in[K]}$. Then
for any $(n,t)\in\N\times[1,\infty)$ satisfying \eqref{eq: lemma e3 side
condition}, \eqref{eq:SideCondsHighProbSubsampleRiskBndLambdaStar} and
\eqref{eq:AddedSideCondsHighProbSubsampleRiskBndLambdaCV}, we have
\begin{equation}
\En\big[(\widehat{U}_{i}^{\mathtt{cv}}-U_{i})^{2}\big]\leqslant\frac{3C_{L}^{2}t\ln n}{\ln\left(1/a\right)}\left( \frac{11C_L}{4c_Dc_M}\sqrt{\frac{3t\ln n}{\ln(1/a)n}} + \frac{3}{2}\sqrt{\frac{\overline{\mathcal{E}}_n^{\ast}}{c_Dc_M}} \right)\label{eq:High-Prob-CV-Residual-L2n-Bnd}
\end{equation}
with probability at least $1 -
K(4t^{-r}+4t^{-1}+2C/\ln^2(pn)+[(K-1)c_{D}n]^{-1})$
\end{lem}

\begin{proof}
[\sc{Proof of Theorem \ref{cor: convergence rate bootstrap after cv}}] 
The proof
will follow from Lemma \ref{thm:RatesBM}, with \eqref{eq: residual estimation
side condition} being verified via Lemma
\ref{thm:High-Probability-L2n-Error-Bnd-CV-Residuals}. Observe that \eqref{eq:
difficult restriction} ensures that there is a non-random sequence $t_n$ in
$[1,\infty)$ such that
\begin{equation*}%\label{eq: exhausted almost there} % :)
t_n\to\infty, \quad t_n n^{1/r} B_ns_q\eta_n^{1-q}\to0\quad\text{and}\quad  \frac{t_n^3 B_n^4 s_q(\ln(pn))^{5-q/2}(\ln n)^2}{n^{1-q/2-4/r}}\to 0.
\end{equation*}
\begin{comment}
% J: A concrete choice of t_n is given below. 
Denote
$$
a_n := n^{1/r} B_ns_q\eta_n^{1-q}\quad\text{and}\quad b_n:=\frac{B_n^4 s_q(\ln(pn))^{5-q/2}(\ln n)^2}{n^{1-q/2-4/r}},
$$
which are both $o(1)$. Let $t_n:=\max\{1,\min\{a_n^{-1/2},b_n^{-1/4}\}\}$. Then
$t_n\in[1,\infty)$, $t_n\to\infty$, $t_n a_n\to 0$ and $t_n^3 b_n\to 0$.
\end{comment}
Therefore, for $\overline{\mathcal{E}}_n^{\ast}$ appearing in the statement of
Lemma \ref{thm:High-Probability-L2n-Error-Bnd-CV-Residuals}, setting
\[
\delta_n^2:=\frac{3C_{L}^{2}t_n\ln n}{\ln\left(1/a\right)}\left( \frac{11C_L}{4c_Dc_M}\sqrt{\frac{3t_n\ln n}{\ln(1/a)n}} + \frac{3}{2}\sqrt{\frac{\overline{\mathcal{E}}_n^{\ast}}{c_Dc_M}} \right)\ln^2(pn),
\]
a calculation shows that $n^{1/r}B_n\delta_n\to 0$. 
% J: Take ^4 and fold out delta_n^4 to see this. Note that n to any positive
% power knocks off the extra ln(n).
Together with \eqref{eq: difficult restriction}, this implies that \eqref{eq:
simple restriction bm} is satisfied. Also, \eqref{eq: difficult restriction} and
$t_n n^{1/r} B_ns_q\eta_n^{1-q}\to0$ imply that, setting $t=t_n$, \eqref{eq:
lemma e3 side condition} and
\eqref{eq:SideCondsHighProbSubsampleRiskBndLambdaStar} will hold for all $n$
large enough. In addition, given that $\delta_n\to 0$ and $t_n\to\infty$, it
follows that, setting $t=t_n$,
\eqref{eq:AddedSideCondsHighProbSubsampleRiskBndLambdaCV} will hold for all $n$
large enough as well. Thus, Lemma
\ref{thm:High-Probability-L2n-Error-Bnd-CV-Residuals} implies that, setting
$\widehat U_i = \widehat{U}_{i}^{\mathtt{cv}}$ for all $i\in[n]$, \eqref{eq:
residual estimation side condition} is satisfied. The asserted claim now follows
from applying Lemma \ref{thm:RatesBM}.
\end{proof}

\begin{proof}[\sc{Proof of Theorem \ref{thm: convergence rates post estimator}}] 
The asserted claim will follow from an application of Theorem \ref{thm; post
penalized estimator non asymptotic}, which we state and prove in a separate
appendix (see Section \ref{sec:analysis post estimator}), due to its length.

Observe that \eqref{eq: even more difficult restriction} ensures that there is a
non-random sequence $t_n$ in $[1,\infty)$ such that
\begin{equation*}%\label{eq: exhausted almost there} % :)
t_n\to\infty, \quad t_n n^{1/r} B_ns_q\eta_n^{1-q}\to0\quad\text{and}\quad  \frac{t_n^3 B_n^4 s_q(\ln(pn))^{5-q/2}(\ln n)^2}{n^{1-q/2-4/r}}\to 0.
\end{equation*}
Therefore, for $\overline{\mathcal{E}}_n^{\ast}$ appearing in the statement of
Lemma \ref{thm:High-Probability-L2n-Error-Bnd-CV-Residuals}, setting
$$
\delta_n^2:=\frac{3C_{L}^{2}t_n\ln n}{\ln\left(1/a\right)}\left( \frac{11C_L}{4c_Dc_M}\sqrt{\frac{3t_n\ln n}{\ln(1/a)n}} + \frac{3}{2}\sqrt{\frac{\overline{\mathcal{E}}_n^{\ast}}{c_Dc_M}} \right)\ln^2(pn),
$$
we have $n^{1/r}B_n\delta_n\to 0$.  Also, \eqref{eq: even more difficult
restriction} and $t_n n^{1/r} B_ns_q\eta_n^{1-q}\to0$ imply that \eqref{eq:
lemma e3 side condition} and
\eqref{eq:SideCondsHighProbSubsampleRiskBndLambdaStar} with $t=t_n$ hold for all
$n$ large enough. In addition, given that $\delta_n\to 0$ and $t_n\to\infty$, it
follows that \eqref{eq:AddedSideCondsHighProbSubsampleRiskBndLambdaCV} with
$t=t_n$ holds for all $n$ large enough as well. Thus, given that
$\E[\max_{1\leqslant i\leqslant n}\|\bX_i\|_{\infty}^r]\leqslant n B_n^r$ by
Assumption
\ref{assu:LossLocallyLipschitzAndMore}.\ref{enu:LossLocallyLipschitz}, Lemma
\ref{thm:High-Probability-L2n-Error-Bnd-CV-Residuals} together with
$n^{1/r}B_n\delta_n\to 0$ imply that
\begin{equation}\label{eq:ZhatsApproxZsPostBCVPf}
\P\left( \max_{1\leqslant j\leqslant p}\En\left[(\widehat{U}_{i}^{\mathtt{cv}}X_{i,j}-U_{i}X_{i,j})^2\right] > \frac{\widetilde\delta_n^2}{\ln^2(pn)} \right)\to 0
\end{equation}
for some non-random sequence $\widetilde\delta_n$ in $(0,1)$ satisfying
$\widetilde \delta_n \to 0$. Let $\boldsymbol{\mathcal{N}}$ be a centered normal
random vector in $\R^p$ with covariance matrix $\E[U^2\bX\bX^{\top}]$ and for
all $\beta\in(0,1)$, let $q^{\boldsymbol{\mathcal{N}}}(\beta)$ be the $\beta$th
quantile of $\|\boldsymbol{\mathcal{N}}\|_{\infty}$. Theorem
\ref{thm:QuantileComparison} together with Assumption
\ref{assu:ResidualBootstrapMethod} and \eqref{eq: even more difficult
restriction} then imply that with probability $1 - o(1)$,
$$
\frac{q^{\boldsymbol{\mathcal{N}}}(1-\alpha_n - \rho_n)}{\sqrt{n}} \leqslant \widehat{q}^{\mathtt{bcv}}\left(1-\alpha_n\right)\quad\text{and}\quad \widehat{q}^{\mathtt{bcv}}(1/2) \leqslant \frac{q^{\boldsymbol{\mathcal{N}}}(1/2 + \rho_n)}{\sqrt{n}}
$$
for some non-random sequence $\rho_n$ in $(0,\infty)$ satisfying $\rho_n \to 0$.
Also, from (the median version of) Borell's inequality \citep[Proposition
A.2.1]{van_der_vaart_weak_1996}, we get
\begin{align*}
\widehat{q}^{\mathtt{bcv}}(1-\alpha_n) 
\leqslant \widehat{q}^{\mathtt{bcv}}(1/2) + \sqrt{\max_{1\leqslant j\leqslant p}\En[(\widehat{U}_{i}^{\mathtt{cv}}X_{i,j})^2]}\sqrt{\frac{2\ln(1/\alpha_n)}{n}}.
\end{align*}
Using \eqref{eq:ZhatsApproxZsPostBCVPf} and the same arguments as those in the
proof of Theorem \ref{thm:NonasyHighProbBndsBM}, we get $$\max_{1\leqslant
j\leqslant p}\En[(\widehat{U}_{i}^{\mathtt{cv}}X_{i,j})^2]\leqslant
4(\widetilde{C}C_U)^2$$ with probability $1-o(1)$ for some universal constant
$\widetilde C\in[1,\infty)$. Since $\ln(1/\alpha_n)\lesssim \ln(pn)$, we thus
have
$$
\frac{q^{\mathcal N}(1-\alpha_n - \rho_n)}{\sqrt{n}} \leqslant \widehat{q}^{\mathtt{bcv}}\left(1-\alpha_n\right) \leqslant \frac{q^{\mathcal N}(1/2 + \rho_n)}{\sqrt{n}} + C\eta_n
$$
with probability $1 - o(1)$ for some constant $C\in[1,\infty)$. Setting
$\underline\lambda_n$ and $\overline\lambda_n$ to be $c_0$ times the left-hand
side and the right-hand side of this chain of inequalities, respectively, we
thus have $\P(\widehat{\lambda}^{\mathtt{bcv}}_{\alpha_n} > \overline\lambda_n)
\to0$ and $\P(\widehat{\lambda}^{\mathtt{bcv}}_{\alpha_n} < \underline\lambda_n)
\to0$. The same arguments as those used to invoke the multiplier
bootstrap consistency result (Theorem \ref{thm:MultiplierBootstrapConsistency})
in the proof Theorem \ref{thm:NonasyHighProbBndsBM} show that
$\P(\widehat{\lambda}^{\mathtt{bcv}}_{\alpha_n} <
c_0\|\bS_n\|_{\infty})\leqslant \alpha_n+\rho_n\to 0$. Since $\rho_n\to 0$, we
must eventually have $q^{\mathcal N}(1/2 + \rho_n)\leqslant q^{\mathcal
N}(2/3)$. Using the Gaussian quantile bound (Lemma
\ref{lem:GaussianQuantileBnd}) alongside Assumption
\ref{assu:ResidualBootstrapMethod}, we therefore get $\overline\lambda_n
\lesssim \eta_n$. Similarly, because $\alpha_n\to0$ and $\rho_n\to0$, we must
eventually have $q^{\mathcal N}(1-\alpha_n - \rho_n)\geqslant q^{\mathcal
N}(1/2)$. Lower bounding the maximum $\|\boldsymbol{\mathcal N}\|_\infty$ by the
coordinate $|\mathcal{N}_1|$ (for example), and observing that the quantile of a
folded normal distribution scales linearly with its standard deviation, again
using Assumption \ref{assu:ResidualBootstrapMethod}, we get
$n^{-1/2}\lesssim\underline\lambda_n$. It follows that $\phi_n:= ((\eta_n^2 +
\overline\lambda_n^2)/\underline\lambda_n^2)^{1/2}\lesssim \sqrt{\ln(pn)}$. The
asserted claim now follows from an application of Theorem \ref{thm; post
penalized estimator non asymptotic}, which is justified by \eqref{eq: even more
difficult restriction}.
\end{proof}

\begin{proof}
[\sc{Proof of Lemma \ref{lem:ExistenceGoodCandidatePenalty}}] Fix any
$C\in(0,\infty)$ satisfying $\eta_n\leqslant C_{\Lambda}a/C$ and
$n\eta_n\geqslant c_{\Lambda}C_{\Lambda}/C $. Denote $b_{n}:=C\eta_n$. We will
show that there is an integer $\ell_0\in\{0,1,2,\dotsc\}$ such that 
\begin{equation}\label{eq: lemma a1 chain}
c_{\Lambda}C_{\Lambda}/n \leqslant b_n \leqslant C_{\Lambda}a^{\ell_0}\leqslant b_n/a.
\end{equation}
By Assumption \ref{assu:CandidatePenalties}, this will imply that
$C_{\Lambda}a^{\ell_0}$ belongs to both the candidate penalty set $\Lambda_n$
and the interval $[C\eta_n,C\eta_n/a]$. To prove \eqref{eq: lemma a1 chain},
note that the condition $\eta_n\leqslant C_{\Lambda}a/C$ implies that
$$
0\leqslant\frac{\ln\left(b_{n}/C_{\Lambda}\right)}{\ln a}-1.
$$
Thus, there exists an integer $\ell_0\in\{0,1,2,\dots\}$ such that
$$
\frac{\ln\left(b_{n}/C_{\Lambda}\right)}{\ln a}-1\leqslant\ell_{0}\leqslant\frac{\ln\left(b_{n}/C_{\Lambda}\right)}{\ln a}.
$$
In turn, the latter implies that $b_n \leqslant C_{\Lambda}a^{\ell_0}\leqslant
b_n/a$. Moreover, the condition $n\eta_n\geqslant c_{\Lambda}C_{\Lambda}/C$
means that $c_{\Lambda}C_{\Lambda}/n\leqslant b_n$. Combining these inequalities
gives \eqref{eq: lemma a1 chain} and completes the proof of the lemma.
\end{proof}

\begin{proof}[\sc{Proof of Lemma \ref{lem:SubsampleEmpiricalErrorBnd}}] 
The claim will follow from $K$ applications of the maximal inequality in Theorem
\ref{lem:MaxIneqBasedOnContraction} in combination with the union bound. The
proof is very similar to that of Lemma \ref{lem:EmpiricalErrorBound}. We include
the steps for the sake of completeness.
%Verification of Conditions
%\ref{enu:ContractionConsequenceLocallyLipschitz}\textendash
%\ref{enu:ContractionConsequencehBoundedInL2} of Lemma
%\ref{lem:MaxIneqBasedOnContraction} follow almost exactly as in the proof of
%Lemma \ref{lem:EmpiricalErrorBound}. The only thing is that the maximum over
%the observations outside I_k is upper bounded by the maximum over all
%observations. The main change thus lies in verification of Condition 4.
First, fix $n\in\N$, $t\in[1,\infty)$ and $u\in(0,\infty)$ satisfying \eqref{eq:
lemma e2 side condition} and denote
$\Delta(u,\eta_n):=\mathcal{R}(\overline{c}_{0},\eta_n)\cap\left\{ \left\Vert
\cdot\right\Vert _{2}\leqslant u\right\} $. Lemma
\ref{lem:RestrictedSetConsequence} shows that
\begin{equation}
\left\Vert \Delta\left(u,\eta_n\right)\right\Vert _{1}:=\sup_{\bdelta\in\Delta\left(u,\eta_n\right)}\left\Vert \delta\right\Vert _{1}\leqslant\left(1+\overline{c}_{0}\right)\left(u\sqrt{s_{q}\eta_n^{-q}}+s_{q}\eta_n^{1-q}\right)=:\overline{\Delta}_n(u).\label{eq:DeltaEtaOfuEll1Bnd}
\end{equation}
Setting up for an application of Theorem \ref{lem:MaxIneqBasedOnContraction},
define $h:\R\times\mathcal{X}\times\mathcal Y\to\R$ by
$h(t,\bx,\by):=m(\bx^{\top}\btheta_{0}+t,\by)-m(\bx^{\top}\btheta_{0},\by)$ for
all $t\in\R$ and $(\bx,\by)\in\mathcal{X}\times\mathcal Y$. By construction,
$h(0,\cdot,\cdot)\equiv0$. By Assumption
\ref{assu:LossLocallyLipschitzAndMore}.\ref{enu:LossLocallyLipschitz}, the
restriction $h:[-c_{L},c_{L}]\times\mathcal{X}\times\mathcal Y\to\R$ is
$L(\bx,\by)$-Lipschitz in its first argument, thus verifying Condition
\ref{enu:ContractionConsequenceLocallyLipschitz} of Theorem
\ref{lem:MaxIneqBasedOnContraction} with $C_h=c_L$. H\"{o}lder's inequality,
Assumption
\ref{assu:LossLocallyLipschitzAndMore}.\ref{enu:LossLocallyLipschitz},
(\ref{eq:DeltaEtaOfuEll1Bnd}), and \eqref{eq: lemma e2 side condition} imply
that
\[
\max_{1\leqslant k\leqslant K}\max_{i\in I_k^c}\sup_{\bdelta\in\Delta\left(u,\eta_{n}\right)}\left|\bX_{i}^{\top}\bdelta\right|\leqslant\max_{1\leqslant i\leqslant n}\left\Vert \bX_{i}\right\Vert _{\infty}\left\Vert \Delta\left(u,\eta_{n}\right)\right\Vert _{1}\leqslant tn^{1/r}B_{n}\overline{\Delta}_{n}\left(u\right)\leqslant c_{L}
\]
with probability at least $1-t^{-r}$, where the bound $\P(\max_{1\leqslant
i\leqslant n}\|\bX_i\|_{\infty}>tn^{1/r}B_n)\leqslant t^{-r}$ follows from
Markov's inequality since Assumption
\ref{assu:LossLocallyLipschitzAndMore}.\ref{enu:LossLocallyLipschitz} implies
that $\E[\|\bX\|_{\infty}^r]\leqslant B_n^r$. Condition
\ref{enu:ContractionConsequenceXprimeDeltaBounded} of Theorem
\ref{lem:MaxIneqBasedOnContraction} thus holds with $C_h=c_L$ and
$\zeta_n=t^{-r}$. Further, given that $s_{q},t,B_n\in[1,\infty)$, and
$\eta_n\leqslant 1$ by assumption, \eqref{eq: lemma e2 side condition} implies
that $u\leqslant c_{L}.$ Therefore, it follows from Assumption
\ref{assu:LossLocallyLipschitzAndMore}.\ref{enu:LossMeanSquareEll2Conts} that
\begin{align*}
&\sup_{\bdelta\in\Delta\left(u,\eta_{n}\right)}\mathrm{E}\left[h(\bX^{\top}\bdelta,\bX,\bY)^{2}\right]\\
& \leqslant\sup_{\substack{\btheta_{0}+\bdelta\in\Theta\\
\|\bdelta\|_{2}\leqslant u}
}\mathrm{E}\left[\left|m\left(\bX^{\top}\left(\btheta_{0}+\bdelta\right),\bY\right)-m\left(\bX^{\top}\btheta_{0},\bY\right)\right|^{2}\right]\leqslant C_{L}^{2}u^{2},
\end{align*}
and so Condition \ref{enu:ContractionConsequencehBoundedInL2} of Theorem
\ref{lem:MaxIneqBasedOnContraction} holds for $B_{1n}=C_{L}u$. For
(the final) Condition \ref{enu:ContractionConsequenceLtimesXBoundedInEmpL2}
of Theorem \ref{lem:MaxIneqBasedOnContraction}, note that for some universal
constant $C\in[1,\infty)$, we have
\begin{align*}
\max_{1\leqslant k\leqslant K}\max_{1\leqslant j\leqslant p}\mathbb{E}_{I_{k}^{c}}\bracn{L(\bX_{i},\bY_i)^{2}X_{i,j}^{2}}
\leqslant\frac{1}{\left(K-1\right)c_{D}}\max_{1\leqslant j\leqslant p}\En\bracn{L(\bX_{i},\bY_i)^{2}X_{i,j}^{2}}
\leqslant\frac{(CC_{L})^{2}}{\left(K-1\right)c_{D}}
\end{align*}
with probability at least $1 - C/\ln^2(pn)$, where the first (deterministic)
inequality follows from Assumption \ref{assu:DataPartition} and the second
(probabilistic) inequality follows from the argument leading to \eqref{eq: lemma
c4 useful inequality to be used later} in the proof of Lemma
\ref{lem:EmpiricalErrorBound}.
% J: This is where $B_n^2\ln(pn)/\sqrt{n}\leqslant C_L^2$ is used.
Condition \ref{enu:ContractionConsequenceLtimesXBoundedInEmpL2} of Theorem
\ref{lem:MaxIneqBasedOnContraction} thus holds with $\gamma_{n}=C/\ln^2(pn)$ and
the now $\left(K,c_{D}\right)$-dependent $B_{2n}=
CC_L/\sqrt{\left(K-1\right)c_{D}}$. 
Theorem \ref{lem:MaxIneqBasedOnContraction} combined with the bound
$\overline{\Delta}_{n}(u)$ on $\|\Delta(u,\eta_{n})\|_{1}$ from
(\ref{eq:DeltaEtaOfuEll1Bnd}) and $\ln(8pn)\leqslant4\ln(pn)$ (which follows
from $p\geqslant2$) now show that for any given $k\in[K]$, we have
\begin{multline}
 \mathrm{P}\left(\sqrt{|I_k^c|}\epsilon_{I_{k}^{c}}\left(u\right)>\left\{ 4C_{L}u\right\} \lor\left\{ 16\sqrt{2}CC_{L}\overline{\Delta}_{n}\left(u\right)\sqrt{\frac{\ln\left(pn\right)}{(K-1)c_D}}\right\} \right) \\
 \leqslant 4t^{-r} +  4C/\ln^2(pn) +\left[\left(K-1\right)c_{D}n\right]^{-1},\label{eq: application of fancy maximal inequality, new}
\end{multline}
where we also used Assumption \ref{assu:DataPartition} to bound $|I_k^c|^{-1}$.
Next, Assumption \ref{assu:DataPartition} also implies that $(K-1)c_D\leqslant
Kc_D\leqslant 1$. Now, given that $s_q,C\in[1,\infty)$, $p\in[2,\infty)$,
$n\in[3,\infty)$, and $\eta_n\in(0,1]$, it follows that
$4\sqrt{2}C(1+\overline{c}_0)\{s_q
\eta_n^{-q}\ln(pn)/[(K-1)c_D]\}^{1/2}\geqslant 1$, and so
$16\sqrt{2}CC_{L}\overline{\Delta}_{n}\left(u\right)\{\ln\left(pn\right)/[(K-1)c_D]\}^{1/2}\geqslant4C_{L}u.$
It follows from \eqref{eq: application of fancy
maximal inequality, new} that
\begin{align*}
\epsilon_{I_{k}^{c}}\left(u\right) 
&\leqslant 16\sqrt{2}CC_{L}\overline{\Delta}_{n}\left(u\right)\sqrt{\frac{\ln\left(pn\right)}{(K-1)c_D|I_k^c|}}
\leqslant \frac{16\sqrt{2}CC_{L}}{(K-1)c_D}\overline{\Delta}_{n}\left(u\right)\eta_n
\end{align*}
with probability at least $1-(4t^{-r} + 4C/\ln^2(pn) +
[(K-1)c_{D}n]^{-1})$. The latter bound is uniform in
$k\in[K]$. Redefining the universal constant $C$ appropriately, the union bound
produces the desired result.
\end{proof}

\begin{proof}[\sc{Proof of Lemma \ref{lem:SubsampleScoreBnd}}] 
The claim will follow from $K$ applications of Theorem \ref{thm: max inequality
standard} in combination with the union bound. Fix $k\in[K]$. We invoke Theorem
\ref{thm: max inequality standard} with $\bZ_i=U_i\bX_i$, and $i\in I_k^c$, such
that $n$ there corresponds to $|I_k^c|$. Lemma \ref{lem:EofUXis0} shows that
these random variables are centered. Assumption
\ref{assu:ResidualBootstrapMethod}.\ref{enu:ZijSecondMomentsBndAwayZero}
involves $\max_{1\leqslant j\leqslant p}\E[|UX_j|^2]\leqslant C_U^2$, so
Condition \ref{enu:MaxIneqIISecondMomentBnd} of Theorem \ref{thm: max inequality
standard} is satisfied with $\sigma_n=C_U$, a constant. Assumption
\ref{assu:ResidualBootstrapMethod}.\ref{enu:maxZijFourthMomentBnd} shows that
Condition \ref{enu:MaxIneqIIMomentOfMaxBnd} of Theorem \ref{thm: max inequality
standard} is satisfied with $q=4$ and $M_n=\widetilde B_n$. For (the final)
Condition \ref{enu:MaxIneqIIGrowthCond} of Theorem \ref{thm: max inequality
standard}, observe from \eqref{eq: lemma e3 side condition} that
\begin{align*}
\frac{\widetilde B_n \sqrt{\ln(pn)}}{|I_k^c|^{1/4}}
&\leqslant \frac{\widetilde B_n \sqrt{\ln(pn)}}{[(K-1)c_Dn]^{1/4}}\leqslant C_U,
\end{align*}
where $|I_k^c|\geqslant(K-1)c_Dn$ follows from Assumption
\ref{assu:DataPartition}. Therefore, applying Theorem \ref{thm: max inequality
standard}, we obtain that there is a universal constant $C\in[1,\infty)$, such
that
\begin{align*}
\P\left( \sqrt{|I_k^c|}\|\bS_{I_k^c}\|_{\infty} > CC_U \sqrt{\ln(p|I_k^c|)} \right) 
&\leqslant \frac{C}{\ln^4(p|I_k^c|)} \leqslant \frac{C}{\ln^4(p(K-1)c_Dn)} \leqslant \frac{2^4C}{\ln^4(pn)},
\end{align*}
where the last inequality follows from $pn\geqslant 1/[(K-1)c_D]^2$. Hence, with
probability at least $1-2^4C/\ln^4(pn)$,
\begin{align*}
   \|\bS_{I_k^c}\|_{\infty} \leqslant CC_U \sqrt{\frac{\ln(p|I_k^c|)}{|I_k^c|}}
      \leqslant \frac{CC_U}{\sqrt{(K-1)c_D}} \sqrt{\frac{\ln(pn)}{n}} = \frac{CC_U\eta_n}{\sqrt{(K-1)c_D}}.
\end{align*}
The latter bound is uniform in $k\in[K]$. The claim now follows from combining
this inequality with the union bound, and redefining the universal constant $C$
appropriately.
\end{proof}

\begin{proof}
[\sc{Proof of Lemma \ref{lem: theorem 1 extension}}]
Fix $k\in[K]$ and observe that Lemma \ref{lem:NonasymptoticDeterministicBounds}
still holds if we replace $\bS_n$, $\epsilon_n(\widetilde u_n)$, and
$\mathscr{S}_n\cap\mathscr{L}_n\cap\mathscr{E}_n$ by $\bS_{I_k^c}$,
$\epsilon_{I_k^c}(\widetilde u_n)$, and \eqref{eq: main event modified for lemma
e4}, respectively. Fix $\widehat\btheta \in
\widehat\Theta_{I_k^c}(\lambda)$ and denote $\widehat\bdelta := \widehat\btheta -
\btheta_0$. Then $\widehat\bdelta\in \mathcal R(\overline c_0,\eta_n)$,
$\|\widehat\bdelta\|_2 \leqslant \widetilde u_n$ and
$\Vert\widehat{\bdelta}\Vert_{1}\leqslant(1+\overline{c}_{0})(\widetilde
u_{n}\sqrt{s_q\eta_n^{-q}}+s_q\eta_n^{1-q})$ on the event \eqref{eq: main event
modified for lemma e4}. It follows that
\begin{align*}
\mathcal{E}(\widehat{\btheta}) 
& \leqslant \mathbb E_{I_k^c}[m(\bX_i^{\top}\widehat\btheta,\bY_i)] - \mathbb E_{I_k^c}[m(\bX_i^{\top}\btheta_0,\bY_i)] + \epsilon_{I_k^c}(\widetilde u_n) \\
& \leqslant \lambda(\|\btheta_0\|_1 - \|\widehat\btheta\|_1)+\epsilon_{I_k^c}\left(\widetilde u_{n}\right)\\
& \leqslant\overline{\lambda}_n\Vert\widehat{\bdelta}\Vert_{1}+\epsilon_{I_k^c}\left(\widetilde u_{n}\right)\\
& \leqslant \left(1+\overline{c}_{0}\right) \overline{\lambda}_n\left(\widetilde u_{n}\sqrt{s_q\eta_n^{-q}}+s_q\eta_n^{1-q}\right) + a_{\epsilon,n}\widetilde u_n + b_{\epsilon,n},
\end{align*}
where the first inequality follows from the definition of
$\epsilon_{I_k^c}(\widetilde u_n)$, the second from the definition of
$\widehat\btheta$, the third from the triangle inequality and \eqref{eq: main
event modified for lemma e4}, and the fourth from \eqref{eq: main event modified
for lemma e4} again. The asserted claim now follows from taking the supremum 
over $\widehat\btheta \in \widehat\Theta_{I_k^c}(\lambda)$.
\end{proof}

\begin{proof}[\sc{Proof of Lemma
\ref{lem:High-Probability-Subsample-Risk-Bound-Lambda-Star}}] 
Let $(n,t)\in\N\times[1,\infty)$ satisfy \eqref{eq: lemma e3 side condition}
and \eqref{eq:SideCondsHighProbSubsampleRiskBndLambdaStar}. Then Lemma
\ref{lem:ExistenceGoodCandidatePenalty} with the constant there equal to
$c_0C_1$ shows that
$
[c_{0}C_1\eta_n,c_{0}C_1\eta_n/a]\cap\Lambda_{n}\neq\emptyset,
$
so we can fix a penalty $\lambda_{\ast}\in\Lambda_{n}$ satisfying
$
   c_0C_1\eta_n\leqslant\lambda_\ast\leqslant c_0C_1\eta_n/a.
$
Specify the non-random numbers
\[
\overline{\lambda}_n:=\frac{c_0C_1\eta_n}{a},\quad a_{\epsilon,n}:=(1+\overline c_0)C_2\sqrt{s_q\eta_n^{2-q}}\quad\text{and}\quad b_{\epsilon,n}:=(1+\overline c_0)C_2s_q\eta_n^{2-q}.
\]
The via \eqref{eq: definition un tilde key} implied  $\widetilde u_n$ then
reduces to $C_S(s_q\eta_n^{2-q})^{1/2}=:\check{u}_n$. Then $\check u_n\leqslant
c_M'$ by \eqref{eq:SideCondsHighProbSubsampleRiskBndLambdaStar}, the requirement
\eqref{eq: lemma c3 additional constraint} reduces to $C_S \geqslant 2$, which
holds true by construction. Define the events
\begin{align*}
\mathscr{Z}_{k} & :=\left\{ \Vert \bS_{I_{k}^{c}}\Vert_{\infty}\leqslant C_1\eta_n\right\} \quad\text{and}\quad\mathscr{E}_{k}:=\left\{ \epsilon_{I_{k}^{c}}\left(\check u_{n}\right)\leqslant a_{\epsilon,n} \check u_{n} +b_{\epsilon,n}\right\},\quad k\in[K].
\end{align*}
Then Lemma \ref{lem: theorem 1 extension} and
\eqref{eq:SideCondsHighProbSubsampleRiskBndLambdaStar} imply that, for any $k\in[K]$, on
$\mathscr{Z}_{k}\cap\mathscr{E}_{k}$, the penalty level $\lambda_\ast$ yields
\begin{equation}\label{eq:SubsampleRiskBndHoldingOutI_k}
\sup_{\widehat\btheta\in\widehat{\Theta}_{I_{k}^{c}}\left(\lambda_{\ast}\right)}\mathcal{E}(\widehat\btheta)\leqslant\left(1+\overline{c}_{0}\right) \overline{\lambda}_n\left(\check u_{n}\sqrt{s_q\eta_n^{-q}}+s_q\eta_n^{1-q}\right) + a_{\epsilon,n}\check u_n + b_{\epsilon,n}=C_{\mathcal E}s_q\eta_n^{2-q}.
\end{equation}
Further, Lemma \ref{lem:SubsampleEmpiricalErrorBnd} (with $u=\check u_n$) and
\eqref{eq:SideCondsHighProbSubsampleRiskBndLambdaStar} show that
\[
\P\big((\cap_{k=1}^{K}\mathscr{E}_{k})^{c}\big)\leqslant K\left(4t^{-r} + C/\ln^2(pn)+\left[\left(K-1\right)c_{D}n\right]^{-1}\right).
\]
Finally, Lemma \ref{lem:SubsampleScoreBnd} and
\eqref{eq: lemma e3 side condition} show that
\[
\P\big((\cap_{k=1}^{K}\mathscr{Z}_{k})^{c}\big)\leqslant CK/\ln^4(pn).
\]
It thus follows from the union bound and $pn\geqslant\mathrm e$ that
(\ref{eq:SubsampleRiskBndHoldingOutI_k}) holds simultaneously for all
$k\in[K] $ with probability at least $1-K(4t^{-r} +
2C/\ln^2(pn)+[(K-1)c_{D}n]^{-1})$.
\end{proof}

\begin{proof}[\sc{Proof of Lemma \ref{lem: convexity minoration implication}}]
Fix any $\btheta\in\Theta$ such that $\mathcal E(\btheta) \leqslant
c_M(c_M')^2$. If $\|\btheta - \btheta_0\|_2 \leqslant c_M'$, then $\|\btheta -
\btheta_0\|_2^2 \leqslant \mathcal E(\btheta) / c_M$ is immediate from
Assumption \ref{assu:Margin}. It thus suffices to prove that the case $\|\btheta
- \btheta_0\|_2 > c_M'$ is not possible. Seeking a contradiction, suppose that
$\|\btheta - \btheta_0\|_2 > c_M'$. Then we must have $\|\btheta -
\btheta_0\|_2>0$ and $c_M'\in(0,\infty)$. It follows that $u:=c_M' / \|\btheta -
\btheta_0\|_2 \in (0,1)$, and so defining $\widetilde\btheta:=\btheta_0 +
u(\btheta - \btheta_0)$, we have $\|\widetilde\btheta - \btheta_0\|_2 =
u\|\btheta - \btheta_0\|_2 = c_M'$. Using Assumptions \ref{assu:Convexity} and
\ref{assu:Margin}, we therefore see that
$$
c_M (c_M')^2 = c_M\|\widetilde\btheta - \btheta_0\|_2^2 \leqslant \mathcal E(\widetilde\btheta)\leqslant (1-u)\mathcal E(\btheta_0) + u\mathcal E(\btheta) = u\mathcal E(\btheta) \leqslant \frac{c_M(c_M')^3}{\|\btheta - \btheta_0\|_2}.
$$
This implies that $\|\btheta - \btheta_0\|_2\leqslant c_M'$, which contradicts
$\|\btheta - \btheta_0\|_2 > c_M'$. Thus, the case $\|\btheta - \btheta_0\|_2 >
c_M'$ is not possible, and the proof is complete.
\end{proof}

\begin{proof}
[\sc{Proof of Lemma \ref{lem:High-Probability-Subsample-Risk-Bound-Lambda-CV}}]
Fix $(n,t)\in\N\times[1,\infty)$ satisfying \eqref{eq: lemma e3 side condition},
\eqref{eq:SideCondsHighProbSubsampleRiskBndLambdaStar} and
\eqref{eq:AddedSideCondsHighProbSubsampleRiskBndLambdaCV} and define the
non-random sequences
$$
q_n:= C_L\sqrt{\frac{t\overline{\mathcal{E}}^{\ast}_{n}}{c_Dc_Mn}}+\overline{\mathcal{E}}^{\ast}_{n},\quad
\widetilde q_n := \frac{C_L^2}{c_M}\frac{3t\ln n}{c_D\ln(1/a)n} + C_L\sqrt{\frac{q_n}{c_M}}\sqrt{\frac{3t\ln n}{c_D\ln(1/a)n}},
$$
$$
\check u_{1,n}:= \frac{C_L}{c_M}\sqrt{\frac{3t\ln n}{c_D\ln(1/a)n}} + \sqrt{\frac{q_n}{c_M}},\quad\text{and}\quad \check u_{2,n}:= \frac{C_L}{c_M}\sqrt{\frac{3t\ln n}{c_D\ln(1/a)n}} + \sqrt{\frac{q_n + \widetilde q_n}{c_Dc_M}}.
$$
Note that since $c_D\in(0,1)$ by Assumption \ref{assu:DataPartition}, we have
$\check u_{1,n} < \check u_{2,n}$ and, as we will show at the end of this proof
via elementary inequalities, $\check u_{2,n}$ is smaller than the right-hand
side of \eqref{eq: lemma a6 main bound}. Therefore, $\check u_{1,n}\vee\check
u_{2,n}\leqslant c_M'\wedge c_L$ by
\eqref{eq:AddedSideCondsHighProbSubsampleRiskBndLambdaCV}. The latter inequality
will be used below to justify applications of Assumptions \ref{assu:Margin} and
\ref{assu:LossLocallyLipschitzAndMore}.\ref{enu:LossMeanSquareEll2Conts}.

For each $(k,\lambda)\in[K]\times\Lambda_n$, fix a solution
$\widehat\btheta_{I_k^c}\left(\lambda\right)\in\widehat\Theta_{I_k^c}\left(\lambda\right)$.
For $k\in\left[K\right], j\in\{1,2\}$ and $\lambda\in\Lambda_n$, denote
\begin{equation}\label{eq: interpolation theta tilde}
\widetilde\btheta_{I_k^c,j}(\lambda):=\begin{cases}
   \widehat\btheta_{I_k^c}(\lambda) & \text{if}\quad\|\widehat\btheta_{I_k^c}(\lambda) - \btheta_0\|_2 \leqslant \check u_{j,n},\\
   \btheta_0 + \frac{\check u_{j,n}}{\| \widehat\btheta_{I_k^c}(\lambda) - \btheta_0 \|_2}\big(\widehat\btheta_{I_k^c}(\lambda) - \btheta_0\big) & \text{if}\quad\|\widehat\btheta_{I_k^c}(\lambda) - \btheta_0\|_2 > \check u_{j,n}.
\end{cases}
\end{equation}
% J: I rewrote in terms of cases to avoid division by zero.
In addition, for $\btheta\in\Theta$ and $k\in[K]$, let
\begin{align*}
   f_k(\btheta)&:= (\mathbb E_{I_k} - \E)[m(\bX_i^{\top}\btheta,\bY_i) - m(\bX_i^{\top}\btheta_0,\bY_i)]
\end{align*}
let $\lambda_{\ast}\in\Lambda_{n}$ be a penalty level satisfying the bound
of Lemma \ref{lem:High-Probability-Subsample-Risk-Bound-Lambda-Star}, and define
events
\begin{align*}
\mathscr{R}_n&:=\left\{\max_{1\leqslant k\leqslant K}\mathcal{E}(\widehat{\btheta}_{I_{k}^{c}}(\lambda_{\ast}))\leqslant \overline{\mathcal{E}}^{\ast}_{n}\right\},\\   
\mathscr{F}_n&:=\left\{\max_{1\leqslant k\leqslant K}|f_k(\widehat{\btheta}_{I_{k}^{c}}(\lambda_{\ast}))| \leqslant C_L \sqrt{\frac{t\overline{\mathcal{E}}^{\ast}_{n}}{c_Dc_Mn}}\right\}\quad\text{and}\\
\mathscr{C}_n&:=\left\{\big| f_k(\widetilde{\btheta}_{I_{k}^{c},j}(\lambda))\big| \leqslant C_L\sqrt{\frac{3t\ln n}{c_D\ln(1/a)n}}\|\widetilde{\btheta}_{I_{k}^{c},j}(\lambda)   - \btheta_0\|_2,\text{ all }k\in[K],j\in\{1,2\},\lambda\in\Lambda_n\right\}.
\end{align*}
\begin{comment} 
% J: I rewrote the definition of \mathscr{C}_n to avoid potential 0/0.
\mathscr{C}_n&:=\left\{\max_{1\leqslant k\leqslant K}\max_{1\leqslant j\leqslant 2}\max_{\lambda\in\Lambda_n}\frac{\big| f_k(\widetilde{\theta}_{I_{k}^{c},j}(\lambda))\big|}{\|\widetilde{\theta}_{I_{k}^{c},j}(\lambda)   - \theta_0\|_2} \leqslant C_L\sqrt{\frac{3t\ln n}{c_D\ln(1/a)n}}\right\}.\\
\end{comment}
We first derive a lower bound for $\P(\mathscr{R}_n\cap \mathscr{F}_n\cap
\mathscr{C}_n)$ and then prove that \eqref{eq: lemma a6 main bound} is satisfied
on $\mathscr{R}_n\cap \mathscr{F}_n\cap \mathscr{C}_n$.

To derive a lower bound for $\P(\mathscr{R}_n\cap \mathscr{F}_n\cap
\mathscr{C}_n)$, first observe that for any $k \in [K]$, the
variance of the conditional distribution of 
$$
\mathbb{E}_{I_{k}}\left[m\big(\bX_{i}^{\top}\widehat{\btheta}_{I_{k}^{c}}\left(\lambda_{\ast}\right),\bY_{i}\big)-m\big(\bX_{i}^{\top}\btheta_0,\bY_{i}\big)\right]
$$
given $\{(\bX_i,\bY_i)\}_{i\in I_k^c}$ is bounded from above by
$$
\left|I_{k}\right|^{-1}\E_{\bX,\bY}\left[\big| m\big(\bX^{\top}\widehat{\btheta}_{I_{k}^{c}}\left(\lambda_{\ast}\right),\bY\big)-m\big(\bX^{\top}\btheta_0,\bY\big)\big|^{2}\right],
$$
and so, by Assumption \ref{assu:DataPartition} and Chebyshev's inequality
applied conditional on $\{(\bX_i,\bY_i)\}_{i\in I_k^c}$,
$$
\P\bigg( |f_k(\widehat\btheta_{I_k^c}(\lambda_{\ast}))| > \sqrt{\frac{t}{c_D n}}\sqrt{\E_{\bX,\bY}\left[\big| m\big(\bX^{\top}\widehat{\btheta}_{I_{k}^{c}}\left(\lambda_{\ast}\right),\bY\big)-m\big(\bX^{\top}\btheta_0,\bY\big)\big|^{2}\right]} \bigg)\leqslant \frac{1}{t}.
$$
Also, by \eqref{eq:AddedSideCondsHighProbSubsampleRiskBndLambdaCV} and Lemma
\ref{lem: convexity minoration implication}, on the event $\mathscr{R}_n$, we
have $\|\widehat\btheta_{I_k^c}(\lambda_{\ast}) - \btheta_0\|_2^2 \leqslant
\overline{\mathcal{E}}^{\ast}_{n}/c_M$, and so by Assumption
\ref{assu:LossLocallyLipschitzAndMore}.\ref{enu:LossMeanSquareEll2Conts},
$$
\E_{\bX,\bY}\left[\big| m\big(\bX^{\top}\widehat{\btheta}_{I_{k}^{c}}\left(\lambda_{\ast}\right),\bY\big)-m\big(\bX^{\top}\btheta_0,\bY\big)\big|^{2}\right] \leqslant \frac{C_L^2 \overline{\mathcal{E}}^{\ast}_{n}}{c_M}.
$$
Further, observe that for any $\lambda\in\Lambda_{n}$ and $j\in\{1,2\}$, the
variance of the conditional distribution of
$$
\mathbb{E}_{I_{k}}\left[m\big(\bX_{i}^{\top}\widetilde{\btheta}_{I_{k}^{c},j}\left(\lambda\right),\bY_{i}\big)-m\big(\bX_{i}^{\top}\btheta_0,\bY_{i}\big)\right]
$$
given $\{(\bX_i,\bY_i)\}_{i\in I_k^c}$ is bounded from above by
$$
\left|I_{k}\right|^{-1}\E_{\bX,\bY}\left[\big| m\big(\bX^{\top}\widetilde{\btheta}_{I_{k}^{c},j}\left(\lambda\right),\bY\big)-m\big(\bX^{\top}\btheta_0,\bY\big)\big|^{2}\right],
$$
and so, by Assumption \ref{assu:DataPartition} and Chebyshev's inequality
applied conditional on $\{(\bX_i,\bY_i)\}_{i\in I_k^c}$,
\begin{align*}
   &\P\bigg( |f_k(\widetilde\btheta_{I_k^c,j}(\lambda))| > \sqrt{\frac{3t\ln n}{c_D \ln(1/a)n}}\sqrt{\E_{\bX,\bY}\left[\big| m\big(\bX^{\top}\widetilde{\btheta}_{I_{k}^{c},j}\left(\lambda\right),\bY\big)-m\big(\bX^{\top}\btheta_0,\bY\big)\big|^{2}\right]} \bigg)\\
   &\leqslant \ln(1/a) / (3t\ln n).   
\end{align*}
In addition, given that we have
$\max_{j\in\{1,2\}}\|\widetilde{\btheta}_{I_{k}^{c},j}\left(\lambda\right) - \btheta_0\|_2 \leqslant
\check u_{1,n}\vee\check u_{2,n}\leqslant c_L$, it follows from Assumption
\ref{assu:LossLocallyLipschitzAndMore}.\ref{enu:LossMeanSquareEll2Conts} that
$$
\E_{\bX,\bY}\left[\big| m\big(\bX^{\top}\widetilde{\btheta}_{I_{k}^{c},j}\left(\lambda\right),\bY\big)-m\big(\bX^{\top}\btheta_0,\bY\big)\big|^{2}\right] \leqslant C_L^2\| \widetilde{\btheta}_{I_{k}^{c},j}(\lambda) - \btheta_0 \|_2^2,\quad j\in\{1,2\}.
$$
The qualifier $a^\ell\geqslant c_\Lambda/n$ in the definition of $\Lambda_n$ in
Assumption \ref{assu:CandidatePenalties} implies that
\[
   \ell\leqslant\frac{\ln(1/c_\Lambda)+\ln n}{\ln(1/a)}.
\]
Since also $\ell\in\{0,1,2,\dotsc\}$, from
\eqref{eq:AddedSideCondsHighProbSubsampleRiskBndLambdaCV} and the previous display, one can deduce that
$$
\left|\Lambda_{n}\right|\leqslant\frac{2\ln n}{\ln\left(1/a\right)} + 1 \leqslant \frac{3 \ln n}{\ln(1/a)}.
$$
Combining the presented results with Lemma
\ref{lem:High-Probability-Subsample-Risk-Bound-Lambda-Star} and the union bound,
we obtain
\[
\P\left(\mathscr{R}_n\cap\mathscr{F}_n\cap\mathscr{C}_n\right)\geqslant 1 - K\left(4t^{-r}+3t^{-1}+2C/\ln^2(pn)+\left[\left(K-1\right)c_{D}n\right]^{-1}\right),
\]
which is the desired bound.
% J: Fn event contributes K*t to probability statement. Cn event contributes
% 2*K*t. Rn event contributes K(4t^{-r} + 2C/\ln^2(pn) + [(K-1)c_Dn]^{-1})

We next prove that \eqref{eq: lemma a6 main bound} holds on
$\mathscr{R}_n\cap\mathscr{F}_n\cap\mathscr{C}_n$. For the rest of proof, we
therefore fix a realization of the data $\{(\bX_i,\bY_i)\}_{i=1}^n$ and assume that
$\mathscr{R}_n\cap\mathscr{F}_n\cap\mathscr{C}_n$ is satisfied. Given
\[
\widehat{\lambda}^{\mathtt{cv}}\in\argmin_{\lambda\in\Lambda_{n}}\sum_{k=1}^{K}\sum_{i\in I_{k}}m\big(\bX_{i}^{\top}\widehat{\btheta}_{I_{k}^{c}}\left(\lambda\right),\bY_{i}\big),
\]
a problem for which $\lambda_{\ast}$ is feasible, we must have
$$
\sum_{k=1}^{K}\left|I_{k}\right|\mathbb{E}_{I_{k}}\big[m\big(\bX_{i}^{\top}\widehat{\btheta}_{I_{k}^{c}}(\widehat{\lambda}^{\mathtt{cv}}),\bY_{i}\big)\big]
\leqslant \sum_{k=1}^{K}\left|I_{k}\right|\mathbb{E}_{I_{k}}\big[m\big(X_{i}^{\top}\widehat{\btheta}_{I_{k}^{c}}\left(\lambda_{\ast}\right),\bY_{i}\big)\big].
$$
Here, by $\mathscr{R}_n$ and $\mathscr{F}_n$, for each $k\in[K]$ we
have
\begin{align*}
\mathbb{E}_{I_{k}}\big[m\big(\bX_{i}^{\top}\widehat{\btheta}_{I_{k}^{c}}\left(\lambda_{\ast}\right),\bY_{i}\big)\big] 
& = \mathbb{E}_{I_{k}}\big[m\big(\bX_{i}^{\top}\btheta_0,\bY_{i}\big)\big] + f_k(\widehat{\btheta}_{I_{k}^{c}}\left(\lambda_{\ast}\right)) + \mathcal{E}(\widehat{\btheta}_{I_{k}^{c}}\left(\lambda_{\ast}\right)) \\ 
& \leqslant \mathbb{E}_{I_{k}}\big[m\big(\bX_{i}^{\top}\btheta_0,\bY_{i}\big)\big] + q_n.
\end{align*}
Therefore,
\begin{equation}\label{eq: cv implication simplified}
\sum_{k=1}^{K}\frac{\left|I_{k}\right|}{n}\mathbb{E}_{I_{k}}\big[m\big(\bX_{i}^{\top}\widehat{\btheta}_{I_{k}^{c}}(\widehat{\lambda}^{\mathtt{cv}}),\bY_{i}\big)\big]
\leqslant \sum_{k=1}^{K}\frac{\left|I_{k}\right|}{n}\mathbb{E}_{I_{k}}\big[m\big(\bX_{i}^{\top}\btheta_0,\bY_{i}\big)\big] + q_n.
\end{equation}
Now, define $\widehat{\mathcal K}$ as
\begin{equation}\label{eq: cv bound simple set}
\widehat{\mathcal K}:=\left\{k\in\left[K\right];\mathbb{E}_{I_{k}}\big[m\big(\bX_{i}^{\top}\widehat{\btheta}_{I_{k}^{c}}(\widehat{\lambda}^{\mathtt{cv}}),\bY_{i}\big)\big]
\leqslant \mathbb{E}_{I_{k}}\big[m\big(\bX_{i}^{\top}\btheta_0,\bY_{i}\big)\big] + q_n\right\}
\end{equation}
and $\widehat{\mathcal K}^c:=\left[K\right]\backslash\widehat{\mathcal K}$. We
will prove that
\begin{equation}\label{eq: split bounds for two sets of indices}
\max_{k\in\widehat{\mathcal K}}\|\widehat\btheta_{I_k^c}(\widehat\lambda^{\mathtt{cv}}) - \btheta_0 \|_2\leqslant \check u_{1,n}
\quad\text{and}\quad
\max_{k\in\widehat{\mathcal K}^c}\|\widehat\btheta_{I_k^c}(\widehat\lambda^{\mathtt{cv}}) - \btheta_0 \|_2\leqslant \check u_{2,n}
\end{equation}

To prove the \textit{first} inequality in \eqref{eq: split bounds for two sets of
indices}, seeking a contradiction, suppose that the inequality is not true, and
fix any $k\in\widehat{\mathcal K}$ such that
$\|\widehat\btheta_{I_k^c}(\widehat\lambda^{\mathtt{cv}}) - \btheta_0 \|_2 >
\check u_{1,n}$. Then $\check u_{1,n} =
\|\widetilde\btheta_{I_k^c,1}(\widehat\lambda^{\mathtt{cv}}) - \btheta_0 \|_2 <
\|\widehat\btheta_{I_k^c}(\widehat\lambda^{\mathtt{cv}}) - \btheta_0 \|_2$, and so
by \eqref{eq: interpolation theta tilde}, \eqref{eq: cv bound simple set}, and
convexity (Assumption \ref{assu:Convexity}),
$$
\mathbb{E}_{I_{k}}\big[m\big(\bX_{i}^{\top}\widetilde{\btheta}_{I_{k}^{c},1}(\widehat{\lambda}^{\mathtt{cv}}),\bY_{i}\big)\big]
< \mathbb{E}_{I_{k}}\big[m\big(\bX_{i}^{\top}\btheta_0,\bY_{i}\big)\big] + q_n.
$$
Therefore, by $\mathscr{C}_n$ and $k\in\widehat{\mathcal K}$,
\begin{align*}
\mathcal E(\widetilde{\btheta}_{I_{k}^{c},1}(\widehat{\lambda}^{\mathtt{cv}}))
& \leqslant |f_k(\widetilde{\btheta}_{I_{k}^{c},1}(\widehat{\lambda}^{\mathtt{cv}}))| + \mathbb{E}_{I_{k}}\big[m\big(\bX_{i}^{\top}\widetilde{\btheta}_{I_{k}^{c},1}(\widehat{\lambda}^{\mathtt{cv}}),\bY_{i}\big)\big]
- \mathbb{E}_{I_{k}}\big[m\big(\bX_{i}^{\top}\btheta_0,\bY_{i}\big)\big] \\
& < C_L\sqrt{\frac{3t\ln n}{c_D\ln(1/a)n}}\| \widetilde{\btheta}_{I_{k}^{c},1}(\widehat{\lambda}^{\mathtt{cv}}) - \btheta_0\|_2 + q_n,
\end{align*}
and so given that $\check u_{1,n}\leqslant c_M'$, by the margin condition
(Assumption \ref{assu:Margin}),
$$
\| \widetilde{\btheta}_{I_{k}^{c},1}(\widehat{\lambda}^{\mathtt{cv}}) - \btheta_0\|_2^2 < \frac{C_L}{c_M}\sqrt{\frac{3t\ln n}{c_D\ln(1/a)n}} \| \widetilde{\btheta}_{I_{k}^{c},1}(\widehat{\lambda}^{\mathtt{cv}}) - \btheta_0\|_2 + \frac{q_n}{c_M}.
$$
Using the elementary inequality that $x^2< Bx+C$ and $B,C\geqslant0$
imply $x<B+\sqrt{C}$, % J: changed to strict inequality
we see
$$
\check u_{1,n} = \| \widetilde{\btheta}_{I_{k}^{c},1}(\widehat{\lambda}^{\mathtt{cv}}) - \btheta_0\|_2 < \frac{C_L}{c_M}\sqrt{\frac{3t\ln n}{c_D\ln(1/a)n}} +\sqrt{\frac{q_n}{c_M}} = \check u_{1,n}.
$$
This contradiction proves the first inequality in \eqref{eq: split bounds for
two sets of indices}.

To prove the \textit{second} inequality in \eqref{eq: split bounds for two sets
of indices}, observe that from the first inequality in \eqref{eq: split bounds
for two sets of indices} and the definition of
$\widetilde\btheta_{I_k^c,1}(\widehat\lambda^{\mathtt{cv}})$ in \eqref{eq:
interpolation theta tilde}, we know that for all $k\in\widehat{\mathcal
K}$, $\widetilde\btheta_{I_k^c,1}(\widehat\lambda^{\mathtt{cv}}) =
\widehat\btheta_{I_k^c}(\widehat\lambda^{\mathtt{cv}})$. It therefore follows
from $\mathscr{C}_n$ that for all $k\in\widehat{\mathcal K}$,
\begin{align*}
&\mathbb{E}_{I_{k}}\big[m\big(\bX_{i}^{\top}\widehat{\btheta}_{I_{k}^{c}}(\widehat{\lambda}^{\mathtt{cv}}),\bY_{i}\big)\big] - \mathbb{E}_{I_{k}}\big[m\big(\bX_{i}^{\top}\btheta_0,\bY_{i}\big)\big] \\
& \qquad  = f_k(\widehat{\btheta}_{I_{k}^{c}}(\widehat{\lambda}^{\mathtt{cv}})) + \mathcal E(\widehat{\btheta}_{I_{k}^{c}}(\widehat{\lambda}^{\mathtt{cv}})) 
 \geqslant f_k(\widetilde{\btheta}_{I_{k}^{c},1}(\widehat{\lambda}^{\mathtt{cv}})) 
  \geqslant - C_L\sqrt{\frac{3t\ln n}{c_D\ln(1/a)n}}\check u_{1,n}.
\end{align*} 
Rearranging and using the definitions of $\check u_{1,n}$ and
 $\widetilde q_n$, it follows from \eqref{eq: cv implication simplified} that
$$
\sum_{k\in\widehat{\mathcal K}^c}\frac{|I_k|}{n}\mathbb{E}_{I_{k}}\big[m\big(\bX_{i}^{\top}\widehat{\btheta}_{I_{k}^{c}}(\widehat{\lambda}^{\mathtt{cv}}),\bY_{i}\big)\big] \leqslant \sum_{k\in\widehat{\mathcal K}^c}\frac{|I_k|}{n}\mathbb{E}_{I_{k}}\big[m\big(\bX_{i}^{\top}\btheta_0,\bY_{i}\big)\big] + q_n + \widetilde q_n.
$$
Hence, given that
$$
\min_{k\in\widehat{\mathcal K}^c}\left\{\mathbb{E}_{I_{k}}\big[m\big(\bX_{i}^{\top}\widehat{\btheta}_{I_{k}^{c}}(\widehat{\lambda}^{\mathtt{cv}}),\bY_{i}\big)\big] - \mathbb{E}_{I_{k}}\big[m\big(\bX_{i}^{\top}\btheta_0,\bY_{i}\big)\big]\right\} \geqslant 0
$$
by construction, it follows from Assumption \ref{assu:DataPartition} that for
all $k\in\widehat{\mathcal K}^c$,
$$
\mathbb{E}_{I_{k}}\big[m\big(\bX_{i}^{\top}\widehat{\btheta}_{I_{k}^{c}}(\widehat{\lambda}^{\mathtt{cv}}),\bY_{i}\big)\big] \leqslant \mathbb{E}_{I_{k}}\big[m\big(\bX_{i}^{\top}\btheta_0,\bY_{i}\big)\big] + \frac{q_n + \widetilde q_n}{c_D}.
$$
We now establish the second inequality in \eqref{eq: split bounds for two sets
of indices} using an argument parallel to that used to establish the first
inequality with $\widetilde\btheta_{I_k^c,2}( \widehat\lambda^{\mathtt{cv}})$,
$(q_n + \widetilde q_n)/c_D$, and $\check u_{2,n}$  playing the roles of
$\widetilde\btheta_{I_k^c,1}( \widehat\lambda^{\mathtt{cv}})$, $q_n$ and $\check
u_{1,n}$, respectively. This observation finishes the proof of 
the inequalities in \eqref{eq: split bounds for two sets of indices}.

To complete the proof, note that since $\check u_{1,n} < \check u_{2,n}$, the
inequalities in \eqref{eq: split bounds for two sets of indices} imply that
$\|\widehat\btheta_{I_k^c}(\widehat\lambda^{\mathtt{cv}}) - \btheta_0
\|_2\leqslant \check u_{2,n}$ \textit{for all} $k\in[K]$. It thus remains to
simplify the expression for $\check u_{2,n}$. To do so, we denote $T_n := (3t\ln
n / [c_D\ln(1/a)n])^{1/2}$ and use the following elementary inequalities, where
the very first inequality uses $n\geqslant 1/a$ in
\eqref{eq:AddedSideCondsHighProbSubsampleRiskBndLambdaCV}:
$$
q_n \leqslant \left( \sqrt{\overline{\mathcal{E}}^{\ast}_n} + \frac{C_LT_n}{2\sqrt{c_M}} \right)^{2}, \quad
\widetilde q_n \leqslant \left( \frac{\sqrt{q_n}}{2} + \frac{C_LT_n}{\sqrt{c_M}} \right)^2,
$$
$$
\sqrt{q_n + \widetilde q_n}\leqslant \sqrt{q_n} + \sqrt{\widetilde q_n} \leqslant \frac{3}{2}\sqrt{\overline{\mathcal{E}}^{\ast}_n}  + \frac{7C_LT_n}{4\sqrt{c_M}}\quad\text{and}
$$
$$
\check u_{2,n} \leqslant \frac{C_LT_n}{c_M} + \frac{7C_LT_n}{4c_M\sqrt{c_D}} + \frac{3}{2}\sqrt{\frac{\overline{\mathcal{E}}^{\ast}_n}{c_Dc_M}} \leqslant \frac{11C_LT_n}{4c_M\sqrt{c_D}} + \frac{3}{2}\sqrt{\frac{\overline{\mathcal{E}}^{\ast}_n}{c_Dc_M}}.
$$
% J: To see these inequalities, expand the squares and compare the coefficients
% of the terms.
Therefore, $\check u_{2,n}$ is smaller than the right-hand
side of \eqref{eq: lemma a6 main bound}, which completes the proof.
%The previous display and \eqref{eq:AddedSideCondsHighProbSubsampleRiskBndLambdaCV} imply $\check u_{2,n}\leqslant c_M'\wedge c_L$, which completes the proof.
\end{proof}

\begin{proof}
[\sc{Proof of Lemma \ref{thm:High-Probability-L2n-Error-Bnd-CV-Residuals}}] Let
$(n,t)\in\N\times[1,\infty)$ satisfy \eqref{eq: lemma e3 side condition},
\eqref{eq:SideCondsHighProbSubsampleRiskBndLambdaStar} and
\eqref{eq:AddedSideCondsHighProbSubsampleRiskBndLambdaCV}, and denote
$\Lambda_{n,k}:= \{\lambda\in\Lambda_n; \| \widehat\btheta_{I_k^c}(\lambda) -
\btheta_0 \|_2 \leqslant c_L\}$ for each $k\in[K]$. Then for each $k\in[K]$ and
each $\lambda\in\Lambda_{n,k}$, by Assumption
\ref{assu:LossLocallyLipschitzAndMore}.\ref{enu:ResidualMeanSquareEll2Conts} and
Markov's inequality applied conditional on $\{(\bX_i,\bY_i)\} _{i\in I^c_{k}}$,
we have
\begin{align*}
 & \P\Big(\mathbb{E}_{I_{k}}\Big[\big| m_{1}'\big(\bX_{i}^{\top}\widehat{\btheta}_{I_{k}^{c}}(\lambda),\bY_{i}\big)-m_{1}'\big(\bX_{i}^{\top}\btheta_{0},\bY_{i}\big)\big|^{2}\Big]>C_L^2t\| \widehat\btheta_{I_k^c}(\lambda) - \btheta_0 \|_2\Big)\leqslant t^{-1}.
\end{align*}
Also, since $n\geqslant 1/\left(c_{\Lambda}\wedge a\right)$ by
\eqref{eq:AddedSideCondsHighProbSubsampleRiskBndLambdaCV}, Assumption
\ref{assu:CandidatePenalties} implies that
$\left|\Lambda_{n}\right|\leqslant3\left(\ln n\right)/\ln\left(1/a\right).$ (See
the proof of Lemma \ref{lem:High-Probability-Subsample-Risk-Bound-Lambda-CV} for
more details.) Therefore, by the union bound, the probability that there exists
$k\in[K]$ and $\lambda\in\Lambda_{n,k}$ such that
$$
\mathbb{E}_{I_{k}}\Big[\big| m_{1}'\big(\bX_{i}^{\top}\widehat{\btheta}_{I_{k}^{c}}(\lambda),\bY_{i}\big)-m_{1}'\big(\bX_{i}^{\top}\btheta_{0},\bY_{i}\big)\big|^{2}\Big]  >\frac{3C_{L}^{2}t\ln n}{\ln\left(1/a\right)}\| \widehat\btheta_{I_k^c}(\lambda) - \btheta_0 \|_2
$$
is bounded from above by $K/t$. In addition, by Lemma
\ref{lem:High-Probability-Subsample-Risk-Bound-Lambda-CV} and
\eqref{eq:AddedSideCondsHighProbSubsampleRiskBndLambdaCV},
$$
\max_{1\leqslant k\leqslant K}\|\widehat\btheta_{I_k^c}(\widehat\lambda^{\mathtt{cv}}) - \btheta_0 \|_2 \leqslant \frac{11C_L}{4c_Dc_M}\sqrt{\frac{3t\ln n}{\ln(1/a)n}} + \frac{3}{2}\sqrt{\frac{\overline{\mathcal{E}}_n^{\ast}}{c_Dc_M}} \leqslant c_L
$$
with probability at least $1 -
K\left(4t^{-r}+3t^{-1}+2C/\ln^2(pn)+\left[\left(K-1\right)c_{D}n\right]^{-1}\right)$.
Hence, with the same probability, all $\Lambda_{n,k}$ are non-empty.
It follows from the union bound that
\begin{align*}
\En\big[(\widehat{U}_{i}^{\mathtt{cv}}-U_{i})^{2}\big] & =\sum_{k=1}^{K}\frac{\left|I_{k}\right|}{n}\mathbb{E}_{I_{k}}\Big[\big| m_{1}'\big(\bX_{i}^{\top}\widehat{\btheta}_{I_{k}^{c}}\big(\widehat{\lambda}^{\mathtt{cv}}\big),\bY_{i}\big)-m_{1}'\big(\bX_{i}^{\top}\btheta_{0},\bY_{i}\big)\big|^{2}\Big]\\
 & \leqslant \frac{3C_{L}^{2}t\ln n}{\ln\left(1/a\right)}\left( \frac{11C_L}{4c_Dc_M}\sqrt{\frac{3t\ln n}{\ln(1/a)n}} + \frac{3}{2}\sqrt{\frac{\overline{\mathcal{E}}_n^{\ast}}{c_Dc_M}} \right)
\end{align*}
with probability at least $1 -
K\left(4t^{-r}+4t^{-1}+2C/\ln^2(pn)+\left[\left(K-1\right)c_{D}n\right]^{-1}\right)$,
as desired.
\end{proof}

\subsection{Proofs for Section \ref{sec: inference}}
%Before proving Theorem \ref{thm: asymptotic distribution}, we introduce additional notation and state several lemmas, which are used in the proof of Theorem \ref{thm: asymptotic distribution}. The proofs of these lemmas can be found at the end of this section.
%In this section, we will use the same notations as those in Appendix \ref{sec: proofs for bcv}.

%Since $\btheta_{0}$ is interior to $\Theta$ (Assumption
%\ref{assu:ParameterSpace}), there is a radius $\overline{r}_{n}\in(0,\infty)$
%such that $\{
%\btheta\in\R^{p};\|\btheta-\btheta_{0}\|_{2}\leqslant\overline{r}_{n}\}
%\subseteq\Theta$. Let $\btheta'$ be an arbitrary element of the $\ell_2$ ball
%\[
%\mathcal{B}_{\btheta_0}\left(\min\{\overline{r}_{n},c_M'\}\right):=\left\{
%   \btheta\in\R^{p};\|\btheta-\btheta_{0}\|_{2}\leqslant\min\{\overline{r}_{n},c_M'\}\right\}\subseteq\Theta
%\]
%with $c_{M}'\in(0,\infty]$ provided by Assumptions \ref{assu:Margin}. Also, define $f$ and $g$
%as in \eqref{eq:fOfwandrDudley} and \eqref{eq:gOfrDudley}, respectively, where we employ the shorthand notations $\bz:=(\bx,\by)$, $\bZ:=(\bX,\bY)$ and $\mathcal
%Z:=\mathcal X\times\mathcal Y$. In addition, let $f_1'(\tau,\bz):=(\partial / \partial\tau)f(\tau,\bz)$ and
%$f_{11}''(\tau,\bz):=(\partial^2/\partial^2\tau)f(\tau,\bz)$ denote the first and the
%second partial derivatives of $f$ with respect to its first argument, whenever they exist.

Note that according to Assumption \ref{as: smoothness inference}, the second
derivative $m_{11}''(t,\by)$ may not exist for some $(t,\by)\in\R\times\mathcal
Y$. With some abuse of notation, for such $t$ and $\by$, throughout this
section, we set $m_{11}''(t,\by):= 0$, which is consistent with our convention
in Algorithm \ref{alg:ThreeStepDebiasing}. With this convention, the function
$m_{11}''$ is now defined on the entire set $\R\times\mathcal Y$, although it
may not have the interpretation as to the derivative of $m_1'$ with respect to
its first argument.

In proving Theorem \ref{thm: asymptotic distribution}, we rely on the
following eight lemmas, whose proofs can be found at the end of this section.

\begin{lem}[\textbf{Second Derivative of Loss}]\label{lem: loss second derivative}
Let Assumption \ref{as: smoothness inference} hold. Then
$|m_{11}''(t,\by)|\leqslant C_m$ for all $(t,\by)\in\R\times\mathcal Y$.
\end{lem}

\begin{lem}[\textbf{Interpolation}]\label{lem: second order interpolation}
Let Assumption \ref{as: smoothness inference} hold. Then for any
$(\widetilde\btheta_0,\widetilde\btheta_1)\in\Theta\times\Theta$ and
$(\bx,\by)\in\R^p\times\mathcal Y$, we have
$$
m_1'(\bx^{\top}\widetilde\btheta_1,\by) - m_1'(\bx^{\top}\widetilde\btheta_0,\by) = \int_0^1 m_{11}''(\bx^{\top}\widetilde\btheta_\tau,\by)\bx^{\top}(\widetilde\btheta_1 - \widetilde\btheta_0)d\tau,
$$
where we denote $\widetilde\btheta_{\tau} := \widetilde\btheta_0 +
\tau(\widetilde\btheta_1 - \widetilde\btheta_0)$ for all $\tau\in(0,1)$.
As a consequence,
$$
|m_1'(\bx^{\top}\widetilde\btheta_1,\by) - m_1'(\bx^{\top}\widetilde\btheta_0,\by)| 
\leqslant C_m|\bx^\top(\widetilde\btheta_1 - \widetilde\btheta_0)|.
$$
\end{lem}

\begin{lem}[\textbf{First- and Second-Order Conditions}]\label{lem:ExistenceOfSecondOrderDerivatives}
Let Assumptions
\ref{assu:ParameterSpace}--\ref{assu:LossLocallyLipschitzAndMore}, \ref{as:
integrability inference}, and \ref{as: smoothness inference} hold. Then
\begin{equation}\label{eq: first order condition inference lemma}
\E[m'_1(\bX^{\top}\btheta_0,\bY)\bX] = \mathbf 0_p,
\end{equation}
and
\begin{equation}\label{eq: second order condition inference lemma}
\E\left[m_{11}''\left(\bX^\top\btheta_0,\bY\right)|\bX^\top\left(\btheta-\btheta_0\right)|^2\right]\geqslant 2c_M\|\btheta-\btheta_0\|^2_2\quad\text{for all}\quad\btheta\in\R^p.
\end{equation}
\end{lem}

\begin{lem}[\textbf{Existence and Uniqueness of
}$\bmu_0$]\label{lem:ExistenceAndUniquenessOfMu0} Let Assumptions
\ref{assu:ParameterSpace}--\ref{assu:LossLocallyLipschitzAndMore}, \ref{as:
integrability inference}, and \ref{as: smoothness inference} hold. Then there is
a unique solution to \eqref{eq: definition of mu0}, namely
\begin{equation}\label{eq: mu definition proof lemma}
  \bmu_0=\left(\E\left[m_{11}''\left(\bX^\top\btheta_0,\bY\right)\bW\bW^\top\right]\right)^{-1}\E\left[m_{11}''\left(\bX^\top\btheta_0,\bY\right)\bW D\right].
\end{equation}
\end{lem}

\begin{lem}[\textbf{Variance Denominator
Bound}]\label{lem:VarianceDenominatorBound} Let Assumptions
\ref{assu:ParameterSpace}--\ref{assu:LossLocallyLipschitzAndMore}, \ref{as: integrability inference}, and \ref{as: smoothness inference} hold. Then
\begin{equation}\label{eq: denominator bound inference}
   \E\left[m''_{11}(\bX^{\top}\btheta_0,\bY)(D - \bW^{\top}\bmu_0)D\right] \geqslant 2c_M.
   \end{equation}
\end{lem}

\begin{lem}[\textbf{Remainder Term, I}]\label{lem: inference proof remainder 1}
Under the conditions of Theorem \ref{thm: asymptotic distribution},
$$
R_{n,1}:=\En\left[\left(m''_{11}(\bX_i^{\top}\widetilde\btheta, \bY_i) - m''_{11}(\bX_i^{\top}\btheta_0, \bY_i)\right)(D_i - \bW_i^{\top}\bmu_0)D_i\right] = o_{\P}(1).
$$
\end{lem}

\begin{lem}[\textbf{Remainder Term, II}]\label{lem: inference proof remainder 2}
For all $\tau\in[0,1]$, 
denote $\check\btheta_{\tau}:= (\beta_0 + \tau(\widetilde\beta - \beta_0), \widetilde\bgamma^{\top})^{\top}$. 
Under the conditions of Theorem \ref{thm: asymptotic distribution},
\begin{align*}
R_{n,2} & :=  \int_0^1\En\left[\left(m''_{11}(\bX_i^{\top}\check\btheta_{\tau}, \bY_i) - m''_{11}(\bX_i^{\top}\widetilde\btheta, \bY_i)\right)(D_i - \bW_i^{\top}\widetilde\bmu)D_i\right]d\tau \\
& \lesssim_\P a_n + \overline\Delta_n^{1/2} + (B_na_n /\overline\Delta_n)^{r/2}.
\end{align*}
\end{lem}

\begin{lem}[\textbf{Remainder Term, III}]\label{lem: inference proof remainder 3}
For all $\tau\in[0,1]$, 
denote $\mathring\btheta_{\tau} := (\beta_0,\bgamma_0^{\top} + \tau(\widetilde\bgamma - \bgamma_0)^{\top})^{\top}$. 
Under the conditions of Theorem \ref{thm: asymptotic distribution},
\begin{align*}
R_{n,3} & := \int_0^1\En\left[\left(m''_{11}(\bX_i^{\top}\mathring\btheta_{\tau}, \bY_i) - m''_{11}(\bX_i^{\top}\btheta_{0}, \bY_i)\right)(D_i - \bW_i^{\top}\bmu_0)(\bW_i^{\top}\widetilde\bgamma - \bW_i^{\top}\bgamma_0)\right]d\tau \\
& \lesssim_\P a_n^2 + B_na_n \left(\overline\Delta_n^{1/2} + (B_na_n /\overline\Delta_n)^{r/2}\right).
\end{align*}
\end{lem}

\begin{proof}[\sc{Proof of Theorem \ref{thm: asymptotic distribution}}]
Denote the (normalized) \emph{denominator} in the one-step update of Algorithm
\ref{alg:ThreeStepDebiasing} by
\[
   \mathcal D_n:= \En\left[m''_{11}(\bX_i^{\top}\widetilde\btheta, \bY_i)(D_i - \bW_i^{\top}\widetilde\bmu)D_i\right].
\]
We decompose $\mathcal D_n$ as
$$
\mathcal D_n = \E\left[ m''_{11}(\bX^{\top}\btheta_0,\bY)(D - \bW^{\top}\bmu_0)D \right] + I_{n,1} + I_{n,2} + I_{n,3} + I_{n,4},
$$
with remainder terms
\begin{align}
I_{n,1} &:= \left(\En-\E\right)\left[m''_{11}(\bX_i^{\top}\btheta_0, \bY_i)(D_i - \bW_i^{\top}\bmu_0)D_i\right],\nonumber\\
I_{n,2} &:= \En\left[m''_{11}(\bX_i^{\top}\btheta_0,\bY_i)\bW_i^{\top}(\bmu_0 - \widetilde\bmu)D_i\right], \nonumber\\
I_{n,3} &:= \En\left[\left(m''_{11}(\bX_i^{\top}\widetilde\btheta, \bY_i) - m''_{11}(\bX_i^{\top}\btheta_0, \bY_i)\right)(D_i - \bW_i^{\top}\bmu_0)D_i\right],\nonumber\\
I_{n,4} &:= \En\left[\left(m''_{11}(\bX_i^{\top}\widetilde\btheta, \bY_i) - m''_{11}(\bX_i^{\top}\btheta_0, \bY_i)\right)\bW_i^{\top}(\bmu_0 - \widetilde\bmu)D_i\right].\nonumber
\end{align}
We handle each remainder term in turn. First, by Assumption \ref{as:
integrability inference}, Lemma \ref{lem: loss second derivative}, and the
Chebyshev and Cauchy-Schwarz inequalities, we get $I_{n,1} = o_{\P}(1)$. Second,
we have
\begin{align}
   |I_{n,2}| 
   &\leqslant C_m \En\left[|\bW_i^{\top}(\bmu_0 - \widetilde\bmu)D_i|\right]\nonumber\\
   &\leqslant C_m \|\widetilde\bmu-\bmu_0\|_1 \sqrt{\En\left[\|\bW_i\|_\infty^2\right]\En\left[D_i^2\right]} \lesssim_{\P} a_nB_n,\label{eq: i am still here}
\end{align}
where the first line follows from the triangle inequality and Lemma \ref{lem:
loss second derivative}, and the second from the H{\"o}lder and Cauchy-Schwarz
inequalities and Assumptions
\ref{assu:LossLocallyLipschitzAndMore}.\ref{enu:LossLocallyLipschitz}, \ref{as:
integrability inference} and \ref{as: convergence rates inference}. Given that
$a_nB_n\to0$ by assumption, it thus follows that $I_{n,2} = o_{\P}(1)$. Third,
we have $I_{n,3} = o_{\P}(1)$ by Lemma \ref{lem: inference proof remainder 1}.
Fourth, we have
\begin{align*}
   |I_{n,4}| 
   &\leqslant 2C_m\En\left[|\bW_i^{\top}(\widetilde\bmu-\bmu_0)D_i|\right] \lesssim_{\P} a_n B_n,
\end{align*}
where the inequality follows from the triangle inequality and Lemma \ref{lem:
loss second derivative}, and the $\lesssim_\P$ follows as in \eqref{eq: i am
still here}. Given that $a_nB_n\to0$ by assumption, it thus follows that
$I_{n,4} = o_{\P}(1)$. Conclude that
\begin{equation}\label{eq: inference proof linearization denominator 1x}
\mathcal D_n = \E\left[ m''_{11}(\bX^{\top}\btheta_0, \bY)(D - \bW^{\top}\bmu_0)D \right] + o_{\P}(1).
\end{equation}
In addition, Lemma \ref{lem:VarianceDenominatorBound} shows that 
\begin{equation}\label{eq: denominator limit lower bound}
\E[ m''_{11}(\bX^{\top}\btheta_0, \bY)(D - \bW^{\top}\bmu_0)D]\geqslant 2c_M,
\end{equation}
and so $\mathcal D_n$ is bounded away from zero with probability tending to one,
which implies in particular that the one-step update in Algorithm
\ref{alg:ThreeStepDebiasing} is well-defined with probability tending to one as
well and that $1/\mathcal D_n\lesssim_{\P}1$. 

Next, consider the (normalized) \emph{numerator} in the one-step update of
Algorithm \ref{alg:ThreeStepDebiasing}. For all $\tau\in[0,1]$, denote
$\check\btheta_{\tau}:= (\beta_0 + \tau(\widetilde\beta - \beta_0),
\widetilde\bgamma^{\top})^{\top}$ and
\begin{align*}
&\mathcal N_n:= \int_0^1\En\left[m''_{11}(\bX_i^{\top}\check\btheta_{\tau}, \bY_i)(D_i - \bW_i^{\top}\widetilde\bmu)D_i\right]d\tau.
\end{align*}
Then by Lemma \ref{lem: second order interpolation}, we have
\begin{align}
&\En\left[m'_1(D_i\widetilde\beta + \bW_i^{\top}\widetilde\bgamma, \bY_i)(D_i - \bW_i^{\top}\widetilde\bmu)\right] \nonumber\\
&\qquad\qquad  =\En\left[m'_1(D_i\beta_0 + \bW_i^{\top}\widetilde\bgamma, \bY_i)(D_i - \bW_i^{\top}\widetilde\bmu)\right] + \mathcal N_n(\widetilde\beta - \beta_0)\label{eq: proof inference who knows what}.
\end{align}
Hence, by definition of the one-step update in \eqref{eq: debiased estimator
betahat},
\begin{equation}\label{eq: one step error decomposition}
\widehat\beta - \beta_0 = (\widetilde\beta - \beta_0)\left(1 - \frac{\mathcal N_n}{\mathcal D_n}\right) - \frac{1}{\mathcal D_n}\En\left[m'_1(D_i\beta_0 + \bW_i^{\top}\widetilde\bgamma, \bY_i)(D_i - \bW_i^{\top}\widetilde\bmu)\right].
\end{equation}
By Lemma \ref{lem: inference proof remainder 2}, we know
\begin{align*}
   |\mathcal N_n-\mathcal D_n|
    &\lesssim_{\P} a_n + \overline\Delta_n^{1/2} + (B_na_n /\overline\Delta_n)^{r/2}.
\end{align*}
Recalling $1/\mathcal D_n\lesssim_{\P} 1$, it follows that
\[
   \left|\sqrt{n}(\widetilde\beta - \beta_0)\left(1 - \frac{\mathcal N_n}{\mathcal D_n}\right)\right|\lesssim_\P  \sqrt{n}a_n\left(a_n + \overline\Delta_n^{1/2} + (B_na_n /\overline\Delta_n)^{r/2}\right),
\]
which $\to0$ by Assumption \ref{as: convergence rates inference} and by
hypothesis of the theorem. It follows from \eqref{eq: one step error
decomposition} that
\begin{equation}\label{eq: inference proof linearization 1x}
  \sqrt{n}(\widehat\beta - \beta_0) = - \frac{1}{\mathcal D_n}\sqrt{n}\En\left[m'_1(D_i\beta_0 + \bW_i^{\top}\widetilde\bgamma, \bY_i)(D_i - \bW_i^{\top}\widetilde\bmu)\right] + o_{\P}(1). 
\end{equation}
We next further analyze the right-hand side numerator. To this end, denote
$$
\widetilde I_{n,1} : = \sqrt{n}\En\left[m'_1(\bX_i^{\top}\btheta_0, \bY_i)(\bW_i^{\top}\widetilde\bmu - \bW_i^{\top}\bmu_0)\right]
$$
and observe that
$$
|\widetilde I_{n,1}| \leqslant \|\widetilde\bmu - \bmu_0\|_1 \| \sqrt{n}\En[m'_1(\bX_i^{\top}\btheta_0, \bY_i)\bW_i ] \|_{\infty}.
$$
Setting up for an application of Theorem \ref{thm: max inequality standard}, 
observe that for all $j\in[p-1]$, we have
$\E[|m_{1}'(\bX^{\top}\btheta_0,\bY)W_j|^2]\leqslant C_M^4$ by the
Cauchy-Schwarz inequality and Assumption \ref{as: integrability inference}. In
addition, for $q:=(r\wedge\widetilde r)/2\in(2,\infty)$,
\begin{align*}
& \E[\| m_{1}'(\bX^{\top}\btheta_0,\bY)\bW \|_{\infty}^q]  \leqslant \Big( \E[| m'_1(\bX^{\top}\btheta_0,\bY) |^{2q}] \Big)^{1/2}\Big( \E[\|\bW\|_{\infty}^{2q}] \Big)^{1/2} \leqslant C_M^q B_n^q
\end{align*}
by the Cauchy-Schwarz inequality and Assumptions
\ref{assu:LossLocallyLipschitzAndMore}.\ref{enu:LossLocallyLipschitz} and
\ref{as: integrability inference}. Therefore, given that $B_n^2\ln(pn) = o(n^{1
- 4/(r\wedge \widetilde r)})$ by hypothesis of the theorem, (the mean-zero part
of) Lemma \ref{lem:ExistenceOfSecondOrderDerivatives} and Theorem \ref{thm: max
inequality standard} combine to show that
$$
\left\| \sqrt n\En\left[ m'_{1}(\bX_i^{\top}\btheta_0,\bY_i)\bW_i\right] \right\|_{\infty} \lesssim_{\P} \sqrt{\ln(pn)}
$$
Hence, given that $\|\widetilde\bmu - \bmu_0\|_1 \lesssim_{\P} a_n$ by
Assumption \ref{as: convergence rates inference} and $a_n\sqrt{\ln(pn)}\to0$ by
assumption, it follows that  $\widetilde I_{n,1} = o_{\P}(1)$. Moreover, for all
$\tau\in[0,1]$, denote $\mathring\btheta_{\tau} := (\beta_0,\bgamma_0^{\top} +
\tau(\widetilde\bgamma - \bgamma_0)^{\top})^{\top}$ and
$$
\widetilde I_{n,2}:= \sqrt{n}\En\left[m'_1(D_i\beta_0 + \bW_i^{\top}\widetilde\bgamma, \bY_i)(\bW_i^{\top}\widetilde\bmu - \bW_i^{\top}\bmu_0)\right] - \widetilde I_{n,1}.
$$
Then
\begin{align}
|\widetilde I_{n,2}|
&\leqslant \sqrt n \int_0^1 \left| \En\left[m_{11}''( \bX_i^{\top}\mathring\btheta_{\tau},\bY_i)(\bW_i^{\top}\widetilde\bgamma - \bW_i^{\top}\bgamma_0)(\bW_i^{\top}\widetilde\bmu - \bW_i^{\top}\bmu_0)\right]\right|d\tau \nonumber\\ 
&= \sqrt n \int_0^1 \left| (\widetilde\bgamma - \bgamma_0)^\top\En\left[m_{11}''( \bX_i^{\top}\mathring\btheta_{\tau},\bY_i)\bW_i\bW_i^{\top}\right](\widetilde\bmu-\bmu_0)\right|d\tau \nonumber\\ 
&\leqslant C_m\sqrt n\|\widetilde\bgamma - \bgamma_0\|_1\|\widetilde\bmu - \bmu_0\|_1\max_{1\leqslant j,k \leqslant p-1}\En[|W_{i,j}W_{i,k}|], \label{eq: inference proof who knows what 3}
\end{align}
%\begin{align}
%|\widetilde I_{n,2}| 
%& \leqslant \sqrt n \int_0^1 \En[|m_{11}''( \bX_i^{\top}\mathring\btheta_{\tau},\bY_i)(\bW_i^{\top}\widetilde\bgamma - \bW_i^{\top}\bgamma_0)(\bW_i^{\top}\widetilde\bmu - \bW_i^{\top}\bmu_0)|]d\tau \nonumber\\ 
%&\leqslant C_m\sqrt n \En[|(\bW_i^{\top}\widetilde\bgamma - \bW_i^{\top}\bgamma_0)(\bW_i^{\top}\widetilde\bmu - \bW_i^{\top}\bmu_0)|] \nonumber\\
%& \leqslant C_m\sqrt n\|\widetilde\bgamma - \bgamma_0\|_1\|\widetilde\bmu - \bmu_0\|_1\max_{1\leqslant j,k \leqslant p-1}\En[|W_{i,j}W_{i,k}|], \label{eq: inference proof who knows what 3}
%\end{align}
where the first line follows from Lemma \ref{lem: second order interpolation}
and Jensen's inequality, and the third from Lemma \ref{lem: loss second
derivative} and the H{\"o}lder and triangle inequalities. Setting up for an
application of Theorem \ref{thm: max inequality nonnegative}, observe that for
all $j,k\in[p-1]$, we have $\E[ |W_jW_k|]\leqslant C_M^2$  by the Cauchy-Schwarz
inequality and Assumption \ref{as: integrability inference}. In addition, for
$q:=r/2\in(1,\infty)$, we have $\E[\|\bW\|_{\infty}^{q}\|\bW\|_{\infty}^{q}] \leqslant
B_n^{2q}$ by Assumptions
\ref{assu:LossLocallyLipschitzAndMore}.\ref{enu:LossLocallyLipschitz}. Hence,
given that $B_n^2\ln(p n) = o(n^{1-2/r})$ by hypothesis of the theorem,
it follows from Theorem \ref{thm: max inequality nonnegative} that
$$
\max_{1\leqslant j,k\leqslant p-1}\En[ |W_{i,j}W_{i,k}|] \lesssim_{\P} 1.
$$
Therefore, given that $\|\widetilde\bgamma - \bgamma_0\|_{1}\|\widetilde\bmu -
\bmu_0\|_1 \lesssim_{\P} a_n^2$ by Assumption \ref{as: convergence rates
inference}, it follows that $|\widetilde I_{n,2}| \lesssim_{\P} \sqrt n a_n^2$,
which $\to0$ by assumption. We therefore have $\widetilde I_{n,1} + \widetilde
I_{n,2} = o_{\P}(1)$, and it follows from \eqref{eq: inference proof
linearization 1x} and $1/\mathcal D_n \lesssim_{\P} 1$ that
\begin{equation}\label{eq: inference proof linearization 2x}
  \sqrt{n}(\widehat\beta - \beta_0) = - \frac{1}{\mathcal D_n}\sqrt{n}\En\left[m'_1(D_i\beta_0 + \bW_i^{\top}\widetilde\bgamma, \bY_i)(D_i - \bW_i^{\top}\bmu_0)\right] + o_\P(1). 
\end{equation}
Further, by Lemma \ref{lem: second order interpolation}, we have
\begin{align}
& \sqrt n\En[(m'_1(D_i\beta_0 + \bW_i^{\top}\widetilde\bgamma, \bY_i) - m'_1(\bX_i^{\top}\btheta_0, \bY_i))(D_i - \bW_i^{\top}\bmu_0)] \nonumber\\
& \quad = \sqrt n \int_0^1\En[m''_{11}(\bX_i^{\top}\mathring\btheta_{\tau}, \bY_i)(D_i - \bW_i^{\top}\bmu_0)(\bW_i^{\top}\widetilde\bgamma - \bW_i^{\top}\bgamma_0)]d\tau =: \check I_{n,1} \label{eq: proof inference who knows what 2}.
\end{align}
In addition, denote
$$
\check I_{n,2} := \sqrt n\En[m''_{11}(\bX_i^{\top}\btheta_0,\bY_i)(D_i - \bW_i^{\top}\bmu_0)(\bW_i^{\top}\widetilde\bgamma - \bW_i^{\top}\bgamma_0)].
$$
Then, by Lemma \ref{lem: inference proof remainder 3} and hypothesis of the
theorem, we have
$$|\check I_{n,1} - \check I_{n,2}| \lesssim_{\P} \sqrt n \left(a_n^2 + B_na_n \left(\overline\Delta_n^{1/2} + (B_na_n /\overline\Delta_n)^{r/2}\right) \right) \to 0.
$$
Moreover,
$$
|\check I_{n,2}| \leqslant \|\widetilde\bgamma-\bgamma_0\|_1 \|\sqrt n\En[m''_{11}(\bX_i^{\top}\btheta_0,\bY_i)(D_i - \bW_i^{\top}\bmu_0)\bW_i]\|_{\infty}.
$$
Setting up for an application of Theorem \ref{thm: max inequality standard},
observe that for all $j\in[p]$,
$$
\E[|m_{11}''(\bX^{\top}\btheta_0,\bY)(D-\bW^{\top}\bmu_0)W_j|^2]\leqslant C_m^2C_M^4
$$
where we have used Lemma \ref{lem: loss second derivative}, the Cauchy-Schwarz inequality, and Assumption \ref{as: integrability inference}. In addition, for $q:=(r\wedge\widetilde r)/2\in(2,\infty)$,
\begin{align*}
& \E[\| m_{11}''(\bX^{\top}\btheta_0,\bY)(D-\bW^{\top}\bmu_0)\bW \|_{\infty}^q] \\
&\qquad \leqslant C_m^q\Big( \E[| D - \bW^{\top}\bmu_0 |^{2q}] \Big)^{1/2}\Big( \E[\|\bW\|_{\infty}^{2q}] \Big)^{1/2} \leqslant C_m^qC_M^q B_n^q
\end{align*}
by Lemma \ref{lem: loss second derivative}, the Cauchy-Schwarz inequality, and Assumptions \ref{assu:LossLocallyLipschitzAndMore}.\ref{enu:LossLocallyLipschitz} and \ref{as: integrability inference}. Therefore, given that $B_n^2\ln(pn) = o(n^{1 - 4/(r\wedge \widetilde r)})$ by assumption,
$$
\left\| \sqrt n\En\left[ m''_{11}(\bX_i^{\top}\btheta_0,\bY_i)(D_i - \bW_i^{\top}\bmu_0)\bW_i\right] \right\|_{\infty} \lesssim_{\P} \sqrt{\ln(pn)}
$$
by \eqref{eq: definition of mu0} and Theorem \ref{thm: max inequality standard}. 
Hence, given that $\|\widetilde\bgamma - \bgamma_0\|_1\lesssim_{\P}a_n$ by 
Assumption \ref{as: convergence rates inference}, it follows that 
$|\check I_{n,2}| \lesssim_{\P} a_n\sqrt{\ln(pn)}$, which $\to0$ by hypothesis, 
and so $\check I_{n,1} = o_{\P}(1)$. Thus, it follows from 
\eqref{eq: inference proof linearization 2x} and $1/\mathcal D_n \lesssim_{\P} 1$ 
that
\begin{equation}\label{eq: inference proof linearization 3x}
  \sqrt{n}(\widehat\beta - \beta_0) = - \frac{1}{\mathcal D_n}\sqrt{n}\En\left[m'_1(\bX_i^{\top}\btheta_0, \bY_i)(D_i - \bW_i^{\top}\bmu_0)\right] + o_\P(1). 
\end{equation}
Combining this bound with \eqref{eq: inference proof linearization denominator 1x} and \eqref{eq: denominator limit lower bound} in turn yields
$$
\sqrt n(\widehat\beta - \beta_0) = -\frac{n^{-1/2}\sum_{i=1}^n m'_1(\bX_i^{\top}\btheta_0, \bY_i)(D_i - \bW_i^{\top}\bmu_0)}{\E\Big[ m''_{11}(\bX_i^{\top}\btheta_0, \bY)(D - \bW^{\top}\bmu_0)D \Big]} + o_\P(1).
$$
Using Assumptions \ref{as: identifiability inference} and \ref{as: integrability inference}
and Lemmas \ref{lem: loss second derivative} and \ref{lem:VarianceDenominatorBound},
we see that the asymptotic variance $\sigma_0$ is bounded from above and away
from zero, and from the Cauchy-Schwarz inequality, we have
\[
\E\left[\left|m'_1(\bX^{\top}\btheta_0, \bY)(D - \bW^{\top}\bmu_0)\right|^{2+(\widetilde r - 4)/2}\right]\leqslant C_M^{\widetilde r}\in(0,\infty).
\]
It therefore follows that
$$
\frac{\sqrt n(\widehat\beta - \beta_0)}{\sigma_0} = -\frac{{n}^{-1/2}\sum_{i=1}^n m'_1(\bX_i^{\top}\btheta_0, \bY_i)(D_i - \bW_i^{\top}\bmu_0)}{(\E[(m'_1(\bX_i^{\top}\btheta_0, \bY)(D - \bW^{\top}\bmu_0))^2])^{1/2}} + o_\P(1),
$$
and the asserted claim of the theorem now follows from Lyapunov's version of 
the Central Limit Theorem in combination with Slutsky's lemma.
\end{proof}

\begin{proof}[\sc{Proof of Lemma \ref{lem: loss second derivative}}]
Observe that for all $t_1,t_2\in\R$ and $\by\in\mathcal Y$, we have 
$|m_1'(t_2,\by) - m_1'(t_1,\by)|\leqslant C_m|t_2-t_1|$ by 
Assumption \ref{as: smoothness inference}. Hence, for all $t\in\R$ and 
$\by\in\mathcal Y$ such that the derivative of $m_{1}'(t,\by)$ with respect to 
the first argument exists, it is equal to $m_{11}''(t,\by)$ by definition and 
satisfies $|m_{11}''(t,\by)|\leqslant C_m$. Also, for all $t\in\R$ and 
$\by\in\mathcal Y$ such that the derivative of $m_{1}'(t,\by)$ with respect to 
the first argument does not exist, we have $m_{11}''(t,\by)=0$ by convention, 
and so $|m_{11}''(t,\by)|\leqslant C_m$ as well. This gives the asserted claim.
\end{proof}

\begin{proof}[{\sc Proof of Lemma \ref{lem: second order interpolation}}]
Fix $(\widetilde\btheta_0,\widetilde\btheta_1)\in\Theta\times\Theta$ and 
$(\bx,\by)\in\R^p\times \mathcal Y$. 
We provide the proof for the case where
$\bx^{\top}\widetilde\btheta_0 \leqslant \bx^{\top}\widetilde\btheta_1$. The
other case is analogous.
Let $\mathcal J := \{j\in[J]; \ \bx^{\top}\widetilde\btheta_0 \leqslant t_{\by,j}\leqslant \bx^{\top}\widetilde\btheta_1\}$. 
If $\mathcal J$ is empty (i.e.~no threshold was encountered or crossed), then
\begin{equation}\label{eq: second derivative why we are doing it}
m_1'(\bx^{\top}\widetilde\btheta_1,\by) - m_1'(\bx^{\top}\widetilde\btheta_0,\by) = \int_{\bx^{\top}\widetilde\btheta_0}^{\bx^{\top}\widetilde\btheta_1} m_{11}''(t,\by)dt = \int_0^1 m_{11}''(\bx^{\top}\widetilde\btheta_{\tau},\by)\bx^{\top}(\widetilde\btheta_1 - \widetilde\btheta_0)d\tau,
\end{equation}
where the first equality follows from the fundamental theorem of calculus and 
Assumption \ref{as: smoothness inference} and the second from the change of 
variables $t = \bx^{\top}(\widetilde\btheta_0 + \tau(\widetilde\btheta_1 - \widetilde\btheta_0))$.

If the set $\mathcal J$ is non-empty, we let $j_0$ and $j_1$ be its smallest and 
largest element, respectively. (These elements could coincide.) Then
\begin{align}
m_1'(\bx^{\top}\widetilde\btheta_1,\by) - m_1'(\bx^{\top}\widetilde\btheta_0,\by)
& =  m_1'(\bx^{\top}\widetilde\btheta_1,\by) - m_1'(t_{\by,j_1},\by)  \nonumber\\
&\quad + \sum_{j=j_0+1}^{j_1} m_1'(t_{\by,j},\by) - m_1'(t_{\by,j-1},\by)  \nonumber\\
&\quad + m_1'(t_{\by,j_0},\by) - m_1'(\bx^{\top}\widetilde\btheta_0,\by), \label{eq: eq: second derivative why we are doing it again}
\end{align}
where the sum is omitted if $j_0 = j_1$. Applying the same argument as that 
leading to \eqref{eq: second derivative why we are doing it} to each of the 
terms on the right-hand side of \eqref{eq: eq: second derivative why we are doing it again} shows that
\begin{align*}
m_1'(\bx^{\top}\widetilde\btheta_1,\by) - m_1'(\bx^{\top}\widetilde\btheta_0,\by)
& =  \int_{\tau_{j_1}}^1 m_{11}''(\bx^{\top}\widetilde\btheta_{\tau},\by)\bx^{\top}(\widetilde\btheta_1 - \widetilde\btheta_0)d\tau \\
& \quad + \sum_{j=j_0 + 1}^{j_1} \int_{\tau_{j-1}}^{\tau_j} m_{11}''(\bx^{\top}\widetilde\btheta_{\tau},\by)\bx^{\top}(\widetilde\btheta_1 - \widetilde\btheta_0)d\tau \\
& \quad + \int_{0}^{\tau_{j_0}} m_{11}''(\bx^{\top}\widetilde\btheta_{\tau},\by)\bx^{\top}(\widetilde\btheta_1 - \widetilde\btheta_0)d\tau \\
& = \int_{0}^1 m_{11}''(\bx^{\top}\widetilde\btheta_{\tau},\by)\bx^{\top}(\widetilde\btheta_1 - \widetilde\btheta_0)d\tau,
\end{align*}
where we denoted $\tau_j := (t_{\by,j} - \bx^{\top}\widetilde\btheta_0)/(\bx^{\top}(\widetilde\btheta_1 - \widetilde\btheta_0))$
for all $j\in\{j_0,\dots,j_1\}$. The previous display yields the first claim.
The second claim follows from the first and Lemma \ref{lem: loss second derivative}.
\end{proof}

\begin{proof}[{\sc Proof of Lemma \ref{lem:ExistenceOfSecondOrderDerivatives}}]
Since $\btheta_{0}$ is interior to $\Theta$ (Assumption
\ref{assu:ParameterSpace}), there is a radius $\overline{r}_{n}\in(0,\infty)$
such that $\overline r_n \leqslant c_M'$ and the ball $\mathcal B_{\btheta_0}(\overline r_n) := \{
\btheta\in\R^{p};\|\btheta-\btheta_{0}\|_{2}\leqslant\overline{r}_{n}\}$ is a 
subset of $\Theta$, with $c_{M}'\in(0,\infty]$ being provided in Assumption \ref{assu:Margin}. 
Fix any $\btheta\in \mathcal{B}_{\btheta_0}(\overline{r})$ and define
$$
f\left(\tau,\bz\right)
:=m\left(\bx^{\top}\btheta_\tau,\by\right)-m\left(\bx^{\top}\btheta_{0},\by\right)\;\text{for each}\;(\tau,\bz)\in\R\times\mathcal{Z}.
$$
As in Section \ref{sec: proofs for bcv}, we here employ the 
shorthand notations 
$\btheta_\tau=\btheta_0+\tau(\btheta-\btheta_0)$, $\bz=(\bx,\by)$, 
$\bZ=(\bX,\bY)$ and $\mathcal Z=\mathcal X\times\mathcal Y$.

Now, for any $\tau\in(-1,1)$, we have 
$\btheta_\tau\in \mathcal B_{\btheta_0}(\overline r_n) \subset \Theta$ 
by Assumption \ref{assu:ParameterSpace}, and so $\E[|f(\tau,\bZ)|]<\infty$ 
by Assumption \ref{as: diff and int}. 
Hence, $g(\tau) := \E[f(\tau,\bZ)]$, $\tau\in(-1,1)$, is a well-defined map 
from $(-1,1)$ to $\R$. Further, for any $\tau\in(-1,1)$ and 
$\bz\in\mathcal Z$, there is an $\alpha\in[0,1]$ such that
\begin{align*}
|f(\tau,\bz)|
& = |\tau|\cdot|m_1'(\bx^{\top}\btheta_0 + \alpha\tau\bx^{\top}(\btheta - \btheta_0),\by)|\cdot|\bx^{\top}(\btheta - \btheta_0)| \\
& \leqslant |\tau|\cdot\Big( |m_1'(\bx^{\top}\btheta_0 + \bx^{\top}(\btheta - \btheta_0),\by)| + |m_1'(\bx^{\top}\btheta_0 - \bx^{\top}(\btheta - \btheta_0),\by)| \Big) \cdot|\bx^{\top}(\btheta - \btheta_0)| \\
& \leqslant 2|\tau|\cdot\Big(|m_1'(\bx^{\top}\btheta_0,\by)| + C_m |\bx^{\top}(\btheta - \btheta_0)|\Big)\cdot|\bx^{\top}(\btheta - \btheta_0)|,
\end{align*}
where the first line follows from the Mean Value Theorem and Assumption 
\ref{as: smoothness inference}, the second from $m(\cdot,y)$ being convex and
differentiable (Assumptions \ref{assu:Convexity} and \ref{as: smoothness inference})
and the fact that the inequality $a\leqslant b\leqslant c$ implies 
$|b|\leqslant |a|+|c|$, and the third follows from the triangle inequality
(adding and subtracting $m_1'(\bx^\top\btheta_0,\by)$ twice) and
two applications of the Lemma \ref{lem: second order interpolation} inequality.
% J: This verifies Dudleys's domination of difference quotients for f and any tau_0 in (-1,1)
% as required by his Corollary A.3.
Hence,
$
\E[ \sup_{\tau\in(-1,1)}|f(\tau,\bZ)|] <\infty
$ 
by the Cauchy-Schwarz inequality and Assumption \ref{as: integrability inference}.
% J: This verifies both Dudley's (A.2) Part (i) for f and any tau_0 in (-1,1)
Also, for any $\tau\in(-1,1)$ and $\bz=(\bx,\by)\in\mathcal Z$, $f_1'(\tau,\bz)$ 
exists by Assumption \ref{as: smoothness inference} 
% J: So Dudley's (A.2) Part (ii) is trivially satisfied for f and any tau_0 in (-1,1)
and
\begin{align*}
|f_1'(\tau,\bz)| & = |m_1'(\bx^{\top}\btheta_0 + \tau\bx^{\top}(\btheta - \btheta_0),\by)|\cdot|\bx^{\top}(\btheta - \btheta_0)|\\
&  \leqslant 2\Big(|m_1'(\bx^{\top}\btheta_0,\by)| + C_m |\bx^{\top}(\btheta - \btheta_0)|\Big)\cdot|\bx^{\top}(\btheta - \btheta_0)|
\end{align*}
by the same arguments as above,
so $\E[\sup_{\tau\in(-1,1)}|f'_1(\tau,\bZ)|]<\infty$ as well.
% J: This verifies both Dudley's (A.2) Part (iii) for f and any tau_0 in (-1,1)
% J: It also verifies Dudley's (A.2) Part (i) for f_1 and tau=0 specifically.
It then follows from \citet[Corollary A.3]{dudley2014uniform}
that $g$ is differentiable on $(-1,1)$ with derivative given by
$g'(\tau)=\E[f_1'(\tau,\bZ)]$. 
In particular, $g'(0) = \E[m'_1(\bX^{\top}\btheta_0,\bY)\bX^{\top}(\btheta - \btheta_0)]$. 

Further, $f''_{11}(0,\bZ)$ exists almost surely by Assumption \ref{as: smoothness inference}. 
% J: Hence, Dudley's (A.2) Part (ii) is satisfied for f_1 and tau=0 specifically.
Also, for any $\tau \in(-1,1)$ and $\bz = (\bx,\by) \in\mathcal Z$,
\begin{align}
|f'_1(\tau,\bz) - f'_1(0,\bz)|  & \leqslant | m_1'(\bx^{\top}\btheta_0 + \tau\bx^{\top}(\btheta - \btheta_0),\by) - m_1'(\bx^{\top}\btheta_0 ,\by) |\cdot|\bx^{\top}(\btheta - \btheta_0)| \nonumber \\
& \leqslant C_m|\tau|\cdot|\bx^{\top}(\btheta - \btheta_0)|^2 \label{eq: dudley application 2}
\end{align}
by Assumption \ref{as: smoothness inference} and the Lemma \ref{lem: second order interpolation}
inequality as well.
It follows that whenever $f''_{11}(0,\bZ)$ exists, it satisfies
$|f_{11}''(0,\bZ)| \leqslant C_m|\bX^{\top}(\btheta - \btheta_0)|^2$,
and so $\E[|f''_{11}(0,\bZ)|]<\infty$ by 
Assumption \ref{as: integrability inference}. 
% J: Hence, Dudley's (A.2) Part (iii) is satisfied for f_1 and tau=0 specifically.
% J: The display also verifies Dudley's domination of difference quotients for f_1 and tau_0=0
In addition, denoting $\overline f(\bz):=C_m|\bx^{\top}(\btheta - \btheta_0)|^2$, 
we have from \eqref{eq: dudley application 2} that 
$|f_1'(\tau,\bz) - f_1'(0,\bz)|\leqslant |\tau|\overline f(\bz)$, 
where $\E[\overline f(\bZ)]<\infty$ by Assumption \ref{as: integrability inference}. 
It thus follows that $g'(\tau)$ is differentiable at $\tau=0$ 
with derivative $g''(0) = \E[f_{11}''(0,\bZ)]$ by 
\citet[Corollary A.3]{dudley2014uniform} applied with $f_1'$ instead of $f$, 
and so $g(\tau)$ is twice differentiable at $\tau = 0$ with second derivative 
$g''(0) = \E[f_{11}''(0,\bZ)] = \E[m''_{11}(\bX^{\top}\btheta_0,\bY)|\bX^{\top}(\btheta - \btheta_0)|^2]$.

Next, from Taylor's theorem (with Peano's form of remainder),
we know that
\begin{equation}\label{eq: calculus number 1}
   g(\tau)-g(0)=g'(0)\tau+\frac{1}{2}g''(0)\tau^2+h(\tau)\tau^2,\quad\tau\in(-1,1),
\end{equation}
where the function $h:(-1,1)\to\R$ satisfies $h(\tau)\to0$ as $\tau\to 0$. On the other hand, since $\overline r_n \leqslant c_M'$, Assumption
\ref{assu:Margin} and $\btheta\in\mathcal B_{\btheta_0}(\overline r_n)$ imply $\|\btheta_\tau-\btheta_0\|_2=|\tau|\|\btheta-\btheta_0\|_2\leqslant c_M'$, and thus
\begin{equation}\label{eq: calculus number 2}
   g(\tau)-g(0)\geqslant c_M\tau^2\|\btheta -\btheta_0\|_2^2,\quad\tau\in(-1,1).
\end{equation}
We claim that \eqref{eq: calculus number 2} implies that $g'(0)=0$ and $g''(0)\geqslant
2c_M\|\btheta -\btheta_0\|_2^2$. To see the \emph{former}, 
combine \eqref{eq: calculus number 1} and \eqref{eq: calculus number 2} to obtain
$$
g'(0) + \frac{1}{2}g''(0)\tau + h(\tau)\tau \geqslant c_M \tau \|\btheta - \btheta_0\|_2^2,\quad\tau\in(0,1),
$$
and
$$
g'(0) + \frac{1}{2}g''(0)\tau + h(\tau)\tau \leqslant c_M \tau \|\btheta - \btheta_0\|_2^2,\quad\tau\in(-1,0).
$$ 
Take the limits as $\tau\to0_{+}$ and $\tau\to0_{-}$, respectively, to see that both
$g'(0)\geqslant0$ and $g'(0)\leqslant0$, and so $g'(0)=0$.
To see the \emph{latter}, combine 
\eqref{eq: calculus number 1}, \eqref{eq: calculus number 2}, and $g'(0) = 0$ 
to obtain
$$
\frac{1}{2}g''(0)+h(\tau) \geqslant c_M\|\btheta - \btheta_0\|_2^2,\quad\tau\in(-1,1)\setminus\{0\}.
$$
Using $h(\tau)\to0$ as $\tau\to0$, the claim follows from taking the limit as
$|\tau|\to0_+$. In turn, 
$g'(0)=\E[m_1'(\bX^{\top}\btheta_0,\bY)\bX^{\top}(\btheta - \btheta_0)]=0$ 
for all $\btheta\in\mathcal B_{\btheta_0}(\overline r_n)$
gives \eqref{eq: first order condition inference lemma} by varying 
$\btheta$ over $\mathcal B_{\btheta_0}(\overline r_n)$. (See the proof of
Lemma \ref{lem:EofUXis0} for details.) Finally,
$g''(0)=\E[m_{11}''(\bX^{\top}\btheta_0,\bY)|\bX^{\top}(\btheta - \btheta_0)|^2]\geqslant 2c_M\|\btheta - \btheta_0\|_2^2$  
for all $\btheta\in\mathcal B_{\btheta_0}(\overline r_n)$
gives \eqref{eq: second order condition inference lemma} by varying $\btheta$ 
over $\mathcal B_{\btheta_0}(\overline r_n)$ and rescaling $\btheta - \btheta_0$.
\end{proof}

\begin{proof}[\sc{Proof of Lemma \ref{lem:ExistenceAndUniquenessOfMu0}}]
Using Lemma \ref{lem: loss second derivative} followed by the triangle, 
Cauchy-Schwarz and Jensen inequalities along with Assumption \ref{as: integrability inference},
we see that $\E[|m_{11}''(\bX^\top\btheta_0,\bY)||D-\bW^\top\bmu||W_j|]<\infty$ for
any $\bmu\in\R^{p-1}$ and $j\in[p-1]$, so the problem of solving the system of equations
\eqref{eq: definition of mu0} is well-defined. Also, it 
follows from Lemma \ref{lem:ExistenceOfSecondOrderDerivatives} that for all
$\btheta\in\R^p$,   
\[
   \E\left[m_{11}''\left(\bX^\top\btheta_0,\bY\right)|\bX^\top(\btheta-\btheta_0)|^2\right]\geqslant 2c_M\|\btheta-\btheta_0\|^2_2.
\]
For any $\btheta = (\beta,\bgamma^{\top})^{\top}\in\R^p$
such that $\beta=\beta_0$, we therefore obtain
\[
   \left(\bgamma-\bgamma_0\right)^\top\E\left[m_{11}''\left(\bX^\top\btheta_0,\bY\right)\bW\bW^\top\right]\left(\bgamma-\bgamma_0\right)\geqslant 2c_M\|\bgamma-\bgamma_0\|^2_2.
\]
The previous display implies that for any $\bv\in\R^{p-1}$ such that
$\|\bv\|_2 = 1$,
\[
   \bv^\top\E\left[m_{11}''\left(\bX^\top\btheta_0,\bY\right)\bW\bW^\top\right]\bv\geqslant 2c_M>0,
\]
which further implies that $\E[m_{11}''(\bX^\top\btheta_0,\bY)\bW\bW^\top]$ is
positive definite. Hence, a solution $\bmu_0$ to \eqref{eq: definition of mu0}
exists, is unique, and is given by \eqref{eq: mu definition proof lemma}.
\end{proof}

\begin{proof}[\sc{Proof of Lemma \ref{lem:VarianceDenominatorBound}}]
From Lemma \ref{lem:ExistenceOfSecondOrderDerivatives}, we know that for all
$\btheta\in\R^p$,   
\[
   \E\left[m_{11}''\left(\bX^\top\btheta_0,\bY\right)|\bX^\top(\btheta-\btheta_0)|^2\right]\geqslant 2c_M\|\btheta-\btheta_0\|^2_2.
\]
Let $\Delta:=1/\sqrt{1+\|\bmu_0\|_2^2}\in(0,1]$ with
$\bmu_0$ given by \eqref{eq: mu definition proof lemma}. Then
$\btheta':=(\beta_0+\Delta,\bgamma_0^\top-\Delta\bmu_0^\top)^\top$ satisfies
$\|\btheta'-\btheta_0\|_2^2=(1+\|\bmu_0\|_2^2)\Delta^2=1$, 
and $\bX^\top(\btheta'-\btheta_0)=\Delta(D-\bW^\top\bmu_0)$.
With $\btheta=\btheta'$, the previous display produces
\[
   \Delta^2\E\left[m_{11}''\left(\bX^\top\btheta_0,\bY\right)\left(D-\bW^\top\bmu_0\right)^2\right]\geqslant 2c_M,
\]
and so, since $\Delta\in(0,1]$, we get
\[
   \E\left[m_{11}''\left(\bX^\top\btheta_0,\bY\right)\left(D-\bW^\top\bmu_0\right)^2\right]\geqslant 2c_M.
\]
On the other hand, it follows from \eqref{eq: definition of mu0} that
\begin{align*}
   &\E\left[m_{11}''\left(\bX^\top\btheta_0,\bY\right)\left(D-\bW^\top\bmu_0\right)^2\right]=\E\left[m_{11}''\left(\bX^\top\btheta_0,\bY\right)\left(D-\bW^\top\bmu_0\right)D\right].
\end{align*}
The asserted claim follows by combining the last two displays.
\end{proof}

\begin{proof}[\sc{Proof of Lemma \ref{lem: inference proof remainder 1}}]
Suppose first that $J = 1$, such that $m$ is everywhere twice continuously 
differentiable in its first argument. Then
\begin{align}
   |R_{n,1}| 
   &\leqslant C_m\En\left[|(D_i - \bW_i^{\top}\bmu_0)D_i\bX_i^\top(\widetilde\btheta-\btheta_0)|\right]\nonumber \\
   &\leqslant C_m\|\widetilde\btheta-\btheta_0\|_1\En\left[|D_i - \bW_i^{\top}\bmu_0||D_i|\|\bX_i\|_\infty\right]\nonumber\\
   &\leqslant C_m\|\widetilde\btheta-\btheta_0\|_1\left(\En\left[|D_i - \bW_i^{\top}\bmu_0|^4\right]\En\left[|D_i|^4\right]\right)^{1/4}\sqrt{\En\left[\|\bX_i\|_\infty^2\right]}\nonumber\\
   &\lesssim_{\P} a_nB_n, \label{eq: non-smooth case term 1}
\end{align}
where the first inequality follows from the mean-value theorem and 
Assumption \ref{as: smoothness inference},
the second from H{\"o}lder's inequality, 
the third from the Cauchy-Schwarz inequality,
and the $\lesssim_\P$ from Markov's inequality and 
Assumption \ref{assu:LossLocallyLipschitzAndMore}.\ref{enu:LossLocallyLipschitz}
(recall that we take $L\geqslant1$) in combination with Assumptions
\ref{as: integrability inference} and \ref{as: convergence rates inference}. 
Given that $a_nB_n\to0$ by assumption, it thus follows that 
$R_{n,1} = o_{\P}(1)$.

Suppose now that $J\geqslant 2$. For all $i\in[n]$, we denote $\Delta_i:= |\bX_i^{\top}(\widetilde\btheta - \btheta_0)|$, 
\begin{equation}\label{eq: caligraph r1}
\mathcal R_{i,1} = 
\begin{cases} 
1,\quad \text{if $\bX_i^{\top}\btheta_0 - \Delta_i \leqslant t_{Y_i,j}\leqslant \bX_i^{\top}\btheta_0 + \Delta_i$ for some $j\in[J-1]$},\\
0,\quad\text{otherwise},
\end{cases}
\end{equation}
and
\begin{equation}\label{eq: caligraph r2}
\mathcal R_{i,2} = 
\begin{cases} 
1,\quad \text{if $\bX_i^{\top}\btheta_0 - \overline\Delta_n \leqslant t_{Y_i,j}\leqslant \bX_i^{\top}\btheta_0 + \overline\Delta_n$ for some $j\in[J-1]$},\\
0,\quad\text{otherwise},
\end{cases}
\end{equation}
Observe that for all $i\in[n]$, we have $\mathcal R_{i,1}^2 = \mathcal R_{i,1} \leqslant \mathcal R_{i,2} + \mathbf 1(\Delta_i > \overline\Delta_n)$ and 
$$
\mathbf 1(\Delta_i > \overline\Delta_n)\leqslant \mathbf 1(\|\bX_i\|_{\infty}\|\widetilde\btheta - \btheta_0\|_1 > \overline\Delta_n)\leqslant \left(\|\bX_i\|_{\infty}\|\widetilde\btheta - \btheta_0\|_1/\overline\Delta_n\right)^{r},
$$
with $r\in(4,\infty)$ provided by 
Assumption \ref{assu:LossLocallyLipschitzAndMore}.\ref{enu:LossLocallyLipschitz}.
Hence,
\begin{align}
\En[\mathcal R_{i,1}^2] 
&  \leqslant \En[\mathcal R_{i,2}] +\En[\| \bX_i \|^r_{\infty}] \left(\|\widetilde\btheta - \btheta_0\|_1/\overline\Delta_n\right)^{r}  \lesssim_{\P}  \overline\Delta_n + \left(B_n a_n/\overline\Delta_n\right)^{r}, \label{eq: non-smooth bound main appr 1}
\end{align}
where the $\lesssim_\P$ follows from Markov's inequality and 
Assumptions \ref{assu:LossLocallyLipschitzAndMore}.\ref{enu:LossLocallyLipschitz}, 
\ref{as: conditional density} and \ref{as: convergence rates inference}.

Next, decompose as $R_{n,1} = R_{n,1,1} + R_{n,1,2}$, where
\begin{align*}
R_{n,1,1} & := \En\left[(1-\mathcal R_{i,1})\left(m''_{11}(\bX_i^{\top}\widetilde\btheta, \bY_i) - m''_{11}(\bX_i^{\top}\btheta_0, \bY_i)\right)(D_i - \bW_i^{\top}\bmu_0)D_i\right],\\
R_{n,1,2} & := \En\left[\mathcal R_{i,1}\left(m''_{11}(\bX_i^{\top}\widetilde\btheta, \bY_i) - m''_{11}(\bX_i^{\top}\btheta_0, \bY_i)\right)(D_i - \bW_i^{\top}\bmu_0)D_i\right].
\end{align*}
Then $R_{n,1,1} = o_{\P}(1)$ as in the case of $J=1$ treated previously. Also,
\begin{align}
|R_{n,1,2}|
& \leqslant 2C_m\En[\mathcal R_{i,1} |(D_i - \bW_i^{\top}\bmu_0)D_i |] \nonumber\\
& \leqslant 2C_m\Big(\En[\mathcal R_{i,1}^2]\En[(D_i - \bW_i^{\top}\bmu_0)^2D_i^2]\Big)^{1/2}\\ 
& \lesssim_{\P} \overline\Delta_n^{1/2} + \left(B_n a_n/\overline\Delta_n\right)^{r/2}\label{eq: bound in32}
\end{align}
where the first inequality follows from the triangle inequality
and Lemma \ref{lem: loss second derivative}, 
the second from the Cauchy-Schwarz inequality, 
and the $\lesssim_\P$ from \eqref{eq: non-smooth bound main appr 1},
Assumption \ref{as: integrability inference}, and the Markov and Cauchy-Schwarz
inequalities. Given that $\overline\Delta_n\to0$ and 
$B_na_n/\overline\Delta_n\to 0$ by assumption, it follows that 
$R_{n,1,2} = o_{\P}(1)$, and so $R_{n,1} = R_{n,1,1} + R_{n,1,2} = o_{\P}(1)$ 
as well.
\end{proof}

\begin{proof}[\sc{Proof of Lemma \ref{lem: inference proof remainder 2}}]
Suppose first that $J=1$. Then 
$$
   |R_{n,2}| \leqslant C_m|\widetilde\beta-\beta_0|\En\left[|D_i-\bW_i^\top\widetilde\bmu|D_i^2\right]
$$
by Assumption \ref{as: smoothness inference}. The right-hand side average satisfies
\begin{align}
   \En\left[|D_i-\bW_i^\top\widetilde\bmu|D_i^2\right]
   &\leqslant \En\left[|D_i-\bW_i^\top\bmu_0|D_i^2\right]+\En\left[|\bW_i^\top(\widetilde\bmu-\bmu_0)|D_i^2\right]\nonumber\\
   &\leqslant \sqrt{\En\left[D_i^4\right]}\left(\sqrt{\En\left[|D_i-\bW_i^\top\bmu_0|^2\right]}+\|\widetilde\bmu-\bmu_0\|_1\sqrt{\En\left[\|\bW_i\|^2_\infty\right]}\right)\nonumber\\
   &\lesssim_{\P} 1 + a_nB_n\lesssim 1,\label{eq: why am i here}
\end{align}
where the first inequality follows from the triangle inequality,
the second from the Cauchy-Schwarz and H{\"o}lder inequalities, the $\lesssim_\P$ from
Assumptions \ref{assu:LossLocallyLipschitzAndMore}.\ref{enu:LossLocallyLipschitz},
\ref{as: integrability inference} and \ref{as: convergence rates inference},
and the $\lesssim$ from $a_nB_n\to0$, which holds by hypothesis. Since 
$|\widetilde\beta-\beta_0|\lesssim_{\P} a_n$ by 
Assumption \ref{as: convergence rates inference}, 
it follows that $|R_{n,2}|\lesssim_{\P} a_n$.

Suppose now that $J\geqslant 2$. For all $i\in[n]$, denote 
$\Delta_i:= |\bX_i^{\top}(\widetilde\btheta - \btheta_0)| + |D_i(\widetilde\beta - \beta_0)|$ 
and define $\mathcal R_{i,1}$ and $\mathcal R_{i,2}$ 
as in \eqref{eq: caligraph r1} and \eqref{eq: caligraph r2}, respectively. 
Then the rate in \eqref{eq: non-smooth bound main appr 1} follows from 
the argument used in the proof of Lemma \ref{lem: inference proof remainder 1}. 
Next, decompose as $R_{n,2} = R_{n,2,1} + R_{n,2,2}$, where
\begin{align*}
R_{n,2,1} &:= \int_0^1\En\left[(1-\mathcal R_{i,1})\left(m''_{11}(  \bX_i^{\top}\check\btheta_{\tau}, \bY_i) - m''_{11}(\bX_i^{\top}\widetilde\btheta, \bY_i)\right)(D_i - \bW_i^{\top}\widetilde\bmu)D_i\right]d\tau,\\
R_{n,2,2} &:= \int_0^1 \En\left[\mathcal R_{i,1}\left(m''_{11}( \bX_i^{\top}\check\btheta_{\tau}, \bY_i) - m''_{11}(\bX_i^{\top}\widetilde\btheta, \bY_i)\right)(D_i - \bW_i^{\top}\widetilde\bmu)D_i\right]d\tau.
\end{align*}
Then $|R_{n,2,1}| \lesssim_{\P} a_n$ as in the case $J=1$. Also,
\begin{align*}
|R_{n,2,2}|
& \leqslant 2C_m\En[\mathcal R_{i,1} |(D_i - \bW_i^{\top}\widetilde\bmu)D_i |]  \lesssim_{\P} \overline\Delta_n^{1/2} + (B_na_n /\overline\Delta_n)^{r/2},
\end{align*}
where the inequality follows from triangle inequality and Lemma \ref{lem: loss second derivative} 
and the $\lesssim_\P$ follows from the Cauchy-Schwarz inequality, 
\eqref{eq: non-smooth bound main appr 1}, and an argument similar to that
leading to \eqref{eq: why am i here}. Thus, 
$|R_{n,2}| \leqslant |R_{n,2,1}| + |R_{n,2,2}| \lesssim_{\P} a_n + \overline\Delta_n^{1/2} + (B_na_n /\overline\Delta_n)^{r/2}$, 
as claimed.
\end{proof}

\begin{proof}[\sc{Proof of Lemma \ref{lem: inference proof remainder 3}}]
Suppose first that $J=1$. Then 
\begin{align}
|R_{n,3}| 
& \leqslant C_m\En[|D_i - \bW_i^{\top}\bmu_0|(\bW_i^{\top}\widetilde\bgamma - \bW_i^{\top}\bgamma_0)^2] \nonumber\\
& \leqslant C_m \|\widetilde \bgamma - \bgamma_0\|_1^2  \max_{1\leqslant j,k\leqslant p-1}\En[|(D_i - \bW_i^{\top}\bmu_0)W_{i,j}W_{i,k}|] \label{eq: non-smooth case term 3}
\end{align}
by Assumption \ref{as: smoothness inference} followed by H{\"o}lder's inequality.
Setting up for an application of Theorem \ref{thm: max inequality nonnegative},
note that for all $j,k\in[p-1]$, we have  
$\E[|(D-\bW^{\top}\bmu_0)W_jW_k|]\leqslant C_M^3$ 
by the Cauchy-Schwarz inequality and Assumption \ref{as: integrability inference}. 
In addition, for $q:=(r\wedge \widetilde r)/4\in(1,\infty)$, 
$$
\E[|D - \bW^{\top}\bmu_0|^q\|\bW\|_{\infty}^{q}\|\bW\|_{\infty}^{q}] \leqslant C_M^q B_n^{2q}
$$ 
by the Cauchy-Schwarz inequality and 
Assumptions \ref{assu:LossLocallyLipschitzAndMore}.\ref{enu:LossLocallyLipschitz} 
and \ref{as: integrability inference}. Therefore, given that 
$B_n^2\ln(p n) = o(n^{1-4/(r\wedge \widetilde r)})$ by assumption,
Theorem \ref{thm: max inequality nonnegative} produces
$$
\max_{1\leqslant j,k\leqslant p-1}\En[|(D_i - \bW_i^{\top}\bmu_0)W_{i,j}W_{i,k}|] \lesssim_{\P} 1.
$$
Hence, given that $\|\widetilde\bgamma - \bgamma_0\|_{\infty}\lesssim_{\P} a_n$ by Assumption \ref{as: convergence rates inference}, it follows that $|R_{n,3}| \lesssim_{\P} a_n^2$.

Suppose now that $J\geqslant 2$. For all $i\in[n]$, denote 
$\Delta_i:= |\bW_i^{\top}(\widetilde\bgamma - \bgamma_0)|$ and define 
$\mathcal R_{i,1}$ and $\mathcal R_{i,2}$ as in \eqref{eq: caligraph r1} 
and \eqref{eq: caligraph r2}, respectively. Then the rate in \eqref{eq: non-smooth bound main appr 1} 
follows as in the proof of Lemma \ref{lem: inference proof remainder 1}. 
Next, decompose as $R_{n,3} = R_{n,3,1} + R_{n,3,2}$, where
\begin{align*}
R_{n,3,1} &: = \int_0^1\En\big[(1 - \mathcal R_{i,1})\big(m''_{11}(\bX_i^{\top}\mathring\btheta_\tau, \bY_i) -  m''_{11}(\bX_i^{\top}\btheta_0,\bY_i)\big)(D_i - \bW_i^{\top}\bmu_0)\bW_i^{\top}(\widetilde\bgamma-\bgamma_0)\big]d\tau,\\
R_{n,3,2} &: = \int_0^1\En\big[\mathcal R_{i,1}\big(m''_{11}(\bX_i^{\top}\mathring\btheta_\tau, \bY_i) -  m''_{11}(\bX_i^{\top}\btheta_0,\bY_i)\big) (D_i - \bW_i^{\top}\bmu_0)\bW_i^{\top}(\widetilde\bgamma-\bgamma_0)\big]d\tau,
\end{align*}
Then $|R_{n,3,1}| \lesssim_{\P}a_n^2$ follows from the argument in the case $J=1$. 
Also,
\begin{align*}
|R_{n,3,2}| 
&\leqslant 2C_m\En\left[\mathcal R_{i,1}|(D_i - \bW_i^{\top}\bmu_0)\bW_i^{\top}(\widetilde\bgamma - \bgamma_0)|\right] \\
& \lesssim_{\P} B_n a_n \left( \overline\Delta_n^{1/2} + (B_n a_n /\overline\Delta_n)^{r/2} \right),
\end{align*}
where the inequality follows from the triangle inequality and 
Lemma \ref{lem: loss second derivative},
and the $\lesssim_{\P}$ follows from the Cauchy-Schwarz and H{\"o}lder inequalities, 
\eqref{eq: non-smooth bound main appr 1}, and 
Assumptions \ref{assu:LossLocallyLipschitzAndMore}.\ref{enu:LossLocallyLipschitz}, 
\ref{as: integrability inference}, and \ref{as: convergence rates inference}. 
Hence, $|R_{n,3}| \leqslant |R_{n,2,1}| + |R_{n,3,2}| \lesssim_{\P} a_n^2 + B_na_n(\overline\Delta_n^{1/2} + (B_n a_n /\overline\Delta_n)^{r/2})$, as claimed.
\end{proof}

\section{Analysis of Post-Penalized M-Estimation}\label{sec:analysis post estimator} 
In this section, we derive an analog of Theorem
\ref{thm:NonAsymptoticProbabilisticBounds} for the post-$\ell_1$-penalized
M-estimator (post-$\ell_1$-ME). The main message of this section is similar to
that of Section \ref{sec:Deterministic-Bounds}: like in the case of the
$\ell_1$-ME, in order to obtain the post-$\ell_1$-ME with small estimation
errors, we should choose the penalty parameter $\lambda$ such that it is as
small as possible but larger than the (slightly inflated) maximum of the score
$c_0\|\bS_n\|_{\infty}$ with high probability, with $\bS_n$ defined in
\eqref{eq:Score}. Theorem \ref{thm:NonAsymptoticProbabilisticBounds 2}, which is
the main result of this section, serves as the key building block for the proof
of Theorem \ref{thm: convergence rates post estimator} in the main text.

Recall the definitions of $\widetilde{\Theta}(\suppn{\overline\btheta})$ and
$\widetilde\Theta(\lambda)$ in \eqref{eq:post-ell1-ME-thetabar} and
\eqref{eq:AllPostEstimators}, respectively, and that $\eta_n=\sqrt{\ln(pn)/n}$.
The following theorem yields estimation error bounds for the post-$\ell_1$-ME.
\begin{thm}
[\textbf{Non-Asymptotic Error Bounds for
Post-$\boldsymbol{\ell_1}$-ME}]\label{thm:NonAsymptoticProbabilisticBounds 2}\label{thm; post penalized estimator non asymptotic} 
Let Assumptions \ref{assu:ParameterSpace}--\ref{assu:Approximate-Sparsity},
\ref{as: post estimator smoothness}, and \ref{as: post estimator moments} hold,
let $\overline\lambda_n$ and $\underline\lambda_n$ be non-random sequences in
$(0,\infty)$ such that $\overline\lambda_n\geqslant\underline\lambda_n$, and let
$\phi_n:= \sqrt{(\eta_n^2 + \overline\lambda_n^2)/\underline\lambda_n^2}$. In
addition, suppose that
\begin{equation}\label{eq: post ell1 rate growth condition}
\frac{B_n^2\ln(pn)}{\sqrt n}\to 0\quad\text{and}\quad n^{1/r}B_ns_q\eta_n^{-q}\left( \overline\lambda_n\phi_n+ \eta_n\phi_n^2 \right)\to0.
\end{equation}
Then there is a constant $C\in[1,\infty)$, depending only on $c_0$, $C_{ev}$,
$C_L$, $c_M$ and $C_m$, such that
\begin{equation}\label{eq: post estimator ell 2 bound}
\sup_{\widetilde{\btheta}\in\widetilde{\Theta}\left(\lambda\right)}\|\widetilde{\btheta}-\btheta_{0}\|_{2}  \leqslant C\sqrt{s_q\eta_n^{-q}}\left( \overline\lambda_n + \eta_n \phi_n \right)
\end{equation}
and
\begin{equation}\label{eq: post estimator ell 1 bound}
\sup_{\widetilde{\btheta}\in\widetilde{\Theta}\left(\lambda\right)}\|\widetilde{\btheta}-\btheta_{0}\|_{1}\leqslant Cs_q\eta_n^{-q}\left( \overline\lambda_n\phi_n + \eta_n\phi_n^2 \right)
\end{equation}
with probability at least
$1-C(\mathrm{P}(\lambda<c_{0}\|\bS_n\|_{\infty})+\mathrm{P}(\lambda>\overline{\lambda}_{n})
+\mathrm{P}(\lambda<\underline{\lambda}_{n})) - o(1).$
\end{thm}
% J: Have fixed rate condition B_n\ln(pn)/\sqrt n\to0 to B_n^4\ln^2(pn)/n\to0.

Before we prove this theorem, we introduce some extra notation. For $k\in\mathbb
N$, define the ($\ell_0$-) restricted set
\begin{align*}
   \widetilde{\mathcal R}(k)&:=\left\{\bdelta\in\mathbb R^p; \ \|\btheta_0 + \bdelta\|_0\leqslant k\text{ and }\btheta_0+\bdelta\in\Theta\right\},
\end{align*}
and the associated (random) empirical error function
$\widetilde\epsilon:[0,\infty)\times\mathbb N\to[0,\infty)$ by
\begin{equation}\label{eq: empirical error function for post estimator}
\widetilde{\epsilon}_{n}(u,k):=\sup_{\substack{\bdelta\in\widetilde{\mathcal{R}}(k),\\
\left\Vert \bdelta\right\Vert _{2}\leqslant u
}
}\left|\left(\mathbb{E}_{n}-\mathrm{E}\right)\left[m\left(\bX_{i}^{\top}\left(\btheta_{0}+\bdelta\right),\bY_{i}\right)-m\left(\bX_{i}^{\top}\btheta_{0},\bY_{i}\right)\right]\right|.
\end{equation}
Also, let $a_{\epsilon,n}$, $b_{\epsilon,n}$, $\widetilde a_{\epsilon,n}$,
$\widetilde b_{\epsilon,n}$ and $\overline\lambda_n$ be non-random sequences in
$(0,\infty)$, and let $\overline k_n$ be a non-random sequence in $\mathbb N$,
all to be specified later. Based on $a_{\epsilon,n}$, $b_{\epsilon,n}$  and
$\overline\lambda_n$, define the non-random sequence $\widetilde u_n$ in
$(0,\infty)$ as in \eqref{eq: definition un tilde key} and the events
$\mathscr{S}_n$, $\mathscr{L}_n$, and $\mathscr{E}_n$ as in \eqref{eq: three
events}. Moreover, define the events
$$
\mathscr{\widetilde E}_n :=\Big\{\widetilde \epsilon_n(\widetilde u_n + \check u_n,\overline k_n) \leqslant \widetilde a_{\epsilon,n}(\widetilde u_n + \check u_n) + \widetilde b_{\epsilon,n}\Big\}\quad\text{and}\quad \mathscr{K}_n:= \left\{\sup_{\widehat\btheta\in\widehat\Theta(\lambda)}\|\widehat\btheta\|_0\leqslant \overline k_n\right\},
$$
where
\begin{equation}\label{eq: un check definition}
\check u_n:= 4\widetilde u_n + 2\widetilde a_{\epsilon,n}/c_M.
\end{equation}

The proof of Theorem \ref{thm:NonAsymptoticProbabilisticBounds 2} will be based
on the following four lemmas, whose proofs can be found at the end of this
section.

\begin{lem}[\textbf{Restricted Set Consequence, II}]\label{lem:RestrictedSetConsequence 2}
For any $\eta\in(0,\infty)$ and $k\in\mathbb N$, $\bdelta\in\widetilde{\mathcal{R}}(k)$ implies
$\|\bdelta\|_1\leqslant \|\bdelta\|_2 \sqrt{k+s_q\eta^{-q}}+s_q\eta^{1-q}$.
\end{lem}
\begin{lem}
[\textbf{Non-Asymptotic Deterministic Bounds for
Post-$\boldsymbol{\ell_1}$-ME}]\label{lem:NonasymptoticDeterministicBounds 2} Let Assumptions
\ref{assu:ParameterSpace}--\ref{assu:Approximate-Sparsity} hold and suppose that
$\widetilde{u}_{n}+\check u_n\leqslant c_{M}'$,
\begin{equation}\label{eq: lemma c3 additional constraint 2}
\Big( a_{\epsilon,n}+(1+\overline c_0)\overline\lambda_n\sqrt{s_q\eta^{-q}_n} \Big)^2 \geqslant c_M\Big( b_{\epsilon,n} + (1+\overline c_0)\overline\lambda_ns_q \eta_n^{1-q} \Big)
\end{equation}
and
\begin{equation}\label{eq: constant constraints for post estimator}
( 2c_M\widetilde u_n + \widetilde a_{\epsilon,n})^2 \geqslant c_M\Big( \widetilde a_{\epsilon,n}\widetilde u_n + \widetilde b_{\epsilon,n} + \overline \lambda_n(1+\overline c_0)\widetilde u_n\sqrt{s_q\eta_n^{-q}} + \overline \lambda_n s_q\eta_n^{1-q}\Big).
\end{equation}
Then on the event
$\mathscr{S}_n\cap\mathscr{L}_n\cap\mathscr{E}_n\cap\mathscr{\widetilde
E}_n\cap\mathscr K_n$, for all
$\widetilde{\btheta}\in\widetilde{\Theta}(\lambda)$, we have $\widetilde{\btheta}-\btheta_{0}\in\widetilde{\mathcal{R}}(\overline k_n)$,
$$
\|\widetilde{\btheta} -\btheta_{0}\|_{2}  \leqslant \widetilde u_n + \check{u}_{n}\ \text{ and }\ \|\widetilde{\btheta} - \btheta_{0}\|_{1}  \leqslant (\widetilde u_n + \check u_n)\sqrt{\overline k_n + s_q\eta_n^{-q}} + s_q\eta_n^{1-q}.
$$
\end{lem}
% J: Added restricted set conclusion for symmetry of statements and potential
% future reference.

\begin{lem}
[\textbf{Empirical Error Bound, II}]\label{lem:EmpiricalErrorBound 2} Let
Assumptions \ref{assu:LossLocallyLipschitzAndMore} and
\ref{assu:Approximate-Sparsity} hold. Then
there is a universal constant $C\in[1,\infty)$, such that for any $k\in\mathbb
N$, $n\in\N$, $t\in[1,\infty)$ and $u\in(0,\infty)$ satisfying
\begin{equation}\label{eq: extra condition to apply contraction principle 2}
\eta_{n}\leqslant1,\quad \frac{B_n^2 \ln(pn)}{\sqrt n}\leqslant C_L^2\quad\text{and}\quad t n^{1/r} B_n\Big(u\sqrt{k+s_q\eta_n^{-q}} +s_{q}\eta_{n}^{1-q}\Big)\leqslant c_{L},
\end{equation}
we have
\begin{align*}
\widetilde{\epsilon}_{n}\left(u, k\right) & \leqslant CC_{L}\left( u\eta_n\sqrt{k+s_q\eta_n^{-q}} +s_{q}\eta_{n}^{2-q}\right)
\end{align*}
with probability at least $1-4t^{-r} - C/\ln^2(p n) - n^{-1}.$
\end{lem}
% J: Have fixed the side condition (added square on $B_n$) and rearranged a bit.

\begin{lem}
[\textbf{Sparsity Bound}]\label{lem: sparsity} Let Assumptions
\ref{assu:ParameterSpace}--\ref{assu:Approximate-Sparsity}, \ref{as: post estimator smoothness} and \ref{as: post estimator moments} hold and let $\overline\lambda_n$ and $\underline\lambda_n$ be
non-random sequences in $(0,\infty)$ such that
$\overline\lambda_n\geqslant\underline\lambda_n$, and set $\phi_n:= \sqrt{(\eta_n^2 +
\overline\lambda_n^2)/\underline\lambda_n^2}$. In addition, assume that
\begin{equation}\label{eq: sparsity growth conditions}
   \frac{B^2_n\ln(pn)}{\sqrt n}\to0 \quad \text{and}\quad n^{1/r}B_ns_q\eta_n^{-q}\left(\overline\lambda_n + \eta_n\phi_n^2\right)\to0.
\end{equation}
Then there is a constant $C\in[1,\infty)$, depending only on $c_0$,
$C_{ev}$, $C_L$, $c_M$ and $C_m$, such that
$$
\sup_{\widehat{\btheta}\in\widehat{\Theta}\left(\lambda\right)}\|\widehat{\btheta}\|_{0}  \leqslant C s_q\eta_n^{-q}\phi_n^2
$$
with probability at least $1 -
2\P(\lambda<c_{0}\|\bS_n\|_{\infty})-\mathrm{P}(\lambda>\overline{\lambda}_{n})
- \P(\lambda<\underline\lambda_n) - o(1)$.
\end{lem}
% J: Have fixed the side condition (added square on $B_n$) and rearranged a bit.

\begin{proof}
[\sc{Proof of Theorem \ref{thm:NonAsymptoticProbabilisticBounds 2}}] Let
$\widetilde C$ and $\overline C$ be universal constants $C$ from Lemmas
\ref{lem:EmpiricalErrorBound} and \ref{lem:EmpiricalErrorBound 2}, respectively,
and let $\check C$ be the constant from Lemma \ref{lem: sparsity}. Define
$$
a_{\epsilon,n}:=\widetilde C(1+\overline c_0)C_L\sqrt{s_q\eta_n^{2-q}},\quad b_{\epsilon,n} :=\widetilde C(1+\overline c_0)C_Ls_q\eta_n^{2-q}, \quad \overline k_n:=\lceil\check Cs_q\eta_n^{-q}\phi_n^2\rceil,
$$
$$
\widetilde a_{\epsilon,n}:= \overline CC_L\eta_n\sqrt{\overline k_n + s_q\eta_n^{-q}},\quad\text{and}\quad \widetilde b_{\epsilon,n}:=\overline CC_Ls_q\eta_n^{2-q}.
$$
Then, under \eqref{eq: post ell1 rate growth condition}, Lemmas
\ref{lem:EmpiricalErrorBound} and \ref{lem:EmpiricalErrorBound 2} imply (via
appropriate choices of $t_n$ sequences) that $\P(\mathscr E_n^c)\to0$ and
$\P(\mathscr{\widetilde E}_n^c)\to 0$, respectively, and Lemma \ref{lem:
sparsity} implies that $\P(\mathscr K_n^c)\leqslant
2\P(\lambda<c_{0}\|\bS_n\|_{\infty}) +
\mathrm{P}(\lambda>\overline{\lambda}_{n}) + \P(\lambda<\underline\lambda_n) +
o(1)$. In addition, again under \eqref{eq: post ell1 rate growth condition}, we
have that $\widetilde u_n + \check u_n \to 0$, and using the above definitions
and rearranging shows that both \eqref{eq: lemma c3
additional constraint 2} and \eqref{eq: constant constraints for post estimator}
are satisfied since $\widetilde C,\overline C,C_L\geqslant 1$ and $c_M\leqslant
1$.\footnote{To verify \eqref{eq: constant constraints for post estimator}, 
for example, note that 
$(2c_M \widetilde u_n + \widetilde a_{\epsilon,n})^2 = 4c_M^2\widetilde u_n^2 + 4c_M \widetilde u_n \widetilde a_{\epsilon,n} + \widetilde a_{\epsilon,n}^2$
and then $4c_M^2\widetilde u_n^2 \geqslant c_M \overline\lambda_n (1+\bar c_0)\widetilde u_n\sqrt{s_q \eta_n^{-q}}$, 
$c_M\widetilde u_n \widetilde a_{\epsilon,n} \geqslant c_M \widetilde b_{\epsilon,n}$ and $c_M \widetilde u_n \widetilde a_{\epsilon,n} \geqslant c_M \overline\lambda _n s_q \eta_n^{1-q}$.
Condition \eqref{eq: lemma c3 additional constraint 2} follows similarly.} 
Hence, for a sufficiently large constant $C\in[1,\infty)$, that can be chosen
to depend only on $c_0$, $C_{ev}$, $C_L$ and $C_m$, Lemma
\ref{lem:NonasymptoticDeterministicBounds 2} implies that the bounds \eqref{eq:
post estimator ell 2 bound} and \eqref{eq: post estimator ell 1 bound} hold with
probability at least
$\P(\mathscr{S}_n\cap\mathscr{L}_n\cap\mathscr{E}_n\cap\mathscr{\widetilde
E}_n\cap\mathscr K_n)$ for all $n$ large enough (so that $\widetilde u_n +
\check u_n\leqslant c_M'$). Combining these bounds gives the asserted claim.
\end{proof}

\begin{proof}[\sc{Proof of Lemma \ref{lem:RestrictedSetConsequence 2}}]
Recall that, given a vector $\bdelta\in\R^{p}$ and a set of indices
$J\subseteq\left[p\right]$, we let $\bdelta_{J}$ denote the vector in $\R^{p}$
with coordinates given by $\delta_{J,j}=\delta_{j}$ if $j\in J$ and
$\delta_{J,j}= 0$ otherwise. Now, fix $\eta\in(0,\infty)$ and $k\in\N$,
let $\bdelta\in\widetilde{\mathcal{R}}(k)$, and denote
$\btheta:=\btheta(\bdelta):=\btheta_0+\bdelta$ and $J:=\suppn{\btheta}$. Then we
may decompose as follows
\[
\|\bdelta\|_1=\|\bdelta_{J\cup T(\eta)}\|_1+\|\bdelta_{J^c\cap T(\eta)^c}\|_1,
\]
with $T(\eta)=\{j\in[p]; \ |\theta_{0,j}|>\eta\}$. The \emph{first} term on the
right-hand side satisfies
\begin{align*}
\|\bdelta_{J\cup T(\eta)}\|_1
   &\leqslant\|\bdelta_{J\cup T(\eta)}\|_2\sqrt{|J\cup T(\eta)|}\tag{Cauchy-Schwarz}\\
   &\leqslant\|\bdelta\|_2\sqrt{|J|+|T(\eta)|}\\
   &\leqslant\|\bdelta\|_2\sqrt{k + s_q\eta^{-q}}.\tag{$|J|=\|\btheta_0+\bdelta\|_0$ and Lemma \ref{lem:StrongBallConsequences}}
\end{align*}
Since $\suppn{\btheta}=J$, the \emph{second} term on the right-hand side satisfies
\begin{align*}
   \|\bdelta_{J^c\cap T(\eta)^c}\|_1=\|(\btheta-\btheta_0)_{J^c\cap T(\eta)^c}\|_1=\|\btheta_{0J^c\cap T(\eta)^c}\|_1\leqslant \|\btheta_{0T(\eta)^c}\|_1\leqslant s_q\eta^{1-q}.\tag{Lemma \ref{lem:StrongBallConsequences}}
\end{align*}
The claimed $\ell_1$ bound now arises from combining the previous three displays.
\end{proof}    
   
\begin{proof}
[\sc{Proof of Lemma \ref{lem:NonasymptoticDeterministicBounds 2}}] 
Let $\widehat\btheta\in\widehat\Theta(\lambda)$ and
$\widetilde\btheta\in\widetilde\Theta(\suppn{\widehat\btheta})$ be arbitrary,
and let the event
$\mathscr{S}_n\cap\mathscr{L}_n\cap\mathscr{E}_n\cap\mathscr{\widetilde
E}_n\cap\mathscr K_n$ hold. Abbreviate $\widetilde\bdelta:=\widetilde\btheta -
\btheta_0$. Then $\suppn{\widetilde\btheta}\subseteq\suppn{\widehat\btheta}$ and
$\mathscr K_n$ imply
\[
   \|\btheta_0+\widetilde\bdelta\|_0=\|\widetilde\btheta\|_0\leqslant\|\widehat\btheta\|_0\leqslant\overline k_n
\]
and, thus, $\widetilde\bdelta\in\widetilde{\mathcal{R}}(\overline k_n)$, as claimed. The
stated $\ell_1$ bound will therefore follow from Lemma \ref{lem:RestrictedSetConsequence
2} and the $\ell_2$ bound. It remains to show the latter bound. 

Suppose to
the contrary of the asserted claim that $\|\widetilde\btheta - \btheta_0\|_2 >
\widetilde u_n + \check u_n$. Then we must have $\|\widetilde\btheta -
\widehat\btheta\|_2 > \check u_n$, since $\|\widehat\btheta -
\btheta_0\|_2\leqslant \widetilde u_n$ by Lemma
\ref{lem:NonasymptoticDeterministicBounds}. By definition of $\widetilde\Theta(\suppn{\widehat\btheta)}$ in
\eqref{eq:post-ell1-ME-thetabar},
$$
\En[m(\bX_i^\top\widetilde\btheta,\bY_i)]\leqslant\En[m(\bX_i^\top\widehat\btheta,\bY_i)].
$$
For $t:=\check u_n / \|\widetilde\btheta - \widehat\btheta\|_2\in(0,1)$, it
then follows from convexity (Assumption \ref{assu:Convexity}) that
$$
\En[m(\bX_i^\top(\widehat\btheta + t(\widetilde\btheta - \widehat\btheta)),\bY_i)]\leqslant\En[m(\bX_i^\top\widehat\btheta,\bY_i)].
$$
In addition, by definition of $\widehat\Theta(\lambda)$ in
\eqref{eq:ell1PenalizedMEstimationIntro}, the triangle inequality, 
and the $\ell_1$-bound of Lemma \ref{lem:NonasymptoticDeterministicBounds}, 
we have
$$
\En[m(\bX_i^{\top}\widehat\btheta,\bY_i)] - \En[m(\bX_i^{\top}\btheta_0,\bY_i)] \leqslant \lambda(\|\btheta_0\|_1 - \|\widehat\btheta\|_1) \leqslant \overline \lambda_n \overline\Delta_n,
$$
where $\overline\Delta_n:= (1+\overline c_0)(\widetilde u_n\sqrt{s_q
\eta_n^{-q}} + s_q\eta_n^{1-q})$.
Combining these bounds, we obtain
$$
\overline \lambda_n \overline\Delta_n \geqslant \En[m(\bX_i^\top(\widehat\btheta + t(\widetilde\btheta - \widehat\btheta)),\bY_i)] - \En[m(\bX_i^{\top}\btheta_0,\bY_i)].
$$
From the triangle inequality, we see that
\[
\check u_n - \widetilde u_n \leqslant \| \widehat\btheta + t(\widetilde\btheta - \widehat\btheta) - \btheta_0 \|_2 \leqslant \check u_n + \widetilde u_n.
\]
Convexity of the parameter space (Assumption
\ref{assu:ParameterSpace}) shows that $\widehat\btheta + t(\widetilde\btheta -
\widehat\btheta)\in\Theta$, and
$\suppn{\widetilde\btheta}\subseteq\suppn{\widehat\btheta}$ and the event
$\mathscr K_n$ combine to show that
\[
\|\widehat\btheta + t(\widetilde\btheta - \widehat\btheta)\|_0\leqslant \|\widehat\btheta\|_0 \leqslant \overline k_n.
\]
Deduce that $\widehat\btheta + t(\widetilde\btheta - \widehat\btheta) -
\btheta_0\in\widetilde{\mathcal{R}}(\overline{k}_n)$. It follows from
superadditivity of infima and the definition of the empirical error function
$\widetilde\epsilon_n$ in \eqref{eq: empirical error function for post
estimator} that
\begin{align*}
\overline \lambda_n \overline\Delta_n 
&\geqslant \En[m(\bX_i^\top(\widehat\btheta + t(\widetilde\btheta - \widehat\btheta)),\bY_i)] - \En[m(\bX_i^{\top}\btheta_0,\bY_i)]\\
&\geqslant \inf_{\substack{\bdelta\in\widetilde{\mathcal{R}}(\overline k_n),\\
   \check u_n - \widetilde{u}_{n} \leqslant \|\bdelta\|_{2}\leqslant \check u_n + \widetilde{u}_{n}}}\Big\{ \En[m(\bX_i^\top(\btheta_0 + \bdelta),\bY_i)] - \En[m(\bX_i^{\top}\btheta_0,\bY_i)] \Big\} \\
&\geqslant \inf_{\substack{\bdelta\in\widetilde{\mathcal{R}}(\overline k_n),\\
   \check u_n - \widetilde{u}_{n} \leqslant \|\bdelta\|_{2}\leqslant \check u_n + \widetilde{u}_{n}}} \mathcal E(\btheta_0 + \bdelta) - \widetilde\epsilon_n(\widetilde u_n + \check u_n, \overline k_n) \\
   &\geqslant c_M(\check u_n - \widetilde u_n)^2 - \widetilde a_{\epsilon,n}(\check u_n + \widetilde u_n) - \widetilde b_{\epsilon,n},
\end{align*}
where the final inequality uses Assumption \ref{assu:Margin} and the event
$\mathscr{\widetilde E}_n$. Expanding the square, rearranging terms, and using
$c_M\widetilde u_n^2\geqslant0$, it further follows that
$$
A_n\check u_n^2 - B_n\check u_n - C_n \leqslant 0,
$$
where $A_n:= c_M$, $B_n := 2c_M\widetilde u_n + \widetilde a_{\epsilon,n}$, and
$C_n:=\widetilde a_{\epsilon,n}\widetilde u_n +\widetilde b_{\epsilon,n} +
\overline\lambda_n\overline\Delta_n$. The definition of $\check u_n$ in
\eqref{eq: un check definition} means that $\check u_n = 2B_n / A_n$, so the
displayed inequality can be written as $2B_n^2\leqslant A_nC_n$. On the other
hand, \eqref{eq: constant constraints for post estimator} can be rewritten as
$B_n^2 \geqslant A_n C_n$, yielding the desired contradiction. Conclude that
$\|\widetilde\btheta - \btheta_0\|_2 \leqslant \widetilde u_n + \check u_n$. 
\end{proof}
     
\begin{proof}
[\sc{Proof of Lemma \ref{lem:EmpiricalErrorBound 2}}]
The proof of this lemma is closely related to that of Lemma
\ref{lem:EmpiricalErrorBound}. In particular, as in the case of Lemma
\ref{lem:EmpiricalErrorBound}, the proof will follow from an application of the
maximal inequality in Theorem \ref{lem:MaxIneqBasedOnContraction}. First, fix
any $k\in\mathbb N$, $n\in\N$, $t\in[1,\infty)$ and $u\in(0,\infty)$ satisfying 
\eqref{eq: extra condition to apply contraction principle 2} and denote
$\Delta(u,k):=\widetilde{\mathcal{R}}(k)\cap\left\{ \left\Vert \cdot\right\Vert
_{2}\leqslant u\right\} .$ If $\Delta(u,k)=\emptyset$, then the postulated bound
holds with probability one (interpreting the supremum over an empty set as
$-\infty$). We therefore assume that $\Delta(u,k)\neq\emptyset$.
Lemma \ref{lem:RestrictedSetConsequence 2} shows that
\begin{equation}
\left\Vert \Delta\left(u,k\right)\right\Vert _{1}:=\sup_{\bdelta\in\Delta\left(u,k\right)}\left\Vert \bdelta\right\Vert _{1}\leqslant u\sqrt{k + s_q\eta_n^{-q}} + s_q\eta_n^{1-q} =: \overline\Delta_n(u,k).\label{eq:DeltaEtaOfuEll1Bnd new 2}
\end{equation}
Setting up for an application of Theorem \ref{lem:MaxIneqBasedOnContraction},
define $h:\R\times\mathcal{X}\times\mathcal Y\to\R$ by
$h(t,\bx,\by):=m(\bx^{\top}\btheta_{0}+t,\by)-m(\bx^{\top}\btheta_{0},\by)$ for all
$t\in\R$ and $(\bx,\by)\in\mathcal{X}\times\mathcal Y$. By construction,
$h(0,\cdot,\cdot)\equiv0$.
By Assumption
\ref{assu:LossLocallyLipschitzAndMore}.\ref{enu:LossLocallyLipschitz}, the
restriction $h:[-c_{L},c_{L}]\times\mathcal{X}\times\mathcal Y\to\R$ is
$L(\bx,\by)$-Lipschitz in its first argument, thus verifying Condition
\ref{enu:ContractionConsequenceLocallyLipschitz} of Theorem
\ref{lem:MaxIneqBasedOnContraction} with $C_h=c_L$. 
H\"{o}lder's inequality, Assumption
\ref{assu:LossLocallyLipschitzAndMore}.\ref{enu:LossLocallyLipschitz},
(\ref{eq:DeltaEtaOfuEll1Bnd new 2}), and \eqref{eq: extra condition to apply
contraction principle 2} imply that
\[
\max_{1\leqslant i\leqslant n}\sup_{\bdelta\in\Delta\left(u,k\right)}\left|\bX_{i}^{\top}\bdelta\right|
   \leqslant\max_{1\leqslant i\leqslant n}\left\Vert \bX_{i}\right\Vert _{\infty}\left\Vert \Delta\left(u,k\right)\right\Vert _{1}
   \leqslant t n^{1/r} B_{n}\overline{\Delta}_{n}\left(u,k\right)\leqslant c_{L}
\]
with probability at least $1-t^{-r}$, where the bound $\P(\max_{1\leqslant
i\leqslant n}\|\bX_i\|_{\infty}>t n^{1/r} B_n)\leqslant t^{-r}$ follows from
Markov's inequality, since Assumption
\ref{assu:LossLocallyLipschitzAndMore}.\ref{enu:LossLocallyLipschitz} implies
that $\E[\|\bX\|_{\infty}^r]\leqslant B_n^r$. Condition
\ref{enu:ContractionConsequenceXprimeDeltaBounded} of Theorem
\ref{lem:MaxIneqBasedOnContraction} therefore holds with $C_h=c_L$ and
$\zeta_n=t^{-r}$. Further, swapping $\Delta(u,\eta_n)$
for the $\Delta(u,k)$ in the proof of Lemma \ref{lem:EmpiricalErrorBound},
we verify Conditions
\ref{enu:ContractionConsequencehBoundedInL2} and
\ref{enu:ContractionConsequenceLtimesXBoundedInEmpL2} of Theorem
\ref{lem:MaxIneqBasedOnContraction} in exactly the same way,
and with the same constants as those appearing in the proof of Lemma
\ref{lem:EmpiricalErrorBound}. 
% J: This is where we use $B_n^***2*** ln(pn)/sqrt{n}\leqslant C_L^2$
Therefore, Theorem
\ref{lem:MaxIneqBasedOnContraction} combined with the bound
$\overline{\Delta}_{n}(u,k)$ on $\|\Delta(u,k)\|_{1}$ from
(\ref{eq:DeltaEtaOfuEll1Bnd new 2}) and $\ln(8pn)\leqslant4\ln(pn)$ (which
follows from $p\geqslant2$) now show that
$$
   \mathrm{P}\left(\sqrt{n}\widetilde{\epsilon}_{n}\left(u,k\right)>\left\{ 4C_{L}u\right\} \lor\left\{ 16\sqrt{2}CC_{L}\overline{\Delta}_{n}\left(u,k\right)\sqrt{\ln\left(pn\right)}\right\} \right)
   \leqslant 4t^{-r} + 4C/\ln(pn)^2 + n^{-1}
$$
for some universal constant $C\in[1,\infty)$. Now, given that
$k,s_{q},C\in[1,\infty)$, $p\in[2,\infty)$, $n\in[3,\infty)$, and
$\eta_n\in(0,1]$, it follows that $4\sqrt{2}C\sqrt{\ln(pn)}\geqslant1$ and
$\overline{\Delta}_n(u,k)\geqslant u$. Hence,
$16\sqrt{2}CC_{L}\overline{\Delta}_{n}\left(u,k\right)\sqrt{\ln\left(pn\right)}\geqslant4C_{L}u.$
Using this bound in the previous display and redefining the
universal constant $C$ appropriately, we arrive at the asserted claim.
\end{proof}

\begin{proof}
[\sc{Proof of Lemma \ref{lem: sparsity}}] The proof of this lemma adapts 
arguments developed by \cite{belloni_sparse_2012} for post-LASSO
to our setting with a more general loss function. For all $k\in\mathbb N$, let
$$
\mathcal S^p_{k}:=\left\{\bdelta\in\mathbb R^p;
\|\bdelta\|_2\leqslant 1,\|\bdelta\|_0\leqslant k\right\}
\quad \text{and} \quad
\widehat\phi(k):=\sup_{\bdelta\in\mathcal S^p_k}\En[(\bX^{\top}_i\bdelta)^2].
$$
\begin{comment}
   \item J: Have hatted phi to stress randomness.
\end{comment}
We proceed in five steps.

\medskip
\textbf{Step 1: } Let $a_{1,n}$ and $a_{2,n}$ be non-random sequences in
$(0,\infty)$ for which $n^{1/r}B_na_{1,n}\to0$, and let
$$
\Delta_n:=\left\{\bdelta\in\mathbb R^p; \ \|\bdelta\|_1\leqslant a_{1,n}, \ \|\bdelta\|_2\leqslant a_{2,n}\right\}.
$$
In this step, we show that there is a constant $C\in[1,\infty)$, depending only on
$C_{ev}$ and $C_L$, such that, with probability $1-o(1)$,
$$
\sup_{\bdelta\in\Delta_n}\En[(\bX_i^\top\bdelta)^2] \leqslant C(a_{2,n}^2+a_{1,n}\eta_n).
$$ To do so, we
set up for an application of Theorem \ref{lem:MaxIneqBasedOnContraction} with
$h(t,\bx,\by) = t^2$ and
$\Delta=\Delta_n$. By construction, we have
$\|\Delta_n\|_1:=\sup_{\bdelta\in\Delta_n}\|\bdelta\|_1\leqslant a_{1,n}$. Since
$h(t_1,\bx,\by)-h(t_2,\bx,\by)=(t_1+t_2)(t_1-t_2)$, Condition
\ref{enu:ContractionConsequenceLocallyLipschitz} of Theorem
\ref{lem:MaxIneqBasedOnContraction} holds with $C_h=1/2$ and
$L(\cdot,\cdot)\equiv1$. Condition
\ref{enu:ContractionConsequenceXprimeDeltaBounded} of Theorem
\ref{lem:MaxIneqBasedOnContraction} holds for some $\zeta_n=o(1)$, since
$$
\max_{1\leqslant i\leqslant n}\sup_{\bdelta\in\Delta_n}|\bX_i^{\top}\bdelta|
   \leqslant \max_{1\leqslant i\leqslant n}\|\bX_i\|_{\infty}\|\Delta_n\|_1
   \lesssim_{\P}n^{1/r}B_na_{1,n}\to0
$$
which can be argued from H{\"o}lder's inequality, Assumption
\ref{assu:LossLocallyLipschitzAndMore}.\ref{enu:ContractionConsequenceLocallyLipschitz}
and Markov's inequality, and the $\ell_1$-restriction in $\Delta_n$. 
Condition \ref{enu:ContractionConsequencehBoundedInL2}
of Theorem \ref{lem:MaxIneqBasedOnContraction} holds with
$B_{1n}=\sqrt{C_{ev}}a_{2,n}^2$ by Assumption \ref{as: post estimator moments}. To
verify Condition \ref{enu:ContractionConsequenceLtimesXBoundedInEmpL2} of
Theorem \ref{lem:MaxIneqBasedOnContraction}, we set up for an application of
Theorem \ref{thm: max inequality nonnegative} with
$Z_{i,j}=X_{i,j}^2\geqslant0$. Assumption
\ref{assu:LossLocallyLipschitzAndMore}.\ref{enu:LossLocallyLipschitz} shows that
Conditions \ref{enu:MaxIneqIIIFirstMomentBnd} and
\ref{enu:MaxIneqIIIMomentOfMaxBnd} of Theorem \ref{thm: max inequality
nonnegative} hold with $\mu_n = C_L^2$, $q=2$, and $M_n=B_n^2$. Condition
\ref{enu:MaxIneqIIIGrowthCond} of Theorem \ref{thm: max inequality
nonnegative} then translates to $B_n^2\ln(pn)/\sqrt{n}\leqslant C_L^2$, which
holds eventually, cf.~\eqref{eq: sparsity growth conditions}. Theorem 
\ref{thm: max inequality nonnegative} therefore implies that
$
\max_{1\leqslant j\leqslant p}\En[X_{i,j}^2]\leqslant (\widetilde CC_L)^2
$
with probability $1-o(1)$ for some universal constant $\widetilde
C\in(0,\infty)$. It follows that Condition
\ref{enu:ContractionConsequenceLtimesXBoundedInEmpL2} of Theorem
\ref{lem:MaxIneqBasedOnContraction} holds for some $\gamma_n=o(1)$ and $B_{2n} =
\widetilde CC_L$. Applying Theorem \ref{lem:MaxIneqBasedOnContraction}, it
follows that for some universal constant $C\in[1,\infty)$, with probability $1-o(1)$,
\begin{equation}\label{eq: empirical process prediction norm}
\sup_{\bdelta\in\Delta_n}\left|\En[(\bX_i^{\top}\bdelta)^2] - \E[(\bX^\top \delta)^2] \right| \leqslant C(\sqrt{C_{ev}}a_{2,n}^2/\sqrt n + C_La_{1,n}\eta_n).
\end{equation}
In addition, from Assumption \ref{as: post estimator moments}, we get
$$
\sup_{\bdelta\in\Delta_n}\E[(\bX^\top \bdelta)^2] \leqslant \sup_{\bdelta\in\Delta_n}(\E[(\bX^{\top}\bdelta)^4])^{1/2}\leqslant \sqrt{C_{ev}}a_{2,n}^2.
$$
Combining these bounds, the asserted claim of this step follows from the
triangle inequality.

\medskip
\textbf{Step 2:} Let $k_n$ be a non-random sequence in $\mathbb N$ satisfying
$k_n n^{1/r}B_n\eta_n\to0$, whose existence follows from \eqref{eq: sparsity growth conditions}. In this step, we show that for such a sequence, with
probability $1 - o(1)$,
$$
\widehat\phi(k_n) \leqslant \sqrt{C_{ev}} + o(1).
$$
To this end, let $t_n$ be a non-random sequence in $(0,\infty)$ such that
$\sqrt{k_n} n^{1/r}B_n/t_n\to0$ and $t_n\sqrt{k_n}\eta_n\to0$, whose existence follows from $k_n n^{1/r}B_n\eta_n\to0$.
Since $\|\bdelta\|_1\leqslant \sqrt{k_n}\|\bdelta\|_2$ for any
$\bdelta\in\mathcal S^p_{k_n}$ (per Cauchy-Schwarz inequality), it follows that
the set $\mathcal S^p_{k_n}/t_n$ is contained in $\Delta_n$ introduced in Step 1
provided we set $a_{1,n}=\sqrt{k_n}/t_n$ and $a_{2,n}=1/t_n$. In this case, the
premise $n^{1/r}B_na_{1,n}\to0$ of Step 1 is satisfied, and it follows as in
\eqref{eq: empirical process prediction norm} that
$$
\sup_{\bdelta\in\mathcal S^p_{k_n}/t_n}\left| \En[ (\bX_i^{\top}\bdelta)^2] - \E[(\bX^\top \bdelta)^2] \right| \leqslant C\left(\sqrt{C_{ev}}/(t_n^2\sqrt n) + C_L\sqrt{k_n}\eta_n/t_n\right)
$$
with probability $1-o(1)$ for some universal constant $C\in[1,\infty)$. Hence,
with the same constant $C$,
$$
\sup_{\bdelta\in\mathcal S^p_{k_n}}\left| \En[(\bX_i^{\top}\bdelta)^2] - \E[(\bX^\top \bdelta)^2] \right| \leqslant C\left(C_{ev}/\sqrt n + C_Lt_n\sqrt{k_n}\eta_n\right)\to0
$$
with probability $1 - o(1)$. In addition, using the Cauchy-Schwarz inequality and Assumption \ref{as: post estimator moments}, we get
$$
\sup_{\bdelta\in\mathcal S^p_{k_n}}\E[(\bX^{\top}\bdelta)^2] \leqslant \sup_{\bdelta\in\mathcal S^p_{k_n}}(\E[(\bX^{\top}\bdelta)^4])^{1/2} \leqslant \sqrt{C_{ev}}.
$$
Combining these bounds gives the asserted claim of this step.

\medskip
\textbf{Step 3:} In this step, we show that there exists a constant $\widetilde
C\in(0,\infty)$, depending only on $c_0$, $C_{ev}$, $C_L$, $c_M$ and $C_m$, such
that
\begin{equation}\label{eq: sparsity bound original}
\|\widehat\btheta\|_0 \leqslant \widetilde C\widehat\phi(\|\widehat\btheta\|_0) s_q\eta_n^{-q}\phi_n^2\;\text{for all}\;\widehat\btheta \in \widehat{\Theta}(\lambda),
\end{equation}
with probability at least $1 -
2\P(\lambda<c_{0}\|\bS_n\|_{\infty})-\mathrm{P}(\lambda>\overline{\lambda}_{n})
- \P(\lambda<\underline\lambda_n) - o(1)$. To do so, fix any $\widehat\btheta =
(\widehat\theta_1,\dots,\widehat\theta_p)^{\top}\in\widehat\Theta(\lambda)$ and
observe that the first-order conditions for the optimization problem
\eqref{eq:ell1PenalizedMEstimationIntro} imply that
$$
\En[m_1'(\bX_i^{\top}\widehat\btheta,\bY_i)X_{i,j}] + \lambda Z_j = 0
$$
for all $j\in[p]$ and some $(Z_1,\dots,Z_p)^{\top}\in[-1,1]^p$ satisfying
$Z_j=1$ if $\widehat\theta_j>0$ and $Z_j=-1$ if $\widehat\theta_j < 0$.
Therefore, denoting $\widehat T:=\suppn{\widehat\btheta}$, it follows that for
all $j\in\widehat T$,
$$
|\En[m_1'(\bX_i^{\top}\widehat\btheta,\bY_i)X_{i,j}]|^2 = \lambda^2,
$$
and so
$$
\lambda^2|\widehat T| = \sum_{j\in\widehat T}|\En[m_1'(\bX_i^{\top}\widehat\btheta,\bY_i)X_{i,j}]|^2.
$$
Hence, by the triangle inequality,
\begin{align}
\lambda\sqrt{|\widehat T|} 
& \leqslant \left( \sum_{j\in\widehat T}|\En[m_1'(\bX_i^{\top}\btheta_0,\bY_i)X_{i,j}]|^2 \right)^{1/2} \nonumber\\
& \quad + \left(\sum_{j\in\widehat T}|\En[(m_1'(\bX_i^{\top}\widehat\btheta,\bY_i) - m_1'(\bX_i^{\top}\btheta_0,\bY_i))X_{i,j}]|^2\right)^{1/2}.\label{eq: pre-sparsity KKT consequence}
\end{align}
The \emph{first} term on the right-hand side is bounded from above by
$\sqrt{|\widehat T|}\|\bS_n\|_{\infty} \leqslant \sqrt{|\widehat T|}\lambda/c_0$
with probability at least $1 - \P(\lambda < c_0\|\bS_n\|_{\infty})$. 
By the dual norm inequality, the \emph{second}
term is bounded from above by
\begin{align*}
& \sup_{\bdelta\in\mathcal S^p_{|\widehat T|}} \En[(m_1'(\bX_i^{\top}\widehat\btheta,\bY_i) - m_1'(\bX_i^{\top}\btheta_0,\bY_i))\bX_{i}^{\top}\bdelta] \\
&\qquad \leqslant \sup_{\bdelta\in\mathcal S^p_{|\widehat T|}} \left(\En[(m_1'(\bX_i^{\top}\widehat\btheta,\bY_i) - m_1'(\bX_i^{\top}\btheta_0,\bY_i))^2]\right)^{1/2}\left(\En[(\bX_i^{\top}\bdelta)^2]\right)^{1/2} \\
&\qquad \leqslant C_m\sqrt{\widehat\phi(|\widehat T|)}\left( \En[(\bX_i^{\top}\widehat\btheta - \bX_i^{\top}\btheta_0)^2] \right)^{1/2},
\end{align*}
where the first inequality follows from the Cauchy-Schwarz inequality, and the
second from the definition of $\widehat\phi(|\widehat T|)$ and Assumption
\ref{as: post estimator smoothness}. To control the right-hand side average,
observe that condition \eqref{eq: sparsity growth conditions} allows us to pick
a $t=t_n\to\infty$ so as to eventually satisfy all side conditions of Theorem
\ref{thm:NonAsymptoticProbabilisticBounds}. Using Theorem
\ref{thm:NonAsymptoticProbabilisticBounds}, a calculation then shows that
\begin{align*}
   \|\widehat\btheta-\btheta_0\|_2
   \leqslant C_2 \sqrt{s_q\eta_n^{-q}}\left(\eta_n+\overline\lambda_n\right)\quad\text{and}\quad
   \|\widehat\btheta-\btheta_0\|_1
   \leqslant C_1 s_q\eta_n^{-q}\left(\eta_n + \overline\lambda_n\right),
\end{align*}
with probability at least $1 -
\P(\lambda<c_{0}\|\bS_n\|_{\infty})-\mathrm{P}(\lambda>\overline{\lambda}_{n}) -
o(1)$ for some constants $C_1,C_2\in(0,\infty)$, depending only on $c_0$, $C_L$
and $c_M$. Call the two right-hand sides of the previous display $a_{1,n}$ and
$a_{2,n}$, respectively, and consider $\Delta_n$ as defined in Step 1, but now
based on these specific sequences. From \eqref{eq: sparsity growth conditions}
we see that $n^{1/r}B_na_{1,n}\to0$, so Step 1 implies
\begin{align*}
\En[|\bX_i^{\top}(\widehat\btheta-\btheta_0)|^2]
\leqslant \sup_{\widehat\bdelta\in\Delta_n}\En[(\bX_i^{\top}\bdelta)^2]
\leqslant C(a_{2,n}^2+a_{1,n}\eta_n)
\leqslant C' s_q\eta_n^{-q}(\eta_n^2+\overline\lambda_n^2),
\end{align*}
with probability at least $1 -
\P(\lambda<c_{0}\|\bS_n\|_{\infty})-\mathrm{P}(\lambda>\overline{\lambda}_{n}) -
o(1)$, where $C$ is the constant from Step 1, and the constant $C'$ depends
only on $c_0$, $C_{ev}$, $C_L$ and $c_M$.

Continuing the inequality \eqref{eq: pre-sparsity KKT consequence} with the
upper bounds thus developed, we see that
$$
\lambda\sqrt{|\widehat T|} \leqslant \lambda\sqrt{|\widehat T|}/c_0 + \widetilde C\sqrt{\widehat\phi(|\widehat T|) s_q\eta_n^{-q}(\eta_n^2 + \overline\lambda_n^2)}
$$
with probability at least $1 -
2\P(\lambda<c_{0}\|\bS_n\|_{\infty})-\mathrm{P}(\lambda>\overline{\lambda}_{n})
- o(1)$, where $\widetilde C$ is a constant depending only on $c_0$, $C_{ev}$,
$C_L$, $c_M$, and $C_m$. Observe that all probabilistic bounds in this step hold
simultaneously for all $\widehat\btheta\in\widehat\Theta(\lambda)$. Noting that
$c_0>1$ and $|\widehat T| = \|\widehat\btheta\|_0$, the desired inequality
follows from rearranging the previous display, bounding $\lambda$ from below by
$\underline\lambda_n$ and recasting the constant $\tilde C\in(0,\infty)$.

\medskip
\textbf{Step 4:} For the same constant $\widetilde C$ as that in the Step 3, let
\begin{equation}\label{eq: knbar def}
\widetilde k_n:= \min\left\{k\in\N; \ k>2\widetilde C\widehat\phi(k)s_q\eta_n^{-q}\phi_n^2\right\}.
\end{equation}
Such a $\widetilde{k}_n$ exists in $\N$, since
$\widehat\phi(k)=\widehat\phi(p)$ for $k\geqslant p$. In this step, we show
that
\begin{equation}\label{eq: second sparsity bound}
\sup_{\widehat\btheta\in\widehat\Theta(\lambda)}\|\widehat\btheta\|_0 \leqslant \widetilde k_n
\end{equation}
with probability at least $1 -
2\P(\lambda<c_{0}\|\bS_n\|_{\infty})-\mathrm{P}(\lambda>\overline{\lambda}_{n})
- \P(\lambda<\underline\lambda_n) - o(1)$. To this end, let $\widetilde s_n: =
s_q\eta_n^{-q}\phi_n^2$, and let the event \eqref{eq: sparsity bound original}
hold. Seeking a contradiction, note that if $\widetilde
k_n<\|\widehat\btheta\|_0$ for some $\widehat\btheta\in\widehat\Theta(\lambda)$,
then for such $\widehat\btheta$,
\begin{align*}
\|\widehat\btheta\|_0 & \leqslant \widetilde C\widehat\phi(\|\widehat\btheta\|_0) \widetilde s_n = \widetilde C\widehat\phi((\|\widehat\btheta\|_0/\widetilde k_n)\widetilde k_n) \widetilde s_n \\
& \leqslant \widetilde C\widehat\phi(\lceil\|\widehat\btheta\|_0/\widetilde k_n\rceil\widetilde k_n) \widetilde s_n \leqslant \widetilde C\lceil\|\widehat\btheta\|_0/\widetilde k_n\rceil \widehat\phi(\widetilde k_n) \widetilde s_n  \leqslant 2\widetilde C(\|\widehat\btheta\|_0/\widetilde k_n) \widehat\phi(\widetilde k_n) \widetilde s_n,
\end{align*}
where the second inequality in the second line follows from Lemma 9 in
\cite{belloni_sparse_2012}. However, this chain of inequalities implies that
$$
\widetilde k_n \leqslant 2\widetilde C\widehat\phi(\widetilde k_n)\widetilde s_n,
$$
which contradicts the definition of $\widetilde k_n$ in \eqref{eq: knbar def}.
Hence, on the event \eqref{eq: sparsity bound original}, we have \eqref{eq:
second sparsity bound}. Combining this result with the previous step yields the
asserted claim of this step.

\medskip
\textbf{Step 5:} We now complete the proof. To this end, for the constant
$\widetilde C$ from Step 3, let
$$
k_n:= \lceil4\widetilde CC_{ev}s_q\eta_n^{-q}\phi_n^2\rceil.
$$
By \eqref{eq: sparsity growth conditions}, we have $k_n n^{1/r}B_n\eta_n\to0$,
and so it follows from Step 2 that
$$
k_n > 2\widetilde C\widehat\phi(k_n)s_q\eta_n^{-q}\phi_n^2
$$
with probability $1 - o(1)$ since $C_{ev}\geqslant 1$. Hence, $\widetilde k_n$
defined in \eqref{eq: knbar def} satisfies $\widetilde k_n \leqslant k_n$ with
probability $1 - o(1)$. Combining this result with Step 4 yields the asserted
probabilistic claim of the lemma.
\end{proof}

\section{Fundamental Tools}\label{ap: fundamental tools}

\subsection{Maximal Inequalities}

Let $\Gn[f(\bZ_{i})]:=\sqrt{n}(\En-\E)[f(\bZ_{i})]$ abbreviate the centered and
scaled empirical average of the $\{f(\bZ_i)\}_{i=1}^n$.
\begin{thm}
[\textbf{Maximal Inequality, I}]\label{lem:MaxIneqBasedOnContraction} Let
$\left\{ \bZ_{i}\right\} _{i=1}^{n}$ be independent copies of a random vector
$\bZ$, with support $\mathcal{\bZ}$, of which $\bX$ is a $p$-dimensional
subvector, let $\Delta$ be a non-empty subset of $\R^{p}$, and let
$h:\R\times\mathcal{Z}\to\R$ be a measurable function satisfying
$h\left(0,\cdot\right)\equiv0$. Suppose that there are non-random sequences
$B_{1n},B_{2n}\in[0,\infty),\zeta_n,\gamma_{n}\in\left(0,1\right)$, a constant
$C_{h}\in(0,1)$, and a measurable function $L:\mathcal{Z}\to[0,\infty)$ such
that
\begin{enumerate}
\item \label{enu:ContractionConsequenceLocallyLipschitz}for all
$\bz\in\mathcal{Z}$ and all $t_{1},t_{2}\in\R$ satisfying
$\left|t_{1}\right|\lor\left|t_{2}\right|\leqslant C_{h},$
\begin{align*}
\left|h\left(t_{1},\bz\right)-h\left(t_{2},\bz\right)\right| & \leqslant L\left(\bz\right)\left|t_{1}-t_{2}\right|;
\end{align*}
\item \label{enu:ContractionConsequenceXprimeDeltaBounded}$\max_{1\leqslant
i\leqslant n}\sup_{\bdelta\in\Delta}\left|\bX_i^{\top}\bdelta\right|\leqslant C_{h}$
with probability at least $1 - \zeta_n$;
\item
\label{enu:ContractionConsequencehBoundedInL2}$\sup_{\bdelta\in\Delta}\E\bracn{h(\bX^{\top}\bdelta,\bZ)^{2}}\leqslant
B_{1n}^{2}$; and,
\item \label{enu:ContractionConsequenceLtimesXBoundedInEmpL2}$\max_{1\leqslant
j\leqslant p}\En\bracn{L\left(\bZ_{i}\right)^{2}X_{i,j}^{2}}\leqslant B_{2n}^{2}$
with probability at least $1-\gamma_{n}$.
\end{enumerate}
Then, denoting $\norm{\Delta}_{1}:=\sup_{\bdelta\in\Delta}\norm{\bdelta}_{1},$ we
have
\begin{align*}
\P\left(\sup_{\bdelta\in\Delta}\left|\Gn\bracn{h\paran{\bX_{i}^{\top}\bdelta,\bZ_{i}}}\right|>u\right) & \leqslant4\zeta_n + 4\gamma_{n}+n^{-1},
\end{align*}
provided $u\geqslant\left\{ 4B_{1n}\right\}
\lor\bracen{8\sqrt{2}B_{2n}\norm{\Delta}_{1}\sqrt{\ln\left(8pn\right)}}.$
\end{thm}
\begin{proof}
The claim follows from the proof of \citet[Lemma
D.3]{belloni2018highdimensional}, where in Step 1 we replace the set
$\Omega$ by the intersection of $\Omega$ and $\{\max_{1\leqslant i\leqslant
n}\sup_{\bdelta\in\Delta}\left|\bX_i^{\top}\bdelta\right|\leqslant C_{h}\}$.
\end{proof}

\begin{thm}[\textbf{Maximal Inequality, II}]\label{thm: max inequality standard}
Let $\{\bZ_i\}_{i=1}^n$ be independent copies of a random vector $\bZ =
(Z_1,\dots,Z_p)^{\top}$ in $\R^p$ with $p\geqslant2$, and assume that for
non-random sequences $M_n,\sigma_n\in(0,\infty)$ and a constant $q\in(2,\infty)$,
we have
\begin{inparaenum}[(1)]
\item\label{enu:MaxIneqIISecondMomentBnd} $\max_{1\leqslant j\leqslant p}\E[Z_{j}^2]\leqslant \sigma_n^2$,
\item\label{enu:MaxIneqIIMomentOfMaxBnd} $\E[\|\bZ\|_{\infty}^q]\leqslant M_n^q$, and 
\item\label{enu:MaxIneqIIGrowthCond} $M_n\leqslant \sigma_n n^{1/2-1/q}/\sqrt{\ln(pn)}$.
\end{inparaenum}
Then
$$
\P\left(\max_{1\leqslant j\leqslant p}\left| \Gn(Z_{i,j}) \right| > C\sigma_n\sqrt{\ln(pn)}\right) \leqslant \frac{c_q}{\ln^q(p n)},
$$
where $C\in(0,\infty)$ is a universal constant and $c_q\in(0,\infty)$ is a
constant depending only on $q$.
\end{thm}

\begin{proof}
\citet[][Lemma A.3]{belloni2018highdimensional} implies that, for a universal
constant $K\in(0,\infty)$,
\[
\E\left[\max_{1\leqslant j\leqslant p}\left|\Gn(Z_{i,j})\right|\right]
\leqslant K\left(\sqrt{\max_{1\leqslant j\leqslant p}\E[Z_{j}^2]\ln p}+\sqrt{\frac{\E\left[\max_{1\leqslant i\leqslant n}\|\bZ_i\|^2_\infty\right]}{n}}\ln p\right).
\]
By Jensen's inequality, Condition \ref{enu:MaxIneqIIMomentOfMaxBnd} shows
\[
   \E\left[\max_{1\leqslant i\leqslant n}\|\bZ_i\|^2_\infty\right]
   \leqslant\left(\E\left[\max_{1\leqslant i\leqslant n}\|\bZ_i\|^q_\infty\right]\right)^{2/q}
   \leqslant\left(\E\left[\sum_{i=1}^n\|\bZ_i\|^q_\infty\right]\right)^{2/q}
   \leqslant n^{2/q}M_n^2.
\]
Using also Conditions \ref{enu:MaxIneqIISecondMomentBnd} and \ref{enu:MaxIneqIIGrowthCond}, we thus arrive at
\[
   \E\left[\max_{1\leqslant j\leqslant p}\left|\Gn(Z_{i,j})\right|\right]
   \leqslant K\left(\sigma_n\sqrt{\ln p}
      +n^{1/q-1/2}M_n\ln p\right)
   \leqslant2K\sigma_n\sqrt{\ln p}.
\]
Using \citet[][Lemma A.2]{belloni2018highdimensional} with $s=q$ and $t=\sigma_n\sqrt{3n\ln(pn)}$,
we see that
\[
\P\left(\max_{1\leqslant j\leqslant p}\left|\Gn(Z_{i,j})\right|>(4K+\sqrt{3})\sigma_n\sqrt{\ln(pn)}\right)
\leqslant\frac{1}{pn}+\frac{C_q n \E\left[\|\bZ-\E[\bZ]\|_\infty^q\right]}{\sigma_n^q [n\ln(pn)]^{q/2}},
\]
where $C_q\in(0,\infty)$ is a constant depending only on $q$. The triangle and
Jensen inequalities yield
\[
   \E\left[\|\bZ-\E[\bZ]\|_\infty^q\right]
   \leqslant\E\left[(\|\bZ\|_\infty+\|\E[\bZ]\|_\infty)^q\right]
   \leqslant\E\left[(\|\bZ\|_\infty+(\E[\|\bZ\|_\infty^q])^{1/q})^q\right].
\]
Applying the inequality $(a+b)^q\leqslant 2^{q-1}(a^q+b^q)$, which is valid for
any $a,b\in[0,\infty)$ and any $q\in[1,\infty)$,
% J: Write a+b=.5*(2a + 2b)=E[X] for X~Uniform{2a,2b}. Then a+b=||X||_1\leqslant
% (Jensen) ||X||_q=[.5*(2^q a^q + 2^q b^q)]^{1/q}=[2^{q-1}(a^q + b^q)]^{1/q}.
using Condition \ref{enu:MaxIneqIIMomentOfMaxBnd} we see that 
\[
   \E\left[(\|\bZ\|_\infty+\E[\|\bZ\|_\infty])^q\right]
   \leqslant\E\left[2^{q-1}\left(\|\bZ\|_\infty^q+\E[\|\bZ\|_\infty^q]\right)\right]
   =2^q\E[\|\bZ\|_\infty^q]
   \leqslant2^qM_n^q.
\]
Gathering these bounds, and using Condition \ref{enu:MaxIneqIIGrowthCond}, we
arrive at
\[
\P\left(\max_{1\leqslant j\leqslant p}\left|\Gn(Z_{i,j})\right|>(4K+\sqrt{3})\sigma_n\sqrt{\ln(pn)}\right)
\leqslant\frac{1}{pn}+\frac{2^qC_q}{\ln^q(pn)}.
\]
Since the polynomial $x^{1/q}$ dominates the logarithm $\ln x$ as $x\to\infty$,
there is a constant $K_q\in(0,\infty)$ depending only on $q$ such that $\ln
x\leqslant K_q x^{1/q}$ for all $x\in[1,\infty)$. The claim now follows from
specifying $C=4K+\sqrt{3}$ and $c_q=K^q_q+2^qC_q$.
%J: The proof using lemmas from unpublished 2023 version goes as follows: The
%claim follows from \citet[Lemma A.4]{belloni_high-dimensional_2023} and the
%basic inequalities $\ln(p\lor n)\leqslant\ln(pn)\leqslant2\ln(p\lor n)$.
\end{proof}

\begin{thm}
[\textbf{Maximal Inequality, III}]\label{thm: max inequality nonnegative} Let
$\{\bZ_i\}_{i=1}^n$ be independent copies of a random vector $\bZ =
(Z_1,\dots,Z_p)^{\top}$ in $\R^p$  with $p\geqslant2$ such that $Z_j\geqslant 0$
for all $j\in[p]$ and assume that for non-random sequences
$M_n,\mu_n\in(0,\infty)$ and a constant $q\in(1,\infty)$, we have 
\begin{inparaenum}[(1)]
\item\label{enu:MaxIneqIIIFirstMomentBnd} $\max_{1\leqslant j\leqslant p}\E[Z_{j}]\leqslant \mu_n$,
\item\label{enu:MaxIneqIIIMomentOfMaxBnd} $\E[\|\bZ\|_{\infty}^q]\leqslant M_n^q$, and 
\item\label{enu:MaxIneqIIIGrowthCond} $M_n\leqslant \mu_n n^{1-1/q}/\ln(pn)$.
\end{inparaenum}
Then
$$
\P\left(\max_{1\leqslant j\leqslant p}\En\left[Z_{i,j}\right] > C\mu_n\right) \leqslant \frac{c_q}{\ln^q(p n)},
$$
where $C\in(0,\infty)$ is a universal constant and $c_q\in(0,\infty)$ is a
constant depending only on $q$.
\end{thm}

\begin{proof}
\citet[][Lemma A.5]{belloni2018highdimensional} shows that, for a universal
constant $K\in(0,\infty)$,
\[
\E\left[\max_{1\leqslant j\leqslant p}\En\left[Z_{i,j}\right]\right]
\leqslant K\left(\max_{1\leqslant j\leqslant p}\E[Z_j]+\frac{\E\left[\max_{1\leqslant i\leqslant n}\|\bZ_i\|_\infty\right]\ln p}{n}\right).
\]
Jensen's inequality and Condition \ref{enu:MaxIneqIIIMomentOfMaxBnd} imply
\[
   \E\left[\max_{1\leqslant i\leqslant n}\|\bZ_i\|_\infty\right]
   \leqslant\left(\E\left[\max_{1\leqslant i\leqslant n}\|\bZ_i\|^q_\infty\right]\right)^{1/q}   
   \leqslant\left(\E\left[\sum_{i=1}^n\|\bZ_i\|^q_\infty\right]\right)^{1/q}
   \leqslant n^{1/q}M_n.
\]
Using also Conditions \ref{enu:MaxIneqIIIFirstMomentBnd} and \ref{enu:MaxIneqIIIGrowthCond}, we thus arrive at
\[
\E\left[\max_{1\leqslant j\leqslant p}\En\left[Z_{i,j}\right]\right]
\leqslant K(\mu_n+n^{1/q-1} M_n \ln p)
\leqslant 2K\mu_n.
\]
Using \citet[][Lemma A.4]{belloni2018highdimensional} with $s=q$ and $t=n\mu_n$, we see that
\[
\P\left(\max_{1\leqslant j\leqslant p}\En\left[Z_{i,j}\right]>(4K+1)\mu_n\right)
\leqslant \frac{c_q\E\left[\max_{1\leqslant i\leqslant n}\|\bZ_i\|_\infty^q\right]}{n^q \mu_n^q},
\]
where $c_q\in(0,\infty)$ is a constant depending only on $q$. Using Condition \ref{enu:MaxIneqIIIMomentOfMaxBnd}, we see that
\[
\E\left[\max_{1\leqslant i\leqslant n}\|\bZ_i\|^q_\infty\right]   
   \leqslant\E\left[\sum_{i=1}^n\|\bZ_i\|^q_\infty\right]
   \leqslant nM_n^q,
\]
Hence, by Condition \ref{enu:MaxIneqIIIGrowthCond},
\[
\P\left(\max_{1\leqslant j\leqslant p}\En\left[Z_{i,j}\right]>(4K+1)\mu_n\right)
\leqslant \frac{c_q n^{1-q} M_n^q}{\mu_n^q}
\leqslant \frac{c_q}{\ln^q(p n)},
\]
which gives the asserted claim.
%J: The proof using lemmas from unpublished 2023 version goes as follows: The
%claim follows from \citet[Lemma A.7]{belloni_high-dimensional_2023} and the
%basic inequalities $\ln(p\lor n)\leqslant\ln(pn)\leqslant2\ln(p\lor n)$.
\end{proof}

\subsection{Gaussian Inequality}
\begin{thm}
[\textbf{Gaussian Quantile Bound}]\label{lem:GaussianQuantileBnd}
Let $\left(Y_{1},\dotsc,Y_{p}\right)$ be centered Gaussian in $\R^{p}$
with $\sigma^{2}:=\max_{1\leqslant j\leqslant p}\E\left[Y_{j}^{2}\right]$
and $p\geqslant2$. Let $q^{Y}\left(1-\alpha\right)$ denote the $\left(1-\alpha\right)$-quantile
of $\max_{1\leqslant j\leqslant p}\left|Y_{j}\right|$ for $\alpha\in\left(0,1\right)$.
Then $q^{Y}\left(1-\alpha\right)\leqslant(2+\sqrt{2})\sigma\sqrt{\ln\left(p/\alpha\right)}.$
\end{thm}
\begin{proof}
By the Borell-TIS (Tsirelson-Ibragimov-Sudakov) inequality \citep[Theorem 2.1.1]{adler_random_2007},
for any $t\in(0,\infty)$ we have
\[
\P\Big(\max_{1\leqslant j\leqslant p}\left|Y_{j}\right|>\E\big[\max_{1\leqslant j\leqslant p}\left|Y_{j}\right|\big]+\sigma t\Big)\leqslant\mathrm{e}^{-t^{2}/2}.
\]
This inequality translates to the quantile bound
\[
q^{Y}(1-\alpha)\leqslant\E\big[\max_{1\leqslant j\leqslant p}\left|Y_{j}\right|\big]+\sigma\sqrt{2\ln\left(1/\alpha\right)}.
\]
\citet[Proposition A.3.1]{talagrand_mean_2010} shows that
\[
\E\big[\max_{1\leqslant j\leqslant p}\left|Y_{j}\right|\big]\leqslant\sigma\sqrt{2\ln\left(2p\right)},
\]
thus implying
\[
q^{Y}(1-\alpha)\leqslant\sigma\Big(\sqrt{2\ln\left(2p\right)}+\sqrt{2\ln\left(1/\alpha\right)}\Big).
\]
The claim now follows from $p\geqslant2$.
\end{proof}

\subsection{Central Limit Theorem and Bootstrap in High Dimensions}

Throughout this section we let $\{\bZ_{i}\}_{i=1}^n$ be independent copies of a
centered random vector $\bZ$ in $\R^{p}$ and denote their scaled average and
(common)  variance by
\[
\cS_{n}:=\frac{1}{\sqrt{n}}\sum_{i=1}^{n}\bZ_{i}\quad\text{and}\quad\bSigma:=\E\left[\bZ\bZ^{\top}\right],
\]
respectively. (The existence of $\bSigma$ in $\R^{p\times p}$ is guaranteed by
our assumptions below.) Write $\cN_n$ for a centered $p$-dimensional Gaussian
vector with variance $\bSigma$. For $\R^{p}$-valued random variables
$\bU$ and $\bV$, define the distributional measure of distance
\[
\rho\left(\bU,\bV\right):=\sup_{A\in\mathcal{A}_{p}}\left|\P\left(\bU\in A\right)-\P\left(\bV\in A\right)\right|,
\]
where $\mathcal{A}_{p}$ denotes the collection of all hyperrectangles in $\R^{p}$.
\begin{thm}
[\textbf{High-Dimensional CLT}]\label{thm:HDCLT} If for some constant
$b\in(0,\infty)$ and a non-random sequence $B_{n}$ in $[1,\infty)$,
\begin{equation}
\min_{1\leqslant j\leqslant p}\E\left[Z_{j}^{2}\right]\geqslant b,\quad\max_{k\in\{1,2\}}\max_{1\leqslant j\leqslant p}\E\big[\left|Z_{j}\right|^{2+k}\big]/B_{n}^{k}\leqslant 1\quad\text{and}\quad\E\Big[\max_{1\leqslant j\leqslant p}Z_{j}^{4}\Big]\leqslant B_{n}^{4},\label{eq:MomentConds}
\end{equation}
then there is a constant $C_{b}\in(0,\infty)$, depending only on $b$, such that
\begin{equation}
\rho\left(\cS_{n},\cN_n\right)\leqslant C_{b}\left(\frac{B_{n}^{4}\ln^{7}\left(pn\right)}{n}\right)^{1/6}.\label{eq:HDCLTUpperBnd}
\end{equation}
\end{thm}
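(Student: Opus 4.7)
The plan is to invoke the high-dimensional central limit theorem of \cite{chernozhukov_central_2017}, which provides Gaussian approximation for normalized sums of independent centered random vectors in $\R^{p}$ over the class of hyperrectangles $\mathcal{A}_p$. The three moment conditions in (\ref{eq:MomentConds})—a uniform lower bound $b$ on the average coordinate variances, a polynomial control $B_n^k$ on the coordinate-wise absolute moments of order $2+k$ for $k\in\{1,2\}$, and a pooled bound $B_n^4$ on the fourth moment of the coordinate maximum—are precisely the standard configuration used in that reference, and the asserted rate $(B_n^4\ln^7(pn)/n)^{1/6}$ is exactly what their main Gaussian approximation theorem delivers in this regime.

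The argument then reduces to two bookkeeping steps. First, I would align the normalization of our $B_n$ with the single envelope constant used in \cite{chernozhukov_central_2017}, verifying that each of their hypotheses is implied by ours: the lower bound on variances supplies their nondegeneracy condition with constant $b$; the bounds on $n^{-1}\sum_{i=1}^n \E[|Z_{ij}|^{2+k}]$ for $k\in\{1,2\}$ supply the polynomial moment conditions; and the bound on $\E[\max_{j}Z_{ij}^4]$ supplies the ``pooled'' moment condition that controls the tail of the maximal coordinate. Second, I would invoke their theorem and observe that their dependence of the Gaussian approximation constant on moment ratios collapses, under our common envelope $B_n$, to a function of the variance lower bound $b$ alone, yielding the claimed $C_b$.

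The deep content behind the bound—which I would not redevelop—is the standard machinery from \cite{chernozhukov_central_2017}: approximation of a hyperrectangle indicator by a soft-max composed with a Gaussian-type mollifier, a Lindeberg–Stein swapping argument that trades the third and fourth absolute moments of the summands against the third derivatives of the mollifier, and Nazarov's anti-concentration inequality for Gaussian maxima to pass from a Kolmogorov-type bound on smoothed test functions to the hyperrectangle class $\mathcal{A}_p$ (this last step being responsible for the $\ln^{7/6}(pn)$ factor in the exponent $(\cdot)^{1/6}$).

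The only real obstacle is making sure the translation between our moment normalization and the normalization of \cite{chernozhukov_central_2017} is literal, so that the constant factor can legitimately be absorbed into a $C_b$ depending on $b$ alone. Beyond this, Theorem~\ref{thm:HDCLT} is a direct corollary of the existing high-dimensional CLT, and no new probabilistic argument is required.
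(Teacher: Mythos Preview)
Your proposal is correct and matches the paper's own proof, which simply cites \citet[Proposition 2.1]{chernozhukov_central_2017} without further elaboration. Your additional discussion of the bookkeeping and the mechanism behind the cited result is more detailed than what the paper provides, but the approach is identical.
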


\begin{proof}
The claim follows from \citet[Proposition 2.1]{chernozhukov_central_2017} with $q=4$.
\end{proof}

Let $\{\widehat{\bZ}_{i}\}_{i=1}^n$ be random elements of $\R^p$, and let
$\{e_{i}\}_{i=1}^n$ be i.i.d.~standard Gaussians independent of
$\{(\bZ_{i},\widehat{\bZ}_{i})\}_{i=1}^n$. Define
\[
   \widehat{\cS}_{n}^{e}:=\frac{1}{\sqrt n}\sum_{i=1}^{n}e_{i}\widehat{\bZ}_{i},
\]
and let $\P_{e}$ denote the (conditional) probability measure computed with
respect to $\{e_{i}\}_{i=1}^n$ for fixed
$\{(\bZ_{i},\widehat{\bZ}_{i})\}_{i=1}^n$. Also, abbreviate
\[
\widetilde{\rho}(\widehat{\cS}_{n}^{e},\cN_{n}):=\sup_{A\in\mathcal{A}_{p}}\left|\P_{e}\big(\widehat{\cS}_{n}^{e}\in A\big)-\P\left(\cN_n\in A\right)\right|,
\]
with the tilde stressing that $\widetilde{\rho}(\widehat{\cS}_{n}^{e},\cN_{n})$
is a random quantity.
\begin{thm}
[\textbf{Multiplier Bootstrap for Many Approximate Means}]\label{thm:HDMultBootApprox}
Let (\ref{eq:MomentConds}) hold for some constant $b\in(0,\infty)$
and a non-random sequence $B_{n}$ in $[1,\infty)$, and let $\beta_{n}$
and $\delta_{n}$ be non-random sequences in $[0,\infty)$ such that
\begin{equation}\label{eq:L2PnEstErrCond}
\P\left(\max_{1\leqslant j\leqslant p}\En[(\widehat{Z}_{i,j}-Z_{i,j})^{2}]>\frac{\delta_{n}^{2}}{\ln^{2}\left(pn\right)}\right)\leqslant\beta_{n}.
\end{equation}
Then there is a constant $C_{b}\in(0,\infty)$, depending only on $b$,
such that with probability at least $1-\beta_{n}-1/\ln^{2}\left(pn\right)$,
\begin{equation}\label{eq:HDBootstrapUpperBnd}
\widetilde{\rho}(\widehat{\cS}_{n}^{e},\cN_{n})\leqslant C_{b}\left(\delta_{n}\lor\left(\frac{B_{n}^{4}\ln^{6}\left(pn\right)}{n}\right)^{1/6}\right).
\end{equation}
\end{thm}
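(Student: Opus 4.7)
My plan is to exploit that, conditional on the data $\{\widehat Z_i\}_{i=1}^n$, the bootstrap statistic $\widehat S_n^e$ is exactly Gaussian: $\widehat S_n^e \mid \{\widehat Z_i\}_{i=1}^n \sim \mathrm{N}(\mathbf 0,\widehat\Sigma)$ with $\widehat\Sigma := n^{-1}\sum_{i=1}^n \widehat Z_i \widehat Z_i^\top$. Therefore $\widetilde\rho(\widehat S_n^e, N_\Sigma)$ equals the (random) quantity $\sup_{A\in\mathcal{A}_p}|\P(N_{\widehat\Sigma}\in A)-\P(N_\Sigma\in A)|$, evaluated with $\widehat\Sigma$ held fixed at the realized data. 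The problem is thereby reduced to a Gaussian-to-Gaussian comparison in which one of the two covariance matrices is random.

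I would then invoke the standard Chernozhukov-Chetverikov-Kato Gaussian comparison inequality for hyperrectangles (see, e.g., Proposition 2.1 / Lemma A.2 in \cite{chernozhukov_central_2017}): under the lower bound $\Sigma_{jj}\geqslant b$ supplied by (\ref{eq:MomentConds}), there exists a constant $C_b'\in\R_{++}$, depending only on $b$, such that
\[
\sup_{A\in\mathcal A_p}\bigl|\P(N_{\widehat\Sigma}\in A)-\P(N_\Sigma\in A)\bigr|\leqslant C_b' \|\widehat\Sigma-\Sigma\|_{\mathrm{e}}^{1/3}\ln^{2/3}(pn),
\]
on any event where $\min_j\widehat\Sigma_{jj}$ remains comparable to $b$, where $\|M\|_{\mathrm{e}}:=\max_{j,k}|M_{jk}|$ denotes the entry-wise sup norm. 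The ``comparable'' event follows as a byproduct of the bound on $\|\widehat\Sigma-\Sigma\|_{\mathrm{e}}$ below.

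The main workhorse step is to control $\|\widehat\Sigma-\Sigma\|_{\mathrm{e}}$ with the stated probability. I would decompose $\widehat\Sigma-\Sigma = (\widehat\Sigma-\widetilde\Sigma)+(\widetilde\Sigma-\Sigma)$ with $\widetilde\Sigma:=n^{-1}\sum_{i=1}^n Z_iZ_i^\top$. For the empirical-covariance fluctuation $\|\widetilde\Sigma-\Sigma\|_{\mathrm{e}}$, I would apply a Nemirovski-type maximal inequality to the independent centered variables $Z_{ij}Z_{ik}-\E[Z_{ij}Z_{ik}]$, using the fourth-moment bound $\E[\max_j Z_{ij}^4]\leqslant B_n^4$ and the union bound to obtain a bound of order $B_n^2\sqrt{\ln(pn)/n}$ with probability at least $1-1/(2\ln^2(pn))$. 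For the estimation contribution $\|\widehat\Sigma-\widetilde\Sigma\|_{\mathrm{e}}$, I expand entrywise and apply Cauchy-Schwarz,
\[
|\widehat\Sigma_{jk}-\widetilde\Sigma_{jk}|\leqslant 2\max_{\ell}\sqrt{\tfrac{1}{n}\sum_{i=1}^n(\widehat Z_{i\ell}-Z_{i\ell})^2}\,\max_{\ell}\sqrt{\tfrac{2}{n}\sum_{i=1}^n Z_{i\ell}^2+\tfrac{2}{n}\sum_{i=1}^n(\widehat Z_{i\ell}-Z_{i\ell})^2},
\]
and then invoke (\ref{eq:L2PnEstErrCond}) together with the moment-based control on $\max_\ell n^{-1}\sum_i Z_{i\ell}^2$ derived from (\ref{eq:MomentConds}) to obtain $\|\widehat\Sigma-\widetilde\Sigma\|_{\mathrm{e}}\lesssim B_n\delta_n/\ln(pn)$ with probability at least $1-\beta_n-1/(2\ln^2(pn))$.

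Combining the two covariance bounds and plugging into the Gaussian comparison display yields a bound of order $(B_n\delta_n/\ln(pn))^{1/3}\ln^{2/3}(pn)\vee(B_n^2\sqrt{\ln(pn)/n})^{1/3}\ln^{2/3}(pn)$, which (after elementary simplification and absorbing constants into $C_b$) is dominated by $\delta_n\vee (B_n^4\ln^6(pn)/n)^{1/6}$, as advertised. The main obstacle is bookkeeping: one must track how the $1/\ln^2(pn)$ deflation built into (\ref{eq:L2PnEstErrCond}) precisely absorbs the $B_n$ factor and the $\ln^{2/3}$ factor produced by the Gaussian comparison, so that the cross term delivers $\delta_n$ and not $B_n\delta_n$ in the final expression; handling the event where $\min_j\widehat\Sigma_{jj}$ stays bounded away from zero (needed to apply the comparison inequality with a constant depending only on $b$) must be weaved through the same decomposition so that no additional probability budget is spent.
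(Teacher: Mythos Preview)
Your strategy---observe that $\widehat S_n^e$ is conditionally Gaussian with covariance $\widehat\Sigma=n^{-1}\sum_i\widehat Z_i\widehat Z_i^\top$, invoke the CCK Gaussian-comparison inequality over hyperrectangles to reduce $\widetilde\rho(\widehat S_n^e,N_\Sigma)$ to a bound on $\|\widehat\Sigma-\Sigma\|_{\mathrm e}$, and split the latter into a sampling piece $\widetilde\Sigma-\Sigma$ and an estimation piece $\widehat\Sigma-\widetilde\Sigma$---is exactly the route underlying the reference the paper cites (Belloni et al.\ 2018, Theorem~2.2). The paper supplies no argument of its own beyond that citation, so at the level of method you and the paper coincide.

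Where your sketch falls short is the step you yourself flag as the ``main obstacle'' and then dismiss as bookkeeping. The Gaussian-comparison inequality gives $C_b\,\Delta^{1/3}\ln^{2/3}(pn)$. Feeding in your estimation-piece bound $\|\widehat\Sigma-\widetilde\Sigma\|_{\mathrm e}\lesssim B_n\delta_n/\ln(pn)$ (or the sharper $\sqrt{B_n}\,\delta_n/\ln(pn)$ one gets after using $\max_j\Sigma_{jj}\le B_n$) yields a contribution of order $B_n^{1/3}\delta_n^{1/3}\ln^{1/3}(pn)$, whose dependence on $\delta_n$ is $\delta_n^{1/3}$, not $\delta_n$. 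The $1/\ln^{2}(pn)$ deflation built into (\ref{eq:L2PnEstErrCond}) cancels one logarithm \emph{inside} the cube root; it does nothing to the exponent on $\delta_n$. Since $\delta_n\to 0$, one has $\delta_n^{1/3}\gg\delta_n$, and no constant depending only on $b$ can absorb the resulting deficit: for instance, with $B_n$ fixed and $\delta_n=1/\ln(pn)$ the cross term is of order one while the asserted bound tends to zero. So the issue is not ``$\delta_n$ versus $B_n\delta_n$'' as you write, but the cube-root loss on $\delta_n$ inherent in the comparison inequality. Recovering the stated linear-in-$\delta_n$ rate requires an additional device (as in the cited proof) beyond the plain plug-in you describe; your sketch does not supply it.
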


\begin{proof}
The claim essentially follows from the proof of \citet[Theorem
2.2]{belloni2018highdimensional}. We include the argument for completeness and
to clarify the dependence on $b$.

First, denote $\Delta_n:=\delta_n/\sqrt{\ln(pn)}$ and consider the random element $\cS^e_n$ of
$\R^p$ defined by
\[
   \cS^e_n:=\frac{1}{\sqrt{n}}\sum_{i=1}^{n}e_{i}\bZ_{i}.
\]
Observe that, conditional on $\{(\bZ_i,\widehat{\bZ}_i)\}_{i=1}^n$, the elements
$\{\pm(\widehat{\cS}^e_{n,j}-\cS^e_{n,j})\}_{j=1}^p$ are jointly centered
Gaussian with largest (conditional) variance
\[
   \widehat\sigma_e^2:=\max_{1\leqslant j\leqslant p}\En[(\widehat{Z}_{i,j}-Z_{i,j})^{2}].
\] 
Applying Lemma \ref{lem:GaussianQuantileBnd} conditional on on
$\{(\bZ_i,\widehat{\bZ}_i)\}_{i=1}^n$ and with $\alpha=1/n$ shows that
\[
\P_e\left(\|\widehat\cS^e_n-\cS^e_n\|_\infty>K_1\widehat\sigma_e\sqrt{\ln(pn)}\right)\leqslant n^{-1}
\]
for the absolute constant $K_1:=2+\sqrt 2$. Since \eqref{eq:L2PnEstErrCond}
means that $\widehat\sigma_p^2\leqslant\Delta_n^2/\ln(pn)$ with probability at
least $1-\beta_n$, with the same probability we have
\begin{equation}\label{eq:EventMultBootApproxProof}
\P_e\left(\|\widehat\cS^e_n-\cS^e_n\|_\infty>K_1\Delta_n\right)\leqslant n^{-1}.
\end{equation}

Next, consider any (hyper)rectangle $A\in\mathcal{A}_{p}$. Then there are
$p$-dimensional vectors $\bw^l=(w_{1}^l,\dots,w_{p}^l)^{\top}$ and
$\bw^u=(w_{1}^u,\dots,w_{p}^u)^{\top}$ of (possibly extended) reals, for which
\[
A=\{\bw\in\R^{p};w_{j}^l\leqslant w_{j}\leqslant w_{j}^u\text{ for all }j\in[p]\}.
\]
Based on this representation, define the (expanded) set
\[
 A^+:=\{\bw\in\R^{p};w^l_j-K_1\Delta_n \leqslant w_{j}^l\leqslant w^u_j + K_1\Delta_n\text{ for all }j\in[p]\}.
\]
Then also $A^+\in\mathcal A_p$ and $A\subseteq A^+$. It follows that, on the
event \eqref{eq:EventMultBootApproxProof},
\begin{align*}
   \P_e\big(\widehat\cS^e_n\in A\big)
   &\leqslant \P_e\left(\cS^e_n\in A^+\right) + n^{-1}\\
   &\leqslant \P\left(\cN_{n}\in A^+\right) + \widetilde\rho(\cS^e_n,\cN_n) + n^{-1},
\end{align*}
where $\widetilde\rho(\cS^e_n,\cN_n)$ is the random variable given by
\[
\widetilde\rho(\cS^e_n,\cN_n):=\sup_{A\in\mathcal{A}_{p}}\left|\P_{e}\left(\cS^e_n\in A\right)-\P\left(\cN_n\in A\right)\right|.
\]
Repeating the anti-concentration argument on
\citet[p.~69]{belloni2018highdimensional}, we see that there is a constant
$K_2\in(0,\infty)$, depending only on $b$, such that
\[
   \P\left(\cN_n \in A^+\right)\leqslant\P\left(\cN_n \in A\right)+2K_2\delta_n.
\]
Hence, on the event \eqref{eq:EventMultBootApproxProof}, we have the
\emph{upper} bound
\[
\P_e\big(\widehat\cS^e_n\in A\big)\leqslant\P\left(\cN_n \in A\right)+\widetilde\rho(\cS^e_n,\cN_n)+K_2\delta_n+n^{-1}.
\]

To get a \emph{lower} bound, we instead consider the set
\[
A^-:=\{\bw\in\R^{p};w^l_j+K_1\Delta_n \leqslant w_{j}^l\leqslant w^u_j - K_1\Delta_n\text{ for all }j\in[p]\},
\]
which satisfies $A^-\in\mathcal A_p$ and $A^-\subseteq A$. A parallel argument
shows that on the event \eqref{eq:EventMultBootApproxProof},
\[
\P_e\big(\widehat\cS^e_n\in A\big)\geqslant\P\left(\cN_n \in A\right)-\widetilde\rho(\cS^e_n,\cN_n)-K_2\delta_n-n^{-1}.
\]
Combine the upper and lower bounds and take the supremum over $A\in\mathcal A_p$ to see that on the event
\eqref{eq:EventMultBootApproxProof},
\[
\widetilde\rho(\widehat\cS^e_n,\cN_n)\leqslant\widetilde\rho(\cS^e_n,\cN_n)+K_2\delta_n+n^{-1}.
\]
Condition \eqref{eq:MomentConds} and \citet[Theorem
A.2(ii)]{belloni2018highdimensional} [with their $q=4$ and their
$\beta=1/\ln^2(pn)$, the latter choice being justified by $p\geqslant2$ and
$n\geqslant3$] combine to show that there is a constant $K_3\in(0,\infty)$,
depending only on $b$, such that with probability at least $1-1/\ln^2(pn)$,
\[
   \widetilde\rho(\cS^e_n,\cN_n)\leqslant K_3\left(\frac{B_n^4\ln^6(pn)}{n}\right)^{1/6}.
\]
The previous two displays combine to show that with probability at least
$1-\beta_n-1/\ln^2(pn)$,
\[
\widetilde\rho(\widehat\cS^e_n,\cN_n)\leqslant K_3\left(\frac{B_n^4\ln^6(pn)}{n}\right)^{1/6}+K_2\delta_n+n^{-1}.
\]
The claim now follows from taking $C_b:=K_3+K_2+1$, which depends only on $b$.
\begin{comment} % J: Old statement in the proof.
We here rephrase their theorem in order to highlight the dependence on the
constant $b$ and the non-random sequences $\beta_{n}$ and $\delta_{n}$. (Note
that their Theorem 2.2 does not actually use their Condition A(i). The latter
assumption concerns linearization of the original statistic in their Theorem 2.1
and does not affect the bootstrap statistic in their Theorem 2.2.)
\end{comment}
\end{proof}

Let $\cN_\bM$ be a $p$-dimensional centered Gaussian vector with variance
matrix $\bM\in\R^{p\times p}$. Define $q_{\bM}^{\cN}:\R\to\R\cup\left\{ \pm\infty\right\}$
as the (extended) quantile function of $\Vert \cN_{\bM}\Vert_{\infty}$, i.e.
\begin{align*}
q_{\bM}^{\cN}\left(\alpha\right) & :=\inf\left\{ t\in\R;\P\left(\Vert \cN_{\bM}\Vert_{\infty}\leqslant t\right)\geqslant\alpha\right\} ,\quad\alpha\in\R.
\end{align*}
We here interpret $q_{\bM}^{\cN}\left(\alpha\right)$ as $+\infty(=\inf\emptyset)$
if $\alpha\geqslant1,$ and $-\infty(=\inf\R)$ if $\alpha\leqslant0$,
such that $q_{\bM}^{\cN}$ is monotone increasing.
\begin{thm}\label{lem:QuantileComparisonLemma} 
Let $\bM\in\R^{p\times p}$ be a symmetric positive semi-definite matrix with strictly
positive diagonal, i.e.~$M_{j,j}\in(0,\infty)$ for all $j\in[p]$, let $\bU$ be an
$\R^{p}$-valued random variable, and let $q$ denote the quantile function of
$\Vert \bU\Vert_{\infty}$. Then
\[
q_{\bM}^{\cN}\left(\alpha-2\rho\left(\bU,\cN_{\bM}\right)\right)\leqslant q\left(\alpha\right)\leqslant q_{\bM}^{\cN}\left(\alpha+\rho\left(\bU,\cN_{\bM}\right)\right)\text{ for all }\alpha\in\left(0,1\right).
\]
\end{thm}
\begin{proof}
If $\rho(\bU,\cN_\bM)=0,$ then the distributions of $\bU$ and $\cN_\bM$ agree on all
hyperrectangles. In particular, these distributions agree on all cubes
$[-t,t]^p,t\in[0,\infty),$ implying equality of quantile functions, as claimed. For
the remainder of the proof, we therefore take $\rho(\bU,\cN_\bM)\in(0,\infty).$ 
Since $\bM$ has a strictly positive diagonal, by the union bound, for any $t\in\R$ we have
\[
\P\left(\Vert \cN_{\bM}\Vert_{\infty}=t\right)\leqslant\sum_{j=1}^{p}\P\left(\left|\mathrm{N}\left(0,1\right)\right|=t/\sqrt{M_{j,j}}\right)=0.
\]
It follows that for each $\alpha\in\left(0,1\right),$
$q_{\bM}^{\cN}\left(\alpha\right)\in\R$ is uniquely defined by
\[
\P\left(\Vert \cN_{\bM}\Vert_{\infty}\leqslant q_{\bM}^{\cN}\left(\alpha\right)\right)=\alpha.
\]
Fix $\alpha\in(0,1)$. In establishing the \emph{lower} bound we may take
$2\rho\left(\bU,\cN_{\bM}\right)<\alpha$. (Otherwise
$q_{\bM}^{\cN}(\alpha-2\rho(\bU,\cN_{\bM}))=-\infty$ and there is nothing to
show.) Then
\[
   \left[-q_{\bM}^{\cN}\left(\alpha-2\rho\left(\bU,\cN_{\bM}\right)\right),q_{\bM}^{\cN}\left(\alpha-2\rho\left(\bU,\cN_{\bM}\right)\right)\right]^{p} 
\]
is a rectangle and, thus,
\begin{align*}
&\P\left(\Vert \bU\Vert_{\infty}\leqslant q_{\bM}^{\cN}\left(\alpha-2\rho\left(\bU,\cN_{\bM}\right)\right)\right)\\
&\leqslant\P\left(\Vert \cN_{\bM}\Vert_{\infty}\leqslant q_{\bM}^{\cN}\left(\alpha-2\rho\left(\bU,\cN_{\bM}\right)\right)\right)+\rho\left(\bU,\cN_{\bM}\right) <\alpha,
\end{align*}
which implies the lower bound. In establishing the \emph{upper} bound we may
assume $\rho(\bU,\cN_{\bM})<1-\alpha.$ (Otherwise
$q_{\bM}^{\cN}\left(\alpha+\rho\left(\bU,\cN_{\bM}\right)\right)=+\infty$ and there is
nothing to show.) Then from the rectangle
\[
   \left[-q_{\bM}^{\cN}\left(\alpha+\rho\left(\bU,\cN_{\bM}\right)\right),q_{\bM}^{\cN}\left(\alpha+\rho\left(\bU,\cN_{\bM}\right)\right)\right]^{p}, 
\]
a similar calculation shows
\begin{align*}
\P\left(\Vert \bU\Vert_{\infty}\leqslant q_{\bM}^{\cN}\left(\alpha+\rho\left(\bU,\cN_{\bM}\right)\right)\right) & \geqslant\alpha,
\end{align*}
which by definition of quantiles implies the upper bound.
\end{proof}

Now, define $q_{n}(\alpha)$ as the $\alpha$-quantile
of $\Vert \cS_{n}\Vert_{\infty}$
\begin{align*}
q_{n}(\alpha) & :=\inf\left\{ t\in\R;\P(\Vert \cS_{n}\Vert_{\infty}\leqslant t)\geqslant\alpha\right\} ,\quad\alpha\in\left(0,1\right),
\end{align*}
and let $\widehat{q}_{n}(\alpha)$ be the $\alpha$-quantile
of $\Vert\widehat{\cS}_{n}^{e}\Vert_{\infty}$ computed conditional
on $\{(\bZ_{i},\widehat{\bZ}_{i})\}_{i=1}^n$, i.e.
\[
\widehat{q}_{n}(\alpha):=\inf\left\{ t\in\R;\P_{e}(\Vert\widehat{\cS}_{n}^{e}\Vert_{\infty}\leqslant t)\geqslant\alpha\right\} ,\quad\alpha\in\left(0,1\right).
\]

\begin{thm}
[\textbf{Quantile Comparison}]\label{thm:QuantileComparison} If
(\ref{eq:MomentConds}) holds for some constant $b\in(0,\infty)$ and
a non-random sequence $B_{n}$ in $[1,\infty)$, and
\[
\rho_{n}:=2C_{b}\left(\frac{B_{n}^{4}\ln^{7}\left(pn\right)}{n}\right)^{1/6}
\]
denotes two times the upper bound (\ref{eq:HDCLTUpperBnd}) in Theorem \ref{thm:HDCLT}, then
\[
q_{\bSigma}^{\cN}\left(1-\alpha-\rho_{n}\right)\leqslant q_{n}\left(1-\alpha\right)\leqslant q_{\bSigma}^{\cN}\left(1-\alpha+\rho_{n}\right)\text{ for all }\alpha\in\left(0,1\right).
\]
If, in addition, (\ref{eq:L2PnEstErrCond}) holds for some non-random sequences
$\beta_{n}$ and $\delta_{n}$
in $[0,\infty)$, and
\[
\rho'_{n}:=2C_{b}'\left(\delta_{n}\lor\left(\frac{B_{n}^{4}\ln^{6}\left(pn\right)}{n}\right)^{1/6}\right)
\]
denotes two times the upper bound (\ref{eq:HDBootstrapUpperBnd}) in Theorem \ref{thm:HDMultBootApprox}, then
with probability at least $1-\beta_{n}-1/\ln^{2}(pn),$
\[
q_{\bSigma}^{\cN}\left(1-\alpha-\rho_{n}'\right)\leqslant\widehat{q}_{n}\left(1-\alpha\right)\leqslant q_{\bSigma}^{\cN}\left(1-\alpha+\rho_{n}'\right)\text{ for all }\alpha\in\left(0,1\right).
\]
\end{thm}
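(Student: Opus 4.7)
The theorem is a direct corollary of the quantile comparison lemma (Lemma \ref{lem:QuantileComparisonLemma}) combined, respectively, with the high-dimensional CLT (Theorem \ref{thm:HDCLT}) and the high-dimensional multiplier bootstrap theorem (Theorem \ref{thm:HDMultBootApprox}). The plan is therefore to instantiate Lemma \ref{lem:QuantileComparisonLemma} with the correct choices of random vector and of Gaussian reference, and then to absorb the two-to-one asymmetry in the lemma bound into the factor-of-two inflation already built into the definitions of $\rho_n$ and $\rho_n'$.

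For part (i), I would apply Lemma \ref{lem:QuantileComparisonLemma} with $U:=S_n$ and $M:=\Sigma$. Noting that $\Sigma$ is symmetric and positive definite under the lower-moment bound in (\ref{eq:MomentConds}), the lemma yields, for every $\alpha\in(0,1)$, the chain
\[
q_{\Sigma}^{N}\bigl(1-\alpha-2\rho(S_n,N_{\Sigma})\bigr)\leqslant q_n(1-\alpha)\leqslant q_{\Sigma}^{N}\bigl(1-\alpha+\rho(S_n,N_{\Sigma})\bigr).
\]
Theorem \ref{thm:HDCLT} then gives $\rho(S_n,N_{\Sigma})\leqslant \rho_n/2$. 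Since $q_{\Sigma}^{N}$ is monotone (as the extended quantile function of a real-valued random variable), replacing the arguments by the larger negative (respectively the smaller positive) perturbation $\rho_n$ preserves the inequalities, yielding part (i).

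For part (ii), I would apply Lemma \ref{lem:QuantileComparisonLemma} conditionally on the data $\{Z_i,\widehat Z_i\}_{i=1}^n$. The lemma's proof uses only the distributional distance $\rho$ between the target random vector and a Gaussian with symmetric positive definite covariance; it is indifferent to whether that distance is computed unconditionally or in a conditional world. Conditionally, $\widehat S_n^e$ is centered Gaussian with covariance $n^{-1}\sum_{i=1}^{n}\widehat Z_i\widehat Z_i^{\top}$, the relevant quantile is $\widehat q_n(\alpha)$, and the relevant distance is $\widetilde\rho(\widehat S_n^e,N_{\Sigma})$. Applying the lemma conditionally therefore gives, on every realization of the data,
\[
q_{\Sigma}^{N}\bigl(1-\alpha-2\widetilde\rho(\widehat S_n^e,N_{\Sigma})\bigr)\leqslant \widehat q_n(1-\alpha)\leqslant q_{\Sigma}^{N}\bigl(1-\alpha+\widetilde\rho(\widehat S_n^e,N_{\Sigma})\bigr).
\]
Theorem \ref{thm:HDMultBootApprox} then implies $\widetilde\rho(\widehat S_n^e,N_{\Sigma})\leqslant \rho_n'/2$ on an event of probability at least $1-\beta_n-1/\ln^{2}(pn)$, and on this event monotonicity of $q_{\Sigma}^{N}$ again lets us replace $2\widetilde\rho$ and $\widetilde\rho$ by the common value $\rho_n'$, completing the proof.

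There is really no substantive obstacle: the mild point to track is that Lemma \ref{lem:QuantileComparisonLemma} uses a factor of two on the lower side and one on the upper side, which is why $\rho_n$ and $\rho_n'$ are defined as \emph{twice} the bounds from Theorems \ref{thm:HDCLT} and \ref{thm:HDMultBootApprox}; this absorption is what makes the statements symmetric in $\pm\rho_n$ and $\pm\rho_n'$. The only other detail worth noting is that $\Sigma$ must be positive definite so that Lemma \ref{lem:QuantileComparisonLemma} applies, which is ensured by the coordinatewise lower bound on the second moments in (\ref{eq:MomentConds}).
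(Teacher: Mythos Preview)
Your proposal is correct and follows essentially the same approach as the paper: apply Lemma \ref{lem:QuantileComparisonLemma} with $U=S_n$ (respectively, $U=\widehat{S}_n^e$ conditionally on the data) and $M=\Sigma$, then invoke Theorem \ref{thm:HDCLT} (respectively, Theorem \ref{thm:HDMultBootApprox}) to bound $2\rho(S_n,N_\Sigma)\leqslant\rho_n$ (respectively, $2\widetilde\rho(\widehat S_n^e,N_\Sigma)\leqslant\rho_n'$ on the high-probability event), with monotonicity of $q_\Sigma^N$ handling the asymmetric factor of two. One minor quibble: the coordinatewise lower bound in (\ref{eq:MomentConds}) guarantees only that the diagonal of $\Sigma$ is bounded below by $b$, not full positive definiteness; however, inspection of the proof of Lemma \ref{lem:QuantileComparisonLemma} shows that only $\Sigma_{jj}>0$ is actually used, so this does not affect the argument.
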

\begin{proof}
Observe that (\ref{eq:MomentConds}) implies that $\bSigma$ is a symmetric positive
semi-definite matrix with strictly positive diagonal. Apply Theorem
\ref{lem:QuantileComparisonLemma} with $\bU=\cS_{n}$ to obtain
\[
q_{\bSigma}^{\cN}(1-\alpha-2\rho(\cS_{n},\cN_n))\leqslant q_{n}\left(1-\alpha\right)\leqslant q_{\bSigma}^{\cN}(1-\alpha+\rho(\cS_{n},\cN_n))\text{ for all }\alpha\in\left(0,1\right).
\]
The \emph{first} pair of inequalities then follows from
$2\rho(\cS_{n},\cN_n)\leqslant\rho_{n}$, cf.~Theorem \ref{thm:HDCLT}. To
establish the \emph{second} claim, apply Theorem \ref{lem:QuantileComparisonLemma} with
$\bU=\widehat{\cS}_{n}^{e}$ and conditional on the
$\{(\bZ_{i},\widehat{\bZ}_{i})\}_{i=1}^n$ to obtain
\[
q_{\bSigma}^{\cN}(1-\alpha-2\widetilde{\rho}(\widehat{\cS}_{n}^{e},\cN_n))\leqslant\widehat{q}_{n}\left(1-\alpha\right)\leqslant q_{\bSigma}^{\cN}(1-\alpha+\widetilde{\rho}(\widehat{\cS}_{n}^{e},\cN_n))\text{ for all }\alpha\in\left(0,1\right).
\]
The second pair of inequalities then follows on the event $2\widetilde{\rho}(\widehat{\cS}_{n}^{e},\cN_n)\leqslant\rho'_{n},$
which by Theorem \ref{thm:HDMultBootApprox} occurs with probability
at least $1-\beta_{n}-1/\ln^{2}(pn).$
\end{proof}
\begin{thm}
[\textbf{Multiplier Bootstrap Consistency}]\label{thm:MultiplierBootstrapConsistency}
Let (\ref{eq:MomentConds}) and (\ref{eq:L2PnEstErrCond}) hold for some constant
$b\in(0,\infty)$ and non-random sequences $B_{n}$ in $[1,\infty)$ and
$\delta_{n}$ and $\beta_{n}$ both in $[0,\infty).$ Then there is a constant
$C_{b}\in(0,\infty)$, depending only on $b$, such that
\[
\sup_{\alpha\in\left(0,1\right)}\big|\P\big(\Vert \cS_{n}\Vert_{\infty}>\widehat{q}_{n}\left(1-\alpha\right)\big)-\alpha\big|\leqslant C_{b}\max\left\{\beta_{n},\delta_{n},\left(\frac{B_{n}^{4}\ln^{7}\left(pn\right)}{n}\right)^{1/6},\frac{1}{\ln^{2}\left(pn\right)}\right\}.
\]
\end{thm}
\begin{proof}
Fix $\alpha\in(0,1)$. By Theorems \ref{thm:HDCLT} and \ref{thm:QuantileComparison},
\begin{align*}
\P\big(\Vert \cS_{n}\Vert_{\infty}\leqslant\widehat{q}_{n}\left(1-\alpha\right)\big) & \leqslant\P\big(\Vert \cS_{n}\Vert_{\infty}\leqslant q_{\bSigma}^{\cN}\left(1-\alpha+\rho_{n}'\right)\big)+\beta_{n}+\frac{1}{\ln^{2}(pn)}\\
 & \leqslant\P\left(\Vert \cN_{n}\Vert_{\infty}\leqslant q_{\bSigma}^{\cN}\left(1-\alpha+\rho'_{n}\right)\right)+\rho_{n}+\beta_{n}+\frac{1}{\ln^{2}(pn)},
\end{align*}
with $\rho_n$ and $\rho_n'$ defined in Theorem \ref{thm:QuantileComparison}.
If $\rho_n'\geqslant\alpha$, then
$q_{\bSigma}^{\cN}\left(1-\alpha+\rho'_{n}\right)=+\infty$ and, thus,
\[
   \P\left(\Vert \cN_{n}\Vert_{\infty}\leqslant q_{\bSigma}^{\cN}\left(1-\alpha+\rho'_{n}\right)\right)=1\leqslant1-\alpha+\rho_{n}'.
\]
If $\rho_n'<\alpha$, then
\[
   \P\left(\Vert \cN_{n}\Vert_{\infty}\leqslant q_{\bSigma}^{\cN}\left(1-\alpha+\rho'_{n}\right)\right)=1-\alpha+\rho'_{n}.
\]
Continuing the initial string of inequalities, in either case we arrive at
\[
   \P\big(\Vert \cS_{n}\Vert_{\infty}\leqslant\widehat{q}_{n}\left(1-\alpha\right)\big)\leqslant1-\alpha+\rho_{n}'+\rho_{n}+\beta_{n}+\frac{1}{\ln^{2}(pn)}.
\]
Parallel reasoning shows
\begin{align*}
\P\big(\Vert \cS_{n}\Vert_{\infty}\leqslant\widehat{q}_{n}\left(1-\alpha\right)\big) & \geqslant1-\alpha-\Big(\rho_{n}'+\rho_{n}+\beta_{n}+\frac{1}{\ln^{2}(pn)}\Big).
\end{align*}
The claim now follows from combining and rearranging the previous two displays,
taking the supremum over $\alpha\in(0,1)$, and recasting the constant
$C_b\in(0,\infty)$, which can be chosen to depend only on $b$.
\end{proof}

\section{Solution Existence, Sparsity and
Uniqueness}\label{sec:ExistenceSparsityAndUniqueness} 
The relation $\widehat{\btheta}(\lambda)\in\widehat{\Theta}(\lambda)$ in
(\ref{eq:ell1PenalizedMEstimationIntro}) hinges on the fundamental property of
existence of a minimizer, i.e.~the non-emptiness of $\widehat{\Theta}(\lambda)$.
The non-emptiness, cardinality, and related properties of the solution set (a
subset of $\R^p$) generally depend on the data, penalty level, and parameter
space. While we in the main sections of the paper presume the existence of a
minimizer, for the sake of completeness, in this section we provide criteria for
existence of a solution as well as related properties. To keep matters interesting
yet statements simple, we here consider the full (hence unbounded) parameter
space $\Theta=\R^p$.\footnote{Of course, since the criterion is presumed convex
(hence continuous), non-emptiness and compactness of the parameter space suffice
for the existence of a solution, cf.~Weierstrass' extreme value theorem.} Denote
the $n\times p$ regressor matrix $\mathbf{X}:=[\bX_1:\cdots:\bX_n]^\top$ and use
$\mathbf{X}_{J}$ to denote the $n\times|J|$ submatrix of $\mathbf{X}$ with
columns indexed by $J\subseteq\left[p\right]$. (The rank of
$\mathbf{X}_\emptyset$ is interpreted as zero.)

\begin{thm}[\textbf{$\boldsymbol{\ell_1}$-ME Existence and
Sparsity}]\label{thm:ExistenceAndSparsity} Let the loss function
$m:\R\times\mathcal{Y}\to\R$ be non-negative, $m(\cdot,\by)$ convex for all
$\by\in\mathcal{Y}$, and $\Theta=\R^p$. Then for any $\lambda\in(0,\infty)$  and
any realization of $\{(\bY_i,\bX_i)\}_{i=1}^{n}$, the following properties hold:
\begin{enumerate}
\item\label{enu:Existence} The set of minimizers
$\widehat{\Theta}\left(\lambda\right)$ in
(\ref{eq:ell1PenalizedMEstimationIntro}) is non-empty, convex and compact.
\item\label{enu:Sparsity} For at least one minimizer
$\widehat{\btheta}\in\widehat{\Theta}\left(\lambda\right)$, the columns
$\{\mathbf{X}_j;{j\in\mathrm{supp}(\widehat\btheta)}\}$ of $\mathbf{X}$
corresponding to the non-zero entries of $\widehat{\btheta}$ are linearly
independent,
i.e.~$\mathrm{rank}(\mathbf{X}_{\mathrm{supp}(\widehat\btheta)})=\|\widehat{\btheta}\|_0$.
In particular, for such a minimizer, $\|\widehat{\btheta}\|_0\leqslant n\wedge
p$.
\end{enumerate}
\end{thm}

As stated in the theorem, non-negativity establishes not only existence of a
solution, but also existence of a solution for which the ``active'' columns of
$\mathbf{X}$ are linearly independent. Such a solution is therefore
\emph{sparse} in the sense of having no more than $n\wedge p$ non-zeros, with
the interesting part of the bound being
$\|\widehat{\btheta}\|_0\leqslant n$. All examples in Section
\ref{sec:Examples} concern non-negative loss functions, thus guaranteeing the
existence of a (sparse) solution.

Theorem \ref{thm:ExistenceAndSparsity} certainly has precursors in the
literature. Existence results for $\ell_1$-penalized M-estimators are mentioned
in \citet[Section 2.3]{tibshirani_lasso_2013} for differentiable and strictly
convex loss functions. As shown in Theorem \ref{thm:ExistenceAndSparsity},
neither differentiability nor strict convexity is necessary for this
conclusion.\footnote{Moreover, one can find a differentiable and strictly
convex function and a positive penalty level $\lambda$ such that the criterion
function in (\ref{eq:ell1PenalizedMEstimationIntro}) can be made arbitrarily
small and, thus, no minimizer exists. A condition (such as non-negativity)
therefore appears to be missing or implicit in the treatment in
\citet{tibshirani_lasso_2013}.}
% J: Take m(t,y)=exp(-t) - t (y being irrelevant) and X=1, Y=1, lambda=.5. Then
% the criterion is exp(-theta) - theta + .5|theta| which as theta goes to
% infinity goes to minus infinity, so no minimizer exists.
The existence of a solution associated with linearly
independent active columns dates back to \citet{osborne_lasso_2000} for the
LASSO and \citet[Theorem B.3]{rosset_boosting_2004} for differentiable non-negative
convex loss functions. See also \citet[Section 5.2]{tibshirani_lasso_2013} for
discussion. Again, the differentiability is not necessary. In particular,
Theorem \ref{thm:ExistenceAndSparsity} applies to both quantile regression and
trimmed LAD loss functions, both of which are non-differentiable.

\begin{rem}[\textbf{Non-Negativity}]
The non-negativity of the loss function used in establishing the existence of an
${\ell_1}$-ME is actually a bit of a red herring as adding or subtracting a
constant from the loss bears no impact on the solution set. The crucial element
is that the loss is bounded from below, and non-negativity is a simple way to
ensure this. Inspecting the proof of Theorem \ref{thm:ExistenceAndSparsity}, we
see that this property is only used to ensure positivity of the ``asymptotic
slope'' of the ${\ell_1}$-penalized M-estimation criterion [assuming
$\lambda\in(0,\infty)$]. A positive asymptotic slope follows from the condition
\[
   \lim_{\tau\to\infty}\frac{m\left(\tau v,\by\right)}{\tau}\in\left[0,\infty\right]\text{ for both }v\in\left\{ -1,1\right\} \text{ and all }\by\in\mathcal{Y}.
\]
A \textit{positive} asymptotic slope means that the loss eventually increases,
and a \textit{zero} asymptotic slope means that the loss eventually flattens
and, hence, the penalty eventually dominates. In the previous display, existence
of the limits (as possibly extended real numbers) is guaranteed by convexity.
Non-negativity of the displayed limits follows trivially from non-negativity of
the loss itself.\qed
\end{rem}

\begin{rem}[{\textbf{Post-LASSO Existence}}]
The existence of a solution for which the active columns are linearly
independent is particularly interesting in the context of least squares post
variable selection based on the LASSO, also known as \emph{post-LASSO}
\citep{belloni_sparse_2012, belloni_high_2011, belloni_least_2013}.\footnote{
Post-LASSO is sometimes referred to as \emph{Gauss LASSO}. The method coincides
with the \citet{meinshausen_relaxed_2007} \emph{relaxed LASSO}, when the
relaxation parameter is set to zero.} Specifically, the linear independence
implied by such a LASSO solution $\widehat\btheta$ ensures that least squares
based on the regressors selected by $\widehat\btheta$ has a unique solution,
namely
$(\mathbf{X}_{\mathrm{supp}(\widehat\btheta)}^{\top}\mathbf{X}_{\mathrm{supp}(\widehat\btheta)})^{-1}\mathbf{X}_{\mathrm{supp}(\widehat\btheta)}^{\top}\bY$,
where $\bY$ denotes the $n\times 1$ vector of outcomes. Hence, while the
post-LASSO need not exist uniquely for \emph{every} LASSO solution, as long as
$\lambda\in(0,\infty)$, there is a solution for which it does.\qed
\end{rem}

\begin{rem}[\textbf{Non-Sparse Solutions}]
Not all minimizers are necessarily sparse. In fact, one can construct examples
with $p>n$ and some solution having all $(p)$ non-zeros. For a simple numerical
example, take the LAD loss $m(t,y)=|y-t|$ from median regression  with $n=1$
observation and $p=2$ parameters, data $Y=X_1=X_2=1$, and the penalty level
$\lambda\in(0,1)$. From the objective
$|1-(\theta_1+\theta_2)|+\lambda(|\theta_1|+|\theta_2|)$ and $\lambda\in(0,1)$
we see that it is more costly to move $\theta_1+\theta_2$ away from one than to
move $\theta_1$ or $\theta_2$ away from zero. Hence, any solution sets
$\theta_2=1-\theta_1$. The function $|\theta_1|+|1-\theta_1|$ equals one for all
$\theta_1\in[0,1]$ and is strictly greater otherwise, so the set of solutions is
the closed line segment $\{(u,1-u)\in\R^2;u\in[0,1]\}$. Note that the solution
set is both closed and bounded and involves both sparse solutions (the two end
points) and non-sparse solutions (everything in between).\qed
\end{rem}

We next turn to the question of uniqueness. We say that the columns of
$\mathbf{X}$ are in \emph{general position} if for any integer
$k\in\{0,1,\dotsc,(n\wedge p)-1\}$, any $k+1$ column indices
$j_1,\dotsc,j_{k+1}\in\left[p\right]$, and any signs
$\sigma_1,\dotsc,\sigma_{k+1}\in\{-1,1\}$, the affine span of the $k+1$ signed
columns $\{\sigma_1\mathbf{X}_{j_1},\dotsc,\sigma_{k+1}\mathbf{X}_{j_{k+1}}\}$
does not contain any element of $\{\pm
\mathbf{X}_j;j\in[p]\backslash\{j_1,\dotsc,j_{k+1}\}\}$.

\begin{thm}[\textbf{$\boldsymbol{\ell_1}$-ME Uniqueness via General Position}]\label{thm:UniquenessGeneralPosition}
Let the loss function $m:\R\times\mathcal{Y}\to\R$ be
non-negative, $m(\cdot,\by)$ strictly convex for all $\by\in\mathcal{Y}$, and
$\Theta=\R^p$. Then for any $\lambda\in(0,\infty)$ and any realization
of $\{(\bY_i,\bX_i)\}_{i=1}^{n}$ for which the columns of $\mathbf{X}$ are in
general position, there is a unique minimizer
$\widehat{\Theta}\left(\lambda\right)=\{\widehat{\btheta}\left(\lambda\right)\}$
with $\|\widehat{\btheta}\left(\lambda\right)\|_0\leqslant n\wedge p$.
\end{thm}
Sparsity of the solution follows from uniqueness, cf.~Theorem
\ref{thm:ExistenceAndSparsity}. Like the existence result, the uniqueness
theorem have precursors in earlier literature, including
\citet{osborne_lasso_2000} for the LASSO and \citet[Theorem
B.5]{rosset_boosting_2004} for differentiable non-negative convex loss functions.
See \citet{tibshirani_lasso_2013} for discussion and additional references.
Our modest contribution here lies in showing that the crucial parts of the argument can
accommodate non-differentiability by appealing to the Karush-Kuhn-Tucker (KKT)
conditions for optimality.

While general position is an abstract condition, it is satisfied with probability one
when the regressors are drawn from an absolutely continuous distribution on $\R^{p\times n}$.
This observation leads us to the following corollary.

\begin{cor}[\textbf{$\boldsymbol{\ell_1}$-ME Uniqueness via Absolute
Continuity}]\label{cor:UniquenessAbsoluteContinuity} 
Let the loss function
$m:\R\times\mathcal{Y}\to\R$ be non-negative, $m(\cdot,\by)$ strictly convex for
all $\by\in\mathcal{Y}$, $\Theta=\R^p$, and the elements of $\mathbf{X}$
absolutely continuously distributed with respect to Lebesgue measure on
$\R^{np}$. Then for any $\lambda\in(0,\infty)$ and no matter the distribution of
$\{\bY_i\}_{i=1}^{n}$, with probability one, there is a unique $\ell_1$-ME,
which then has at most $n\wedge p$ non-zero components.
\end{cor}

See \citet[Lemma 5]{tibshirani_lasso_2013} for a similar statement for
differentiable and strict convex (non-negative) loss functions and \emph{ibid.}
(p.~1463) for the argument of almost surely sufficiency of absolute continuity
for general position of $\mathbf X$.

\begin{rem}[{\textbf{Necessity of General Position}}]
One cannot in general achieve uniqueness without the columns of $\mathbf{X}$
being in general position. For a simple numerical example, take the square loss
$m(t,y)=(1/2)(y-t)^2$ from mean regression with $n=1$ observation and $p=2$
parameters, data $Y=X_{1}=X_{2}=1$, and penalty level
$\lambda=\textstyle{\frac{1}{2}}$. While the square loss is strictly convex,
since $X_1=X_2$, the columns of $\mathbf{X}$ are not in general position. The
KKT conditions associated with minimizing
$\textstyle{\frac{1}{2}}(1-\theta_1-\theta_2)^2+\textstyle{\frac{1}{2}}(|\theta_1|+|\theta_2|)$
are satisfied by any pair $(u,\textstyle{\frac{1}{2}}-u)$ with
$u\in(0,\textstyle{\frac{1}{2}})$, so the solution is not unique.\qed
\end{rem}

\begin{rem}[{\textbf{Necessity of Strict Convexity}}]
One cannot in general achieve uniqueness without strict convexity of the loss
function. For a simple numerical example, take the (not strictly) convex LAD
loss $m(t,y)=|y-t|$ from median regression with $n=1$ observation and $p=2$
parameters, data $Y=X_{1}=1$ and $X_{2}=0$, and penalty level $\lambda=1$. Since
$X_1$ and $X_2$ differ in terms of more than just their signs, the columns of
$\mathbf{X}$ are in general position. (Recall that the affine span of a singleton is
the singleton itself.) However, the objective
$|1-(\theta_1+0\cdot\theta_2)|+|\theta_1|+|\theta_2|$ is minimized at any
$\theta_1\in[0,1]$ with $\theta_2=0$, so the solution is not unique.\qed
\end{rem}

\begin{rem}[\textbf{Unpenalized Coefficients}]\label{rem:UnpenalizedCoefficients}
In cases where one leaves one or more coefficients out of the penalty, the
existence of solution can no longer be guaranteed independently of the data. For
example, consider including an unpenalized intercept in a binary response
setting with negative log-likelihood loss. In the event that all outcomes are of
the same label (all zeros or all ones), one can achieve complete separation
based on the constant regressor alone. Statements such as ``$\ell_1$-penalized
logistic regression always has a solution'' appearing in the literature must
therefore explicitly or implicitly penalize even the intercept (provided one is
present). For a treatment of existence in the so-called generalized LASSO
problem (with possibly non-square loss), where the penalty is the $\ell_1$ norm
of a (not necessarily identity) matrix times the coefficient vector, see
\citet{ali_generalized_2019}. \qed
\end{rem}
% J: For any candidate theta, choosing the intercept sufficiently large/small
% (depending on the common sign of the outcomes in the thought experiment), one
% can drive up the log likelihood to infinity. Since the intercept is not
% penalized, the criterion can be made arbitrarily large, so no minimizer
% exists.

\section{Proofs for Appendix \ref{sec:ExistenceSparsityAndUniqueness}}

For this section, we let $\widehat M:\Theta\to\R$ abbreviate the average loss,
defined by
\[
\widehat{M}(\btheta):=\En[m(\bX_i^{\top}\btheta,\bY_i)],\quad\btheta\in\Theta=\R^p.   
\]

\begin{proof}[\sc{Proof of Theorem} \ref{thm:ExistenceAndSparsity}]
First, we consider Part \ref{enu:Existence} of the theorem. To show existence,
consider the objective function $f$ on $\R^p$ defined by
\[
f\left(\btheta\right):=\widehat{M}\left(\btheta\right)+\lambda\left\|\btheta\right\|_{1}.
\]
Then $f$ is convex and finite (i.e. real-valued). The recession cone
$R_f:=\{\bvartheta\in\R^p;f^{\infty}(\bvartheta)\leqslant 0\}$ of $f$ consists
of the vectors $\bvartheta\in\R^p$ such that the recession function $f^{\infty}$
at $\bvartheta$ is non-positive. To see that $R_f=\{\mathbf{0}_p\}$, invoke
\citet[Corollary 8.5.2]{rockafellar_convex_1970} to evaluate $f^{\infty}$ as
\[
f^{\infty}\left(\bvartheta\right) = \lim_{\tau\to\infty}\frac{f\left(\tau\bvartheta\right)}{\tau}
   = \lim_{\tau\to\infty}\frac{\widehat{M}\left(\tau\bvartheta\right)+\lambda\left\|\tau\bvartheta\right\|_1}{\tau}
   = \lim_{\tau\to\infty}\frac{\widehat{M}\left(\tau\bvartheta\right)}{\tau} + \lambda\left\|\bvartheta\right\|_1.
\]
Loss non-negativity implies $f^{\infty}(\bvartheta)\geqslant
\lambda\|\bvartheta\|_1$, so from $\lambda\in(0,\infty)$ we conclude that
$f^{\infty}(\bvartheta)>0$ for all $\bvartheta\in\R^p\backslash\{\mathbf{0}\}$.
\citet[Theorem 27.1(d)]{rockafellar_convex_1970} now shows that the set
$\widehat{\Theta}(\lambda)$ of minimizers of $f$ is non-empty and bounded.
Convexity of $\widehat{\Theta}(\lambda)$ follows from convexity of $f$. That the
level set $\widehat{\Theta}(\lambda)$ is closed (hence compact) follows from
continuity of $f$, which is a consequence of its convexity and finiteness on
$\R^p$ \citep[Corollary 10.1.1]{rockafellar_convex_1970}.

Now we consider Part \ref{enu:Sparsity} of the theorem. To show linear
independence, let
$\widehat{\btheta}\in\widehat{\Theta}\left(\lambda\right)(\neq\emptyset)$ be any
solution. We know that
$\mathrm{rank}(\mathbf{X}_{\mathrm{supp}(\widehat\btheta)})\leqslant\|\widehat{\btheta}\|_0$.
If equality holds, then we are done, so suppose that
$\mathrm{rank}(\mathbf{X}_{\mathrm{supp}(\widehat\btheta)})<\|\widehat{\btheta}\|_0$.
Let $\widehat{T}:=\mathrm{supp}(\widehat\btheta)$ abbreviate the support of
$\widehat{\btheta}$. Then there is a
$\bv\in\R^{\|\widehat{\btheta}\|_0}\backslash\{\mathbf{0}\}$ such that
$\mathbf{X}_{\widehat{T}}\bv=\mathbf{0}$. Since $\bv$ is non-zero, there is an
index $j\in\widehat{T}$ such that $v_j\neq0$. Fix such an index $j$. It then
follows that
\begin{equation}\label{eq:XjLinearInXks}
   \mathbf{X}_j=\sum_{k\in\widehat{T}\backslash\{j\}}c_k\mathbf{X}_k,\quad\text{where}\quad c_k:=-\frac{v_{k}}{v_{j}},\quad k\in\widehat{T}\backslash\{j\}.
\end{equation}
Per convexity, optimality of $\widehat{\btheta}$ is equivalent to
$\mathbf{0}\in\partial f(\widehat{\btheta})$, where $\partial
f(\widehat{\btheta})$ denotes the subdifferential of $f$ at $\widehat{\btheta}$.
Since all functions involved are finite convex, \citet[Theorems 23.8 and
23.9]{rockafellar_convex_1970} combine to show
\[
\partial f(\widehat{\btheta})=\partial\widehat{M}(\widehat{\btheta})+\lambda \partial\|\widehat{\btheta}\|_1
   = \mathbb{E}_n\left[\bX_i\partial_1 m(\bX_i^{\top}\widehat{\btheta},\bY_i)\right]+\lambda\sum_{k=1}^{p}\partial|\widehat{\theta}_k|,
\]
where summation is understood in the Minkowski (i.e.~set) sense, and $\partial_1
m(t,\by)$ denotes the subdifferential of $m(\cdot,\by)$ at $t$. Since
$\widehat{\btheta}$ is a solution, we can find $z_i\in-\partial_1
m(\bX_i^{\top}\widehat{\btheta},\bY_i),i\in[n]$, and
$\gamma_k\in\partial|\widehat{\theta}_k|,k\in[p]$, such that
\[
\mathbb{E}_n\left[z_i X_{i,k}\right] = \lambda \gamma_k,\quad k\in\left[p\right].
\]
Fix such $\{z_i\}_{i=1}^{n}$ and $\{\gamma_k\}_{k=1}^{p}$. Note that
$\gamma_k=\mathrm{sgn}(\widehat{\theta}_k)$ for $\widehat{\theta}_k\neq0$. From
(\ref{eq:XjLinearInXks}) we know that
$X_{i,j}=\sum_{k\in\widehat{T}\backslash\{j\}}c_kX_{i,k}$ for all $i\in[n]$, so
multiplying each side of (\ref{eq:XjLinearInXks}) by $\gamma_j$ and using
$\gamma_k^2=1$ for all $k\in\widehat{T}$, we arrive at
\begin{equation}\label{eq:gammaXjLinearIngammaXks}
   \gamma_j X_{i,j}=\sum_{k\in\widehat{T}\backslash\{j\}}a_k \gamma_k X_{i,k},\quad i\in[n],\;\quad\text{where}\quad a_k:=c_k \gamma_j \gamma_k,\quad k\in\widehat{T}\backslash\{j\}.
\end{equation}
Multiplying each side of \eqref{eq:gammaXjLinearIngammaXks} by $z_i$ and
then averaging over $i\in[n]$, we therefore arrive at
\[
\underbrace{\gamma_j \mathbb{E}_n\left[z_i X_{i,j}\right]}_{=\lambda} = \sum_{k\in\widehat{T}\backslash\{j\}}a_k \underbrace{\gamma_k\mathbb{E}_n\left[z_i X_{i,k}\right]}_{=\lambda}.
\]
The previous display and $\lambda\in(0,\infty)$ imply
$\sum_{k\in\widehat{T}\backslash\{j\}} a_k=1$. We now follow the argument on
\citet[p.~969]{rosset_boosting_2004}, included here for completeness. Define
$\bvartheta\in\R^p$ by $\vartheta_j:=-\gamma_j$,
$\vartheta_k:=a_k\gamma_k,k\in\widehat{T}\backslash\{j\}$, and
$\vartheta_k=0,k\notin\widehat{T}$. We construct $\widetilde{\btheta}\in\R^p$ by
moving $\widehat{\btheta}$ in the direction $\bvartheta$ until we hit a new
zero. That is, we let
\[
\widetilde{\btheta}:=\widehat{\btheta}+\tau_0\bvartheta\quad\text{where}\quad\tau_0:=\inf\left\{\tau\geqslant 0; \widehat{\theta}_k+\tau\vartheta_{k}=0\text{ for some }k\in\widehat{T}\right\}.
\]
Note that $\tau_0$ is finite, since $\widehat{\theta}_j+\tau\vartheta_j=0$ is
solved by $\tau=|\widehat{\theta}_j|$. 
For indices $J\subseteq[p]$, we let $\bdelta_{(J)}$ denote the
$|J|$-dimensional subvector of $\bdelta\in\R^p$ with indices indexed by $J$.
Then
\begin{align*}
   \mathbf{X}\widetilde{\btheta}=\mathbf{X}_{\widehat{T}}\widetilde{\btheta}_{(\widehat{T})}
   &=\mathbf{X}_{\widehat{T}}\widehat{\btheta}_{(\widehat{T})}+\tau_0\mathbf{X}_{\widehat{T}}\bvartheta_{(\widehat{T})}\\
   &=\mathbf{X}_{\widehat{T}}\widehat{\btheta}_{(\widehat{T})}+\tau_0\underbrace{\Big(-\gamma_j\mathbf{X}_{j}+\sum_{k\in\widehat{T}\backslash\{j\}}a_k \gamma_k \mathbf{X}_k\Big)}_{=\mathbf{0}_n\text{ cf.~\eqref{eq:gammaXjLinearIngammaXks}}}=\mathbf{X}\widehat{\btheta}.
\end{align*}
It follows that
$\widehat{M}(\widetilde\btheta)=\widehat{M}(\widehat\btheta)$, so
$\widetilde\btheta$ achieves the same loss as $\widehat\btheta$. 

Moreover, from
\begin{align*}
\|\widetilde\btheta\|_1
   &=|\widetilde\theta_j|+\sum_{k\in\widehat{T}\backslash\{j\}}|\widetilde\theta_k|\tag{\ensuremath{\mathrm{supp}(\widetilde\btheta)\subseteq\widehat T}}\\
   &=(\widehat\theta_j+\tau_0\vartheta_j)\mathrm{sgn}(\widehat\theta_j)+\sum_{k\in\widehat{T}\backslash\{j\}}(\widehat\theta_k+\tau_0\vartheta_k)\mathrm{sgn}(\widehat\theta_k)\tag{no sign flips}\\
   &=(\widehat\theta_j-\tau_0\gamma_j)\mathrm{sgn}(\widehat\theta_j)+\sum_{k\in\widehat{T}\backslash\{j\}}(\widehat\theta_k+\tau_0a_k\gamma_k)\mathrm{sgn}(\widehat\theta_k)\\
   &=\widehat\theta_j\mathrm{sgn}(\widehat\theta_j)-\tau_0+\sum_{k\in\widehat{T}\backslash\{j\}}\widehat\theta_k\mathrm{sgn}(\widehat\theta_k) + \tau_0 \sum_{k\in\widehat{T}\backslash\{j\}}a_k\tag{\ensuremath{\gamma_k = \mathrm{sgn}(\widehat\theta_k),k\in\widehat T}}\\
   &=|\widehat\theta_j|+\sum_{k\in\widehat{T}\backslash\{j\}}|\widehat\theta_k|\tag{\ensuremath{\sum_{k\in\widehat{T}\backslash\{j\}}a_k=1}}\\
   &=\|\widehat\btheta\|_1
\end{align*}
we see that $\widetilde\btheta$ achieves the same $\ell_1$ norm as well. It
follows that $\widetilde{\btheta}$ is also a solution and that
$\widetilde{\btheta}$ has (at least) one more zero than $\widehat{\btheta}$. We
can repeat the above argument until we arrive at a solution for which the
columns indexed by its support are linearly independent.

Finally, letting $\widehat\btheta$ be a solution for which
$\mathrm{rank}(\mathbf{X}_{\mathrm{supp}(\widehat\btheta)})=\|\widehat{\btheta}\|_0$,
sparsity then follows from
$\mathrm{rank}(\mathbf{X}_{\mathrm{supp}(\widehat\btheta)})\leqslant n\wedge p$.
\end{proof}

\begin{proof}[\sc{Proof of Theorem} \ref{thm:UniquenessGeneralPosition}]
Existence follows from Theorem \ref{thm:ExistenceAndSparsity}. We argue
uniqueness in four steps. In \textit{Step 1}, we show that strict convexity of
$m(\cdot,\by)$ implies that every solution
$\widehat{\btheta}\in\widehat{\Theta}(\lambda)$ leads to the same linear forms
$\mathbf{X}\widehat\btheta$. In \textit{Step 2}, we use this observation in
combination with the optimality conditions to define the so-called
equicorrelation set $\mathscr E$, which (as we establish) contains the supports
of all solutions $\widehat{\btheta}\in\widehat{\Theta}(\lambda)$. In
\textit{Step 3}, we leverage the columns of $\bfX$ being in general position to
show that the columns picked out by the equicorrelation set are linearly
independent. In \textit{Step 4}, we show that every solution can be
characterized as the solution to the same strictly convex programming problem
and must therefore coincide.

\textbf{Step 1}. To establish equality of linear forms, seeking a contradiction,
suppose that we can find solutions $\widehat{\btheta}^{(0)}$ and
$\widehat{\btheta}^{(1)}$ for which
$\mathbf{X}\widehat\btheta^{(0)}\neq\mathbf{X}\widehat\btheta^{(1)}$. Then
$\bX_i^{\top}\widehat\btheta^{(0)}\neq \bX_i^{\top}\widehat\btheta^{(1)}$ for
some $i\in[n]$. Consider the objective function $f$ on $\R^p$ defined by
$f\left(\btheta\right):=\widehat{M}\left(\btheta\right)+\lambda\left\|\btheta\right\|_{1}$.
Per strict convexity of $m(\cdot,\by),\by\in\mathcal Y$, and convexity of
$\left\|\cdot\right\|_1$, for any $\tau\in(0,1)$ and
$\widehat\btheta^{(\tau)}:=(1-\tau)\widehat{\btheta}^{(0)}+\tau\widehat{\btheta}^{(1)}$
we have
\begin{align*}
f(\widehat\btheta^{(\tau)})
   &=\mathbb{E}_n\big[m\big((1-\tau)\bX_i^{\top}\widehat{\btheta}^{(0)}+\tau \bX_i^{\top}\widehat{\btheta}^{(1)},\bY_i\big)\big]+\lambda\|(1-\tau)\widehat{\btheta}^{(0)}+\tau\widehat{\btheta}^{(1)}\|_1\\
   &<\left(1-\tau\right)\mathbb{E}_n\big[m\big(\bX_i^{\top}\widehat{\btheta}^{(0)},\bY_i\big)\big]+\tau\mathbb{E}_n\big[m\big(\bX_i^{\top}\widehat{\btheta}^{(1)},\bY_i\big)\big]
      +(1-\tau)\lambda\|\widehat{\btheta}^{(0)}\|_1+\tau\lambda\|\widehat{\btheta}^{(1)}\|_1\\
   &=\left(1-\tau\right)\big[\widehat{M}(\widehat\btheta^{(0)})+\lambda\|\widehat{\btheta}^{(0)}\|_1\big]
      +\tau\big[\widehat{M}(\widehat\btheta^{(1)})+\lambda\|\widehat{\btheta}^{(1)}\|_1\big]=\inf_{\R^p} f,
\end{align*}
which contradicts optimality of $\widehat{\btheta}^{(0)}$ and
$\widehat{\btheta}^{(1)}$. Hence, $\mathbf{X}\widehat\btheta$ is constant across
solutions $\widehat{\btheta}\in\widehat{\Theta}(\lambda)$.

\textbf{Step 2.} As described in the proof of Theorem
\ref{thm:ExistenceAndSparsity}, optimality of $\widehat{\btheta}$ is equivalent
to there being $z_i\in-\partial_1
m(\bX_i^{\top}\widehat{\btheta},\bY_i),i\in[n]$, and
$\gamma_j\in\partial|\widehat{\theta}_j|,j\in[p]$, such that
\[
\mathbb{E}_n\left[z_i X_{i,j}\right] = \lambda \gamma_j\quad j\in\left[p\right],
\]
where $\partial_1 m(t,\by)$ denotes the subdifferential of $m(\cdot,\by)$ at
$t$. Note that $\gamma_j=\mathrm{sgn}(\widehat{\theta}_j)$ for
$\widehat{\theta}_j\neq0$. From Step 1 we know that every solution
$\widehat{\btheta}\in\widehat{\Theta}(\lambda)$ leads to the same linear forms
$\mathbf{X}\widehat\btheta$. We can therefore define the \emph{equicorrelation
set} $\mathscr{E}$ independently of the solution by
\[
\mathscr{E}:=\left\{j\in\left[p\right];
   \left|\mathbb{E}_n\left[z_iX_{i,j}\right]\right|=\lambda
   \text{ for some }z_i\in -\partial_1 m(\bX_i^{\top}\widehat{\btheta},\bY_i),i\in[n]\right\}.
\]
We claim that $\mathscr{E}$ contains the support of every solution. Indeed, let
$\widehat\btheta\in\widehat\Theta(\lambda)$ and
$j\in\mathrm{supp}(\widehat\btheta)$. Then there are $z_i\in-\partial_1
m(\bX_i^{\top}\widehat{\btheta},\bY_i),i\in[n]$, and
$\gamma_j\in\partial|\widehat{\theta}_j|,j\in[p]$, such that
$\mathbb{E}_n\left[z_i X_{i,j}\right] = \lambda \gamma_j$ with
$\gamma_j=\mathrm{sgn}(\widehat\theta_j)$ being either plus or minus one. Hence
$j\in\mathscr{E}$.

\textbf{Step 3.} 
This step is similar to the linear independence argument in the proof of Theorem
\ref{thm:ExistenceAndSparsity}. We here show that the columns $\mathbf{X}$ being
in general position implies that
$\mathrm{rank}(\mathbf{X}_{\mathscr{E}})=|\mathscr{E}|$. We prove the
contra-positive statement, which is that
$\mathrm{rank}(\mathbf{X}_{\mathscr{E}})<|\mathscr{E}|$ implies that the columns
of $\mathbf{X}$ are \textit{not} in general position. To this end, we may
without loss of generality assume that $|\mathscr{E}|\leqslant (n\wedge p)+1$.
(Indeed, if $|\mathscr{E}|>(n\wedge p)+1$, then taking any subset $\mathscr{E}'$
of $\mathscr{E}$ with $|\mathscr{E}'|=(n\wedge p)+1$, we have
$\mathrm{rank}(\mathbf{X}_{\mathscr{E}'})\leqslant n\wedge p<|\mathscr{E}'|$,
and we continue the argument with $\mathscr{E}'$ in place of $\mathscr{E}$.)
Now, since $\mathrm{rank}(\mathbf{X}_{\mathscr{E}})<|\mathscr{E}|$, there is a
$\bv\in\R^{|\mathscr{E}|}\backslash\{\mathbf 0\}$ such that
$\mathbf{X}_{\mathscr{E}}\bv=\mathbf{0}$. Since $\bv$ is non-zero, there is a
$j\in\mathscr{E}$ such that $v_j\neq0$. Fix such a $j$. It then follows that
\[
\mathbf{X}_j=\sum_{k\in\mathscr{E}\backslash\{j\}}c_k\mathbf{X}_k,\quad\text{where}\quad c_k:=-\frac{v_{k}}{v_{j}},\quad k\in\mathscr{E}\backslash\{j\}.
\]
From $k\in\mathscr{E}$, we know there are $z_i\in-\partial_1
m(\bX_i^{\top}\widehat{\btheta},\bY_i),i\in[n]$ such that
$\left|\mathbb{E}_n\left[z_iX_{i,k}\right]\right|=\lambda$. Fix such
$\{z_i\}_{i=1}^n$, and abbreviate
$\xi_k:=\mathrm{sgn}(\mathbb{E}_n\left[z_iX_{i,k}\right])$ for $k\in\mathscr E$.
Since $\lambda\in(0,\infty)$, we have $\xi_k\in\{-1,1\}$ and, thus, $\xi_k^2=1$
for each $k\in\mathscr E$. It therefore follows that
\[
\xi_j \mathbf{X}_j=\sum_{k\in\mathscr{E}\backslash\{j\}} a_k \xi_k \mathbf{X}_k,\quad a_k:=\xi_j \xi_k c_k,\quad k\in\mathscr{E}\backslash\{j\}.
\]
Multiplying the $i$th equation by $z_i$ and averaging over $i\in[n]$, we see
that
\[
\underbrace{\xi_j \mathbb{E}_n\left[z_iX_{i,j}\right]}_{=\lambda}=\sum_{k\in\mathscr{E}\backslash\{j\}} a_k \underbrace{\xi_k \mathbb{E}_n\left[z_iX_{i,k}\right]}_{=\lambda},
\]
which via $\lambda\in(0,\infty)$ further implies that
$\sum_{k\in\mathscr{E}\backslash\{j\}} a_k=1$. Deduce that $\xi_j \mathbf{X}_j$
lies in the affine span of $\{\xi_k
\mathbf{X}_k;k\in\mathscr{E}\backslash\{j\}\}$. Since $|\mathscr
E|\leqslant(n\wedge p)+1$, we have
$|\mathscr{E}\backslash\{j\}|\leqslant n\wedge p$, which shows that the columns
of $\mathbf{X}$ are not in general position.

\textbf{Step 4.} Since $\mathscr{E}$ contains the support of every solution, we
can characterize any solution $\widehat\btheta\in\widehat\Theta(\lambda)$ by
$\widehat\btheta_{(\mathscr{E}^c)}=\mathbf{0}$ and
\[
\widehat{\btheta}_{\left(\mathscr{E}\right)}
   \in\argmin_{\btheta\in\R^{\left|\mathscr{E}\right|}}
   \left\{\mathbb{E}_{n}[m\left(\bX_{i\mathscr{E}}^{\top}\btheta,\bY_i\right)]
      +\lambda\left\|\btheta\right\|_1\right\}.
\]
Strict convexity of $m(\cdot,\by),\by\in\mathcal Y$, and
$\mathrm{rank}(\mathbf{X}_{\mathscr{E}})=|\mathscr{E}|$ show that the function
$\btheta\mapsto\mathbb{E}_{n}[m(\bX_{i\mathscr{E}}^{\top}\btheta,\bY_i)]$
defined on $\R^{|\mathscr{E}|}$ is strictly convex. The right-hand side problem
therefore has a unique solution, so $\widehat\Theta(\lambda)$ must be singleton.
\end{proof}

\section{Additional Examples}\label{sec:Additional-Examples}

In this section, we provide additional examples of loss functions that fit the
M-estimation framework \eqref{eq:ell1PenalizedMEstimationIntro} with the loss
function $m(t,\by)$ being convex in its first argument.

\begin{example}
[\textbf{Logistic Calibration}]\label{exa:LogisticCalibration} 
In the context of average treatment effect estimation under a conditional
independence assumption with a high-dimensional vector of controls, consider the
\textit{logit propensity score model} 
\begin{equation}
\P(Y=1|\bX)=\Lambda(\bX^{\top}\btheta_{0}),\label{eq: logit model}
\end{equation}
where $Y\in\left\{ 0,1\right\} $ is a treatment indicator, $\bX$ a vector of
controls, and $\Lambda$ the logistic CDF. Using (\ref{eq:EstimandIntro}),
$\btheta_{0}$ can be identified with the logistic loss function in
(\ref{eq:LossLogit}). However, as shown by \citet{tan2020regularized},
$\btheta_{0}$ can also be identified using (\ref{eq:EstimandIntro}) with the
logistic \textit{calibration} loss
\begin{equation}
m\left(t,y\right)=y\mathrm{e}^{-t}+\left(1-y\right)t,\label{eq:LogisticCalibration}
\end{equation}
which is convex in $t$ as well. As demonstrated by \citet{tan2020regularized},
use of this alternative loss function leads to average treatment effect
estimators with certain robustness properties. Specifically, under Tan's
conditions, these treatment effect estimators remain root-$n$ consistent and
asymptotically normal even if the model for the outcome regression function is
misspecified (\emph{ibid}.).\qed
\end{example}

\begin{example}
[\textbf{Logistic Balancing}]\label{exa:LogisticBalancing} In the same setting
as that of the previous example, the \emph{covariate balancing approach}
of \citet{imai_covariate_2014} amounts to specifying a parametric model for the
treatment indicator $Y\in\left\{ 0,1\right\} $,
\[
\P(Y=1|\bX)=F(\bX^{\top}\btheta_{0})
\]
and ensuring covariate balance in the sense that
\[
\E\left[\left\{ \frac{Y}{F\left(\bX^{\top}\btheta_{0}\right)}-\frac{1-Y}{1-F\left(\bX^{\top}\btheta_{0}\right)}\right\} \bX\right]=\mathbf{0}_p.
\]
Balancing here amounts to enforcing a collection of moment conditions and is
therefore naturally studied in a generalized method of moments (GMM) framework.
However, specifying $F$ to be the logistic CDF $\Lambda$, covariate balancing
can be achieved via M-estimation of $\btheta_{0}$ based on the loss function
\[
m\left(t,y\right)=\left(1-y\right)\mathrm{e}^{t}+y\mathrm{e}^{-t}+\left(1-2y\right)t,
\]
which is also convex in $t$. See \citet{tan2020regularized} for details.\qed
\end{example}

\begin{example}
[\textbf{Panel Logit Model}]\label{exa:PanelLogit} 
Consider the \emph{panel logit model}
\[
\P(Y_{\tau}=1|\bX,\gamma,Y_{0},\dotsc,Y_{\tau-1})=\Lambda(\gamma+\bX_{\tau}^{\top}\btheta_{0}),\quad \tau\in\{1,2\},
\]
where $\bY=(Y_{1},Y_{2})^{\top}\in\{0,1\}^2$ is a pair of binary outcome
variables, $\bX=(\bX_{1}^{\top},\bX_{2}^{\top})^{\top}$ is a vector of
regressors, and $\gamma$ is a unit-specific unobserved fixed effect.
\citet{rasch1960probabilistic} provides conditions under which $\btheta_{0}$ can be
identified by
$\btheta_{0}=\argmin\nolimits_{\btheta\in\R^{p}}\E[m(\left(\bX_{1}-\bX_{2}\right)^{\top}\btheta,\bY)],$
where
\begin{align}
m\left(t,\by\right) & =\mathbf{1}\left(y_{1}\neq y_{2}\right)\left[\ln\left(1+\mathrm{e}^{t}\right)-y_{1}t\right],\label{eq:LossConditionalLogit}
\end{align}
which is convex in $t$.\footnote{See also \citet[Section
3.2]{chamberlain_panel_1984} and \citet[Section
15.8.3]{wooldridge_econometric_2010}.}\qed
\end{example}

\begin{example}
[\textbf{Panel Duration Model}]\label{exa:PartialLikelihoodAndLogLinearHazards}
Consider the \textit{panel duration model} with the log-linear hazard specification
\[
\ln h_{\tau}\left(y\right)=\bX_{\tau}^{\top}\btheta_{0}+\ln h_{0}\left(y\right),\quad \tau\in\{1,2\}.
\]
Here $h_{\tau}$ denotes the (conditional) hazard for spell $\tau$, and both
$h_{0}$ and $h_{\tau}$ are allowed to be unit-specific. This model is a special
case of the duration models studied in \citet[Section
3.1]{chamberlain_heterogeneity_1985}. Chamberlain presumes that the spells
$Y_{1}$ and $Y_2$ are (conditionally) independent of each other and shows that
the partial log-likehood contribution is\footnote{See also \citet[Chapter 9,
Section 2.10.2]{lancaster_econometric_1992}.}
\[
\theta\mapsto\mathbf{1}\left(Y_{1}<Y_{2}\right)\ln\Lambda(\left(\bX_{1}-\bX_{2}\right)^{\top}\btheta)+\mathbf{1}\left(Y_{1}\geqslant Y_{2}\right)\ln\big(1-\Lambda(\left(\bX_{1}-\bX_{2}\right)^{\top}\btheta)\big).
\]
The implied loss function
\begin{align}
m\left(t,\by\right) & =\ln\left(1+\mathrm{e}^{t}\right)-\mathbf{1}\left(y_{1}<y_{2}\right)t\label{eq:LossPartialLogLikelihood}
\end{align}
is of the logit form (see Example \ref{exa:Logit}), hence convex in $t$. With
more than two completed spells, the partial log-likelihood takes a
conditional-logit form (\emph{ibid.}), and the resulting loss is therefore still
a convex function (albeit involving multiple indices).\qed
\end{example}

\section{Additional Simulations}\label{sec:AdditionalSimulations}

In this section we present additional results for the simulation setting in
Section \ref{sec:Simulations}. In Appendix
\ref{sec:Sparsity-of-Debiasing-Vector}, we relate the $\ell_0$ norm of the
debiasing coefficient vector $\bmu_{0}$ to that of the structural coefficients
$\bgamma_{0}$ attached to the controls. We then present additional simulation
results in Appendix \ref{sec:Additional-Simulation-Results}, which stem from a
different choice of probability tolerance $\alpha=\alpha_{n}$.

\subsection{Sparsity of Debiasing Coefficient Vector\label{sec:Sparsity-of-Debiasing-Vector}}

In this section we argue that, at least in our simulation setting,
the $\ell_{0}$ norm of the non-primitive debiasing vector $\bmu_{0}$
is bounded by that of the structural coefficients $\bgamma_{0}$. We consider
a master data-generating processes (DGP) akin to the one in the main text,
where
\begin{align*}
\bX & =\left(\begin{array}{c}
D\\
\bW
\end{array}\right)\sim\mathrm{N}\left(\left(\begin{array}{c}
0\\
\boldsymbol{0}_{\left(p-1\right)\times1}
\end{array}\right),\left(\begin{array}{cc}
\Sigma_{DD} & \bSigma_{WD}^{\top}\\
\bSigma_{WD} & \bSigma_{WW}
\end{array}\right)\right),\\
Y\mid \bX & \sim\mathrm{Ber}\left(F\left(\bX^{\top}\btheta_{0}\right)\right)\overset{d}{=}\mathrm{Ber}\left(F\left(\beta_{0}D+\bW^{\top}\bgamma_{0}\right)\right),\\
(\Sigma_{XX})_{j,k} & =\rho^{\left|j-k\right|},\quad\left|\rho\right|<1,
\end{align*}
and $F:\R\to[0,1]$ is a twice differentiable cumulative
distribution function (CDF) with everywhere positive derivative $f=F'$
and satisfying the technical condition
\[
\mathrm{E}\left[\left|\left.\frac{\mathrm{d}}{\mathrm{d}t}\frac{f\left(t\right)}{F\left(t\right)\left(1-F\left(t\right)\right)}\right|_{t=\bX^{\top}\btheta_{0}}\right|\right]<\infty,
\]
which we use to guarantee the finiteness of
$\mathrm{E}[|m''_{11}(\bX^{\top}\btheta_{0},\bY)|]$. For the logit model
$F=\Lambda$, one has $\Lambda'=\Lambda(1-\Lambda)$, and the latter condition is
trivial. For the probit model $F=\Phi$ considered in the main text, the absolute
integrability in the previous display is less obvious, although it can be shown
to follow. (See also Appendix \ref{sec: verification} for detailed derivations
for the examples in Section \ref{sec:Examples}.)

Suppose that for some integer $C\in\left[p-1\right]$ the first $C$ controls
$(W_{1},\dotsc,W_{C})^{\top}=:\bW_{[C]}$
are relevant in the sense that we have $\gamma_{0,j}\neq0,j\in\left[C\right],$
and the remaining controls
$(W_{C+1},\dotsc,W_{p-1})^{\top}=:\bW_{[p-1]\backslash[C]}$
are irrelevant, in that
$\gamma_{0,j}=0,j\in\left[p-1\right]\backslash\left[C\right]$. This structure
fits the exactly sparse and intermediate coefficient patterns presented in
Section \ref{sec:Simulations} for which we have $C=1$ and $4$,
respectively.\footnote{The structure also fits with the approximately sparse
pattern of Section \ref{sec:Simulations}, but there $\|\bmu_{0}\|_{0}\leqslant
p-1(=\|\bgamma_0\|_0)$ holds trivially.}

We wish to quantify the sparsity of
\[
\bmu_{0}=\left(\mathrm{E}\left[m''_{11}\left(\bX^{\top}\theta_{0},Y\right)\bW\bW^{\top}\right]\right)^{-1}\mathrm{E}\left[m''_{11}\left(\bX^{\top}\btheta_{0},Y\right)\bW D\right],
\]
which is well-defined under the assumptions of the main
text.\footnote{To be more precise, Lemma \ref{lem:ExistenceAndUniquenessOfMu0}
provides \emph{one} set of sufficient conditions for the existence and uniqueness a
solution to \eqref{eq: definition of mu0}. Other sets of sufficient conditions are possible.} For binary response we have
\[
m\left(t,y\right)=-y\ln F\left(t\right)-\left(1-y\right)\ln\left(1-F\left(t\right)\right).
\]
Differentiating once and simplifying, we get
\[
m'_{1}\left(t,y\right)=\frac{f\left(t\right)}{F\left(t\right)\left(1-F\left(t\right)\right)}\left(F\left(t\right)-y\right).
\]
Differentiating once more, we see that
\begin{align*}
m''_{11}\left(t,y\right) & =\frac{f\left(t\right)^{2}}{F\left(t\right)\left(1-F\left(t\right)\right)}+\left(\frac{f\left(t\right)}{F\left(t\right)\left(1-F\left(t\right)\right)}\right)'\left(F\left(t\right)-y\right),
\end{align*}
and, thus, using finiteness of $\mathrm{E}[|m''_{11}(\bX^{\top}\btheta_{0},Y)|]$,
\[
\mathrm{E}\left[m''_{11}\left(\bX^{\top}\btheta_{0},Y\right)|\bX^\top\btheta = t\right]
   =\frac{f\left(t\right)^{2}}{F\left(t\right)\left(1-F\left(t\right)\right)}=:\omega_F\left(t\right).
\]
It follows that we can express $\bmu_{0}$ as
\[
\bmu_{0}=\left(\mathrm{E}\left[\omega_F\left(\bX^{\top}\btheta_{0}\right)\bW\bW^{\top}\right]\right)^{-1}\mathrm{E}\left[\omega_F\left(\bX^{\top}\btheta_{0}\right)\bW D\right].
\]
We claim that $\bmu_{0}$ coincides with $\bmu^{\ast}$ defined as
\[
\bmu^{\ast}:=\left(\begin{array}{c}
\left(\mathrm{E}\left[\omega_F\left(\bX^{\top}\btheta_{0}\right)\bW_{\left[C\right]}\bW_{\left[C\right]}^{\top}\right]\right)^{-1}\mathrm{E}\left[\omega_F\left(\bX^{\top}\btheta_{0}\right)\bW_{\left[C\right]}D\right]\\
\boldsymbol{0}_{\left(p-1-C\right)\times1}
\end{array}\right),
\]
which, in particular, implies that $\bmu_{0}$ has no more than $C$
non-zero entries, since $\|\bmu_{0}\|_{0}\leqslant C=\|\bgamma_{0}\|_{0}$.
To establish this claim, we show that $\bmu^{\ast}$ solves the linear
system of equations
\begin{equation}
\mathrm{E}\left[\omega_F\left(\bX^{\top}\btheta_{0}\right)\bW\bW^{\top}\right]\bmu=\mathrm{E}\left[\omega_F\left(\bX^{\top}\btheta_{0}\right)\bW D\right]\label{eq:DebiasingSystemOfEquations}
\end{equation}
in $\bmu\in\mathbf{R}^{p-1}$, in which case it must coincide with
the (previously established) unique solution $\bmu_{0}$. To see that
$\bmu^{\ast}$ is indeed a solution, first note that by zero means
and Gaussianity
\begin{align*}
\mathrm{E}\left[\bW_{\left[p-1\right]\backslash\left[C\right]}\mid D,\bW_{\left[C\right]}\right] & =\bSigma_{W_{\left[p-1\right]\backslash\left[C\right]},DW_{\left[C\right]}}\bSigma_{DW_{\left(\left[C\right]\right)},DW_{\left[C\right]}}^{-1}\left(\begin{array}{c}
D\\
\bW_{\left[C\right]}
\end{array}\right).
\end{align*}
Using the Toeplitz correlation structure, we get
\[
\bSigma_{W_{\left[p-1\right]\backslash\left[C\right]},DW_{\left[C\right]}}\bSigma_{DW_{\left[C\right]},DW_{\left[C\right]}}^{-1}=\left(\begin{array}{cccc}
\boldsymbol{0}_{\left(p-\left(1+C\right)\right)\times1} & \cdots & \boldsymbol{0}_{\left(p-\left(1+C\right)\right)\times1} & \bSigma_{W_{\left[p-1\right]\backslash\left[C\right]},W_{C}}\end{array}\right).
\]
Indeed, observe that
\begin{align*}
 & \left(\begin{array}{cccc}
\boldsymbol{0}_{\left(p-1-C\right)\times1} & \cdots & \boldsymbol{0}_{\left(p-1-C\right)\times1} & \bSigma_{W_{\left[p-1\right]\backslash\left[C\right]},W_{C}}\end{array}\right)\bSigma_{DW_{\left[C\right]},DW_{\left[C\right]}}\\
 & =\left(\begin{array}{cccc}
0 & \cdots & 0 & \rho\\
0 & \cdots & 0 & \rho^{2}\\
\vdots &  & \vdots & \vdots\\
0 & \cdots & 0 & \rho^{p-1-C}
\end{array}\right)\left(\begin{array}{ccccc}
1 & \rho & \rho^{2} & \cdots & \rho^{C}\\
\rho & 1 & \rho &  & \vdots\\
\rho^{2} & \rho & \ddots & \ddots & \rho^{2}\\
\vdots &  & \ddots &  & \rho\\
\rho^{C} & \cdots &  & \rho & 1
\end{array}\right)\\
 & =\left(\begin{array}{cccc}
\rho^{C+1} & \rho^{C} & \cdots & \rho\\
\rho^{C+2} & \rho^{C+1} & \cdots & \rho^{2}\\
\vdots & \vdots &  & \vdots\\
\rho^{p-1} & \rho^{p-2} & \cdots & \rho^{p-1-C}
\end{array}\right)=\bSigma_{W_{\left[p-1\right]\backslash\left[C\right]},DW_{\left[C\right]}},
\end{align*}
as desired. We therefore get the simplification
\begin{align*}
\mathrm{E}\left[\bW_{\left[p-1\right]\backslash\left[C\right]}\mid D,\bW_{\left[C\right]}\right] & = W_{C}\bSigma_{W_{\left[p-1\right]\backslash\left[C\right]},W_{C}}.
\end{align*}
Since $\bX^{\top}\btheta_{0}$ does not depend on $\bW_{\left[p-1\right]\backslash\left[C\right]}$,
the right-hand side (RHS) vector of (\ref{eq:DebiasingSystemOfEquations})
is
\begin{align*}
\mathrm{E}\left[\omega_F\left(\bX^{\top}\btheta_{0}\right)\bW D\right] & =\mathrm{E}\left[\omega_F\left(\bX^{\top}\btheta_{0}\right)\left(\begin{array}{c}
\bW_{\left[C\right]}\\
\bW_{\left[p-1\right]\backslash\left[C\right]}
\end{array}\right)D\right]\\
 & =\mathrm{E}\left[\omega_F\left(\bX^{\top}\btheta_{0}\right)\left(\begin{array}{c}
   \bW_{\left[C\right]}\\
\mathrm{E}\left[\bW_{\left[p-1\right]\backslash\left[C\right]}\mid D,\bW_{\left[C\right]}\right]
\end{array}\right)D\right]\\
 & =\mathrm{E}\left[\omega_F\left(\bX^{\top}\btheta_{0}\right)\left(\begin{array}{c}
\bW_{\left[C\right]}\\
W_{C}\bSigma_{W_{\left[p-1\right]\backslash\left[C\right]},W_{C}}
\end{array}\right)D\right]\\
 & =\left(\begin{array}{c}
\mathrm{E}\left[\omega_F\left(\bX^{\top}\btheta_{0}\right)\bW_{\left[C\right]}D\right]\\
\mathrm{E}\left[\omega_F\left(\bX^{\top}\btheta_{0}\right)W_{C}D\right]\bSigma_{W_{\left[p-1\right]\backslash\left[C\right]},W_{C}}
\end{array}\right).
\end{align*}
Similarly, the left-hand side (LHS) matrix of (\ref{eq:DebiasingSystemOfEquations})
is
\begin{align*}
 & \mathrm{E}\left[\omega_F\left(\bX^{\top}\btheta_{0}\right)\bW\bW^{\top}\right]\\
 & =\left(\begin{array}{cc}
\mathrm{E}\left[\omega_F\left(\bX^{\top}\btheta_{0}\right)\bW_{\left[C\right]}\bW_{\left[C\right]}^{\top}\right] & \mathrm{E}\left[\omega_F\left(\bX^{\top}\btheta_{0}\right)\bW_{\left[C\right]}\bW_{\left[p-1\right]\backslash\left[C\right]}^{\top}\right]\\
\mathrm{E}\left[\omega_F\left(\bX^{\top}\btheta_{0}\right)\bW_{\left[p-1\right]\backslash\left[C\right]}\bW_{\left[C\right]}^{\top}\right] & \mathrm{E}\left[\omega_F\left(\bX^{\top}\btheta_{0}\right)\bW_{\left[p-1\right]\backslash\left[C\right]}\bW_{\left[p-1\right]\backslash\left[C\right]}^{\top}\right]
\end{array}\right)\\
 & =\left(\begin{array}{cc}
\mathrm{E}\left[\omega_F\left(\bX^{\top}\btheta_{0}\right)\bW_{\left[C\right]}\bW_{\left[C\right]}^{\top}\right] & \mathrm{E}\left[\omega_F\left(\bX^{\top}\btheta_{0}\right)\bW_{\left[C\right]}W_{C}\right]\bSigma_{W_{\left[p-1\right]\backslash\left[C\right]},W_{C}}^{\top}\\
\bSigma_{W_{\left[p-1\right]\backslash\left[C\right]},W_{C}}\mathrm{E}\left[\omega_F\left(\bX^{\top}\btheta_{0}\right)W_{C}\bW_{\left[C\right]}^{\top}\right] & \mathrm{E}\left[\omega_F\left(\bX^{\top}\btheta_{0}\right)\bW_{\left[p-1\right]\backslash\left[C\right]}\bW_{\left[p-1\right]\backslash\left[C\right]}^{\top}\right]
\end{array}\right).
\end{align*}

Insertion shows that $\bmu^{\ast}$ solves the \emph{top} $C$ equations.
To see that $\bmu^{\ast}$ solves the \emph{bottom} part of the system as
well, partition as follows:
\begin{align*}
 & \left(\mathrm{E}\left[\omega_F\left(\bX^{\top}\btheta_{0}\right)\bW_{\left[C\right]}\bW_{\left[C\right]}^{\top}\right]\right)^{-1}\\
 & =\left(\begin{array}{cc}
\mathrm{E}\left[\omega_F\left(\bX^{\top}\btheta_{0}\right)\bW_{\left[C-1\right]}\bW_{\left[C-1\right]}^{\top}\right] & \mathrm{E}\left[\omega_F\left(\bX^{\top}\btheta_{0}\right)\bW_{\left[C-1\right]}W_{C}\right]\\
\mathrm{E}\left[\omega_F\left(\bX^{\top}\btheta_{0}\right)W_{C}\bW_{\left[C-1\right]}^{\top}\right] & \mathrm{E}\left[\omega_F\left(\bX^{\top}\btheta_{0}\right)W_{C}^{2}\right]
\end{array}\right)^{-1}\\
 & =:\left(\begin{array}{cc}
\mathbf{A} & \boldsymbol{b}\\
\boldsymbol{c}^{\top} & d
\end{array}\right)^{-1}.
\end{align*}
The bottom left block of the LHS matrix of (\ref{eq:DebiasingSystemOfEquations})
is $\bSigma_{W_{\left[p-1\right]\backslash\left[C\right]},W_{C}}$
times
\begin{align*}
\mathrm{E}\left[\omega_F\left(\bX^{\top}\btheta_{0}\right)W_{C}\bW_{\left[C\right]}^{\top}\right] & =\left(\begin{array}{cc}
\mathrm{E}\left[\omega_F\left(\bX^{\top}\btheta_{0}\right)W_{C}\bW_{\left[C-1\right]}^{\top}\right] & \mathrm{E}\left[\omega_F\left(\bX^{\top}\btheta_{0}\right)W_{C}^{2}\right]\end{array}\right)\\
 & =\left(\begin{array}{cc}
\boldsymbol{c}^{\top} & d\end{array}\right).
\end{align*}
Inserting $\bmu^{\ast}$ to check and verify the bottom part of the
system, we get
\begin{align*}
 & \left(\begin{array}{cc}
\bSigma_{W_{\left[p-1\right]\backslash\left[C\right]},W_{\left(C\right)}}\mathrm{E}\left[\omega_F\left(\bX^{\top}\btheta_{0}\right)W_{C}\bW_{\left[C\right]}^{\top}\right] & \mathrm{E}\left[\omega_F\left(\bX^{\top}\btheta_{0}\right)\bW_{\left[p-1\right]\backslash\left[C\right]}\bW_{\left[p-1\right]\backslash\left[C\right]}^{\top}\right]\end{array}\right)\bmu^{\ast}\\
 & =\bSigma_{W_{\left[p-1\right]\backslash\left[C\right]},W_{C}}\mathrm{E}\left[\omega_F\left(\bX^{\top}\btheta_{0}\right)W_{C}\bW_{\left[C\right]}^{\top}\right]\\
 &\quad\times \left(\mathrm{E}\left[\omega_F\left(\bX^{\top}\btheta_{0}\right)\bW_{\left[C\right]}\bW_{\left[C\right]}^{\top}\right]\right)^{-1}\mathrm{E}\left[\omega_F\left(\bX^{\top}\btheta_{0}\right)\bW_{\left[C\right]}D\right]\\
 & =\bSigma_{W_{\left[p-1\right]\backslash\left[C\right]},W_{C}}\left(\begin{array}{cc}
\boldsymbol{c}^{\top} & d\end{array}\right)\left(\begin{array}{cc}
\mathbf{A} & \boldsymbol{b}\\
\boldsymbol{c}^{\top} & d
\end{array}\right)^{-1}\mathrm{E}\left[\omega_F\left(\bX^{\top}\btheta_{0}\right)\bW_{\left[C\right]}D\right].
\end{align*}
Since
\[
\left(\begin{array}{cc}
\boldsymbol{0}_{1\times\left(C-1\right)} & 1\end{array}\right)\left(\begin{array}{cc}
\mathbf{A} & \boldsymbol{b}\\
\boldsymbol{c}^{\top} & d
\end{array}\right)=\left(\begin{array}{cc}
\boldsymbol{c}^{\top} & d\end{array}\right),
\]
we have
\[
\left(\begin{array}{cc}
\boldsymbol{c}^{\top} & d\end{array}\right)\left(\begin{array}{cc}
\mathbf{A} & \mathbf{b}\\
\boldsymbol{c}^{\top} & d
\end{array}\right)^{-1}=\left(\begin{array}{cc}
\boldsymbol{0}_{1\times\left(C-1\right)} & 1\end{array}\right).
\]
Hence,
\begin{align*}
 & \bSigma_{W_{\left[p-1\right]\backslash\left[C\right]},W_{C}}\left(\begin{array}{cc}
\boldsymbol{c}^{\top} & d\end{array}\right)\left(\begin{array}{cc}
\mathbf{A} & \boldsymbol{b}\\
\boldsymbol{c}^{\top} & d
\end{array}\right)^{-1}\mathrm{E}\left[\omega_F\left(\bX^{\top}\btheta_{0}\right)\bW_{\left[C\right]}D\right]\\
 & =\bSigma_{W_{\left[p-1\right]\backslash\left[C\right]},W_{C}}\left(\begin{array}{cc}
\boldsymbol{0}_{1\times\left(C-1\right)} & 1\end{array}\right)\left(\begin{array}{c}
\mathrm{E}\left[\omega_F\left(\bX^{\top}\btheta_{0}\right)\bW_{\left[C-1\right]}D\right]\\
\mathrm{E}\left[\omega_F\left(\bX^{\top}\btheta_{0}\right)\bW_{C}D\right]
\end{array}\right)\\
 & =\mathrm{E}\left[\omega_F\left(\bX^{\top}\btheta_{0}\right)\bW_{C}D\right]\bSigma_{W_{\left[p-1\right]\backslash\left[C\right]},W_{C}},
\end{align*}
as desired.

To verify the above calculation, we return to the binary probit model ($F=\Phi$) in our main
simulations section (Section \ref{sec:Simulations}). Figures \ref{fig:RawDebiasingVector} and
\ref{fig:SortedAbsoluteDebiasingVector} display the raw and sorted absolute values, respectively, of
the debiasing coefficients $\bmu_{0}$ for the different coefficient patterns and correlation levels
considered in the main text. These values are obtained via simulation for $p=100$ and a sample size
of $100,000$. For clarity of plots, we only display the first and largest $10$ coefficients,
respectively. The figures show that, when the structural coefficients $\bgamma_0$ are
$\ell_0$-sparse, this sparsity is indeed inherited by the debiasing coefficients coefficients
$\bmu_0$. In addition, when the structural coefficients are only approximately sparse, the sorted
absolute values of the debiasing coefficients still decay polynomially fast in the index, which is a
form of approximate sparsity.

\begin{figure}
\caption{Raw Debiasing Coefficients $\bmu_{0}$, $p=100$, First 10 Coefficients\label{fig:RawDebiasingVector}} 
\centering{}
\includegraphics[viewport=5bp 5bp 463bp
416bp,clip,width=0.7\textwidth]{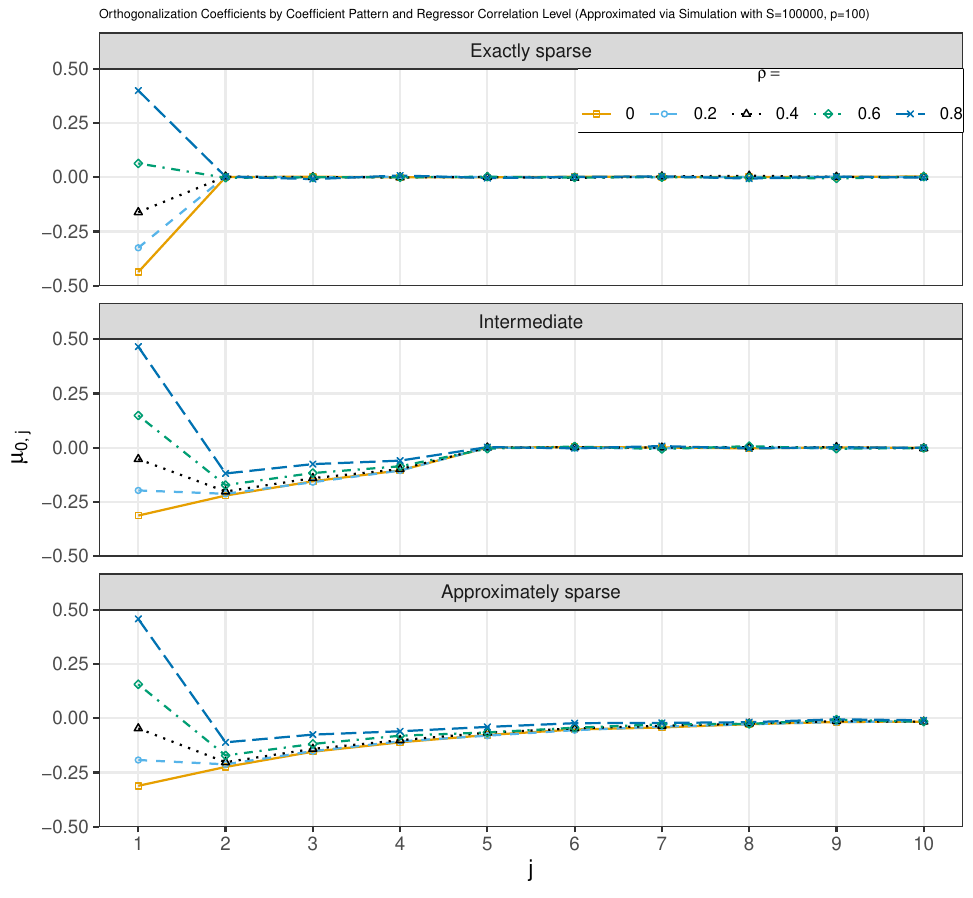}
\end{figure}

\begin{figure} 
\caption{Sorted Absolute Debiasing
  Coefficients $\bmu_{0}$, $p=100$, Largest 10 Coefficients\label{fig:SortedAbsoluteDebiasingVector}}
\centering{}
\includegraphics[viewport=5bp 5bp 463bp
416bp,clip,width=0.7\textwidth]{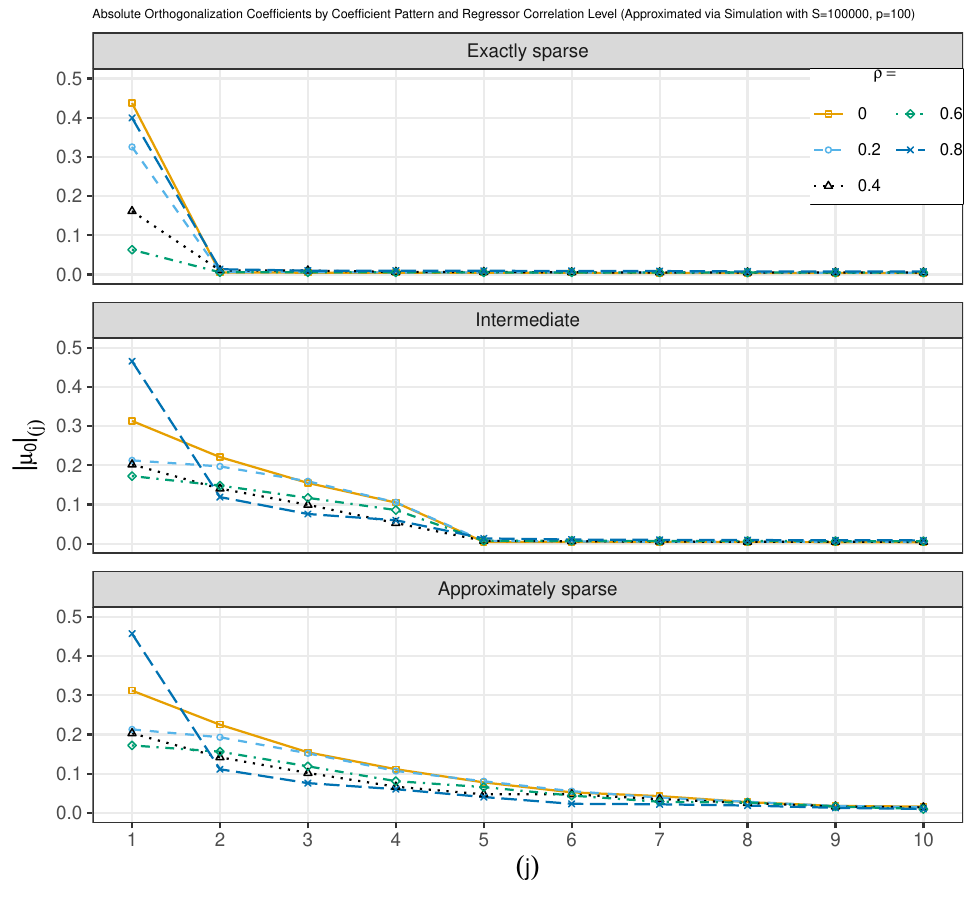}
\end{figure}

\subsection{Additional Results}\label{sec:Additional-Simulation-Results}

In this section, we give additional simulation results based on the simulation
design in the main text (Section \ref{sec:Simulations}). The only difference is
that we use the ad hoc probability tolerance rule $\alpha_{n}=10/n$ instead of
the rule $\alpha_{n}=.1/\ln(p\lor n)$ from
\citet{belloni_sparse_2012}.\footnote{Strictly speaking, the results in Section
\ref{sec:Simulations} and this section also differ in terms of their random
number generator seeds and, hence, the resulting datasets. Thus, even though CV
does not depend on $\alpha_n$, one may also see small numerical differences when
contrasting figures for CV across the two sections.} The ad hoc rule was reverse
engineered to yield $\alpha=10\%,5\%$ and $2.5\%$ for $n=100,200$ and $400$,
respectively, which can be compared with $\alpha\approx2.2\%,1.9\%$ and $1.7\%$ appearing in Section \ref{sec:Simulations}.

Following the progression of Section \ref{sec:Simulations}, we display the
estimation errors stemming from the ad hoc rule and then provide the resulting
normal approximations. Specifically, Figures
\ref{fig:MeanEll2ErrorEstimatorComparisonBCCHMarkupAdhocRule},
\ref{fig:MeanEll2ErrorBCVVaryingc0Adhocrule},
\ref{fig:MeanEll2ErrorPostBCVVaryingc0Adhocrule},
\ref{fig:NormalApproxBCVPostBCVCVVaryingSampleSizeBCCHmarkupAdhocruleZeroCorr}
and \ref{fig:NormalApproxPostBCVCVBCCHmarkupAdhocruleVaryingCorr} below should
be compared to Figures \ref{fig:MeanEll2ErrorEstimatorComparisonBCCHTuning},
\ref{fig:MeanEll2ErrorBCVVaryingc0BCCHrule},
\ref{fig:MeanEll2ErrorPostBCVVaryingc0BCCHrule},
\ref{fig:NormalApproxBCVPostBCVCVVaryingSampleSizeBCCHtuningZeroCorr} and
\ref{fig:NormalApproxPostBCVCVBCCHtuningVaryingCorr}, respectively.

\begin{figure}
\caption{Mean $\ell_{2}$ Estimation Error by Method with $c_{0}=1.1$ and
$\alpha_{n}=10/n$\label{fig:MeanEll2ErrorEstimatorComparisonBCCHMarkupAdhocRule}}

\centering{}\includegraphics[viewport=5bp 5bp 463bp 416bp,clip,width=0.7\textwidth]{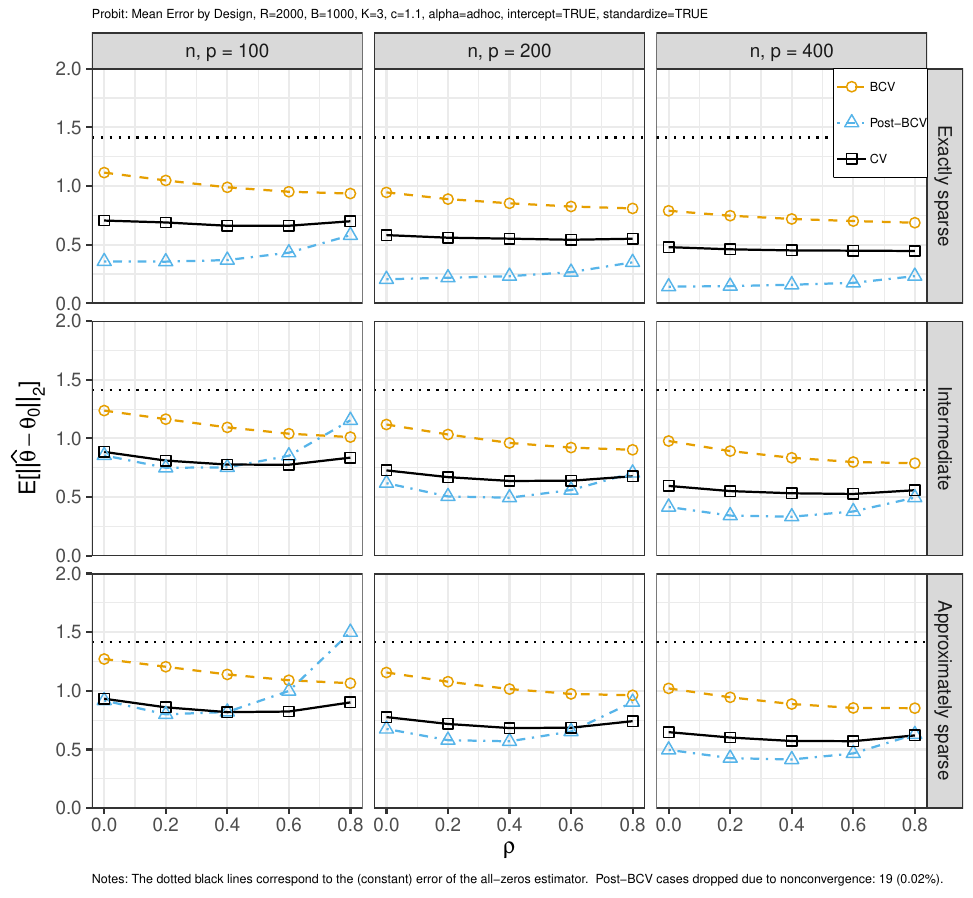}
\end{figure}
\begin{figure}
\caption{Mean $\ell_{2}$ BCV Estimation Error by Score Markup with $\alpha_{n}=10/n$\label{fig:MeanEll2ErrorBCVVaryingc0Adhocrule}}

\centering{}\includegraphics[viewport=5bp 5bp 463bp 416bp,clip,width=0.7\textwidth]{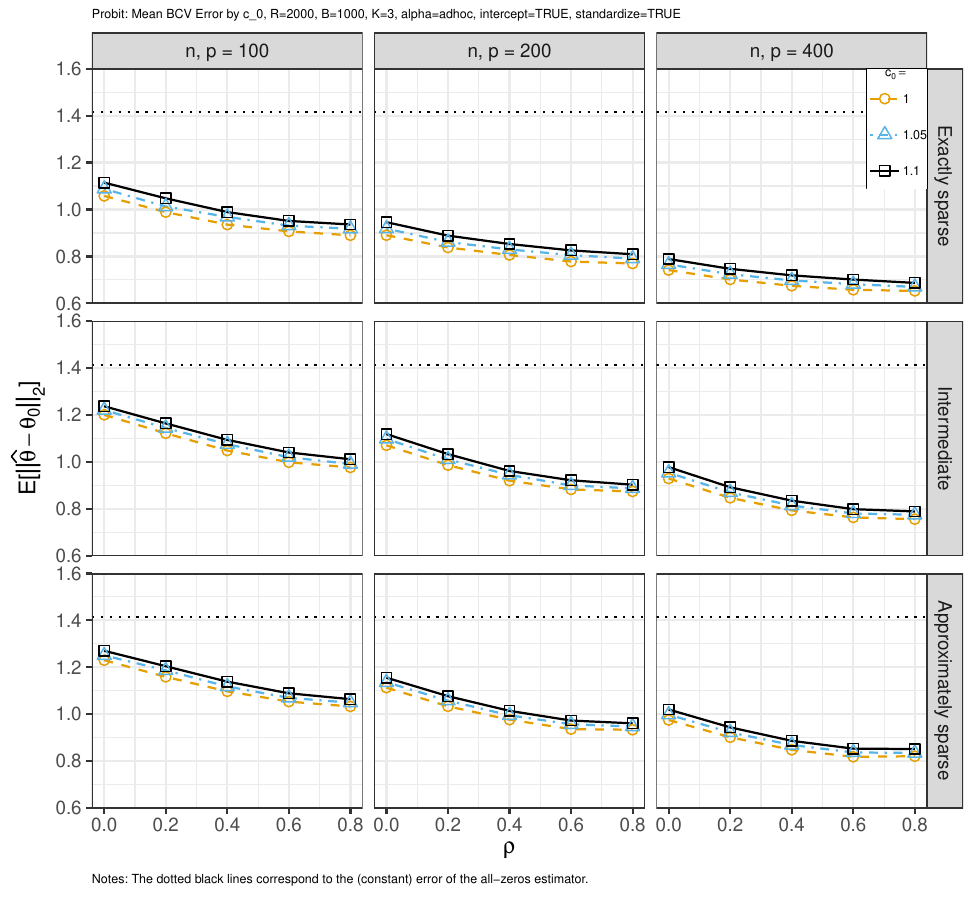}
\end{figure}
\begin{figure}
\caption{Mean $\ell_{2}$ Post-BCV Estimation Error by Score Markup with $\alpha_{n}=10/n$\label{fig:MeanEll2ErrorPostBCVVaryingc0Adhocrule}}

\centering{}\includegraphics[viewport=5bp 5bp 463bp 416bp,clip,width=0.7\textwidth]{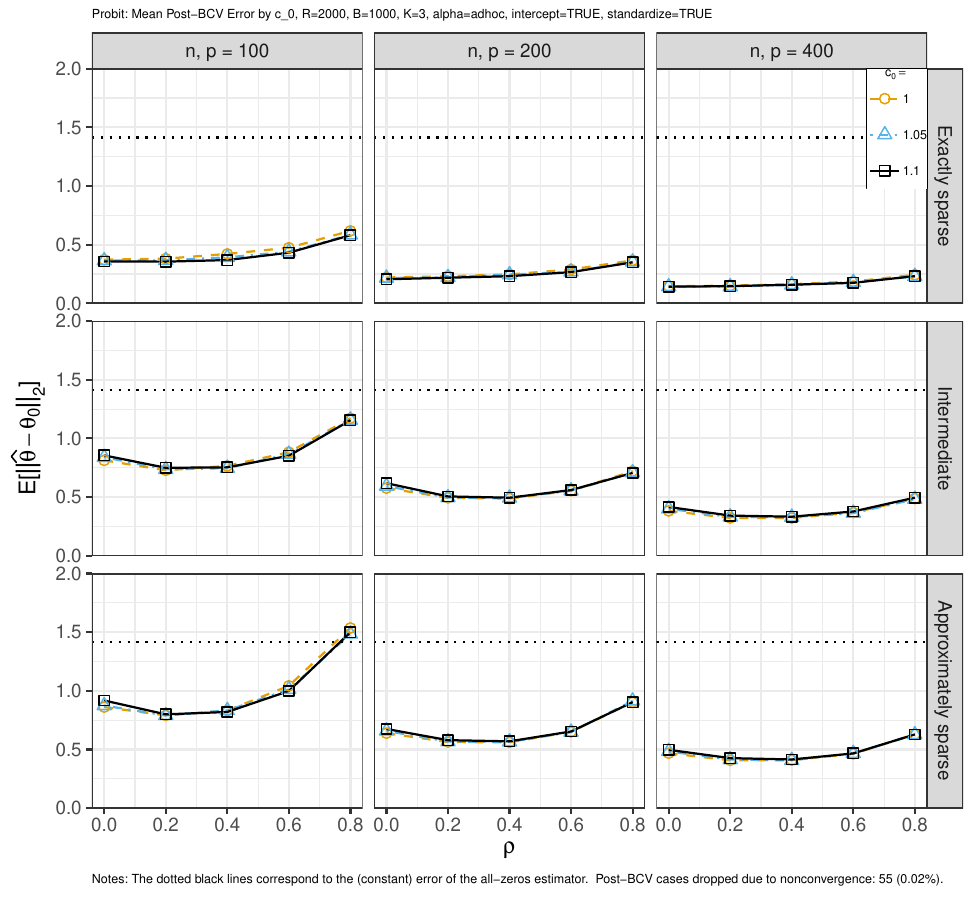}
\end{figure}

\begin{figure}
\caption{Densities of Studentized Estimates by $n(=p)$ with $\rho=0$, $c_{0}=1.1$
and $\alpha_{n}=10/n$\label{fig:NormalApproxBCVPostBCVCVVaryingSampleSizeBCCHmarkupAdhocruleZeroCorr}}

\centering{}\includegraphics[viewport=5bp 5bp 463bp 416bp,clip,width=0.68\textwidth]{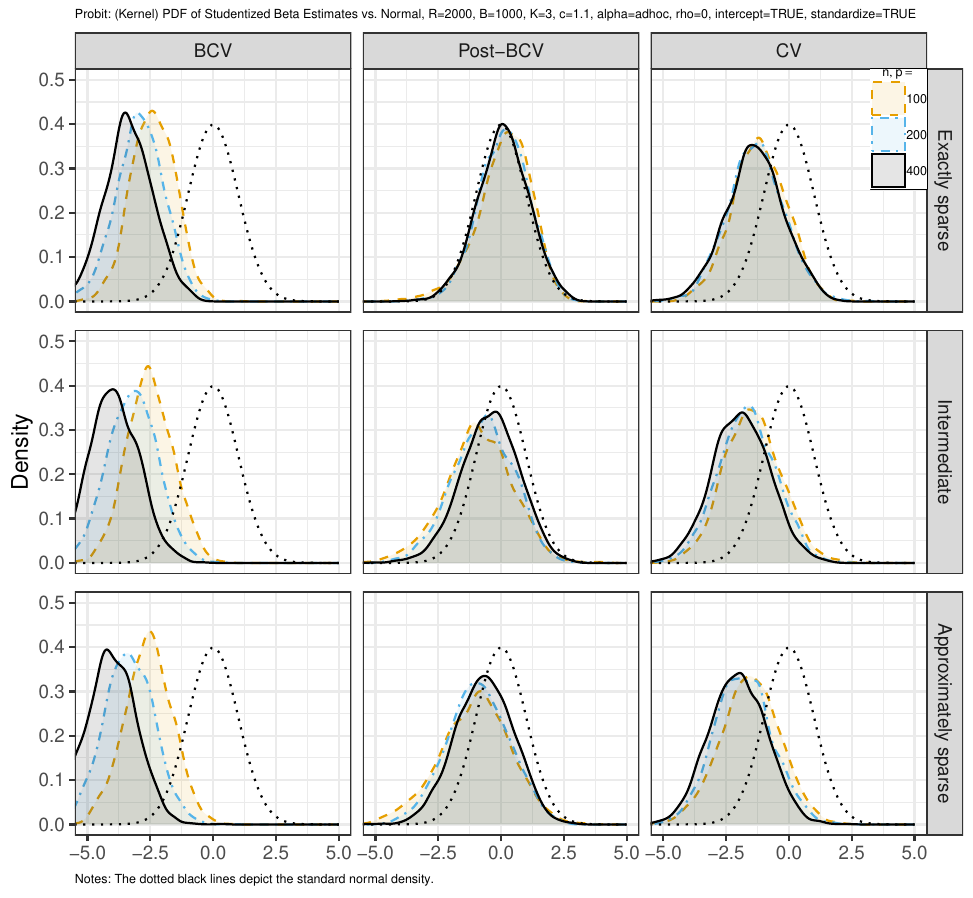}
\end{figure}
\begin{figure}
\caption{{\footnotesize{}Densities of Studentized Post-BCV and CV Estimates
for Different $\rho$ with $n(=p)=400,$ Approximately Sparse Coefficient Pattern,
$c_{0}=1.1$ and $\alpha_{n}=10/n$}\label{fig:NormalApproxPostBCVCVBCCHmarkupAdhocruleVaryingCorr}}

\centering{}\includegraphics[viewport=5bp 5bp 463bp 416bp,clip,width=0.68\textwidth]{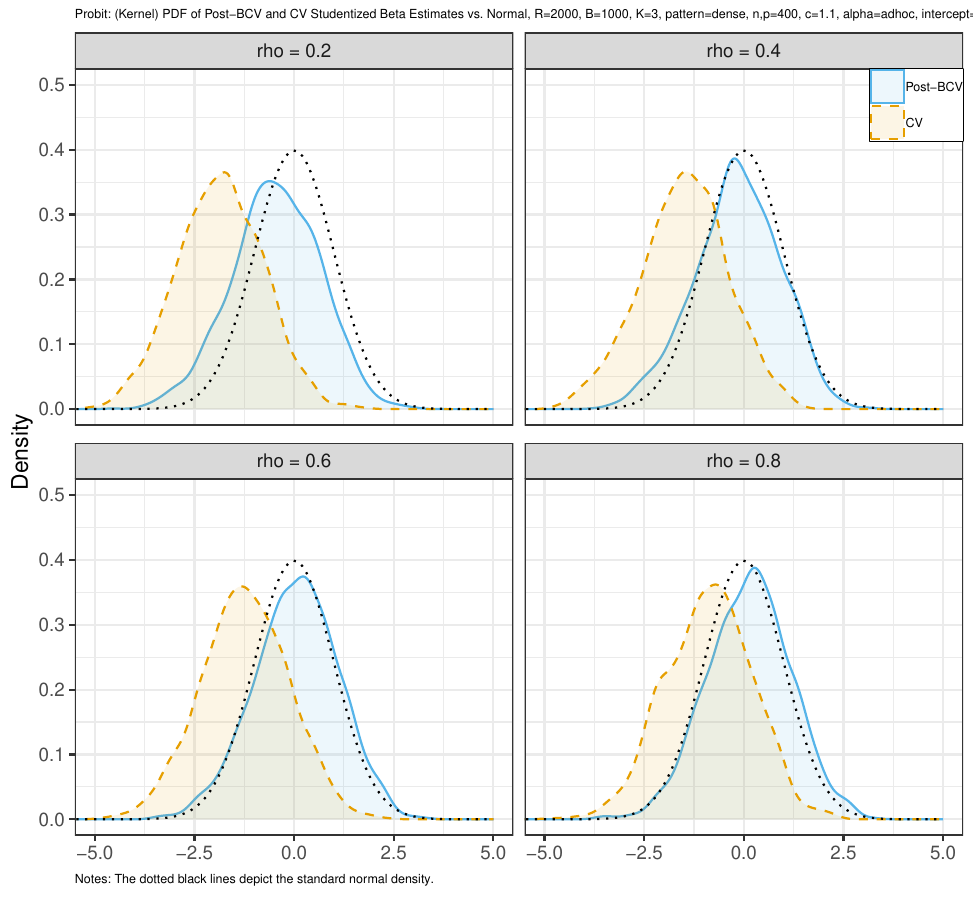}
\end{figure}

For both estimation and inference purposes, the BCV and Post-BCV estimators
resulting from the ad hoc rule perform similarly to those resulting from the
rule $\alpha_{n}=.1/\ln(p\lor n)$. The rankings and conclusions of the main text
therefore appear robust to the choice of $\alpha_{n}$.

\section{Comparison with Existing Penalty Methods}\label{sec:Comparison-with-Existing-Methods}
For specific models and loss functions, existing methods for choosing the penalty can be used to
estimate $\btheta_0$. One theoretically justifiable method is based on moderate deviation theory for
self-normalized sums as detailed in \citet{jing_self-normalized_2003} and
\citet{de_la_pena_self-normalized_2009}. Building on various structures (e.g.~mean-square projection
or Lipschitz loss), data-driven self-normalization is used in both \citet{belloni_sparse_2012} and
\citet{belloni2016post} for the LASSO and $\ell_1$-penalized logistic regression, respectively. We
next compare the performance of our BCV penalty method with the self-normalized penalty methods in
each of these papers. Our main finding is that whenever the self-normalized penalty methods apply,
they provide results which are rather similar to those obtained from our BCV method.

\subsection{Comparing with \texorpdfstring{\citet{belloni_sparse_2012}}{BCCH}
Penalty Method} We here consider data-generating processes identical to those in
Section \ref{sec:Simulations} except that the outcome is generated as
\[
Y_i=\bX_i^{\top}\btheta_0+\varepsilon_i,\quad \varepsilon_i\mid \bX_i \sim \mathrm{N}(0,1),\quad i\in\left[n\right],
\]
thus implying a linear model with Gaussian errors. Taking $m$ to be the
(one-half) square loss $(1/2)(y-t)^2$, the $\ell_1$-ME in
\eqref{eq:ell1PenalizedMEstimationIntro} is the LASSO
\citep{tibshirani_regression_1996}.

\citet[Algorithm A.1]{belloni_sparse_2012}
provides a data-driven penalization scheme allowing for non-Gaussianity,
conditionally heteroskedastic errors and regressors measured on different
scales. We here compare with a slightly simplified two-step version of their
algorithm, presuming that the researchers knows that (i) the regressors are
measured on the same scale, and (ii) the errors are conditionally homoskedastic
(with variance unknown). These simplifications are only made to ease
exposition and reduce the computational burden involved in the
comparison.\footnote{We also presume that the researcher knows that the true
intercept is zero. Hence, we do not include a constant regressor and penalize
all regressor coefficients.}

With conditionally homoskedastic errors and equivariant regressors, a two-step
version of \citet[Algorithm A.1]{belloni_sparse_2012} goes as follows:
\begin{enumerate}
   \item \textbf{Initial:} Calculate an initial penalty level
   \[
      \widehat{\lambda}^{\mathtt{brt}}_{\alpha}:=\frac{c_0 \widehat{\sigma}_y \Phi^{-1}(1-\alpha/(2p))}{\sqrt{n}},
   \]
   where $\widehat{\sigma}_y^2$ is the outcome sample variance, and store the
   residuals
   $\widehat{\varepsilon}_i:=Y_i-\bX_i^{\top}\widehat{\btheta},i\in[n],$ from
   any LASSO solution
   $\widehat{\btheta}\in\widehat{\Theta}(\widehat{\lambda}^{\mathtt{brt}}_{\alpha})$.
   \item \textbf{Refined:} Calculate a refined penalty level
   \[
      \widehat{\lambda}^{\mathtt{bcch}}_{\alpha}:=\frac{c_0 \widehat{\sigma}_{\varepsilon} \Phi^{-1}(1-\alpha/(2p))}{\sqrt{n}},
   \]
   where $\widehat{\sigma}_{\varepsilon}^2$ is the residual sample variance
   $\{\widehat{\varepsilon}_i\}_{i=1}^n$. The \citet{belloni_sparse_2012}
   estimator is then any LASSO solution
   $\widehat{\btheta}\in\widehat{\Theta}(\widehat{\lambda}^{\mathtt{bcch}}_{\alpha})$.
\end{enumerate}
As in \citet{belloni_sparse_2012}, in the above we use $c_0=1.1$ and
$\alpha=\alpha_n=.1/\ln(n\lor p)$. Note that the (simplified) initial step here
corresponds to the penalty level $\widehat{\lambda}^{\mathtt{brt}}_{\alpha}$ and
resulting estimator suggested in \citet{bickel_simultaneous_2009}.

Figure \ref{fig:MeanLASSOEll2ErrorEstimatorBCCHComparison} plots the mean
$\ell_2$ estimation errors resulting from the BCV,
\citet{bickel_simultaneous_2009} (BRT09) and in \citet{belloni_sparse_2012}
(BCCH12) penalty methods, respectively, considering the sample and problem sizes
($n$ and $p$), correlation levels $(\rho)$ and coefficient patterns considered
in Section \ref{sec:Simulations} (for the binary probit). The figure shows that
BCV performs at least as well as the other methods. However, compared to the
here theoretically justifiable BCCH12 method, the BCV improvement is modest.

\begin{figure}
\caption{Comparing with \citet{belloni_sparse_2012} Penalty Method (for LASSO)\label{fig:MeanLASSOEll2ErrorEstimatorBCCHComparison}}
\centering{}\includegraphics[viewport=5bp 5bp 463bp 416bp,clip,width=0.7\textwidth]{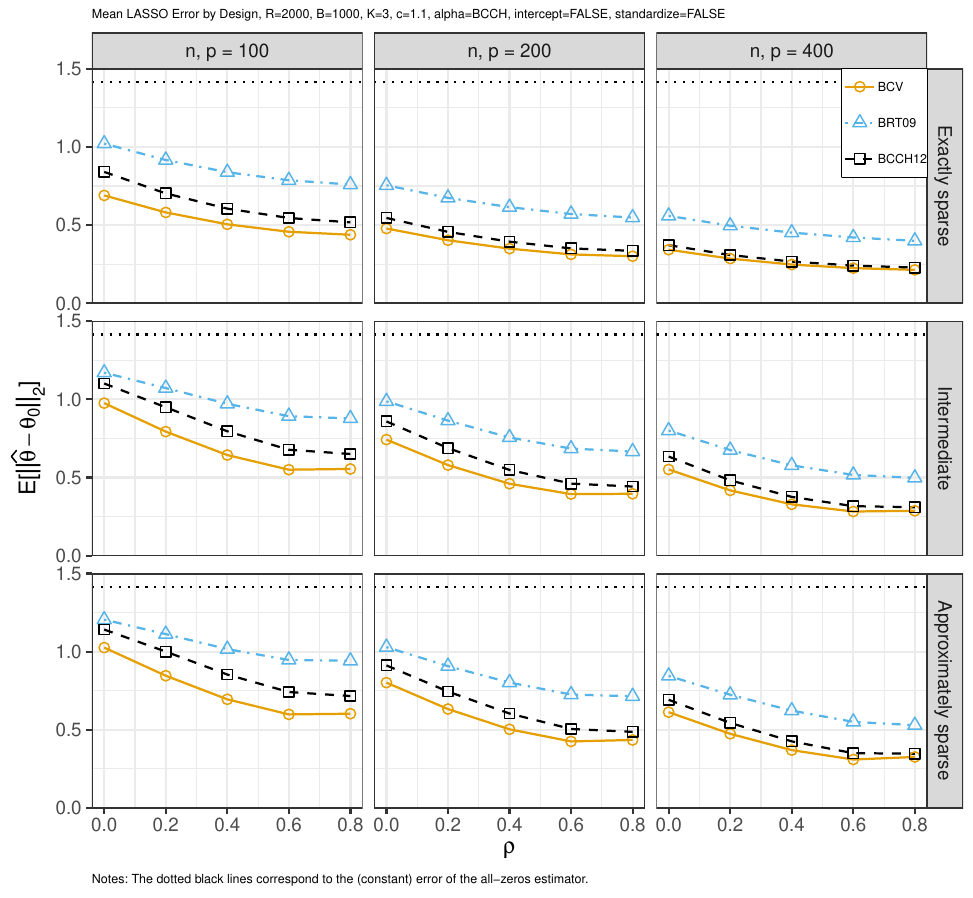}
\end{figure}

\subsection{Comparing with \texorpdfstring{\citet{belloni2016post}}{BCW}
Penalty Method} We here consider data-generating processes identical to those in
Section \ref{sec:Simulations} except that the outcome is generated as
\[
Y_i=\mathbf{1}\left(\bX_i^{\top}\btheta_0+\varepsilon_i>0\right),\quad \varepsilon_i\mid \bX_i \sim \mathrm{Logistic}(0,1),\quad i\in\left[n\right],
\]
thus implying a binary logit model as in Example \ref{exa:Logit}. We take $m$ to
be the negative logit likelihood loss in \eqref{eq:LossLogit}.

The notes of \citet[Table 1]{belloni2016post} suggest the penalty level
\[
\lambda^{\texttt{bcw}}_{\alpha,n} := \frac{c_0 \Phi^{-1}(1-\alpha/(2p))}{2\sqrt{n}},
\]
where we continue to use $c_0=1.1$ and $\alpha=\alpha_n=.1/\ln(n\lor p)$.\footnote{
See \citet[Algorithm 3]{belloni2018uniformly} for details on how to handle
regressors measured on different scales.} As in
\citet{belloni2016post}, we (correctly) presume that the regressors are
equivariant, and that the true intercept is zero. Hence, we do not include a
constant regressor and penalize all regressor coefficients.

Figure \ref{fig:MeanEll2ErrorEstimatorBCWomparison} plots the mean
$\ell_2$ estimation errors resulting from the BCV and \citet{belloni2016post}
(BCW16) penalty methods, respectively, considering the sample and problem sizes
($n$ and $p$), correlation levels $(\rho)$ and coefficient patterns considered
in Section \ref{sec:Simulations} (for the binary probit). The figure shows that
BCV performs at least as well as BCW16. However, compared to the here
theoretically justifiable BCW16 method, the BCV improvement is again modest.

\begin{figure}
\caption{Comparing with \citet{belloni2016post} Penalty Method (for
$\ell_1$-Penalized Logit)\label{fig:MeanEll2ErrorEstimatorBCWomparison}}
\centering{}\includegraphics[viewport=5bp 5bp 463bp
416bp,clip,width=0.7\textwidth]{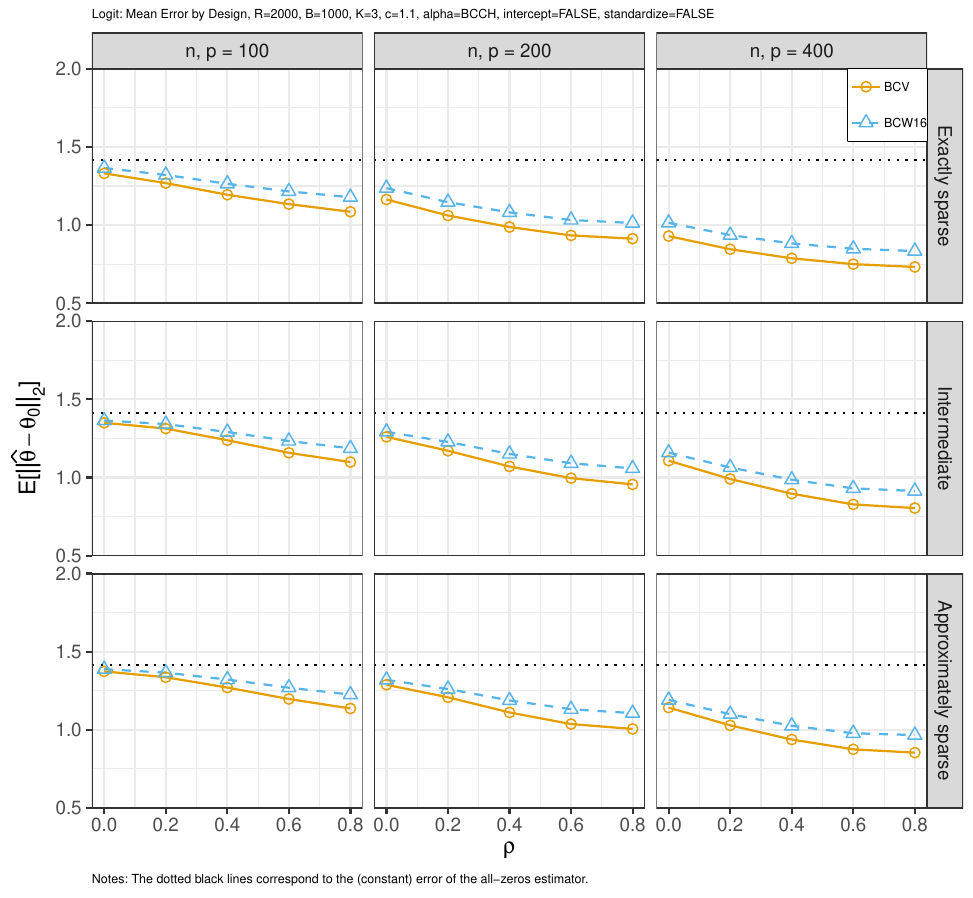}
\end{figure}

%\putbib[My_Library] % display appendix references
%\end{bibunit} % stop tracking appendix references

\end{document}